\documentclass[11pt]{article}

\usepackage{graphicx}
\graphicspath{ {./images/} }

\usepackage[dvipsnames]{xcolor}
\definecolor{b}{HTML}{4472c4}
\definecolor{o}{HTML}{ED7D31}
\definecolor{g}{HTML}{70ad47}

\definecolor{t}{RGB}{40,154,150}

\usepackage{fullpage}
\usepackage{amsmath,amsthm,amssymb,latexsym,color}
\usepackage[hidelinks,unicode]{hyperref}
\usepackage{lipsum}
\usepackage{enumitem,bbm}
\usepackage{hyperref}

\usepackage{tikz}
\usepackage{subfigure}

\usepackage{authblk}

\usepackage{multirow}

\usepackage{algorithmicx,algorithm,algpseudocode}
\algnewcommand{\Input}{\item[\textbf{Input:}]}
\algnewcommand{\Output}{\item[\textbf{Output:}]}

\thispagestyle{empty}

\newtheorem{theorem}{Theorem}[section]
\newtheorem*{theorem*}{Theorem}

\newtheorem{lemma}[theorem]{Lemma}
\newtheorem{cor}[theorem]{Corollary}
\newtheorem{defi}[theorem]{Definition}
\newtheorem{prop}[theorem]{Proposition}

\theoremstyle{definition}

\theoremstyle{plain}

\newtheorem{theoremA}{Theorem}

% USEFUL COMMANDS:

\newcommand{\N}{\mathbb{N}}

\newcommand{\R}{\mathbb{R}}

\newcommand{\Exp}{\mathbb{E}}

\newcommand{\Event}{\mathcal{E}}
\def\Prob{{\mathbb P}}

\def\Id{{\rm Id}}

\def\spn{{\rm span\,}}
\def\dist{{\rm dist}}

\def\col{{\rm col}}

\def\row{{\rm row}}
\def\dist{{\rm dist}}

% HH's commands
\definecolor{color2}{RGB}{118,128,97} % ``11812897''

% Specifically for Gaussian Elimination

\def\prA{\mathcal M}

\def\izero{i_{th}}
\def\ione{i_{\max}}

\title{Average-case analysis\\
of the Gaussian Elimination with Partial Pivoting}
\author{Han Huang\footnote{School of Mathematics, University of Missouri, Columbia. Email: hhuang@missouri.edu}\quad and\quad Konstantin Tikhomirov\footnote{Department of Mathematical Sciences, Carnegie Mellon University. Email: ktikhomi@andrew.cmu.edu. K.T.\ was partially supported by the Sloan Research Fellowship and by the NSF grant DMS 2054666}}
\begin{document}
\maketitle

\begin{abstract}
The Gaussian Elimination with Partial Pivoting (GEPP) is a classical algorithm for solving systems of linear equations. Although in specific cases
the loss of precision in GEPP due to roundoff errors can be very significant, empirical evidence strongly suggests that for a {\it typical} square coefficient matrix, GEPP is numerically stable. We obtain a (partial)
theoretical justification of this phenomenon
by showing that, given the random $n\times n$ standard Gaussian coefficient matrix $A$, the {\it growth factor} of the Gaussian Elimination with Partial Pivoting is at most polynomially large in $n$ with probability close to one.
This implies that with probability close to one the number of bits of precision sufficient to solve $Ax = b$ to $m$ bits of accuracy using GEPP is
$m+O(\log n)$, which improves an earlier estimate $m+O(\log^2 n)$ of Sankar, and which we conjecture to be optimal by the order of magnitude. We further provide
tail estimates of the growth factor which can be used to support
the empirical observation that GEPP is more stable than the Gaussian Elimination with no pivoting.
\end{abstract}

\tableofcontents

\section{Introduction}
The Gaussian Elimination
is a classical algorithm
for solving systems of linear equations \cite[Chapter~3]{Golub}, \cite[Chapter~9]{Higham}.
The simplest form of the algorithm --- the Gaussian Elimination with no pivoting --- solves a linear system (SLE) $Ax=b$ with a square coefficient matrix $A$ 
by performing the {\it $LU$--factorization}: $A$
is represented as the product $LU$ where $L$ and $U$ are lower and upper triangular matrix, respectively, and $x$ is obtained by a combination of forward and back substitutions $y:=L^{-1}b$, $x:=U^{-1}y$. A possible algorithmic representation of this well known process is given below in Algorithm~\ref{GENP}.
The procedure produces a sequence of matrices $A^{(0)}:=A,A^{(1)},\dots,A^{(n-1)}=:U$, where for every $k\leq n-1$, the $k\times k$ top left submatrix of $A^{(k-1)}$ is upper triangular.
The elimination process with no pivoting fails if at any step $k=1,\dots,n-1$, the $k$--th diagonal element of $A^{(k-1)}$ is zero.

The computation of $A^{(k)}$ from $A^{(k-1)}$ can be represented in matrix form as
$$
A^{(k)}:= (\Id_n-\tau^{(k)}e_k^\top)A^{(k-1)},
$$
where
$$
\tau^{(k)}:= \sum_{j=k+1}^n \tau^{(k)}_j e_j,
$$
and where $e_1,\dots,e_n$ are standard unit basis vectors in $\R^n$.
We also note that with this notation
$$
L=\Id_n+\sum_{k=1}^{n-1} \tau^{(k)}e_k^\top.
$$
The matrices $\Id_n-\tau^{(k)}e_k^\top$, $k=1,\dots,n-1$, are called {\it the Gauss transformations}.

\begin{algorithm}[ht]
\normalsize
\caption{{\tt $LU$-factorization}}
\label{GENP}
\begin{algorithmic}[1]
\Input An $n\times n$ matrix $A$
\Output $LU$--factorization $A=LU$ if the algorithm succeeds, or \textbf{Error}
if the algorithm fails
\State{$A^{(0)}:= A$}
\For{$k = 1, \dots, n-1$}
\State{\textbf{if } $A_{k,k}^{(k-1)}\neq 0$}
\State{\textbf{then}} 
\State{Initialize $A^{(k)}$ as zero matrix}
\For{$i = k+1, \dots, n$}
\State{$\tau^{(k)}_i:= A_{i,k}^{(k-1)}/A_{k,k}^{(k-1)}$}
\For{$j= k+1,\dots,n$}
\State{$A^{(k)}_{i,j}:=A^{(k-1)}_{i,j}-A^{(k-1)}_{k,j}\,\tau_i^{(k)}$}
\EndFor
\EndFor
\State{\textbf{else}} 
\State{Output \textbf{Error}} 
\State{\textbf{end if}}
\EndFor
\State{$U:= A^{(n-1)}$}
\State{Set $L$ to be the unit lower triangular matrix with
$$L_{i,k}:=\tau^{(k)}_i, \quad i = k+1, \dots, n, \quad k=1,\dots,n-1.$$}
\end{algorithmic}
\end{algorithm}

When considering an implementation using the floating point arithmetic, a well known issue of the Gaussian Elimination is its {\it numerical instability}.
Recall that the {\it condition number} $\kappa(A)$ of a square  matrix $A$ defined as the ratio of the largest and smallest singular value of $A$.
Even for some well-conditioned matrices (i.e having a small condition number), solving SLE with help of the Gaussian Elimination with no pivoting results in large relative errors of the computed solution vectors \cite[Section~3.3]{Golub}.

\medskip

Several modifications of the elimination procedure are commonly used in matrix computations to address the instability issue \cite[Chapter~3]{Golub}, \cite[Chapter~9]{Higham}.
In particular, the {\it Gaussian Elimination with Partial Pivoting (GEPP)}
looks for a representation $PA=LU$ (called {\it $PLU$--factorization}), where, as before, $L$ and $U$ are lower and upper triangular matrices, while $P$ is a specially constructed permutation matrix.
The solution of a corresponding SLE can then be obtained by a combination of forward and back substitutions, and a permutation of vector's components (see Algorithm~\ref{GEPP1}; for better readability, we represent the formula for $L$ in matrix rather than entry-wise form there).
The GEPP succeeds in exact arithmetic whenever $A$ is non-singular (although it may fail in floating point arithmetic). %and provides certain worst-case guarantees for the relative error of the solution vector in terms of the coefficient matrix' condition number.

\begin{algorithm}[ht]
\normalsize
\caption{{\tt $PLU$-factorization}}
\label{GEPP1}
\begin{algorithmic}[1]
\Input An $n\times n$ matrix $A$
\Output Matrices $P,L,U$ satisfying $PA=LU$, or \textbf{Error}
if the algorithm fails
\State{$A^{(0)}:=A$}
\For{$k = 1, \dots, n-1$}
\State{Set $y_k$ to be an index in $\{k,\dots,n\}$ corresponding
to the largest (by the absolute value) element among $A^{(k-1)}_{k,k},A^{(k-1)}_{k+1,k},\dots,A^{(k-1)}_{n,k}$\quad}
\Comment{{\it \small We can assume any tie-breaking rule here%; for example, take the smallest of the admissible values for $y_k$
}}
\State{\textbf{if } $A^{(k-1)}_{y_k,k}=0$}
\State{\textbf{then}} 
\State{Output \textbf{Error}} 
\State{\textbf{else}} 
\State{Set $P^{(k)}$ to be the permutation matrix that
swaps $k$--th and $y_k$--th components:
$$P^{(k)}_{k,y_k}=P^{(k)}_{y_k,k}:=1,\quad P^{(k)}_{j,j}:=1, \;\; j\in[n]\setminus\{y_k,k\}$$}
%\Comment{{\it \small This permutation swaps $k$--th and $y_k$--th rows of a matrix}}
\State{Initialize $A^{(k)}$ as zero matrix}
\For{$i = k+1, \dots, n$}
\State{$\tau^{(k)}_i:= (P^{(k)}A^{(k-1)})_{i,k}/(P^{(k)}A^{(k-1)})_{k,k}$}
\For{$j= k+1,\dots,n$}
\State{$A^{(k)}_{i,j}:=(P^{(k)}A^{(k-1)})_{i,j}-(P^{(k)}A^{(k-1)})_{k,j}\,\tau_i^{(k)}$}
\EndFor
\EndFor
\Comment{{\it \small In matrix form,
$A^{(k)}:=(\Id_n-\tau^{(k)}e_k^\top)P^{(k)}A^{(k-1)}$, where $\tau^{(k)}:= \sum_{j=k+1}^n \tau^{(k)}_j e_j$}}
%the computation of $A^{(k)}$ in the above loop can be written as $A^{(k)}:=(\Id_n-\tau^{(k)}e_k^\top)P^{(k)}A^{(k-1)}$, where $\tau^{(k)}:= \sum_{j=k+1}^n \tau^{(k)}_j e_j$}}
\State{\textbf{end if}}
\EndFor
\State{$U:=A^{(n-1)}$}
\State{$P:=\prod_{k=1}^{n-1}P^{(k)}$}
\State{$L:=\prod_{k=1}^{n-1}\big(\Id_n+\big(P^{(n-1)}\dots P^{(k+1)}\tau^{(k)}\big)\,e_k^\top\big)
=\Id_n+\sum_{k=1}^{n-1}\big(P^{(n-1)}\dots P^{(k+1)}\tau^{(k)}\big)\,e_k^\top$}
\end{algorithmic}
\end{algorithm}

A seminal result of Wilkinson \cite{Wilkinson}
gives an upper bound on the backward error
during the Gaussian Elimination when the floating point computations are performed.
Define the {\it unit roundoff}
$$
{\bf u}:=\frac{1}{2}\big(\mbox{the gap between $1$ and the next floating point number}\big),
$$
so that for every real number $x$, its floating point representation ${\rm fl}(x)$
satisfies $|x-{\rm fl}(x)|\leq {\bf u}|x|$ as long as {\it no underflow or overflow exception arise} \cite[Section~2.7]{Golub}. 
Let $A$ be an invertible $n\times n$ matrix, assume that GEPP in floating point arithmetic with no underflow and overflow exceptions is performed on the matrix ${\rm fl}(A)$,
and assume that no error occurs during the computation (i.e no zero pivots are encountered).
Let $\hat P,\hat L,\hat U$ be the computed matrices from the floating point GEPP of ${\rm fl}(A)$, with $\hat U_{n,n}\neq 0$, and let $P,L,U$ be the $PLU$--factorization of $A$ in exact arithmetic. Assume that $\hat P=P$.
Further, let $\hat x$ denote the computed solution corresponding to the exact solution of the SLE $Ax=b$.
Then
$$
PA=\hat L \hat U+H,\quad (A+E)\hat x=b \quad \mbox{(equalities hold in exact arithmetic)},
$$
where
$$
\|H\|= O\big(n^2\,{\bf u}\,(\|A\|+n\|\hat U\|)\big),
$$
and
$$
\|E\|=O\big(n^2\,{\bf u}\,(\|A\|+n\|\hat U\|)\big)
$$
(see, in particular, \cite[Theorem~3.3.1 and Theorem~3.3.2]{Golub}).
Define the {\it growth factor}
${\bf g_{\rm GEPP}}$ as
\begin{align}\label{dfpajnefpiufnwpfijnwqpkwn}
		{\bf g_{\rm GEPP}}(A)
		:=\frac{ \max_{k,i,j \in [n]} |\hat A^{(k-1)}_{i,j}|}{\max_{i,j\in[n]} |A_{i,j}|},
\end{align}
where $\hat A^{(1)},\dots,\hat A^{(n-1)}$ denote the computed (in the floating point arithmetic)
matrices $A^{(1)},\dots$, $A^{(n-1)}$. Then, under the above assumptions, the backward error estimate can be written as
$$
\|E\|=O\big(n^4 \,{\bf u}\,g_{\rm GEPP}(A)\,\max_{i,j\in[n]} |A_{i,j}|\big),
$$
implying, under the additional assumption $s_{\min}(A)\geq 2\|E\|$, the forward error bound for the computed solution
\begin{equation}\label{eq: wilk123}
\frac{\| \hat x - x\|_2}{\|\hat x\|_2} = O\big(n^4 \,{\bf u}\,\kappa(A)\, {\bf g_{\rm GEPP}}(A)\big),
\end{equation}
where $\kappa(A)=\|A\|\,\|A^{-1}\|$ is the condition number of $A$.
Similar error bounds are available for other versions of the Gaussian Elimination
(with no pivoting, with complete or with rook pivoting).
We refer, in particular, to Wilkinson's paper \cite{Wilkinson} and to modern accounts
of the backward error analysis of the different forms of the Gaussian Elimination
in \cite[Chapter~3]{Golub}, \cite[Chapter~9]{Higham}, as well as \cite{HighamPaper}.

%\footnote{We note that similar upper bounds on the relative error for the Gaussian elimination with no pivoting have been obtained in the literature (see, for example, \cite[Section~9.3]{Higham} or \cite[Section~3.3]{Golub}). Specifically, with if $\tilde x$ is the vector obtained via the Gaussian Elimination with no pivoting and using the floating point computations with no overflow exceptions then
%\begin{equation*}
%\frac{\| \tilde x - x\|_2}{\|x\|_2} \leq {\bf u}\,n^C\, \kappa(A)\, {\bf g_{\rm GE}}(A),
%\end{equation*}
%where the {\it growth factor} ${\bf g_{\rm GE}}(A)$ is defined as
%$$
%{\bf g_{\rm GE}}(A):=\|L\|_{\infty\to\infty}\,\frac{\|U\|_{\infty\to\infty}}{\|A\|_{\infty\to\infty}}.
%$$
%The ``no-pivoting'' growth factor ${\bf g}_{\rm GE}(A)$ is unconstrained (can be arbitrarily %large).}. 

\medskip

It can be checked that ${\bf g}_{\rm GEPP}(A)=O(2^n)$ for any $n\times n$ invertible matrix $A$, and that this bound is attained.
Thus, \eqref{eq: wilk123} provides a satisfactory worst-case estimate 
only under the assumption ${\bf u}\ll 2^{-n}$, i.e when the unit roundoff is exponentially small in the matrix dimension. At the same time, the accumulated empirical evidence suggests that for a ``typical'' coefficient matrix the loss of precision is much smaller than the worst-case prediction. Let us quote \cite[p.~131]{Golub}: ``Although there is still more to understand about [the growth factor], the consensus is that serious
element growth in Gaussian Elimination with Partial Pivoting is {\it extremely} rare. {\it The method can be used with confidence.}''

In \cite{Trefethen} Trefethen and Schreiber carried out an empirical study of the Gaussian Elimination with Partial and with Complete Pivoting in the setting when the input coefficient matrix $A$ is {\it random}, having i.i.d standard Gaussian entries. Their experiments showed that with high probability the growth factor in GEPP is only polynomially large in $n$. Further numerical studies by Edelman suggest that ${\bf g}_{\rm GEPP}(A)$ of an $n\times n$ standard Gaussian matrix $A$ is of order $O(n^{1/2+o(1)})$ with probability close to one (see a remark in \cite[p.~182]{EM}).

An important step in improving theoretical understanding of numerical stability of the Gaussian Elimination was made by Yeung and Chan in \cite{Yeung}. Their result implies (although that is not explicitly stated in the paper) that for the Gaussian Elimination with {\it no pivoting} applied to the standard $n\times n$ Gaussian matrix, the relative error of the solution vector can be bounded above by ${\bf u}\,n^{O(1)}$ with probability close to one. A vast generalization of their estimate was obtained by Sankar, Spielman, and Teng in \cite{SST} in the context of the {\it smoothed analysis of algorithms}. Let $M$ be any non-random $n\times n$ matrix, and let $G$ be an $n\times n$ matrix with i.i.d $N(0,\sigma^2)$ Gaussian entries. The main result of \cite{SST} asserts that the expected number of bits of precision sufficient to solve $(M+G)x = b$ to $m$ bits of accuracy using Gaussian elimination without pivoting is at most
$m+O\big(\log\big(n+\frac{\|M\|}{\sigma}\big)\big)$. This provides a theoretical justification for the observed performance of the GE with no pivoting for {\it structured} dense coefficient matrices.

The no-pivoting strategy is crucial for the proofs in \cite{Yeung} or \cite{SST}. With partial pivoting, the permutations of the rows after each elimination step introduce complex dependencies to the model which require other arguments to handle. In the PhD thesis \cite{Sankar}, Sankar carried out smoothed analysis of GEPP based on certain recursive matrix formula (to be discussed in some detail in the next section).
Let $A=M+G$, where $G$ is the Gaussian random matrix with i.i.d $N(0,\sigma^2)$
entries, and $M$ is a deterministic matrix of spectral norm at most one.
One of main results of \cite{Sankar} states that, with the above notation,
$$
\Prob\bigg\{
\frac{ \max_{k,i,j \in [n]} |A^{(k-1)}_{i,j}|}{\max_{i,j\in[n]} |A_{i,j}|}\geq t\bigg\}
\leq \frac{\big(O\big(n\sigma^{-1}+n^{3/2}\big)\big)^{12\log n}}{t^{(\log n)/21}},\quad t>0,
$$
so that in the mean zero setting $M=0$, with high probability
$\frac{\max_{k,i,j \in [n]} |A^{(k-1)}_{i,j}|}{\max_{i,j\in[n]} |A_{i,j}|}=
n^{O(\log n)}$.
Note that the quantity considered in \cite{Sankar}
is not a growth factor as was defined above but its ``exact arithmetic'' counterpart.
The relation between matrices $A^{(k-1)}$ and the corresponding computed matrices $\hat
A^{(k-1)}$ is not trivial and will be discussed later; at this point we note that
assuming that the magnitudes of the ratio $\frac{\max_{k,i,j \in [n]} |A^{(k-1)}_{i,j}|}{\max_{i,j\in[n]} |A_{i,j}|}$ and the growth factor ${\bf g}_{\rm GEPP}(A)$ match and in view of \eqref{eq: wilk123}, the result of Sankar implies that with high probability GEPP results in at most $O(\log^2 n)$ lost bits of precision in the obtained solution vector.
This bound is worse than the $O(\log n)$ estimate for GE with no pivoting implied by \cite{Yeung}.

\medskip

To summarize, whereas strong results on average-case stability of GE with {\it no pivoting} has been obtained in the literature, the Gaussian Elimination with Partial Pivoting lacked {\it matching} theoretical guarantees, let alone justifying the common belief that GEPP tends to be {\it more stable} than GE with no pivoting.
In this work, we make progress on this problem. To avoid any ambiguity, we
recall all the imposed assumptions and notation:
\begin{theoremA} \label{thm: main}
There are universal constants $C,\tilde C>1$ and a function
$\tilde n:[1,\infty)\to\N$ with the following property. 
Let $p\geq 1$, and let $n\geq \tilde n(p)$.
\begin{itemize}
\item Assume that the floating point computations with no underflow and overflow exceptions
and with a unit roundoff ${\bf u}$ are being performed.
\item Let $A$ be the random $n\times n$ matrix with i.i.d standard Gaussian
entries, (the real Ginibre Ensemble). Assume that the floating point GEPP is performed on the matrix ${\rm fl}(A)$.
%\item Denote by $\hat P$, $\hat L$, $\hat U$, $\hat A^{(0)}:={\rm fl}(A),
%\hat A^{(1)},\dots,\hat A^{(n-1)}$
%the computed matrices corresponding to $P,L,U$, $A:=A^{(0)},A^{(1)},\dots,A^{(n-1)}$ from the
%$PLU$--factorization of $A$ in exact arithmetic,
%and let the growth factor ${\bf g}_{\rm GEPP}(A)$
%be defined by \eqref{dfpajnefpiufnwpfijnwqpkwn}.
\end{itemize}
Then with probability at least $1-{\bf u}^{1/8}\,n^{\tilde C}$, the GEPP for ${\rm fl}(A)$
succeeds in floating point arithmetic and the computed permutation matrix $\hat P$
agrees with the matrix $P$ from the $PLU$--factorization of $A$ in exact arithmetic. Furthermore, assuming ${\bf u}^{1/8}\,n^{\tilde C}\leq 1/2$,
$$
\Prob\big\{{\bf g}_{\rm GEPP}(A)
\geq n^{t}\;\big|\;\mbox{GEPP succeeds in f.p.\ arithmetic and }\hat P=P \big\} \leq
40^p\,n^{-p t},\quad t\geq Cp^2.
$$
%As an immediate consequence, for all such $p$ and $n$, 
%$$
%\Exp({\bf g}_{\rm GEPP}(A))^p \le 2n^{\tilde Cp^2}
%$$
%for a universal constant $\tilde C\geq 1$.
\end{theoremA}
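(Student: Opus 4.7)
The plan is to split the proof into (i) a floating-point reduction, proving $\hat P = P$ and success of GEPP with high probability, and (ii) a tail bound on the exact-arithmetic growth factor, which are treated by quite different tools. For (i), I would carry out a per-step backward-error analysis of Algorithm~\ref{GEPP1} in the spirit of Wilkinson~\cite{Wilkinson}. Inductively, under the assumption that the exact-arithmetic matrices $A^{(k)}$ have entries bounded by $n^{C_0}\max_{i,j}|A_{ij}|$, the floating-point computed matrices $\hat A^{(k)}$ remain within additive error $O({\bf u}\cdot\mathrm{poly}(n))$ of $A^{(k)}$. Consequently $\hat P = P$ provided that at every step $k$ the gap between the largest and second-largest candidates for the pivot in column $k$ of $A^{(k-1)}$ exceeds some threshold $n^{-C_1}\max_{i,j}|A_{ij}|$. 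This ``pivot-gap'' event holds with probability $1 - O({\bf u}^{1/8}n^{\tilde C})$ by anti-concentration of Gaussian polynomials (\emph{\`a la} Carbery--Wright), since each entry of $A^{(k)}$ is a rational function of the entries of $A$ whose numerator and denominator are polynomials of modest degree in the Gaussian data.

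For (ii), the starting point is the Schur-complement identity
$$A^{(k)}_{i,j} \;=\; B_{i,j} \;-\; B_{i,[k]}\,\bigl(B_{[k],[k]}\bigr)^{-1} B_{[k],j}, \qquad i,j>k,$$
where $B = P^{(k)}\cdots P^{(1)}A$ is the current row-permuted version of $A$. The crucial probabilistic observation is that the pivoting decisions $y_1,\dots,y_k$ (hence the permutations $P^{(1)},\dots,P^{(k)}$) are measurable with respect to the $\sigma$-algebra $\mathcal F_k$ generated by the first $k$ columns of $A$. Conditioning on $\mathcal F_k$ therefore freezes $B_{[k],[k]}$ and $B_{i,[k]}$ while leaving the remaining entries $\{A_{\ell,j}\}_{\ell\in[n],\, j>k}$ i.i.d.\ standard Gaussian and independent of $\mathcal F_k$. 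It follows that, conditionally on $\mathcal F_k$, the entry $A^{(k)}_{i,j}$ is a centered Gaussian with variance at most $1 + \|B_{i,[k]}\|_2^{2}\cdot\|(B_{[k],[k]})^{-1}\|^{2}$.

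Hence, controlling high moments of $A^{(k)}_{i,j}$ reduces to controlling high moments of $\|B_{i,[k]}\|_2$ and $\|B_{[k],[k]}^{-1}\|$. Bounds of the shape $\Exp\bigl|A^{(k)}_{i,j}\bigr|^{2p}\leq (Cp)^{2p}\,n^{O(p)}(\max_{i,j}|A_{ij}|)^{2p}$, combined with a union bound over the $O(n^3)$ triples $(k,i,j)$ and Markov's inequality at exponent $2p$, then yield the claimed tail $40^p\,n^{-pt}$ for $t\geq Cp^2$; the quadratic dependence on $p$ in the admissible range of $t$ reflects the $n^{O(p)}$ cost of the union bound.

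The main obstacle is that $B_{[k],[k]}$ is \emph{not} a standard Gaussian matrix: it is formed from the $k$ rows of $A_{[n],[k]}$ selected by the adaptive pivoting rule, so its entries have a complicated conditional distribution. Classical smallest-singular-value bounds for Gaussian matrices cannot be applied directly. The route I would take is to expose the pivots incrementally: reveal the first column of $A$, determine $y_1$, then reveal the second column of the resulting $(n-1)\times(n-1)$ Schur complement, determine $y_2$, and so on. At each stage the matrix under consideration is a Schur complement of a Gaussian matrix conditioned on a pivoting constraint, which (by the argument of the previous paragraph) retains a tractable conditional Gaussian structure on its ``unseen'' columns. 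Propagating high-moment bounds on singular values through this recursion, while carefully controlling the multiplicative growth produced by conditioning on each pivot choice, is the technical heart of the proof and is precisely the step at which Sankar's approach~\cite{Sankar} loses an extra $\log n$ factor in the exponent.
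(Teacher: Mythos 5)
Your high-level decomposition into a floating-point reduction (proving $\hat P = P$ and success of GEPP) and an exact-arithmetic growth-factor tail bound matches the paper's structure, and your key structural observation---that the pivot selections are measurable with respect to the first $k$ columns, so that the Schur-complement entries are conditionally Gaussian with variance governed by $\|A_{j,[k]}(A_{I_k,[k]})^{-1}\|_2$---is indeed the correct starting point. However, both halves of your argument contain genuine gaps.

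For the floating-point half, your proposed Carbery--Wright anti-concentration bound for the ``pivot-gap'' event cannot deliver the stated probability. The entries of $A^{(k-1)}$ are rational functions of the Gaussian data whose numerator and denominator (after clearing $(A_{I_{k-1},[k-1]})^{-1}$) are polynomials of degree growing linearly in $k$, hence up to order $n$; Carbery--Wright degrades as the $1/d$-th root of the threshold, so for $d\sim n$ the resulting bound is useless. The paper instead exploits that, conditionally on $I_{k-1}$ and $A_{I_{k-1},[k]}$, the remaining rows have a symmetric log-concave density restricted to the polytope $K_{k-1}(A)$, and uses monotonicity of that density to get the needed pivot-gap estimate directly (Lemma~\ref{akdjfnapfjnwpfijfqnpi} in the paper's numbering, i.e.\ the per-step relative-gap lemma); this bypasses any degree issues. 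You would also need a companion bound on $s_{\min}(A_{I_k,[k]})$ to run the Wilkinson-style induction, which you have not stated.

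For the exact-arithmetic half, you correctly reduce the problem to bounding high moments of $\|A_{j,[k]}(A_{I_k,[k]})^{-1}\|$, but you then explicitly defer to ``propagating high-moment bounds on singular values through this recursion \ldots the technical heart of the proof,'' while acknowledging this is precisely where Sankar's incremental approach loses the $\log n$ exponent. You do not supply the new idea that closes this gap. The paper's route is qualitatively different from column-by-column revelation: it conditions on $(I_{r'}, A_{I_{r'},[r']})$ at geometrically spaced scales $r'\approx r/(1+\tilde\varepsilon)$, treats $A_{I_r,[r]}$ as a partially random block matrix $\begin{bmatrix}F&M\\W&Q\end{bmatrix}$ with fixed $F=A_{I_{r'},[r']}$, and proves lower bounds on \emph{intermediate} singular values $s_{r-O(1)}(A_{I_r,[r]})$ (Proposition~\ref{prop: sec4Main} and Section~\ref{s: fpenfqpwiufnpin}); the residual non-Gaussianity of $A_{[n]\setminus I_{r'},[r']}$ is handled via the Gaussian-measure lower bound on the random polytope $K_{r'}(A)$ (Proposition~\ref{prop: polyMain}). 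Passing from $s_{r-O(1)}$ to $s_{\min}$ is then a separate deterministic-plus-probabilistic argument using the partial-pivoting-induced distances to row spans (Proposition~\ref{smin prop efwf}). None of these three ingredients---intermediate singular values of block matrices, polytope-measure control of conditioning, and the distance-to-span passage to $s_{\min}$---appears in your proposal, so as written the argument stops at exactly the point where the real work begins.
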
  
We do not attempt to compute the constant $C$ in the above theorem
explicitly and leave the problem of finding an optimal (up to $n^{o_n(1)}$
multiple) estimate of the growth factor ${\bf g}_{\rm GEPP}(A)$ for future research
(see Section~\ref{fepfojnfpwieufnqwpifjqn}). Further, we expect
a much stronger bound on the probability that GEPP succeeds in the floating point arithmetic
and that $\hat P=P$.

In view of the aforementioned work of Wilkinson and well known estimates for the condition number of the Gaussian matrix \cite{Edelman,Szarek}, the theorem implies that
with probability close to one the number of bits of precision sufficient to solve $Ax = b$ to $m$ bits of accuracy using GEPP is
$m+O(\log n)$. We conjecture that this bound is optimal in the sense that in the same setting $m+\Omega(\log n)$ bits of precision are {\it necessary} with probability close to one.

Let us further apply Theorem~\ref{thm: main} to compare numerical stability of GEPP with that of GE with no pivoting. As we mentioned at the beginning of the introduction, the Gaussian Elimination with no pivoting can produce arbitrarily large relative error in the floating point arithmetic even for well-conditioned coefficient matrices. As an illustration, consider a $2\times 2$ standard Gaussian matrix in floating point arithmetic,
$$
M=\begin{pmatrix}
{\rm fl}(g_{11}) & {\rm fl}(g_{12})\\
{\rm fl}(g_{21}) & {\rm fl}(g_{22})
\end{pmatrix}.
$$
The Gaussian Elimination with no pivoting yields the computed $LU$--factorization of $M$,
$$
\hat L=\begin{pmatrix}
1 & 0\\
{\rm fl}\big({\rm fl}(g_{21})/{\rm fl}(g_{11})\big) & 1
\end{pmatrix};\quad 
\hat U=\begin{pmatrix}
{\rm fl}(g_{11}) & {\rm fl}(g_{12})\\
0 & {\rm fl}\big({\rm fl}(g_{22})-{\rm fl}(g_{12})\cdot {\rm fl}(g_{21})/{\rm fl}(g_{11})\big)
\end{pmatrix}.
$$
It can be checked that for every $\varepsilon\in({\bf u},1)$, with probability
$\Theta(\varepsilon)$ all of the following holds:
\begin{itemize}
    \item The matrix $M$ is well-conditioned, say, $\kappa(M)\leq 100$;
    \item $|{\rm fl}(g_{11})|\leq \varepsilon$, $|{\rm fl}(g_{12})|,|{\rm fl}(g_{21})|,|{\rm fl}(g_{22})|\in[1/2,2]$;
    \item
    $\big|{\rm fl}\big({\rm fl}(g_{22})-{\rm fl}(g_{12})\cdot {\rm fl}(g_{21})/{\rm fl}(g_{11})\big)
    -\big({\rm fl}(g_{22})-{\rm fl}(g_{12})\cdot {\rm fl}(g_{21})/{\rm fl}(g_{11})\big)
    \big|\geq \Omega({\bf u})\,\big({\rm fl}(g_{22})-{\rm fl}(g_{12})\cdot {\rm fl}(g_{21})/{\rm fl}(g_{11})\big)$.
\end{itemize}
With the above conditions,
the bottom right element of the product $\hat L\hat U$
differs from ${\rm fl}(g_{22})$ by a quantity of order
$\Omega({\bf u})\,\big({\rm fl}(g_{22})-{\rm fl}(g_{12})\cdot {\rm fl}(g_{21})/{\rm fl}(g_{11})\big)$, that is, the normwise backward error satisfies
$$
\Prob\big\{\|\hat L\hat U-M\|> c{\bf u}\,\|M\|/\varepsilon\;\big|\;
\kappa(M)\leq 100\big\}\geq c\varepsilon,\quad \varepsilon\in({\bf u},1),
$$
for some universal constant $c>0$ (one may safely take $c=1/100$, say).

%and assume we are looking for a solution of the SLE $Ax=(1,1)^\top$.
%Let $\Event$ be the event that $A$ has condition number at most $10$.
%It is not difficult to check that conditioned on $\Event$, for arbitrary $\varepsilon\in(0,1)$ with (conditional) probability of order $\Theta(1/\varepsilon)$
%we have $|{\rm fl}(g_{11})|\leq \varepsilon$, $|{\rm fl}(g_{12})|,|{\rm fl}(g_{21})|,|{\rm %fl}(g_{22})|\in[1/2,2]$.
%It can further be justified that applying the GE with no pivoting to such a matrix and assuming floating point arithmetic with 
%a unit roundoff ${\bf u}$ and no overflow or underflow exceptions, the relative error of the %obtained solution vector $\hat x$ would satisfy
%$$
%\Prob\bigg\{\frac{\|x-\hat x\|_2}{\|x\|_2}\geq c{\bf u}/\varepsilon\;\big|\;\kappa(A)\leq %10\bigg\}\geq c\varepsilon,\quad \varepsilon>0,
%$$
%for some universal constant $c>0$. %In particular, we get that the {\it expected} conditional
%relative error $\Exp\,\big(\frac{\|x-\hat x\|_2}{\|x\|_2}\;|\;\kappa(A)\leq 10\big)$ is infinite.

In sharp contrast with the above observation, in the case of GEPP the probability
of large deviations for the backward
error is much smaller as Theorem~\ref{thm: main}
shows.
Indeed, with the notation from the theorem and in view of Wilkinson's bound, {\it arbitrary} $p\geq 1$ and assuming $n$ is sufficiently large, we have
$$
\Prob\big\{%\mbox{GEPP succeeds in f.p.\ arithm., }
\|\hat L\hat U-\hat P A\|> C'{\bf u}\,n^{4}\|A\|/\varepsilon 
\big\}\leq 40^p\varepsilon^p+{\bf u}\,n^C,\; \varepsilon\in(0, n^{-Cp^2}],
$$
for a universal constant $C'>0$. Thus, the tail of the
distribution of the backward error of GEPP decays
superpolynomially.
%Let $A$ be an $n\times n$ Gaussian matrix,
%$b$ be a fixed coefficient vector, and consider SLE $Ax=b$. As before, we denote by $\hat x$ the solution obtained using floating point arithmetic (but this time via GEPP). Let $C'\geq 1$ be arbitrary constant.
%Theorem~\ref{thm: main} then implies that for every $p\geq 1$ and sufficiently large $n$ there is $C''\geq 1$ depending only on $p$ and $C'$ such that
%$$
%\Exp\,\bigg(\Big(\frac{\|x-\hat x\|_2}{\|x\|_2}\Big)^p\;|\;\kappa(A)\leq n^{C'}\bigg)
%\leq n^{C''}\,{\bf u}^p.
%$$
Informally, {\it the ``proportion'' of well-conditioned coefficient matrices yielding large backward errors
is much smaller for GEPP than for the Gaussian Elimination with no pivoting.}

\bigskip

We provide a detailed outline of the argument, as well as a comparison of our techniques with the earlier approach of Sankar, in the next section.

\bigskip

{\it{}The following notation will be used throughout the paper:}

\medskip

For positive integers $m\le n$, 

\begin{tabular}{cp{0.8\textwidth}}
	$[n]$ & is the set $\{1,2,3,\dots, n\}$	\\
	$[m,n]$ & is the set $\{m,m+1,\dots, n\}$
\end{tabular}

\medskip

For a $n\times m$ matrix $M$, indices $i\in[n]$, $j\in [m]$, and non-empty
subsets $I \subset [n]$ and $J \subset [m]$, 

\begin{tabular}{cp{0.8\textwidth}}
$M_{I,J}$ & is the submatrix of $M$ formed by taking rows indexed over
$I$ and columns indexed over $J$. When $I =\{i\}$ or $J=\{j\}$, we will use lighter notations $M_{i,J}$ and $M_{I,j}$ in place of $M_{\{i\},J}$ and $M_{I,\{j\}}$ \\
$M_{i,j}$ & is the $(i,j)$--th entry of $M$ \\ 
$s_j(M)$ & is the $j$--th largest singular value of $M$ \\
$\R^I$ & The $|I|$-dimensional Euclidean space with components indexed over $I$ \\
$\dist(\cdot,\cdot)$ & The Euclidean distance
\end{tabular}

\section{Outline of the proof}\label{peofjnwqepifunfpijn}

Let $A$ be an $n\times n$ standard Gaussian matrix, let $A^{(0)}:=A,A^{(1)},\dots,A^{(n-1)}$
be the sequence of matrices generated by GEPP process,
and let $P^{(1)},\dots,P^{(n-1)}$ be the corresponding permutation matrices
(see Algorithm~\ref{GEPP1}).
It turns out that in our probabilistic model, estimating the growth factor ${\bf g}_{\rm GEPP}(A)$ can be reduced to bounding the exact arithmetic counterpart of the quantity,
$$
\frac{\max_{k,i,j \in [n]} |A^{(k-1)}_{i,j}|}{\max_{i,j\in[n]} |A_{i,j}|}.
$$
Our main focus is to derive Proposition~\ref{npafjnepfijnfpijqnpijn}, which is
the exact arithmetic counterpart of the main theorem, and
then reduce
the setting of floating-point arithmetic to exact arithmetic.
We provide a rigorous account of the reduction procedure in Section~\ref{faonfpafjnpifjnpijn}, and prefer to
avoid discussing this technical matter here. We only note that
comparison of the matrices $A^{(k-1)}$ and
$\hat A^{(k-1)}$, $1\leq k\leq n-1$, is based on a well established inductive argument somewhat similar to the one used to prove Wilkinson's backward error bound.
From now on and till 
Section~\ref{faonfpafjnpifjnpijn} we work in exact arithmetic unless explicitly stated otherwise.

\medskip

Define ``unpermuted'' matrices $\prA^{(k)}$ obtained at the $k$--th elimination step,
i.e $\prA^{(0)}:=A$ and
\begin{equation}\label{nalfkjnfo34iuhoqfjj}
\prA^{(k)} = \big(P^{(1)}\big)^{-1} \big(P^{(2)}\big)^{-1}
\cdots \big(P^{(k)}\big)^{-1}\, A^{(k)},\quad 1\leq k\leq n-1.
\end{equation}
Let $I_0:=\emptyset$, and
for each $1\leq k\leq n-1$ let $I_k=I_k(A)$ be the (random) subset of $[n]$ of row indices of $A$ corresponding to the pivot elements used in the first $k$ steps of the ``permutation-free'' elimination process.
Notice that within the $k$--th column of ${\prA}^{(k)}$, the components except those
in $I_{k-1}$ and the $k$--th pivot element, are all zeros. Therefore, the set $I_k$ can be defined as 
$$
I_k:=I_{k-1}\cup\big\{i\in[n]\setminus I_{k-1}:\;\prA^{(k)}_{ik}\neq 0\big\},\;\;1\leq k\leq n-1,
$$
where $\big\{i\in[n]\setminus I_{k-1}:\;\prA^{(k)}_{ik}\neq 0\big\}$ is a singleton.
We will further denote by $i_k=i_k(A)$, $1\leq k\leq n-1$, the elements in the singletons $I_k\setminus I_{k-1}$, so that $I_k = \{i_1,i_2,\dots, i_k \}$, $1\leq k\leq n-1$.

For $1\leq k\leq n-1$ and $t \in [k]$, the first $t-1$ components of $\row_{i_t}(\prA^{(k)})$ are zeros, and $\prA^{(k)}_{ [n]\backslash I_k, [k] }$ is the zero matrix; more specifically, for each $1 \le k \le n-1$, and $j \in [n]\backslash I_k$,
\begin{align}\nonumber
    \row_j\big( \prA^{(k)}_{[n],[k]} \big) &= 0 \quad\mbox{  and  } \\
    \row_j\big(\prA^{(k)}_{[n],[k+1,n]}\big)
&=
    \row_j \big( A_{[n],[k+1,n]} \big)  - \row_j(A_{[n],[k]}) \big(A_{I_k,[k]} \big)^{-1} A_{I_k, [k+1,n]}\label{kejfnpfiunf9un}
\end{align}
(see, in particular, \cite[Formula~4.1]{SST} for GE with no pivoting, which can be adapted to our setting).
Thus, for $0\leq k<n-1$, the index $i_{k+1}$ is defined as the one corresponding to the largest number among
$$
A_{j,k+1} - A_{j,[k]} \big(A_{I_k,[k]}\big)^{-1} A_{I_k, k+1}, \, \, j \in [n]\backslash I_k.
$$

\medskip

Due to strong concentration of Gaussian variables, the operator norms of matrices $A_{I,J}$, $I,J\subset[n]$, can be {\it uniformly} bounded from above by a polynomial in $n$.
Thus, the principal difficulty in obtaining satisfactory upper bounds on the growth factor ${\bf g}_{\rm GEPP}(A)$ is in estimating the norm of vectors
$\row_j(A_{[n],[k]}) \big(A_{I_k,[k]} \big)^{-1}$, $j \in [n]\backslash I_k$.
The sets $I_k$ are random and depend on $A$ in a rather complicated way. At the same time, the trivial upper bound
$$
\max\limits_{j\in [n]\backslash I_k}
\big\|\row_j(A_{[n],[k]}) \big(A_{I_k,[k]} \big)^{-1}\big\|_2
\leq \max\limits_{J\subset[n],\,|J|=k;\,j\in [n]\setminus J}
\big\|\row_j(A_{[n],[k]}) \big(A_{J,[k]} \big)^{-1}\big\|_2
$$
which completely eliminates the randomness of $I_k$ from consideration,
is vastly suboptimal\footnote{It can be checked, for example, that with a constant probability
$$
\max\limits_{J\subset[n],\,|J|=n/2;\,j\in [n]\setminus J}
\big\|\row_j(A_{[n],[n/2]}) \big(A_{J,[n/2]} \big)^{-1}\big\|_2
$$
is greater than any predefined constant power of $n$. We expect that a much stronger lower bound can be established.}.

The first part of this section is devoted to the argument of Sankar from \cite{Sankar}
which yields 
a bound ${\bf g}_{\rm GEPP}(A)=O(n^{C\log n})$ with high probability using certain recursive matrix formula. In the second part, we discuss our approach.

\subsection{Sankar's argument}

Consider a block matrix 
$$
\begin{bmatrix} B \\ X \end{bmatrix}= \begin{bmatrix} B_{\rm u} \\ B_{\rm \ell} \\ X \end{bmatrix} =
\begin{bmatrix} B_{ \rm u\ell} & B_{ \rm ur} \\ B_{\rm \ell\ell} & B_{ \rm \ell r} \\ X_{\rm \ell} & X_{\rm r} \end{bmatrix},
$$
where \(B_{\rm u\ell}\) and \(B_{\rm \ell r}\) are square non-singular matrices and \(X\) is a row vector. Then, denoting \(B' := B_{\rm \ell r} - B_{\rm \ell\ell}B_{\rm u\ell}^{-1} B_{\rm ur}\) and \(X' := X_{\rm r} - X_{\rm \ell} B_{\rm u\ell}^{-1} B_{\rm ur}\),
\begin{align}\label{p4pi3hjr4p9ruh34pr9u834h}
 \begin{bmatrix}  -XB^{-1} & 1 \end{bmatrix}  =& \begin{bmatrix}  -X'(B')^{-1} & 1 \end{bmatrix} \cdot \begin{bmatrix} -\begin{bmatrix} B_{\rm \ell} \\ X \end{bmatrix} \cdot B_{\rm u}^\dagger & \Id \end{bmatrix},
\end{align}
where $B_{\rm u}^\dagger$ is the right pseudoinverse of $B_{\rm u}$
(see \cite[Chapter~3]{Sankar}).

The above formula is applied in \cite{Sankar} in a recursive manner. Assume for simplicity of exposition that we are interested in bounding the Euclidean norm of the vector
$A _{j,[n/2]} (A_{I_{n/2},[n/2]})^{-1}$ for some $j\in [n]\setminus I_{n/2}$
(recall that, in view of \eqref{kejfnpfiunf9un} and standard concentration estimates for the spectral norm of Gaussian matrices, this would immediately imply an estimate
on the components of $\prA^{(n/2)}_{j,[n/2+1,n]}$).
Fix for a moment any \(0\le v < m < n/2\), and let
\(B := \prA^{(v)}_{ I_{n/2}\setminus I_v, [v+1,n/2]}\) and \(X:= \prA^{(v)}_{j, [v+1,n/2]}\). We write
\begin{align*}
\begin{bmatrix} B \\ X \end{bmatrix}= \begin{bmatrix} B_{\rm u} \\ B_{\rm \ell} \\ X \end{bmatrix} =
\begin{bmatrix} B_{ \rm u\ell} & B_{ \rm ur} \\ B_{\rm \ell\ell} & B_{ \rm \ell r} \\ X_{\rm \ell} & X_{\rm r} \end{bmatrix}  &= \begin{bmatrix} \prA^{(v)}_{I_{m}\setminus I_v,[v+1,m]} & \prA^{(v)}_{ I_{m}\setminus I_v,[m+1,n/2]} \\ \prA^{(v)}_{I_{n/2}\setminus I_m, [v+1,m]} & \prA^{(v)}_{I_{n/2}\setminus I_m,[m+1,n/2]} \\ \prA^{(v)}_{j,[v+1,m] } & \prA^{(v)}_{j, [m+1,n/2]}
\end{bmatrix}.
\end{align*}
It can be checked that with the above notation,
\(B'= \prA^{(m)}_{I_{n/2}\setminus I_m,[m+1,n/2]}\) and \(X' = \prA^{(m)}_{j,[m+1,n/2]}\) \cite{Sankar}. Relation \eqref{p4pi3hjr4p9ruh34pr9u834h} then implies
\begin{align*}
 &\left\| \begin{bmatrix} \prA^{(v)}
 _{j,[v+1,n/2]} (\prA^{(v)}_{I_{n/2}\setminus I_v,[v+1,n/2]})^{-1} & 1 \end{bmatrix}\right\|
\leq
\left\| \begin{bmatrix} \prA^{(m)}_{j,[m+1,n/2]} (\prA^{(m)}_{I_{n/2}\setminus I_m,[m+1,n/2]})^{-1} & 1 \end{bmatrix}\right\|\cdot\\
&\hspace{2cm}\cdot  \left\|
\begin{bmatrix}\prA^{(v)}_{ (I_{n/2}\setminus I_m)\cup\{j\}, [v+1,n/2]} \big( \prA^{(v)}_{I_{m}\setminus I_v, [v+1,n/2]} \big)^\dagger & \Id_{n/2-m+1} \end{bmatrix}\right\|.
\end{align*}
Now, assume that we have constructed a sequence of indices
\(0=k_0 <k_1<k_2<\cdots k_s <n/2\), with $k_s=n/2-1$ and $k_1\geq n/4$.
Applying the last relation recursively $s$ times, we obtain
\begin{align}\nonumber
&\left\| \begin{bmatrix} A_{j,[n/2]} (A_{I_{n/2},[n/2]})^{-1} & 1 \end{bmatrix}\right\|
 \leq 
 \left\| \begin{bmatrix} \prA^{(n/2-1)}_{j,n/2} (\prA^{(n/2-1)}_{i_{n/2},n/2})^{-1} & 1 \end{bmatrix}\right\|\cdot\\
&\hspace{2cm}\cdot\prod_{\ell=0}^{s-1} \left\|
\begin{bmatrix}\prA^{(k_\ell)}_{ (I_{n/2}\setminus I_{k_{\ell+1}})\cup\{j\}, [k_\ell+1,n/2]} \big( \prA^{(k_\ell)}_{I_{k_{\ell+1}}\setminus I_{k_\ell}, [k_\ell+1,n/2]} \big)^\dagger & \Id_{n/2-k_{\ell+1}+1} \end{bmatrix}\right\|,\label{qpinfopfiu3nf08ufn08un}
\end{align}
where, by the definition of the partial pivoting, $\big|\prA^{(n/2-1)}_{j,n/2} (\prA^{(n/2-1)}_{i_{n/2},n/2})^{-1}\big|\leq 1$.
Therefore, the problem reduces to estimating the spectral norms of matrices
\begin{equation}\label{fpqiufn40r8083unf083ufn}
\prA^{(k_\ell)}_{ (I_{n/2}\setminus I_{k_{\ell+1}})\cup\{j\}, [k_\ell+1,n/2]} \big( \prA^{(k_\ell)}_{I_{k_{\ell+1}}\setminus I_{k_\ell}, [k_\ell+1,n/2]} \big)^\dagger,\;\;0\leq \ell<s.
\end{equation}

Sankar shows that as long as $n/2-k_\ell$ ($\ell=s,s-1,\dots,0$) grow as a geometric sequence (in which case $s$ should be of order logarithmic in $n$), the norm of each matrix can be bounded by a constant power of $n$ with a large probability. We only sketch this part of the argument.
Fix any $0\leq \ell<s$,
and define \(Z:= (A_{I_{k_\ell},[k_\ell+1,n/2]})^\dagger A_{I_{k_\ell},[k_\ell]}\), so that \(ZZ^\dagger = \Id\), and
\begin{align*}
&\prA^{(k_\ell)}_{ (I_{n/2}\setminus I_{k_{\ell+1}})\cup\{j\}, [k_\ell+1,n/2]} \big( \prA^{(k_\ell)}_{I_{k_{\ell+1}}\setminus I_{k_\ell}, [k_\ell+1,n/2]} \big)^\dagger\\
&\hspace{2cm}= \prA^{(k_\ell)}_{ (I_{n/2}\setminus I_{k_{\ell+1}})\cup\{j\}, [k_\ell+1,n/2]}\, Z\big( \prA^{(k_\ell)}_{I_{k_{\ell+1}}\setminus I_{k_\ell}, [k_\ell+1,n/2]} Z\big)^\dagger,
\end{align*}
where, in view of \eqref{kejfnpfiunf9un},
\begin{align*}
&\prA^{(k_\ell)}_{ (I_{n/2}\setminus I_{k_{\ell+1}})\cup\{j\}, [k_\ell+1,n/2]}\, Z\\
&\hspace{2cm}=  A_{(I_{n/2}\setminus I_{k_{\ell+1}})\cup\{j\}, [k_\ell+1,n/2]}\, (A_{I_{k_\ell},[k_\ell+1,n/2]})^\dagger\, A_{I_{k_\ell},[k_\ell]} - A_{(I_{n/2}\setminus I_{k_{\ell+1}})\cup\{j\}, [k_\ell]}\, Z^\dagger Z.
\end{align*}
Since \(Z^{\dagger}Z\) is a projection and has unit norm,
an upper bound on $\big\|(A_{I_{k_\ell},[k_\ell+1,n/2]})^\dagger\big\|$
would provide a bound on $\big\|
\prA^{(k_\ell)}_{ (I_{n/2}\setminus I_{k_{\ell+1}})\cup\{j\}, [k_\ell+1,n/2]} Z
\big\|$. The key observation here is that $A_{I_{k_\ell},[k_\ell+1,n/2]}$ is equidistributed with the standard $k_\ell\times (n/2-k_\ell)$ Gaussian matrix, so that a satisfactory estimate on the norm of the pseudoinverse follows.

Bounding the operator norm of $\big( \prA^{(k_\ell)}_{I_{k_{\ell+1}}\setminus I_{k_\ell}, [k_\ell+1,n/2]} Z\big)^\dagger$ is more involved. Note that, equivalently, it is sufficient to provide a good lower bound on the smallest singular value of the matrix
$$
\big(\prA^{(k_\ell)}_{I_{k_{\ell+1}}\setminus I_{k_\ell}, [k_\ell+1,n/2]} Z\big)^\top.
$$
We have
\begin{align}\label{falkfnofiuyfb038fu3p8fun}
s_{\min}\big(\big(\prA^{(k_\ell)}_{I_{k_{\ell+1}}\setminus I_{k_\ell}, [k_\ell+1,n/2]} Z\big)^\top\big) \ge 
\min_{J \subset [n] \backslash I_{k_\ell}, |J|=k_{\ell+1}-k_{\ell}} s_{\min} \big((\prA_{J,[k_\ell+1,n/2]}^{(k_\ell)}Z)^\top\big),
\end{align}
where, again in view of \eqref{kejfnpfiunf9un}, for each admissible \(J\), 
\begin{align}\label{piaungpoigunpiu3np}
 (\prA_{J,[k_\ell+1,n/2]}^{(k_\ell)}Z)^\top
 =  Z^\top (A_{J,[k_\ell+1,n/2]})^\top 
 - Z^{\dagger}Z (A_{J,[k_\ell]})^\top,
\end{align}
and where $Z^{\dagger}Z$ is a $k_\ell\times k_\ell$ orthogonal projection matrix of rank $n/2-k_\ell$.
Although $(A_{J,[k_\ell]})^\top$ is dependent on $Z$, it can be shown that $Z^{\dagger}Z (A_{J,[k_\ell]})^\top$ behaves ``almost'' like $Z^{\dagger}Z$ applied to an independent tall rectangular $k_\ell\times (k_{\ell+1}-k_\ell)$
Gaussian matrix (see \cite[Section~3.7]{Sankar}).
This allows to obtain probabilistic estimates on the smallest singular value of the matrix in \eqref{piaungpoigunpiu3np} which, under the assumption that the sequence $n/2-k_\ell$ ($\ell=s,s-1,\dots,0$) does not grow too fast, turn out to be strong enough to survive the union bound in \eqref{falkfnofiuyfb038fu3p8fun}.

\medskip

To summarize, the above argument gives a polynomial in $n$ estimate for 
matrices in \eqref{fpqiufn40r8083unf083ufn}, where $s$ is logarithmic in $n$.
Thus, \eqref{qpinfopfiu3nf08ufn08un} implies a bound $\|\prA^{(n/2)}_{j,[n/2+1,n]}\|_2=n^{O(\log n)}$, $j\in [n]\setminus I_{n/2}$,
with high probability. An extension of this argument to all $\prA^{(k)}$, $1\leq k\leq n-1$, yields ${\rm g}_{\rm GEPP}(A)=n^{O(\log n)}$.
As Sankar notes in \cite{Sankar}, a different choice of $s$ and of the sequence $k_0,k_1,\dots,k_s$, and a refined analysis for the operator norms of matrices \eqref{fpqiufn40r8083unf083ufn} may improve the upper estimate on the growth factor, but cannot achieve a polynomial bound.

\subsection{High-level structure of the proof of the main theorem}

Returning to relation \eqref{kejfnpfiunf9un}, a polynomial
bound on the growth factor ${\rm g}_{\rm GEPP}(A)$ will follow
as long as the norm $(A_{I_r,[r]})^{-1}$ is bounded by $n^{O(1)}$ for every $1\leq r\leq n-1$
with high probability.
We obtain this estimate via analysis of the {\it entire singular spectrum}
of $A_{I_r,[r]}$ rather than attempting to directly bound the smallest singular value of the matrix.

The strategy of the proof can be itemized as follows:
\begin{itemize}

\item Obtaining estimates on the singular values of {\it partially random} block matrices. 
More specifically, we consider matrices of the form 
\begin{equation}\label{eq: fepiufno8f03ybof83u}
B=
\begin{bmatrix}
  F & M \\
  W & Q
\end{bmatrix},
\end{equation}
where $F$ is a fixed square matrix with prescribed singular spectrum, and $M,W,Q$ are independent Gaussian random matrices of compatible dimensions. Our goal here is to derive lower bounds on the intermediate singular values of $B$
in terms of singular values of $F$.

\item Applying the estimates on the intermediate singular values of partially random block matrices in a recursive manner together with a union bound argument, derive lower bounds on the ``smallish'' singular values of matrices $A_{I_r,[r]}$. Our argument at this step only allows to bound first to $(r-C)$--th singular value of the matrix for some large constant $C$.

\item Use the bound on $s_{r-C}(A_{I_r,[r]})$ together with the information on the Euclidean distances
from $\row_{i_\ell}(A_{I_r,[r]})$ to $\spn\{\row_{i_j}(A_{I_r,[r]}),\;1\leq j<\ell\}$, $\ell=1,\dots,r$ that can be extracted from the partial pivoting rule, to obtain polynomial in $n$ lower bounds on $s_{\min}(A_{I_r,[r]})$.
\end{itemize}
Below, we discuss each component in more detail.

\medskip

{\bf Singular spectrum of partially random block matrices.}
The partially random block matrices are treated in Section~\ref{s: poienfpq49f3p9-} of the paper.
Consider a block matrix $B$ of type \eqref{eq: fepiufno8f03ybof83u}, where
$F$ is a fixed $r\times r$ matrix,
$M$ is $r\times x$, $W$ is $x\times r$, $Q$ is $x\times x$ (with $x\leq r$), and the entries of $M$, $W$, $Q$ are mutually independent standard Gaussian variables.
In view of rotational invariance of the Gaussian distribution, we can ``replace''
$F$ with a diagonal matrix $D$
with the same singular spectrum, and with its singular values on the main diagonal arranged in a non-decreasing order. We fix a small positive parameter $\tilde\varepsilon>0$ and an integer $i\geq 1$ such that $\tilde\varepsilon (1+\tilde\varepsilon)^{-i}r\geq 2$.
Our goal at this point is to estimate from below the singular value
$$
s_{\lfloor(1-(1+\tilde\varepsilon)^{-i-1})(r+x)\rfloor}(B).
$$
Having chosen a certain small threshold $\tau>0$ (which is defined as a function of $i,r,\tilde\varepsilon$, the singular spectrum of $D$, and some other parameters which we are not discussing here),
our estimation strategy splits into two cases depending on whether the number $\ell_{i+1}'$ of the singular values of $D$ less than $\tau$ is ``small'' or ``large''.
In the former case, the matrix $D$ has a well controlled singular spectrum, and our goal is to show that attaching to it $x$ rows and columns of standard Gaussians cannot deteriorate the singular values estimates. In the latter case, we show that by adding the Gaussian rows and columns we actually improve the control of the singular values, using that
the top left $\ell_{i+1}'\times \ell_{i+1}'$ corner of $B$ is essentially a zero matrix.
The main result of Section~\ref{s: poienfpq49f3p9-} --- Proposition~\ref{prop: sec4Main}
--- provides a probability estimate on the event that the ratio
$$
\frac{s_{\lfloor(1-(1+\tilde\varepsilon)^{-i-1})(r+x)\rfloor}(B)}{\tau}
$$
is small assuming certain additional relations between the parameters $x,r,\tilde\varepsilon$.

\medskip

{\bf A recursive argument to bound $s_{r-C}(A_{I_r,[r]})$.}
The treatment of the partially non-random block matrices
allows us to solve the principal problem with estimating the singular spectrum of $A_{I_r,[r]}$, namely, the complicated dependencies between $A$ and the index set $I_r$. As we mentioned before, simply bounding the $k$--th smallest singular value of $A_{I_r,[r]}$ by $\min\limits_{I\subset[n],\,|I|=r}s_{r-k}(A_{I_r,[r]})$ produces an unsatisfactory estimate for small $k$. On the other hand, in view of strong concentration of intermediate singular values, already for $k\gg\sqrt{n}\,{\rm polylog}(n)$ (see Proposition~\ref{p: small sing shifted}) this straightforward union bound argument does work.
In order to boost the union bound argument to smaller $k$, we avoid taking the union bound over all $I\subset[n],\,|I|=r$ and instead condition on a realization of $I_{r'}$ for certain $r'<r$, so that the union bound over all $I_{r'}\subset I\subset[n]$ of cardinality $r$ runs over only ${n-r'\choose r-r'}$ admissible subsets rather than ${n\choose r}$ subsets. The two main issues with this approach are
\begin{itemize}
\item first, we must have estimates for the singular spectrum of $A_{I_{r'},[r']}$ in order to apply the results of Section~\ref{s: poienfpq49f3p9-} to
obtain bounds for the singular values of $A_{I_r,[r]}$, and,
\item second, conditioning on a realization of $I_{r'}$ inevitably destroys Gaussianity and mutual independence of the entries of $A_{[n]\setminus I_{r'},[r']}$.
\end{itemize}

The first issue is resolved through the inductive argument, when estimates
on the spectrum of
$A_{I_{r'},[r']}$
obtained at the last induction step are used to control the singular spectrum of 
$A_{I_r,[r]}$ at the next step.
Of course, in this argument we must make sure that the total error accumulated throughout the induction process stays bounded by a constant power of $n$.

The second issue with probabilistic dependencies is resolved by observing that the partial pivoting ``cuts'' a not too large set of admissible values for the elements in $A_{[n]\setminus I_{r'},[r']}$ i.e we {\it can} continue treating them as independent Gaussians up to a manageable loss in the resulting probability estimate after conditioning on a certain event of not--too--small probability.
This problem is formally treated by studying the random polytopes $K_{r'}(A)\subset\R^n$
defined in Section~\ref{fepijnfeofiniqwjnpij} as
$$
K_{r'}(A):= \big\{ x \in \R^n \, :\, \forall s \in [r'],\, | 
		\langle v_s(A) ,\, x \rangle |  \le | \langle v_s(A),\, (A_{i_s,[n]})^\top \rangle|
		\big\},
$$
where
$$
v_s(A) := \big( ((A_{I_{s-1},[s-1]})^{-1} A_{I_{s-1},s})^\top ,\, 1 ,\, 
		\underbrace{0, \dots, 0}_{ n-s \mbox{ components } }
	\big)^\top,\quad s=1,2,\dots,r'.
$$
By the nature of the partial pivoting process,
any row of the submatrix $A_{[n]\setminus I_r',[n]}$ necessarily lies within the polytope $K_{r'}(A)$, and its distribution is a restriction of the standard Gaussian measure in $\R^n$ to $K_{r'}(A)$ (see Section~\ref{fepijnfeofiniqwjnpij}
for a rigorous description). After showing that the Gaussian measure of $K_{r'}(A)$
is typically ``not very small'', we can work with the rows
of $A_{[n]\setminus I_r',[n]}$ as if they were standard Gaussian vectors,
up to conditioning on an event of a not very small probability.
We remark here that Sankar's work \cite{Sankar}
uses random polytopes related to our construction.

\medskip

{\bf Estimating the smallest singular value of $A_{I_r,[r]}$.}
To simplify the discussion, we will only describe the idea of showing that
with a ``sufficiently high'' probability,
$\big(s_{\min}(A_{I_r,[r]})\big)^{-1}=n^{O(1)}$,
without considering computation of the moments of $\big(s_{\min}(A_{I_r,[r]})\big)^{-1}$.
As a corollary of the lower bound on $s_{r'-C}(A_{I_{r'},[r']})$ obtained
via the recursive argument,
we get that with high probability, the inverse of the smallest singular value
of the rectangular matrix $A_{I_{r'},[r'+2\tilde C]}$ satisfies
$\big(s_{\min}(A_{I_{r'},[r'+2\tilde C]})\big)^{-1}=n^{O(1)}$, $r'\in[\tilde C+1,
n-2\tilde C]$, for some integer constant $\tilde C>0$
(see Corollary~\ref{cor: sminTall}).
This corollary is a quantitative version of 
a rather general observation that, by adding
at least $\ell+1$ independent Gaussian rows to a fixed square matrix with at most
$\ell$ zero singular values, we get a rectangular matrix with a strictly
positive $s_{\min}$
almost surely.

Once a satisfactory bound on $s_{\min}(A_{I_{r'},[r'+2\tilde C]})$, $r'\in[\tilde C+1,
n-2\tilde C]$, is obtained,
we rely on the simple deterministic relation between the smallest singular
value and distances to rowspaces:
for every $m\times k$ matrix $Q$, 
$$
\min_{i\in[m]} \dist(H_i(Q),Q_{i,[m]}) \ge  s_{\min}(Q^\top) \ge
m^{-1/2} \min_{i\in[m]} \dist(H_i(Q),Q_{i,[m]})
$$
where $H_i(Q)$ denotes the subspace spanned by row vectors $Q_{j,[m]}$ for $j\neq i$.
In our context, a strong probabilistic lower bound on
$\dist( \spn\{A_{i_t,[r]},\,1\leq t<s\}, A_{i_s,[r]})$ ($s\leq r$)
guaranteed by the partial pivoting strategy, implies 
that with high probability for every $t\in [r-2\tilde C]$,
%Once this has been established, we will relying on the following relation of least singular value and distance to subspaces: For a $m\times k$ matrix $B$, 
%$$
%\min_{i\in[m]} {\rm dist}(H_i,B_{i,[m]}) \ge  s_m(B^\top) \ge
%m^{-1/2} \min_{i\in[m]} {\rm dist}(H_i,B_{i,[m]})
%$$
%where $H_i$ is the subspace spanned by row vectors $B_{j,[m]}$ for $j\neq i$. 
%Indeed, the pivoting strategy at the $r$th step is equivalent to pick the row $A_{i_r,[r]}$ %whose the distance to the subspace spanned by $A_{i_s,[r]}$ for $s \in [r-1]$ is maximized. To see that, following from \eqref{eq: ArinA},
%\begin{align*}
%  i_{r+1} =
%{\rm argmax}_{i\in [n]\backslash I_r} |A^{(r)}_{i,r}|=
%  {\rm argmax}_{i \in [n]\backslash I_r}	
%	 \big|
%	 		A_{i,r+1} - A_{i,[r]} (A_{I_r,[r]})^{-1} A_{I_r,r+1} 
%	 \big|.
%\end{align*}
%In short, taking the row which achiving the maximum in the distance allow us to
%bound this distance below with sufficient high probability. Relying on this and derive iteratively one can establish the bound: For $t \in [r-k]$, 
$$
\dist\big(\spn\{A_{i_s,[r]},\,s\in[r-2\tilde C]\setminus\{t\}\}, A_{i_t,[r]}\big)
\leq n^{O(1)}\dist\big(\spn\{A_{i_s,[r]},\,s\in[r]\setminus\{t\}\}, A_{i_t,[r]}\big)
$$
(see proof of Proposition~\ref{smin prop efwf}),
and via the above deterministic relation to the singular values,
$$
s_{\min}(A_{I_{r-2\tilde C},[r]})\leq n^{O(1)}\dist\big(\spn\{A_{i_s,[r]},\,s\in[r]\setminus\{t\}\}, A_{i_t,[r]}\big).
$$
This, combined with some auxiliary arguments, implies the lower bound on $s_{\min}(A_{I_r,[r]})$.

\section{Intermediate singular values of partially random block matrices}\label{s: poienfpq49f3p9-}

We start with a preparatory material to deal with norms and
intermediate singular values of random matrices.
We first consider a standard deviation estimates for the Hilbert--Schmidt norm of a Gaussian random matrix; see, for example, \cite{BLM}:
\begin{theorem}\label{stdev HS}
Let $G$ be an $u\times t$ random matrix with i.i.d standard Gaussian entries.
Then
$$
\Prob\big\{\|G\|_{HS}\geq \sqrt{ut}+s\big\}\leq 2\exp(-cs^2),\quad s>0,
$$
where $c>0$ is a universal constant.
\end{theorem}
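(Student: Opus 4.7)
The plan is to derive this as a routine consequence of the Gaussian concentration inequality for Lipschitz functions, combined with a one-line upper bound on the mean. I would first identify the $u\times t$ matrix $G$ with a standard Gaussian vector in $\R^{ut}$ by flattening its entries. Under this identification the Hilbert--Schmidt norm $\|G\|_{HS} = \big(\sum_{i,j} G_{ij}^2\big)^{1/2}$ coincides with the Euclidean norm on $\R^{ut}$, and hence the map $G \mapsto \|G\|_{HS}$ is $1$--Lipschitz by the reverse triangle inequality $\big|\|G\|_{HS} - \|G'\|_{HS}\big| \leq \|G-G'\|_{HS}$.

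Next, I would invoke the standard Gaussian Lipschitz concentration theorem (a consequence of Borell's isoperimetric inequality; see, e.g., \cite{BLM}): for any $1$--Lipschitz $f:\R^N\to\R$ and a standard Gaussian vector $g$ in $\R^N$,
$$
\Prob\big\{f(g)\geq \Exp f(g) + s\big\}\leq \exp(-s^2/2),\quad s>0.
$$
Applied to $f=\|\cdot\|_{HS}$ on $\R^{u\times t}\cong\R^{ut}$, this gives the centered tail bound $\Prob\{\|G\|_{HS}\geq \Exp\|G\|_{HS}+s\}\leq \exp(-s^2/2)$.

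Finally, I would bound the mean by Jensen's inequality: $\Exp\|G\|_{HS}\leq \big(\Exp\|G\|_{HS}^2\big)^{1/2}=\sqrt{ut}$, since $\|G\|_{HS}^2$ is a sum of $ut$ i.i.d.\ $\chi^2_1$ variables, each of mean $1$. Composing the concentration estimate with this mean bound yields
$$
\Prob\big\{\|G\|_{HS}\geq \sqrt{ut}+s\big\}\leq \Prob\big\{\|G\|_{HS}\geq \Exp\|G\|_{HS}+s\big\}\leq \exp(-s^2/2),
$$
which is the claim with, say, $c=1/2$ (the leading factor of $2$ in the theorem statement is harmless slack).

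There is no real obstacle here --- this is a textbook inequality for Gaussian matrices, and each of the three ingredients (the $1$-Lipschitz property, Gaussian concentration about the mean, and the Jensen bound on $\Exp\|G\|_{HS}$) is immediate. The authors almost certainly just cite the result from \cite{BLM} rather than prove it from scratch.
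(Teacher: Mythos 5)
Your proof is correct and is exactly the standard argument behind the cited result: the paper itself offers no proof, simply invoking \cite{BLM}, and the intended justification is precisely the $1$--Lipschitz property of $\|\cdot\|_{HS}$ on $\R^{ut}$, Gaussian concentration about the mean, and the Jensen bound $\Exp\|G\|_{HS}\leq\sqrt{ut}$. Nothing further is needed.
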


The next proposition was proved in the special case of {\it square} random Gaussian
matrices by Szarek in \cite{Szarek}. In a much more general setting, similar results were obtained earlier by Nguyen \cite{Nguyen}; his argument was later reused in \cite{LTT} to get sharp small ball probability estimates for the condition number of a random square matrix.
\begin{prop}[Singular values of random matrices with continuous distributions]\label{p: small sing shifted}
Let $M$ be an $u\times t$ ($t\geq u$) 
%random standard Gaussian matrix. 
random matrix with i.i.d. standard Gaussian entries.
Then
$$
\Prob\bigg\{s_{u-i}(M)\leq \frac{c'i\,s}{\sqrt{u}}\bigg\}\leq u^{i/2}\,s^{i^2/32},\quad 4\leq i\leq u-1,\quad s\in(0,1],
$$
where $c'\in(0,1]$ is a universal constant.
\end{prop}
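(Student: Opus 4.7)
The plan is to reduce the rectangular case to Szarek's small-ball estimate for square Gaussian matrices via a simple monotonicity observation. Let $M'$ denote the $u\times u$ submatrix formed by the first $u$ columns of $M$; by the Courant--Fischer characterization of singular values, dropping columns can only decrease them, so $s_k(M')\leq s_k(M)$ for every $k\in[u]$. Consequently,
$$\Prob\{s_{u-i}(M)\leq c'is/\sqrt{u}\}\leq \Prob\{s_{u-i}(M')\leq c'is/\sqrt{u}\},$$
and the entries of $M'$ are i.i.d.\ standard Gaussian. The desired bound $u^{i/2}s^{i^2/32}$ then follows from Szarek's theorem in \cite{Szarek} applied to the square matrix $M'$ in the prescribed range $4\leq i\leq u-1$, $s\in(0,1]$. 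Alternatively, Nguyen's treatment in \cite{Nguyen} handles rectangular Gaussian matrices directly with the same exponent.

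At a higher level, the common strategy behind the cited results rests on the equivalent reformulation: $s_{u-i}(M)\leq \alpha$ holds iff there is an orthonormal system $y_1,\dots,y_{i+1}\in\R^u$ with $\|y_j^\top M\|_2\leq \alpha$ for every $j$. For a fixed orthonormal frame $Y=(y_1,\dots,y_{i+1})$, the matrix $Y^\top M$ has i.i.d.\ standard Gaussian entries by rotational invariance of the Gaussian measure, so $\|Y^\top M\|_{HS}^2$ is a chi-squared random variable of known degree; standard small-ball estimates on chi-squared variables give a quantitative pointwise bound. A union bound over a suitable $\delta$-net on the Grassmannian of $(i+1)$-dimensional subspaces of $\R^u$ then converts this pointwise bound into a uniform one.

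The genuinely delicate component, already carried out in the cited works, is the balancing between the cardinality of the Grassmannian $\delta$-net, which grows like $(C/\delta)^{(i+1)(u-i-1)}$, and the individual small-ball probability, which decays like a power of $\alpha^2/u$. The specific exponent $i^2/32$ in the conclusion is produced by a careful optimization over $\delta$, and is really the only nontrivial aspect of the argument. Since the statement is used as a black box in what follows, the cleanest strategy is simply to import it verbatim from \cite{Szarek} (after the trivial column-restriction reduction above), rather than reproducing the net argument.
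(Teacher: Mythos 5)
The column-restriction reduction you begin with matches the paper's first move (the appendix cites interlacing of singular values to pass from $u\times t$ to a $u\times u$ submatrix), so that part is fine. But the rest of the proposal diverges from what the paper actually does, and also misdescribes the cited references.

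The paper gives a self-contained proof in the appendix following Nguyen's \emph{restricted invertibility} argument, not a Grassmannian net argument. Concretely: it starts from the right singular vectors $Z^\top$ associated to the $i$ smallest singular values, applies the Spielman--Srivastava restricted invertibility theorem to find a subset $J$ of $\lfloor i/2\rfloor$ rows of $Z$ with a good smallest singular value, and then observes that $s_{u-i}(M)$ controls $\sum_{j\in J}\dist^2(\col_j(M),H_J)$ where $H_J$ is the span of the remaining columns. The small-ball probability for each such distance is then bounded via the Rudelson--Vershynin anti-concentration estimate for projections of independent bounded-density coordinates, and a union bound over $J$ and over ``many-of-them-small'' events finishes the proof. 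Your supplementary paragraph attributing the ``delicate component'' to an $\varepsilon$-net on the Grassmannian is not a description of Szarek's argument (which proceeds via the explicit joint density of singular values of the Wishart ensemble), nor of Nguyen's, nor of the appendix proof; it is a generic small-ball heuristic that doesn't line up with any of the three.

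The more substantive gap is the ``import it verbatim from \cite{Szarek}'' step. The proposition has a very specific quantitative form --- the prefactor $u^{i/2}$, the exponent $i^2/32$, the constant $c'$, and the ranges $4\le i\le u-1$, $s\in(0,1]$ --- and Szarek's square-matrix statement is not phrased this way; one must re-derive or at least carefully translate his bound to confirm it implies this one. The paper's authors evidently felt this was not immediate, which is presumably why they re-prove the proposition from scratch (with the added benefit that their argument extends to i.i.d.\ entries with bounded density, not just Gaussians). If you want to go the citation route, you need to actually exhibit the translation from Szarek's inequality to the one stated, including the constant in the exponent; asserting that it ``follows'' without checking is the step that would fail peer review.
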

We provide a proof of the above proposition in the Appendix.

\bigskip

This section deals with a large number of parameters satisfying multiple constraints;
we group those constraints into blocks for better readability.
We have four ``section-wide'' scalar parameters:
\begin{equation}\label{repsilonxassump}
r\in\N,\;\;\tilde\varepsilon\in(0,1],\;\;i\in\N,\;\;\mbox{ such that }
\tilde\varepsilon (1+\tilde\varepsilon)^{-i}r\geq 2;\;\;\;x\in\N.
\end{equation}
The objective of the section is to study singular values of a block matrix of the form
$$
B=
\begin{bmatrix}
  F & M \\
  W & Q
\end{bmatrix},
$$
where $F$ is a fixed $r\times r$ matrix with prescribed singular values, $M$ is $r\times x$, $W$ is $x\times r$, $Q$ is $x\times x$, and the entries of $M$, $W$, $Q$ are mutually independent standard Gaussians. 
Let   
\begin{align*}
    \varrho_j 
: =  
    \lfloor 
        (1-(1+\tilde \varepsilon)^{-j}) r
    \rfloor, \quad j=0,1,\dots,i.
\end{align*}
Observe that the relation $\tilde\varepsilon (1+\tilde\varepsilon)^{-i}r\geq 2$ from \eqref{repsilonxassump} yields, for $j \in [0,i-1]$,
\begin{align}
\label{eq: deltar}
    \varrho_{j+1}-\varrho_j
>&
    (1-(1+\tilde \varepsilon)^{-j-1}) r -1
-
    (1-(1+\tilde \varepsilon)^{-j}) r 
=
    \tilde\varepsilon(1 + \tilde \varepsilon)^{-j-1}r -1 
> 
    0,
\end{align}
which in turn implies that the sequence
$(\varrho_j)_{j=0}^i$ is strictly increasing. 

% Observe that the relation $\tilde\varepsilon (1+\tilde\varepsilon)^{-i}r\geq 2$ implies that the sequence $(r_j)_{j \in [i]}$ is strictly increasing. 
Next, let $g(\cdot):(r_j)_{j \in [i]}\to(0,\infty)$ be a strictly positive {\it growth function} satisfying  
\begin{equation}\label{p4t8yfip-98y-8}
\begin{split}
& g(\varrho_j)
\geq 16\,g(\varrho_{j+1}), \quad j=1,2,\dots,i-1;\\
& g(\varrho_i) \leq \sqrt{x}.
\end{split}
\end{equation}
Now, we assume the matrix $F$ satisfies 
\begin{equation}\label{p498h3-98y30t8743yt83}
s_{\varrho_j}(F)\geq g\big(\varrho_j\big),
\quad j \in [i]. 
\end{equation}
In this section, we deal with an arbitrary growth function satisfying the conditions; a specific choice of $g(\cdot)$ will be made later in Section~\ref{s: fpenfqpwiufnpin}.

\medskip

% For the matrix $F$, we will assume that its singular values satisfy the relations
% \begin{equation}\label{p498h3-98y30t8743yt83}
% s_{\lfloor(1-(1+\tilde\varepsilon)^{-j})r\rfloor}(F)\geq g\big(\lfloor(1-(1+\tilde\varepsilon)^{-j})r\rfloor\big),
% \quad j=1,2,\dots,i,
% \end{equation}
% where $g(\cdot)$ is a strictly positive {\it growth function} satisfying
% \begin{equation}\label{p4t8yfip-98y-8}
% \begin{split}
% & g\big(\lfloor(1-(1+\tilde\varepsilon)^{-j})r\rfloor\big)
% \geq 16\,g\big(\lfloor(1-(1+\tilde\varepsilon)^{-j-1})r\rfloor\big),\quad j=1,2,\dots,i-1;\\
% & g\big(\lfloor(1-(1+\tilde\varepsilon)^{-i})r\rfloor\big)\leq \sqrt{x}.
% \end{split}
% \end{equation}
% Observe that the relation $\tilde\varepsilon (1+\tilde\varepsilon)^{-i}r\geq 2$ implies that the sequence 
% $$
% \big(\lfloor(1-(1+\tilde\varepsilon)^{-j})r\rfloor\big)_{j=1}^i
% $$
% is strictly increasing, so the condition \eqref{p4t8yfip-98y-8} on $g(\cdot)$ is well posed. In this section, we deal with an arbitrary growth function satisfying the conditions; a specific choice of $g(\cdot)$ will be made later in Section~\ref{s: fpenfqpwiufnpin}.

%\HH{I move the Proposition \ref{prop: sec4Main} from the end the section to here.}

Our objective in this section is to derive the following proposition.
\begin{prop} \label{prop: sec4Main}
There are universal constants $c\in(0,1]$, $\tilde C\geq 1$ with the following property.
Let
$$
B=
\begin{bmatrix}
  F & M \\
  W & Q
\end{bmatrix},
$$
where $F$ is a fixed $r\times r$ matrix, $M$ is $r\times x$,
$W$ is $x\times r$, $Q$ is $x\times x$, and the entries of $M$, $W$, $Q$ are mutually independent standard Gaussians.
Assume that parameters $\tilde\varepsilon\in (0,1]$, $h\in(0,1]$, $r$, $x$, and $i\in\N$ satisfy
\begin{align*}
&r-%\lfloor (1-(1+\tilde\varepsilon)^{-i})r\rfloor
\varrho_i\leq x\leq r,\;\;
\tilde \varepsilon x\geq 4, \;\;h\leq 2^{-11}(c')^2\tilde\varepsilon,\\
&3(1+\tilde\varepsilon)^{-i-1}r-(1+\tilde\varepsilon)^{-i}r\geq x+1+11\tilde \varepsilon x,\;\;
\tilde\varepsilon(1+\tilde\varepsilon)^{-i}r\geq 2,
\end{align*}
where $c' \in (0,1]$ is the constant from Proposition~\ref{p: small sing shifted}.
Further, assume \eqref{p498h3-98y30t8743yt83} for the singular values of $F$, for a positive function $g(\cdot)$
satisfying \eqref{p4t8yfip-98y-8}.
Then with probability at least
$$
1-2x^{\tilde\varepsilon x/2}\,
h^{(\tilde\varepsilon x)^2/64}
-4\exp\big(-cx^2\,\tilde\varepsilon/h^2\big)
-\tilde C\exp\big(-c\tilde\varepsilon^2 (1+\tilde\varepsilon)^{-i}r x/h^2\big)
$$
we have
$$
s_{\lfloor(1-(1+\tilde\varepsilon)^{-i-1})(r+x)\rfloor}(B)\geq \frac{c' \tilde\varepsilon h^5\,g\big(\varrho_i 
%\lfloor(1-(1+\tilde\varepsilon)^{-i})r \rfloor
\big)}{32}.
$$
\end{prop}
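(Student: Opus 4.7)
The proposition appears to be a packaging of the two lemmas that immediately precede it, so my proof plan is to combine them into a single statement through a case split on the size of $\ell_{i+1}'$, and then simplify the resulting constraints on parameters.

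The plan is to split according to the dichotomy
$$
\ell_{i+1}' \leq (1+\tilde\varepsilon)^{-i-1}(r+x) - 3\tilde\varepsilon x
\quad\text{vs.}\quad
\ell_{i+1}' > (1+\tilde\varepsilon)^{-i-1}(r+x) - 3\tilde\varepsilon x,
$$
which is exactly the threshold appearing in the two preceding lemmas. In the first (``small'' $\ell_{i+1}'$) case, the fixed diagonal block $D$ already has a well-behaved singular spectrum, and the first lemma applies to give
$$
s_{\lfloor(1-(1+\tilde\varepsilon)^{-i-1})(r+x)\rfloor}(B)\geq \beta=\frac{c'\tilde\varepsilon h\tau}{32}=\frac{c'\tilde\varepsilon h^5\,g\big(\lfloor(1-(1+\tilde\varepsilon)^{-i})r\rfloor\big)}{32}.
$$
In the second (``large'' $\ell_{i+1}'$) case, the second lemma applies and yields the bound $\tau=h^4 g(\lfloor(1-(1+\tilde\varepsilon)^{-i})r\rfloor)$, which under the assumption $h\leq 2^{-11}(c')^2\tilde\varepsilon$ is greater than $\beta$. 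So in both cases the weaker bound $\beta$ holds, which is precisely the quantity stated in the proposition.

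The next step is to check that the hypotheses of the proposition imply the hypotheses of both lemmas (except for the dichotomy on $\ell_{i+1}'$, which is automatically decided). The constraint $h\leq 2^{-11}(c')^2 \tilde\varepsilon$ is stronger than $\tilde\varepsilon\geq 4h$, and the constraints $\tilde\varepsilon x\geq 4$, $\ell_{i+1}\leq x\leq r$, and $3(1+\tilde\varepsilon)^{-i-1}r-(1+\tilde\varepsilon)^{-i}r\geq x+1+11\tilde\varepsilon x$ are already stated in both forms; the assumption $\tilde\varepsilon(1+\tilde\varepsilon)^{-i}r\geq 2$ is the section-wide assumption that makes $\ell_j$ well-defined. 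Thus both lemmas are applicable in their respective cases with no extra work.

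Finally, for the probability bound, I would take the pointwise maximum of the two failure probabilities from the two lemmas. The first lemma's failure probability is
$$
2x^{\tilde\varepsilon x/2} h^{(\tilde\varepsilon x)^2/64}+4\exp(-cx^2\tilde\varepsilon/h^2)+\tilde C\exp(-c\tilde\varepsilon^2(1+\tilde\varepsilon)^{-i}rx/h^2),
$$
while the second lemma's bound has the same first term, the same third term (with $\tilde C$), and $2\exp(-cx^2\tilde\varepsilon/h^2)$ in place of $4\exp(-cx^2\tilde\varepsilon/h^2)$. The envelope
$$
2x^{\tilde\varepsilon x/2} h^{(\tilde\varepsilon x)^2/64}+4\exp(-cx^2\tilde\varepsilon/h^2)+\tilde C\exp(-c\tilde\varepsilon^2(1+\tilde\varepsilon)^{-i}rx/h^2)
$$
dominates both and is what the proposition asserts. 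I expect no serious obstacle here --- the ``real'' work is in the two preceding lemmas, where the subspace $H$ of dimension at least $(1-(1+\tilde\varepsilon)^{-i-1})(r+x)$ is constructed so that $\|B'v\|_2\geq\beta$ (resp.\ $\tau$) for every unit $v\in H$; the present proposition is a bookkeeping step. The only subtlety is to verify the inequality $\beta\leq\tau$, which reduces to $c'\tilde\varepsilon h/32\leq 1$ and is immediate from $h,\tilde\varepsilon\in(0,1]$ and $c'\leq 1$.
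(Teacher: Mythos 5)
Your proposal is correct and matches the paper's intent exactly: the paper offers no separate proof of Proposition~\ref{prop: sec4Main}, stating only that it "combines the above two statements," and your case split on $\ell_{i+1}'$, the observation that $\beta=c'\tilde\varepsilon h\tau/32\leq\tau$ so the weaker bound holds in both cases, and the envelope of the two failure probabilities are precisely the required bookkeeping.
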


%For further discussion, we will need some additional notation. 
%Define %We further define numbers
%$$
%\Delta_j:= \varrho_j - \varrho_{j-1},\, j \in [i] , \quad \Delta_{i+1}:= r - \varrho_i,
%$$
%to partition the singular spectrum of $F$ into blocks according to constraints %\eqref{p498h3-98y30t8743yt83};
%specifically, we have
%$$
%s_{\sum_{q=1}^j \Delta_j}(F)\geq g\big(\varrho_j%\lfloor(1-(1+\tilde\varepsilon)^{-%j})r\rfloor
%\big),
%\quad j=1,2,\dots,i.
%$$
%For every $j=1,2,\dots,i$, \hh{following from the same derivation as shown in \eqref{eq: %deltar}, }
%\begin{equation}\label{-9851-9uhfpy}
%\Delta_j\in [\tilde\varepsilon (1+\tilde\varepsilon)^{-j}r-1,\tilde\varepsilon %(1+\tilde\varepsilon)^{-j}r+1].
%\end{equation}

% We further define numbers
% $$
% \ell_j:=\big\lfloor(1-(1+\tilde\varepsilon)^{-j})r\big\rfloor-\big\lfloor(1-(1+\tilde\varepsilon)^{-j+1})r\big\rfloor,
% \; 1\leq j\leq i,\;\; \ell_{i+1}:=r-\sum_{j=1}^i\ell_j=r-\big\lfloor(1-(1+\tilde\varepsilon)^{-i})r\big\rfloor,
% $$
% to partition the singular spectrum of $F$ into blocks according to constraints \eqref{p498h3-98y30t8743yt83};
% specifically, we have
% $$
% s_{\sum_{q=1}^j \ell_j}(F)\geq g\big(\lfloor(1-(1+\tilde\varepsilon)^{-j})r\rfloor\big),
% \quad j=1,2,\dots,i.
% $$
% For every $j=1,2,\dots,i$,
% \begin{equation}\label{-9851-9uhfpy}
% \ell_j\in [\tilde\varepsilon (1+\tilde\varepsilon)^{-j}r-1,\tilde\varepsilon (1+\tilde\varepsilon)^{-j}r+1].
% \end{equation}

Note that if $F=UDV$ is a singular values decomposition of $F$ then, in view of
rotational invariance of the Gaussian distribution,
$$
B=
\begin{bmatrix}
  UDV & M \\
  W & Q
\end{bmatrix}=
\begin{bmatrix}
  U & 0 \\
  0 & \Id_x
\end{bmatrix}
\begin{bmatrix}
  D & U^{-1}M \\
  WV^{-1} & Q
\end{bmatrix}
\begin{bmatrix}
  V & 0 \\
  0 & \Id_x
\end{bmatrix},
$$
where $U^{-1}M$, $WV^{-1}$, and $Q$ have mutually independent
standard Gaussian entries,
and where the singular spectrum of $B$
coincides with that of
$$B':=
\begin{bmatrix}
  D & U^{-1}M \\
  WV^{-1} & Q
\end{bmatrix}.
$$
We can assume without loss of generality that the
diagonal elements (the singular values) of $D$ are arranged in the non-decreasing order when moving from top left to bottom right corner. We will work with the singular spectrum of $B'$ as it will allow to somewhat
simplify the computations.

\medskip

The specific goal is to estimate from below the singular value
$$
s_{\lfloor(1-(1+\tilde\varepsilon)^{-i-1})(r+x)\rfloor}(B)
=s_{\lfloor(1-(1+\tilde\varepsilon)^{-i-1})(r+x)\rfloor}(B')
$$
in terms of $g(\varrho_i%\lfloor(1-(1+\tilde\varepsilon)^{-i})r\rfloor
)$. To have a better control on probability estimates, we introduce one more scalar parameter $h\in(0,1]$ which will allow us to balance the precision of the estimate and the probability with which the estimate holds (the smaller $h$ is, the less precise the estimate is and the stronger are probability bounds).
We set
\begin{equation}\label{taudefapfj}
    \tau:= h^4\cdot g(\varrho_i%\lfloor(1-(1+\tilde\varepsilon)^{-i})r\rfloor
),
\end{equation}
and let
\begin{equation}\label{elliplusoneafd}
\mbox{$\ell_{i+1}'=$\,the number of singular values of $F$ strictly less than $\tau$};\quad
\ell_{i+1}'':=r - \varrho_i-\ell_{i+1}'.
\end{equation}
%and set
%$$
%\ell_{i+1}':=\max\big\{q\geq 0:\,s_{r-q+1}(F)<\tau\big\},\quad
%\ell_{i+1}'':=r - \varrho_i-\ell_{i+1}'.
%$$
Let us remark that $\ell_{i+1}' \leq r - \varrho_i$
since, by the intermediate singular values assumption \eqref{p498h3-98y30t8743yt83} on $F$, we have
$s_{\varrho_i}(F)\geq\tau$.
%where
%$$
%\Delta_j:= \varrho_j - \varrho_{j-1},\, j \in [i] , \quad \Delta_{i+1}:= r - \varrho_i.
%$$

Set
\begin{equation}\label{Idefakjgns}
I:=[r]\setminus[\ell_{i+1}'].
\end{equation}
Our argument to control $s_{\lfloor(1-(1+\tilde\varepsilon)^{-i-1})(r+x)\rfloor}(B)$ splits into two parts depending on whether $\ell_{i+1}'$ is ``small'' or ``large''. In the former case (see Lemma \ref{lem: sec4lem1}), the matrix $F$ (or $D$) has a well controlled singular spectrum, and our goal is to show that attaching to it $x$ rows and columns of standard Gaussians cannot deteriorate the singular values estimates. In this setting, we completely ignore the first $\ell_{i+1}'$ rows of $B'$, and work with the matrix $B'_{I\times [r+x]}$. 
In the latter case (see Lemma \ref{lem: sec4lem2}), we show that by adding the Gaussian rows and columns we actually improve the control of the singular values. The fact that the top right $\ell_{i+1}'\times x$ corner of $B'$ is a standard Gaussian matrix, plays a crucial role in this setting. The proof of Proposition \ref{prop: sec4Main} follows from Lemma \ref{lem: sec4lem1} and Lemma \ref{lem: sec4lem2}.

\medskip

The high-level proof strategy for both Lemmas~\ref{lem: sec4lem1}
and~\ref{lem: sec4lem2} is similar. We construct a (random) subspace $H$ of $\R^{r+x}$ of dimension at least $(1-(1+\tilde\varepsilon)^{-i-1})(r+x)$, designed in such a way that, under appropriate assumptions on the singular spectra of certain submatrices of $U^{-1}M$, $WV^{-1}$, and $Q$, $\|B'v\|_2$ is large for every unit vector $v\in H$. By the minimax formula for singular values,
$$
s_{\lfloor(1-(1+\tilde\varepsilon)^{-i-1})(r+x)\rfloor}(B')\geq \inf\limits_{v\in H,\,\|v\|_2=1}\|B'v\|_2.
$$
The ``appropriate assumptions'' on the singular spectra are encapsulated in a {\it good} event $\Event_{good}$
which, as we show, has a very large probability.
In what follows, it will be convenient to use notation
\begin{equation}\label{elljdefahg}
\ell_j:= \varrho_j - \varrho_{j-1},\, j \in [i] , \quad \ell_{i+1}:= r - \varrho_i.
\end{equation}
We remark that for every $j=1,2,\dots,i$, by the same derivation as shown in \eqref{eq: deltar}, 
\begin{equation}\label{-9851-9uhfpy}
\ell_j\in [\tilde\varepsilon (1+\tilde\varepsilon)^{-j}r-1,\tilde\varepsilon (1+\tilde\varepsilon)^{-j}r+1].
\end{equation}

\begin{lemma} 
\label{lem: sec4lem1}
There exist universal constants $c\in(0,1]$, $\tilde C\geq 1$ with the following property.
Assume that $i \in \mathbb{N}$, $\tilde\varepsilon\in (0,1]$, $h\in(0,1]$, $r$, and $x$ satisfy
the assumptions of Proposition~\ref{prop: sec4Main},
and assume additionally that
%\begin{align*}
%\ell_{i+1}\leq x\leq r,\;\;
%\tilde \varepsilon x\geq 4, \;\;\tilde\varepsilon\geq 4h,
%%8\tilde\varepsilon x+2\leq (1-\tilde\varepsilon)(1+\tilde\varepsilon)^{-i-1}r,
%\end{align*}
%and that
$$
\ell_{i+1}'\leq (1+\tilde\varepsilon)^{-i-1}(r+x)-3\tilde\varepsilon x,
$$
where $\ell_{i+1}'$ is defined in \eqref{elliplusoneafd}.
Denote
$$
\beta:=%\min\bigg(
\frac{c' \tilde\varepsilon h\tau}{32},
%\frac{\sqrt{\tilde\varepsilon} h\cdot g\big(\lfloor(1-(1+\tilde\varepsilon)^{-i})r\rfloor\big)}{512}\bigg),
$$
where $c' \in (0,1]$ is the constant from Proposition~\ref{p: small sing shifted}
and where $\tau$ is defined by \eqref{taudefapfj}.
Then with probability at least
$$
1-2x^{\tilde\varepsilon x/2}\,
h^{(\tilde\varepsilon x)^2/64}
-4\exp\big(-cx^2\,\tilde\varepsilon/h^2\big)
-\tilde C\exp\big(-c\tilde\varepsilon^2 (1+\tilde\varepsilon)^{-i}r x/h^2\big)
$$
we have
$$
s_{\lfloor(1-(1+\tilde\varepsilon)^{-i-1})(r+x)\rfloor}(B)\geq \beta.
$$
\end{lemma}
\begin{proof}{\bf{}Construction of subspace $H$.}
Denote by $X_1,X_2,\dots,X_{r+x}\in \R^{r+x}$ an orthonormal basis of the right singular vectors
of the matrix $B_{I\times [r+x]}'$, {\it measurable w.r.t the $\sigma$--field $\sigma(B_{I\times [r+x]}')$},
where $X_j$ corresponds to $s_j(B_{I\times [r+x]}')$, $1\leq j\leq r+x$,
and where $I$ is defined by \eqref{Idefakjgns}.
Note that by interlacing properties of the singular values (see, for example \cite{Djalil}), we have
$$
s_j(B_{I\times [r+x]}')\geq s_{j}(D_{\{\ell_{i+1}'+1,\dots,r\}\times\{\ell_{i+1}'+1,\dots,r\}}),\quad 1\leq j\leq r-\ell_{i+1}';
$$
in particular, $s_{r-\ell_{i+1}'}(B_{I\times [r+x]}')\geq \tau$ everywhere on the probability space.

\medskip

Observe that, conditioned on $\sigma(B_{[r]\times [r+x]}')$, the $x\times \ell_{i+1}''$ matrix 
$$
Y^{(i+1)}:=
\begin{bmatrix}
  WV^{-1} & Q
\end{bmatrix}\,
\begin{bmatrix}
   X_{r+1-\ell_{i+1}} & \dots & X_{r-\ell_{i+1}'}
\end{bmatrix}
=
\begin{bmatrix}
  WV^{-1} & Q
\end{bmatrix}\,
\begin{bmatrix}
   X_{\varrho_i+1} & \dots & X_{r-\ell_{i+1}'}
\end{bmatrix}
$$
has mutually independent standard Gaussian entries.
Denote by $e^{(i+1)}_q$, $1\leq q\leq \min(\lfloor \tilde \varepsilon x\rfloor,\ell_{i+1}'')$,
a random orthonormal system of right singular vectors
of $Y^{(i+1)}$ corresponding to $\min(\lfloor \tilde \varepsilon x\rfloor,\ell_{i+1}'')$ largest singular values of $Y^{(i+1)}$,
and let $E^{(i+1)}\subset \R^{\ell_{i+1}''}$ be the subspace
$$\spn\big\{e^{(i+1)}_q,\; 1\leq q\leq \min(\lfloor \tilde \varepsilon x\rfloor,\ell_{i+1}'')\big\}^\perp.$$

Similarly, for every $1\leq j\leq i$ and for $\ell_j$ given by \eqref{elljdefahg}, we define the $x\times \ell_j$ matrix
$$
Y^{(j)}:=
\begin{bmatrix}
  WV^{-1} & Q
\end{bmatrix}\,
\begin{bmatrix}
   X_{r+1-\sum_{d=j}^{i+1}\ell_{d}} & \dots & X_{r-\sum_{d=j+1}^{i+1}\ell_{d}}
\end{bmatrix}
=
\begin{bmatrix}
  WV^{-1} & Q
\end{bmatrix}\,
\begin{bmatrix}
   X_{\varrho_{j-1}+1} & \dots & X_{\varrho_{j}}
\end{bmatrix}
$$
(again, conditioned on $\sigma(B_{[r]\times [r+x]}')$, $Y^{(j)}$
has mutually independent standard normal entries).
Denote by $e^{(j)}_q$, $1\leq q\leq \min(\lfloor 2^{j-i-1}\tilde \varepsilon x\rfloor,\ell_j)$,
a random orthonormal system of right singular vectors
of $Y^{(j)}$ corresponding to $\min(\lfloor 2^{j-i-1}\tilde \varepsilon x\rfloor,\ell_j)$ largest singular values of $Y^{(j)}$,
and let $E^{(j)}\subset \R^{\ell_{j}}$ be the subspace
$$\spn\big\{e^{(j)}_q,\; 1\leq q\leq \min(\lfloor 2^{j-i-1}\tilde \varepsilon x\rfloor,\ell_j)\big\}^\perp.$$

%Let $e'_1,\dots,e_{\dim E'}'$ be an orthonormal basis of $E'\subset \R^{\ell_{i+1}'+x}$
%measurable w.r.t $\sigma(B_{[r]\times [r+x]}')$, and 
Consider
the random $x\times (\ell_{i+1}'+x)$ matrix 
$$
\hat Y:=\begin{bmatrix}
  WV^{-1} & Q
\end{bmatrix}\,
\begin{bmatrix}
   X_{r+1-\ell_{i+1}'} & \dots & X_{r+x}
\end{bmatrix}.
$$
Let $\hat e_1,\hat e_2,\dots,\hat e_{x-\lfloor \tilde \varepsilon x\rfloor}$ be
a random orthonormal set of right singular vectors
of $\hat Y$
corresponding to $x-\lfloor \tilde \varepsilon x\rfloor$ largest singular values of $\hat Y$,
and let $\tilde E\subset \R^{\ell_{i+1}'+x}$
be the random subspace of dimension $x-\lfloor \tilde \varepsilon x\rfloor$
defined as
$$
\tilde E:=\spn\{\hat e_1,\hat e_2,\dots,\hat e_{x-\lfloor \tilde \varepsilon x\rfloor}\}.
$$

Now, we construct the (random) subspace $H\subset \R^{r+x}$ as
\begin{align*}
H:=\spn\Big\{
&\begin{bmatrix}
   X_{r+1-\ell_{i+1}'} & \dots & X_{r+x}
\end{bmatrix}(\tilde E),\\
&\begin{bmatrix}
   X_{r+1-\ell_{i+1}} & \dots & X_{r-\ell_{i+1}'}
\end{bmatrix}(E^{(i+1)});\\
&\begin{bmatrix}
   X_{\varrho_{j-1}+1%r+1-\sum_{d=j}^{i+1}\ell_{d}
   } 
   & \dots & X_{\varrho_j%r-\sum_{d=j+1}^{i+1}\ell_{d}
   }
\end{bmatrix}(E^{(j)}), \;1\leq j\leq i
\Big\}.
\end{align*}

Let us check that the constructed subspace satisfies the required lower bound on dimension, that is, $\dim H\geq (1-(1+\tilde\varepsilon)^{-i-1})(r+x)$.
In view of the assumptions on $\ell_{i+1}'$, we have
\begin{align*}
\dim H&\geq x-\lfloor\tilde\varepsilon x\rfloor+\ell_{i+1}''-\lfloor\tilde\varepsilon x\rfloor
+\sum_{j=1}^i \big(\ell_{j}-\lfloor 2^{j-i-1}\tilde \varepsilon x\rfloor\big)\\
&\geq r+x-\ell_{i+1}'-3\tilde\varepsilon x\\
&\geq r+x-(1+\tilde\varepsilon)^{-i-1}(r+x).
\end{align*}    

\bigskip

{\bf{}Defining a good event.}
Denote by $\tilde\Event$ the event
$$
\bigg\{\big\|\begin{bmatrix}
  WV^{-1} & Q
\end{bmatrix} v\big\|_2\geq \frac{c' \lfloor\tilde\varepsilon x\rfloor\,h}{\sqrt{x}}\mbox{ for every unit vector}
\mbox{ $v\in \Big(\begin{bmatrix}
   X_{r+1-\ell_{i+1}'} & \dots & X_{r+x}
\end{bmatrix}(\tilde E)\Big)$}\bigg\},
$$
where the constant $c'$ is taken from Proposition~\ref{p: small sing shifted}. According to our definition of the subspace $\tilde E$, for every unit vector $v$ as above we have
$$
\big\|\begin{bmatrix}
  WV^{-1} & Q
\end{bmatrix} v\big\|_2\geq s_{x-\lfloor \tilde \varepsilon x\rfloor}(\hat Y),
$$
where the matrix $\hat Y$ is $x\times (x+\ell_{i+1}')$ standard Gaussian, in view of the independence of $\begin{bmatrix}
  WV^{-1} & Q
\end{bmatrix}$ from the $\sigma$--field $\sigma(B_{[r]\times [r+x]}')$.

Hence, by Proposition~\ref{p: small sing shifted} applied to $\hat Y$, we get
$$
\Prob(\tilde \Event)\geq 1-x^{\lfloor\tilde\varepsilon x\rfloor/2}\,
h^{\lfloor\tilde\varepsilon x\rfloor^2/32}.
$$
Further, let
$$
\Event^{(i+1)}:=\big\{s_{\lfloor\tilde\varepsilon x\rfloor+1}(Y^{(i+1)})\leq \sqrt{x}/h\big\},
$$
and for every $1\leq j\leq i$, let
$$
\Event^{(j)}:=\big\{s_{\lfloor 2^{-i-1+j}\tilde\varepsilon x\rfloor+1}(Y^{(j)})\leq 2^{i+1-j}\sqrt{\ell_j}/h\big\}.
$$
Since, by our assumptions, $\sqrt{\tilde\varepsilon x}/h\geq 2\sqrt{\ell_{i+1}''}$,
we have, according to Proposition~\ref{stdev HS},
$$
\Prob\big(\big(\Event^{(i+1)}\big)^c\big)
\leq \Prob\big\{\|Y^{(i+1)}\|_{HS}\geq \sqrt{x}\cdot \sqrt{\tilde\varepsilon x}/h\big\}
\leq 2\exp\big(-cx^2\,\tilde\varepsilon/h^2\big),
$$
for a universal constant $c>0$.
Similarly, since for every $j=1,2,\dots,i$, $\sqrt{2^{i+1-j}\tilde\varepsilon x}/h\geq \sqrt{\tilde\varepsilon x}/h\geq 2\sqrt{x}$, we have
$$
\Prob\big(\big(\Event^{(j)}\big)^c\big)
\leq \Prob\big\{\|Y^{(j)}\|_{HS}\geq 2^{i+1-j}\sqrt{\ell_j}\cdot \sqrt{2^{-i-1+j}\tilde\varepsilon x}/h\big\}
\leq 2\exp\big(-c\,2^{i+1-j}\ell_j x\,\tilde\varepsilon/h^2\big).
$$
We define
$$
\Event_{\rm good}:=\tilde\Event\cap \bigcap_{j=1}^{i+1}\Event^{(j)}.
$$
In view of the above,
\begin{align*}
\Prob\big(\Event_{\rm good}\big)
&\geq 1-2x^{\lfloor\tilde\varepsilon x\rfloor/2}\,
h^{\lfloor\tilde\varepsilon x\rfloor^2/32}
-
4\exp\big(-cx^2\,\tilde\varepsilon/h^2\big)\\
&\hspace{2cm}-2\sum_{j=1}^i \exp\big(-c\,2^{i+1-j}\ell_j x\,\tilde\varepsilon/h^2\big)\\
&\geq 1-2x^{\tilde\varepsilon x/2}\,
h^{(\tilde\varepsilon x)^2/64}
-4\exp\big(-cx^2\,\tilde\varepsilon/h^2\big)\\
&\hspace{2cm}-\tilde C\exp\big(-c\tilde\varepsilon^2 (1+\tilde\varepsilon)^{-i}r x/h^2\big),
\end{align*}
for a universal constant $\tilde C>0$.

\bigskip

{\bf{}Checking that $H$ satisfies the required property conditioned on $\Event_{\rm good}$.}
Assuming the conditioning,
pick any unit vector $v\in H$.
We represent $v$ in terms of the basis $X_1,\dots,X_{r+x}$ as
$$
v=\sum_{q=1}^{r+x}a_q\, X_q,
$$
for some coefficients $a_1,\dots,a_{r+x}$ with $\sum_{q=1}^{r+x}a_q^2=1$.
Note that
$$
\|B_{I\times [r+x]}'v\|_2^2
=\sum_{q=1}^{r+x}a_q^2 \,s_{q}(B_{I\times [r+x]}')^2.
$$
If the last expression is greater than $\beta^2$ then we are done.
Otherwise, we have
$$
\sum_{q=1}^{r+x}a_q^2\, s_{q}(B_{I\times [r+x]}')^2\leq \beta^2,
$$
and hence, in particular,
\begin{equation}\label{1-98rh-q98743y087}
\sum_{q=r+1-\ell_{i+1}}^{r-\ell_{i+1}'}a_q^2\leq \frac{\beta^2}{\tau^2}\leq \frac{h^2}{16^2},
\end{equation}
and for every $j=1,2,\dots,i$,
\begin{equation}\label{1iu40837rh0343y087}
\sum_{q=\varrho_{j-1}+1%r+1-\sum_{d=j}^{i+1}\ell_{d}
}^{\varrho_j%r-\sum_{d=j+1}^{i+1}\ell_{d}
}a_q^2
\leq\frac{\beta^2}{g\big(\varrho_j%\lfloor(1-(1+\tilde\varepsilon)^{-j})r\rfloor
\big)^2}
\leq \frac{h^2}{16^2}\cdot 16^{j-i}.
\end{equation}
Observe that the last conditions yield
$$
\sum_{q=r+1-\ell_{i+1}'}^{r+x}a_q^2\geq \frac{1}{4}.
$$
In view of conditioning on $\tilde\Event$, this immediately implies
$$
\Big\|\begin{bmatrix}
  W & Q
\end{bmatrix} \sum_{q=r+1-\ell_{i+1}'}^{r+x}a_q X_q\Big\|_2\geq \frac{c' \lfloor\tilde\varepsilon x\rfloor}{4\sqrt{x}}.
$$
Further, in view of conditioning on events $\Event^{(1)},\dots,\Event^{(i+1)}$,
$$
\Big\|\begin{bmatrix}
  W & Q
\end{bmatrix} \sum_{q=r+1-\ell_{i+1}}^{r-\ell_{i+1}'}a_q X_q\Big\|_2
\leq \frac{\beta}{\tau}\cdot \frac{\sqrt{x}}{h},
$$
and for every $j=1,2,\dots,i$,
$$
\bigg\|\begin{bmatrix}
  W & Q
\end{bmatrix} \sum_{q=\varrho_{j-1}+1%r+1-\sum_{d=j}^{i+1}\ell_{d}
}^{\varrho_j%r-\sum_{d=j+1}^{i+1}\ell_{d}
}a_q X_q\bigg\|_2
\leq \frac{\beta}{g\big(\varrho_j%\lfloor(1-(1+\tilde\varepsilon)^{-j})r\rfloor
\big)}\cdot \frac{2^{i+1-j}\sqrt{\ell_j}}{h}.
$$
Thus, by the triangle inequality,
\begin{align*}
\big\|\begin{bmatrix}
  W & Q
\end{bmatrix} v\big\|_2
&\geq \frac{c' \lfloor\tilde\varepsilon x\rfloor}{4\sqrt{x}}-\frac{\beta}{\tau}\cdot \frac{\sqrt{x}}{h}
-\sum_{j=1}^i \frac{\beta}{g\big(\varrho_j%\lfloor(1-(1+\tilde\varepsilon)^{-j})r\rfloor
\big)}\cdot \frac{2^{i+1-j}\sqrt{\ell_j}}{h}\\
&\geq \frac{c' \tilde\varepsilon \sqrt{x}}{8}-\frac{\beta}{\tau}\cdot \frac{\sqrt{x}}{h}
-\frac{8\,\beta}{g\big(\varrho_i%\lfloor(1-(1+\tilde\varepsilon)^{-i})r\rfloor
\big)}\cdot \frac{\sqrt{\tilde\varepsilon(1+\tilde\varepsilon)^{-i}r}}{h},
\end{align*}
where the last relation follows from our assumptions on parameters \eqref{-9851-9uhfpy} and \eqref{p4t8yfip-98y-8}.
The assumption on $\beta$ then implies the result.
\end{proof}

\begin{lemma}
\label{lem: sec4lem2}
There are universal constants $c\in(0,1]$, $\tilde C\geq 1$ with the following property.
Assume that $i \in \mathbb{N}$, $\tilde\varepsilon\in (0,1]$, $h\in(0,1]$, $r$, and $x$ satisfy
the assumptions of Proposition~\ref{prop: sec4Main},
%$$
%\ell_{i+1}\leq x\leq r,\quad
%3(1+\tilde\varepsilon)^{-i-1}r-(1+\tilde\varepsilon)^{-i}r\geq x+1+11\tilde \varepsilon %x,\quad
%\tilde\varepsilon x\geq 4,\quad h\leq 2^{-11}(c')^2\tilde\varepsilon,
%$$
%where $c'$ is the constant from Proposition~\ref{p: small sing shifted},
and assume additionally that
$$
\ell_{i+1}'> (1+\tilde\varepsilon)^{-i-1}(r+x)-3\tilde\varepsilon x,
$$
where $\ell_{i+1}'$ is given in \eqref{elliplusoneafd}.
Then with probability at least
$$
1-2x^{\tilde\varepsilon x/2}\,
h^{(\tilde\varepsilon x)^2/64}-
\tilde C\exp\big(-c\tilde\varepsilon^2 (1+\tilde\varepsilon)^{-i}r x/h^2\big)
-2\exp\big(-c\,x^2\,\tilde\varepsilon/h^2\big)
$$
we have
$$
s_{\lfloor(1-(1+\tilde\varepsilon)^{-i-1})(r+x)\rfloor}(B)\geq \tau,
$$
where $\tau$ is defined by \eqref{taudefapfj}.
\end{lemma}
\begin{proof}
{\bf{}Construction of subspace $H$.}
Consider a refinement of the block representation of $B'$:
$$
B'=
\begin{bmatrix}
  \begin{bmatrix}
  D_{i+1}' & 0 & 0 & \dots & 0\\
  0 &   D_{i+1}'' & 0 & \dots & 0\\
  0 &   0 & D_i & \dots & 0\\
  \dots & \dots & \dots & \dots & \dots \\
  0 & 0 & 0 & \dots & D_1
  \end{bmatrix} & \begin{bmatrix}
  M_{i+1}' \\ M_{i+1}'' \\ M_{i}\\ \dots\\ M_1
  \end{bmatrix}\\
  \begin{bmatrix}
  W_{i+1}' & W_{i+1}'' & W_{i} & \dots & W_1
  \end{bmatrix} & Q
\end{bmatrix},
$$
where
$$
U^{-1}M=\begin{bmatrix}
  M_{i+1}' \\ M_{i+1}'' \\ M_{i}\\ \dots\\ M_1
  \end{bmatrix};\; WV^{-1}=\begin{bmatrix}
  W_{i+1}' & W_{i+1}'' & W_{i} & \dots & W_1
  \end{bmatrix};\; D=\begin{bmatrix}
  D_{i+1}' & 0 & 0 & \dots & 0\\
  0 &   D_{i+1}'' & 0 & \dots & 0\\
  0 &   0 & D_i & \dots & 0\\
  \dots & \dots & \dots & \dots & \dots \\
  0 & 0 & 0 & \dots & D_1
  \end{bmatrix}.
$$
In particular, for every $1\leq j\leq i$, the matrix $D_j$ above is $\ell_j\times\ell_j$,
$M_j$ is $\ell_j\times x$, and $W_j$ is $x\times \ell_j$, where $\ell_j$'s are given by
\eqref{elljdefahg}.
Further, $D_{i+1}'$ is $\ell_{i+1}'\times\ell_{i+1}'$, 
$M_{i+1}'$ is $\ell_{i+1}'\times x$, and $W_{i+1}'$ is $x\times \ell_{i+1}'$;
the dimensions of $D_{i+1}''$, $M_{i+1}''$ and $W_{i+1}''$ are defined accordingly.
In this proof, we denote by 
$P_{i+1}':\R^{r+x}\to \R^{\ell_{i+1}'}$ the coordinate projection onto first $\ell_{i+1}'$ coordinates,
by $P_x:\R^{r+x}\to \R^{x}$ the coordinate projection onto last $x$ coordinates,
and, for every $1\leq j\leq i$, denote by $P_j:\R^{r+x}\to \R^{\ell_j}$
the coordinate projection onto $\ell_j$ components starting from $1+\sum_{d=j+1}^{i+1}\ell_d$.

Denote by $e^{(i+1)'}_q$, $1\leq q\leq \ell_{i+1}'-\lfloor \tilde \varepsilon x\rfloor$,
a random orthonormal system of right singular vectors
of $W_{i+1}'$ corresponding to $\ell_{i+1}'-\lfloor \tilde \varepsilon x\rfloor$ largest singular values of $W_{i+1}'$,
and let $\tilde E\subset \R^{\ell_{i+1}'}$ be the subspace
$$\spn\big\{e^{(i+1)'}_q,\; 1\leq q\leq \ell_{i+1}'-\lfloor \tilde \varepsilon x\rfloor\big\}.$$

For every $1\leq j\leq i$,
denote by $e^{(j)}_q$, $1\leq q\leq \min(\lfloor 2^{j-i-1}\tilde \varepsilon x\rfloor,\ell_j)$,
a random orthonormal system of right singular vectors
of $W_j$ corresponding to $\min(\lfloor 2^{j-i-1}\tilde \varepsilon x\rfloor,\ell_j)$ largest singular values of $W_j$,
and let $E^{(j)}\subset \R^{\ell_{j}}$ be the subspace
$$\spn\big\{e^{(j)}_q,\; 1\leq q\leq \min(\lfloor 2^{j-i-1}\tilde \varepsilon x\rfloor,\ell_j)\big\}^\perp.$$

Finally, we construct a random subspace $\hat E\subset\R^x$ as follows.
Denote by $e^{(Q)}_q$, $1\leq q\leq \lfloor \tilde \varepsilon x\rfloor$,
a random orthonormal system of right singular vectors
of $Q$ corresponding to $\lfloor \tilde \varepsilon x\rfloor$ largest singular values of $Q$,
and let $E^{(Q)}\subset \R^x$ be the subspace
$$\spn\big\{e^{(Q)}_q,\; 1\leq q\leq \lfloor \tilde \varepsilon x\rfloor\big\}^\perp.$$
Further, let $e^{(M_{i+1}')}_q$, $1\leq q\leq \ell_{i+1}'-\lfloor \tilde \varepsilon x\rfloor$,
be a random orthonormal system of right singular vectors
of $M_{i+1}'$ corresponding to $\ell_{i+1}'-\lfloor \tilde \varepsilon x\rfloor$ largest singular values of $M_{i+1}'$,
and let
$$
E^{(M_{i+1}')}:=\spn\big\{e^{(M_{i+1}')}_q,\; 1\leq q\leq \ell_{i+1}'-\lfloor \tilde \varepsilon x\rfloor\big\}.
$$
For every $1\leq j\leq i$, let $e^{(M_{j})}_q$, $1\leq q\leq \lfloor 2^{j-i-1}\tilde \varepsilon x\rfloor$,
be a random orthonormal system of right singular vectors
of $M_j$ corresponding to $\lfloor 2^{j-i-1}\tilde \varepsilon x\rfloor$ largest singular values of $M_{j}$,
and let
$$
E^{(M_j)}:=\spn\big\{e^{(M_{j})}_q,\; 1\leq q\leq \lfloor 2^{j-i-1}\tilde \varepsilon x\rfloor\big\}^\perp
\subset\R^x.
$$
We then set
$$
\hat E:=E^{(M_{i+1}')}\;\cap\;\bigcap_{j=1}^i E^{(M_j)}.
$$

\medskip

The subspace $H$ is now defined as
$$
H:=\big\{v\in R^{r+x}:\;P_{i+1}'v\in \tilde E;\;P_{i+1}''v=0;\;P_j v\in E^{(j)},\; 1\leq j\leq i;\;
P_x v\in \hat E\big\}.
$$

Let us check that $H$ satisfies the required assumptions on the dimension.
We have
\begin{align*}
\dim H&=\ell_{i+1}'-\lfloor \tilde \varepsilon x\rfloor
+\sum_{j=1}^i\big(\ell_j-\min(\lfloor 2^{j-i-1}\tilde \varepsilon x\rfloor,\ell_j)\big)
+\dim\hat E\\
&\geq r-\ell_{i+1}''- 2\tilde \varepsilon x
+x-\tilde \varepsilon x-(x-\ell_{i+1}'+\lfloor \tilde \varepsilon x\rfloor)
-\sum_{j=1}^i \lfloor 2^{j-i-1}\tilde \varepsilon x\rfloor\\
&\geq r+2\ell_{i+1}'-\ell_{i+1}-5\tilde \varepsilon x.
\end{align*}
Next, we use the assumption on $\ell_{i+1}'$ and the assumptions on parameters to obtain
\begin{align*}
r+2\ell_{i+1}'-\ell_{i+1}-5\tilde \varepsilon x
&\geq r+2(1+\tilde\varepsilon)^{-i-1}(r+x)-11\tilde\varepsilon x
-(1+\tilde\varepsilon)^{-i}r-1\\
&\geq (1-(1+\tilde\varepsilon)^{-i-1})(r+x).
\end{align*}

\bigskip

{\bf{}Defining a good event.}
Denote by $\tilde\Event$ the event
$$
\bigg\{\big\|  W_{i+1}'v\big\|_2\geq
\frac{c' \lfloor\tilde\varepsilon x\rfloor\,h}{\sqrt{x}}\mbox{ for every unit vector}
\mbox{ $v\in \tilde E$}\bigg\},
$$
where the constant $c'$ is taken from Proposition~\ref{p: small sing shifted}.
According to our definition of the subspace $\tilde E$, for every unit vector $v$ as above we have
$$
\big\|W_{i+1}' v\big\|_2\geq s_{\ell_{i+1}'-\lfloor \tilde \varepsilon x\rfloor}(W_{i+1}').
$$
Hence, by Proposition~\ref{p: small sing shifted} applied to $W_{i+1}'$, we get
$$
\Prob(\tilde \Event)\geq 1-x^{\lfloor\tilde\varepsilon x\rfloor/2}\,
h^{\lfloor\tilde\varepsilon x\rfloor^2/32}.
$$
Further, for every $1\leq j\leq i$, let
$$
\Event^{(j)}:=\big\{s_{\lfloor 2^{-i-1+j}\tilde\varepsilon x\rfloor+1}(W_j)\leq 2^{i+1-j}\sqrt{\ell_j}/h\big\}.
$$
Note that conditioned on $\Event^{(j)}$, we have
$$
\big\|W_{j} v\big\|_2\leq 2^{i+1-j}\sqrt{\ell_j}/h\mbox{ for every unit vector $v\in E^{(j)}$}.
$$
Since for every $j=1,2,\dots,i$, $\sqrt{2^{i+1-j}\tilde\varepsilon x}/h\geq \sqrt{\tilde\varepsilon x}/h\geq 2\sqrt{x}$, we have
$$
\Prob\big(\big(\Event^{(j)}\big)^c\big)
\leq \Prob\big\{\|W_j\|_{HS}\geq 2^{i+1-j}\sqrt{\ell_j}\cdot \sqrt{2^{-i-1+j}\tilde\varepsilon x}/h\big\}
\leq 2\exp\big(-c\,2^{i+1-j}\ell_j x\,\tilde\varepsilon/h^2\big).
$$

Finally, we define events corresponding to a ``good'' realization of $\hat E$.
Let $\Event_{M_{i+1}'}$ be the event
$$
\bigg\{\big\|  M_{i+1}'v\big\|_2\geq
\frac{c' \lfloor\tilde\varepsilon x\rfloor\,h}{\sqrt{x}}\mbox{ for every unit vector}
\mbox{ $v\in E^{(M_{i+1}')}$}\bigg\}.
$$
Repeating the argument for $\tilde \Event$, we get
$$
\Prob(\Event_{M_{i+1}'})\geq 1-x^{\lfloor\tilde\varepsilon x\rfloor/2}\,
h^{\lfloor\tilde\varepsilon x\rfloor^2/32}.
$$
Similarly, adjusting the argument for $\Event^{(j)}$ accordingly, we get that for every $1\leq j\leq i$,
the event
$$
\Event_{M_{j}}:=\big\{s_{\lfloor 2^{-i-1+j}\tilde\varepsilon x\rfloor+1}(M_j)\leq 2^{i+1-j}\sqrt{\ell_j}/h\big\}
$$
has probability at least $1-2\exp\big(-c\,2^{i+1-j}\ell_j x\,\tilde\varepsilon/h^2\big)$,
and that the event
$$
\Event_Q:=\big\{s_{\lfloor \tilde\varepsilon x\rfloor+1}(Q)\leq \sqrt{x}/h\big\}
$$
has probability at least
$$
1-2\exp\big(-c\,x^2\,\tilde\varepsilon/h^2\big).
$$
We define
$$
\Event_{\rm good}:=\tilde\Event\cap\; \bigcap_{j=1}^i \Event^{(j)}\;\cap
\Event_{M_{i+1}'}\cap \Event_Q\cap \;\bigcap_{j=1}^i \Event_{M_{j}}.
$$
In view of the above,
\begin{align*}
\Prob(\Event_{\rm good})
&\geq 1-2x^{\tilde\varepsilon x/2}\,
h^{(\tilde\varepsilon x)^2/64}
-4\sum_{j=1}^i \exp\big(-c\,2^{i+1-j}\ell_j x\,\tilde\varepsilon/h^2\big)
-2\exp\big(-c\,x^2\,\tilde\varepsilon/h^2\big)\\
&\geq
1-2x^{\tilde\varepsilon x/2}\,
h^{(\tilde\varepsilon x)^2/64}-
\tilde C\exp\big(-c\tilde\varepsilon^2 (1+\tilde\varepsilon)^{-i}r x/h^2\big)
-2\exp\big(-c\,x^2\,\tilde\varepsilon/h^2\big),
\end{align*}
for a universal constant $\tilde C\geq 1$.

\bigskip

{\bf{}Checking that $H$ satisfies the required property conditioned on $\Event_{\rm good}$.}
Assuming the conditioning, pick any unit vector $v\in H$.
First, we observe that
$$\|D_{i+1}'P_{i+1}'v\|_2\leq \|D_{i+1}'\|\leq \tau,$$
whereas, by the definition of $\hat E$ and the conditioning,
$$
\|M_{i+1}'P_x v\|_2\geq \frac{c' \lfloor\tilde\varepsilon x\rfloor\,h}{\sqrt{x}}\,\|P_x v\|_2.
$$
Thus, if $\|P_x v\|_2\geq \frac{2\sqrt{x}\tau}{c' \lfloor\tilde\varepsilon x\rfloor\,h}$
then $\|B'v\|_2\geq \|M_{i+1}'P_x v\|_2-\|D_{i+1}'P_{i+1}'v\|_2\geq \tau$, and we are done.

Otherwise, if 
\begin{equation}\label{p0gu340ht837ty0387h}
\|P_x v\|_2< \frac{2\sqrt{x}\tau}{c' \lfloor\tilde\varepsilon x\rfloor\,h}
\leq \frac{4\tau}{c' \tilde\varepsilon \sqrt{x}\,h},
\end{equation}
then, in view of the conditioning (see the definition of $\Event_{M_{j}}$),
$$
\|M_j P_x v\|_2\leq \frac{2^{i+1-j}\sqrt{\ell_j}}{h}\,\frac{2\sqrt{x}\tau}{c' \lfloor\tilde\varepsilon x\rfloor\,h}
\leq \frac{2^{i+3-j}\sqrt{\ell_j/x}\,\,\tau}{c'\tilde\varepsilon h^2},\quad
1\leq j\leq i.
$$
On the other hand, by our assumptions
$$
\|D_j P_j v\|_2\geq g\big(\varrho_j%\lfloor(1-(1+\tilde\varepsilon)^{-j})r\rfloor
\big)\,\|P_j v\|_2,\quad 1\leq j\leq i.
$$
Thus, unless $\|B'_{[r]\times[r+x]}v\|_2\geq \tau$, we must have
$$
g\big(\varrho_j%\lfloor(1-(1+\tilde\varepsilon)^{-j})r\rfloor
\big)\,\|P_j v\|_2
-\frac{2^{i+3-j}\sqrt{\ell_j/x}\,\,\tau}{c'\tilde\varepsilon h^2}\leq
\|D_j P_j v\|_2-\|M_j P_x v\|_2\leq \tau,\quad 1\leq j\leq i,
$$
implying
\begin{equation}\label{4502t-94t039ty8}
\|P_j v\|_2\leq \frac{2^{i+4-j}\sqrt{\ell_j/x}\,\,\tau}{c'\tilde\varepsilon h^2\,
g\big(\varrho_j%\lfloor(1-(1+\tilde\varepsilon)^{-j})r\rfloor
\big)},\quad 1\leq j\leq i.
\end{equation}

As a final step of the proof, we will show that for any unit vector $v\in H$ satisfying conditions
\eqref{p0gu340ht837ty0387h} and~\eqref{4502t-94t039ty8}, one has $\|B'_{\{r+1,\dots,r+x\}\times[r+x]}v\|_2\geq \tau$.
First, note that \eqref{p0gu340ht837ty0387h} and~\eqref{4502t-94t039ty8} imply that
\begin{align*}
\|P_{i+1}'v\|_2&\geq 1-\frac{4\tau}{c' \tilde\varepsilon \sqrt{x}\,h}
-\sum_{j=1}^i \frac{2^{i+4-j}\sqrt{\ell_j/x}\,\,\tau}{c'\tilde\varepsilon h^2\,
g\big(\varrho_j%\lfloor(1-(1+\tilde\varepsilon)^{-j})r\rfloor
\big)}\\
&\geq 1-4h^2-
h\,\sum_{j=1}^i \frac{2^{i+4-j}\sqrt{2\tilde\varepsilon (1+\tilde\varepsilon)^{i-j}}}{4^{i-j}}
>1-1-4h^2-64h\geq 1/2,
\end{align*}
whence, in view of conditioning on $\tilde\Event$,
$$
\|W_{i+1}'P_{i+1}'v\|_2\geq
\frac{c' \tilde\varepsilon \sqrt{x}\,h}{4}
\geq \frac{c' \tilde\varepsilon \sqrt{(1+\tilde\varepsilon)^{-i}r}\,h}{4}.
$$
Now, for every $1\leq j\leq i$, by the above and in view of conditioning on $\Event^{(j)}$,
\begin{align*}
\|W_jP_j v\|_2&\leq 
\frac{2^{i+1-j}\sqrt{\ell_j}}{h}
\frac{2^{i+4-j}\sqrt{\ell_j/x}\,\,\tau}{c'\tilde\varepsilon h^2\,
g\big(\varrho_j%\lfloor(1-(1+\tilde\varepsilon)^{-j})r\rfloor
\big)}\\
&\leq
\frac{2^{2i+6-2j} (1+\tilde\varepsilon)^{-j}r\,\,\tau/\sqrt{(1+\tilde\varepsilon)^{-i}r}}{c' 16^{i-j}\, h^2\,
g\big(\varrho_i%\lfloor(1-(1+\tilde\varepsilon)^{-i})r\rfloor
\big)},
\end{align*}
whence
$$
\sum_{j=1}^i \|W_jP_j v\|_2\leq 
\frac{2^{7} \sqrt{(1+\tilde\varepsilon)^{-i}r}\,\,\tau}{c'  h^2\,
g\big(\varrho_i%\lfloor(1-(1+\tilde\varepsilon)^{-i})r\rfloor
\big)}
\leq\frac{c' \tilde\varepsilon \sqrt{(1+\tilde\varepsilon)^{-i}r}\,h}{16}.
$$
Similarly, in view of conditioning on $\Event_Q$, we get
$$
\|QP_x v\|_2\leq \frac{4\tau}{c' \tilde\varepsilon \sqrt{x}\,h} \frac{\sqrt{x}}{h}
\leq \frac{4 h^4\sqrt{x}}{c' \tilde\varepsilon h^2}<\frac{c' \tilde\varepsilon \sqrt{(1+\tilde\varepsilon)^{-i}r}\,h}{16}.
$$
Thus,
$$
\|B'_{\{r+1,\dots,r+x\}\times[r+x]}v\|_2
\geq \|W_{i+1}'P_{i+1}'v\|_2-\sum_{j=1}^i \|W_jP_j v\|_2
-\|QP_x v\|_2\geq \frac{c' \tilde\varepsilon \sqrt{(1+\tilde\varepsilon)^{-i}r}\,h}{8}\geq \tau,
$$
and the proof is complete.
\end{proof}

\bigskip

% For convenience, we combine the above two statements into a single proposition, while
% simplifying some of the constraints on parameters.
% \begin{prop} \label{prop: sec4Main}
% There are universal constants $c\in(0,1]$, $\tilde C\geq 1$ with the following property.
% Let
% $$
% B=
% \begin{bmatrix}
%   F & M \\
%   W & Q
% \end{bmatrix},
% $$
% where $F$ is a fixed $r\times r$ matrix, $M$ is $r\times x$,
% $W$ is $x\times r$, $Q$ is $x\times x$, and the entries of $M$, $W$, $Q$ are mutually independent standard Gaussians.
% Assume that parameters $\tilde\varepsilon\in (0,1]$, $h\in(0,1]$, $r$, $x$, and $i\in\N$ satisfy
% \begin{align*}
% &r-\lfloor 1-(1+\tilde\varepsilon)^{-i}r\rfloor\leq x\leq r,\;\;
% \tilde \varepsilon x\geq 4, \;\;h\leq 2^{-11}(c')^2\tilde\varepsilon,\\
% &3(1+\tilde\varepsilon)^{-i-1}r-(1+\tilde\varepsilon)^{-i}r\geq x+1+11\tilde \varepsilon x,\;\;
% \tilde\varepsilon(1+\tilde\varepsilon)^{-i}r\geq 2,
% \end{align*}
% where $c'$ is the constant from Proposition~\ref{p: small sing shifted}.
% Further, assume \eqref{p498h3-98y30t8743yt83} for the singular values of $F$, for a positive function $g(\cdot)$
% satisfying \eqref{p4t8yfip-98y-8}.
% Then with probability at least
% $$
% 1-2x^{\tilde\varepsilon x/2}\,
% h^{(\tilde\varepsilon x)^2/64}
% -4\exp\big(-cx^2\,\tilde\varepsilon/h^2\big)
% -\tilde C\exp\big(-c\tilde\varepsilon^2 (1+\tilde\varepsilon)^{-i}r x/h^2\big)
% $$
% we have
% $$
% s_{\lfloor(1-(1+\tilde\varepsilon)^{-i-1})(r+x)\rfloor}(B)\geq \frac{c' \tilde\varepsilon h^5\,g\big(\lfloor(1-(1+\tilde\varepsilon)^{-i})r\rfloor\big)}{32}.
% $$
% \end{prop}

\section{Random polytopes, and distances to pivot rows}\label{fepijnfeofiniqwjnpij}

Let $B$ be an $n \times m$ matrix with $m\ge n$, and assume that every
square submatrix of $B$ is invertible.  We define recursively the sequence of indices
$\{i_r(B)\}_{r \in [n]}$, vectors $\{v_r(B)\}_{r \in [n]}$ in $\R^m$,
and polytopes $\{K_r(B)\}_{r \in [n]}$ in $\R^m$ as follows.

\medskip

Set $v_1(B):=e_1$ and $I_0(B):=\emptyset$. For $r$ from $1$ to $n$, 
\begin{align}
	i_r(B) &:= \mbox{argmax}_{i \in [n]\backslash I_{r-1}(B)} 
		| \langle v_r(B) ,\, (B_{i,[m]})^\top \rangle |,  \nonumber \\
	I_r(B) &:= \{i_s(B)\}_{s\in [r]}, \nonumber \\
	v_r(B) &:= \big( -((B_{I_{r-1},[r-1]})^{-1} B_{I_{r-1},r})^\top ,\, 1 ,\, 
		\underbrace{0, \dots, 0}_{ m-r \mbox{ components } }
	\big)^\top, \nonumber \\
	K_r(B) &:= \big\{ x \in \mathbb{R}^m \, :\, \forall s \in [r],\, | 
		\langle v_s(B) ,\, x \rangle |  \le | \langle v_s(B),\, (B_{i_s(B),[m]})^\top \rangle|
		\big\}.  \label{eq: polyDef}
\end{align}
Observe that $v_r(B)$ is a null vector of $B_{I_{r-1}, [r]}$ such that $v_{r}(B)=1$,
and that
$i_r(B)$ can be viewed as the index of the $r$-th
pivot row in the Gaussian Elimination with Partial Pivoting
with the [rectangular] input matrix $B$.
Note also that our definition of the sets $I_r(B)$ is consistent with that of the sets $I_r(A)$ discussed earlier.
The above construction does not provide any tie-breaking rules for the choice of the indices $i_r(B)$ in case when respective expressions have multiple maximizers.
In our setting, however (when $B$ is Gaussian), each pivot is unique with probability one, and hence the choice of a tie-breaking rule is irrelevant.
We have an immediate relation
\begin{align} \label{eq: polyDim}
  K_r(B) = K_r(B_{[n],[r]}) \times \R^{m-r} \mbox{ and }
  \sigma_m(K_r(B)) = \sigma_r(K_r(B_{[n],[r]})),\quad r \in [m-1],
\end{align}
where $\sigma_k$ is the standard Gaussian measure for the corresponding dimension. 

Suppose we have performed $r$ steps of the {\rm GEPP} algorithm on the $n\times n$ Gaussian matrix $A$. Let $I\subset[n]$ have size $r$, and
condition on a realization of $I_r=I$ and $A_{I,[r]}$, which determines $K_r(A)$. Then, for every $j\in [n]\setminus I$,
the $j$--th row of $A$ is a Gaussian vector conditioned to stay within the polytope $K_r(A)$.
Formally, for every $I\subset[n]$ of size $r$, every $j\in[n]\setminus I$, and every Borel subset $\mathcal B$
of $\R^n$,
$$
\Prob\big\{(A_{j,[n]})^\top\in \mathcal B\;|\;A_{I,[r]}\big\}
=\frac{\sigma_n(\mathcal B\cap K_r(A))}{\sigma_n(K_r(A))}\quad
\mbox{almost everywhere on the event $\{I_r(A)=I\}$.}
$$
We will not directly use the above description of the conditional
distribution of $(A_{j,[n]})^\top$ given $A_{I,[r]}$;
instead, we will apply a simple
decoupling based on Lemma~\ref{lem: secPolyBasic}
which essentially establishes the same property.
We provided the above formula only to clarify our argument.

%There are two goals in this section. First, Proposition \ref{prop: polyMain} shows the Gaussian volume of $K_r(A)$ can be bounded with high probability after revealing $I_r$ and $A_{I_r,[r]}$, the vector $A_{j,[n]}^\top$  still inherited many properties of a Gaussian vector. Second, relying on the volume bound of $K_r(A)$, Corollary \ref{cor: polyDist} shows with high probability a lower bound on the distance to subspace ${\rm dist}(H,A_{i_r,[r]}^\top)$ where $H$ is the subspace spanned by $A_{i_s,[r]}^\top$ for $s\in[r-1]$. 

\begin{lemma} \label{lem: secPolyBasic}
Suppose $B$ is an $n\times m$ random matrix such that its entries are i.i.d and have continuous distribution.  Then, for $r \in [n]$ and $I \subset [n]$ with $|I|=r$, almost surely the following assertions are equivalent:
\begin{enumerate}
    \item $I_r(B)=I$, 
    \item $\forall s \in [r],\, i_s(B) = i_s(B_{I,[m]}),\, 
	v_s(B)= v_s(B_{I,[m]}), \mbox{ and } K_s(B) = K_s(B_{I,[m]}) $,
	\item $\forall j \in [n]\backslash I$, $(B_{j,[m]})^\top \in K_r(B_{I,[m]})$. 
\end{enumerate}
\end{lemma}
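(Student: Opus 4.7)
The plan is to prove the equivalences by a short induction argument, working on the full--measure event on which every square submatrix of $B$ is invertible and on which the argmax in the definition of $i_s(B)$ and $i_s(B_{I,[m]})$ is unique at every step $s \in [r]$. Both properties hold almost surely because $B$ has i.i.d.\ entries with a continuous distribution, and together they let us treat the construction in \eqref{eq: polyDef} as deterministic and unambiguous.

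For (1) $\Rightarrow$ (2), I will induct on $s \in [r]$ and show simultaneously that $i_s(B)=i_s(B_{I,[m]})$, $v_s(B)=v_s(B_{I,[m]})$, and $K_s(B)=K_s(B_{I,[m]})$. The base case $s=1$ is immediate from $v_1=e_1$. For the induction step, note that assuming $I_{s-1}(B)=I_{s-1}(B_{I,[m]}) \subset I$, the vector $v_s(B)$ is defined through $(B_{I_{s-1},[s-1]})^{-1} B_{I_{s-1},s}$, which depends only on rows indexed by $I$, so it coincides with $v_s(B_{I,[m]})$. Since $I_r(B)=I$, we have $i_s(B) \in I \setminus I_{s-1}(B)$, so the argmax over $[n]\setminus I_{s-1}(B)$ of $|\langle v_s(B),(B_{i,[m]})^\top\rangle|$ is already attained in $I\setminus I_{s-1}(B)$, hence equals $i_s(B_{I,[m]})$. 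The identity $K_s(B)=K_s(B_{I,[m]})$ then follows from the definition of $K_s$.

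The implication (2) $\Rightarrow$ (3) is essentially immediate: for any $j\in [n]\setminus I$ and any $s \in [r]$ we have $j\in [n]\setminus I_{s-1}(B)$, so the fact that $i_s(B)$ is an argmax yields
$$
|\langle v_s(B),(B_{j,[m]})^\top\rangle| \leq |\langle v_s(B),(B_{i_s(B),[m]})^\top\rangle|,
$$
and under (2) this is exactly the inequality defining membership of $(B_{j,[m]})^\top$ in $K_r(B_{I,[m]})$.

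For (3) $\Rightarrow$ (1), I will induct on $s$ to show $i_s(B)=i_s(B_{I,[m]})\in I$ and $v_s(B)=v_s(B_{I,[m]})$. Once these are established up to step $s-1$, the inductive step splits $[n]\setminus I_{s-1}(B)=(I\setminus I_{s-1}(B))\,\cup\,([n]\setminus I)$; on the first piece the maximum of $|\langle v_s(B_{I,[m]}),(B_{i,[m]})^\top\rangle|$ is attained at $i_s(B_{I,[m]})$ by definition, while for $j\in [n]\setminus I$ the hypothesis $(B_{j,[m]})^\top\in K_r(B_{I,[m]})$ bounds $|\langle v_s(B_{I,[m]}),(B_{j,[m]})^\top\rangle|$ by exactly the same value. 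Uniqueness of the argmax (on our full--measure event) then forces $i_s(B)=i_s(B_{I,[m]})\in I$, closing the induction and giving $I_r(B)=I$.

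The only delicate point is (3) $\Rightarrow$ (1): the polytope inequalities are non--strict, so without the almost--sure uniqueness of the argmax one could in principle have $i_s(B)\in [n]\setminus I$ while still satisfying (3). That is why I start the argument by restricting to the full--measure event on which argmaxes are unique; once this preliminary step is in place, the three inductive arguments above are all routine.
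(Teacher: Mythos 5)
Your proof is correct and follows essentially the same route as the paper's: restrict to the full-measure event where all square submatrices are invertible and all argmaxes are unique, then establish $1\Rightarrow 2$ and $3\Rightarrow 1$ by induction on $s$ and note that $2\Rightarrow 3$ is immediate. Your explicit remark that the non-strict polytope inequalities make the uniqueness of the argmax essential in $3\Rightarrow 1$ is exactly the point the paper's preliminary reduction is designed to handle.
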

\begin{proof}
We can assume without loss of generality that everywhere on the probability space,
all square submatrices of $B$ are invertible, and for all $1\leq s\leq n-1$
and $I'\subset [n]$ of size $s$, the expression
$$
| B_{i,s+1} - B_{i,[s]} (B_{I',[s]})^{-1}B_{I',s+1}|
$$
attains its maximum on $i\in [n]\backslash I'$ at a unique point.
These conditions ensure that the above algorithm for generating
$\{i_r(B)\}_{r \in [n]}$, $\{v_r(B)\}_{r \in [n]}$, $\{K_r(B)\}_{r \in [n]}$
have a uniquely determined output i.e no ambiguity in the choice
of the indices $i_r(B)$ occurs.

Notice that the implication
$2 \Rightarrow 3$ is straightforward by the
above definitions. We will check the implications $1\Rightarrow 2$
and $3\Rightarrow 1$ below.

%fix a sample of $B$ satisfying the conditions:$\forall s \in [r]$ and $I'\subset [n]$ with %$|I'|=s$
%  \begin{enumerate}
%    \item  $B_{I',[s]}$ is invertible, and 
%   \item ${\rm argmax}_{i \in [n]\backslash I'}| B_{i,s+1} - B_{i,[s]} B_{I',[s]}^{-1}B_{I',s+1}|$ %is unique. 
% \end{enumerate}
%Due to entries of $B$ are independent and having continuous distributions, almost surely $B$ %satisfies the above conditions. And these conditions guarantee there is no ambiguity of %$\{i_s,v_s,K_s\}_{s\in[r]}$ for both $B$ and $B_{I,[m]}$. 

\medskip

{\bf Implication $1 \Rightarrow 2$}.
Condition on the event $\{I_r(B)=I\}$.
We have $v_1(B)=v_1(B_I) = e_1$ and   
$$i_1(B_{I,[m]})= {\rm argmax}_{i \in I}|\langle v_1(B) , (B_{i,[m]})^\top\rangle|
={\rm argmax}_{i \in [n]}|\langle v_1(B) , (B_{i,[m]})^\top\rangle|=i_1(B).$$ 
Further, assume that $k<r$ is such that
$\forall s \in [k]$, $i_s(B)= i_s(B_{I,[m]})$.
Since $I_k(B)=I_k(B_{I,[m]})$, we also have $v_{k+1}(B)=v_{k+1}(B_{I,[m]})$, and thus, 
\begin{align*}
i_{k+1}(B_{I,[m]}) &= {\rm argmax}_{i \in I\backslash I_k(B_{I,[m]})}|\langle v_{k+1}(B_{I,[m]}) , B_{i,[m]}^\top\rangle|\\
&={\rm argmax}_{i \in [n]\backslash I_k(B)}|\langle v_{k+1}(B) , B_{i,[m]}^\top\rangle|
=i_{k+1}(B).
\end{align*}
Thus, by induction, $i_s(B)= i_s(B_{I,[m]})$ for all $s\in [r]$, whence
$v_s(B)=v_s(B_{I,[m]})$, $K_s(B)=K_s(B_{I,[m]})$, and
$I_s(B)=I_s(B_{I,[m]})$
for all $s \in [r]$.

\medskip

{\bf Implication $3 \Rightarrow 1$}. The argument is based on induction
just as above.
We assume that $\forall j \in [n]\backslash I$, $(B_{j,[m]})^\top \in K_r(B_{I,[m]})$.
First, 
$v_1(B)=v_1(B_{I,[m]})=e_1$,
and since
$$
K_r(B_{I,[m]})\subset\big\{x \in \mathbb{R}^m \, :\,
		|\langle v_1(B_{I,[m]}) ,\, x \rangle |  \le | \langle v_1(B_{I,[m]}),\, (B_{i_1(B_{I,[m]}),[m]})^\top \rangle|\big\},$$
we have
$$
{\rm argmax}_{j \in [n]\backslash I}| \langle e_1, B_{j,[m]}^\top\rangle| \le
|\langle e_1, (B_{i_1(B_{I,[m]}),[m]})^\top \rangle |.
$$
On the other hand, by the definition of $i_1(B_{I,[m]})$,
$$
|\langle e_1, (B_{i_1(B_{I,[m]}),[m]})^\top \rangle |=
{\rm argmax}_{i \in I} |\langle e_1, (B_{i,[m]})^\top\rangle|.
$$
As a consequence, $i_1(B)=i_1(B_{I,[m]})\in I$, completing the base step of the induction.
Now, let $k<r$
be an integer such that $\forall s \in [k], i_s(B)=i_s(B_{I,[m]})$. Since $v_{k+1}(B)=v_{k+1}(B_{I,[m]})$ by our construction, and since
$$
K_r(B_{I,[m]})\subset\big\{x \in \mathbb{R}^m \, :\,
		|\langle v_{k+1}(B_{I,[m]}) ,\, x \rangle |  \le | \langle v_{k+1}(B_{I,[m]}),\, (B_{i_{k+1}(B_{I,[m]}),[m]})^\top \rangle|\big\},
$$
we get
\begin{align*}
{\rm argmax}_{j \in [n]\backslash I}| \langle v_{k+1}(B), (B_{j,[m]})^\top\rangle| &\le
|\langle v_{k+1}(B_{I,[m]}), (B_{i_{k+1}(B_{I,[m]}),[m]})^\top \rangle |\\
&=
{\rm argmax}_{i \in I\backslash I_{k}(B)} |\langle e_1, (B_{i,[m]})^\top\rangle|,
\end{align*}
which implies that $i_{k+1}(B)=i_{k+1}(B_{I,[m]})\in I$.
Thus, we conclude by induction that $i_s(B)=i_s(B_{I,[m]})\in I$ for all $s\in [r]$,
and the result follows.
\end{proof}

As the first main result of the section, we have a probability
estimate for the event that the Gaussian measure of the polytope
$K_r(A)$ is below a given threshold:
\begin{prop}[Gaussian measure of $K_r(A)$] \label{prop: polyMain}
Let $A$ be an $n\times n$ Gaussian matrix. Then for any $r \in [n-1]$ and any $t \geq 2$, 
	$$
\Prob \{ \sigma_n(K_r(A)) \le n^{-t} \} 
\le	n^{-t(n-r)/2}.
	$$
\end{prop}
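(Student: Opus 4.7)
The plan is to decompose the probability by conditioning on the realization of $I_r(A)$ and then exploit Lemma~\ref{lem: secPolyBasic}. Fix a subset $I \subset [n]$ with $|I|=r$. On the event $\{I_r(A) = I\}$, the equivalence $1 \Leftrightarrow 2$ of Lemma~\ref{lem: secPolyBasic} yields $K_r(A) = K_r(A_{I,[n]})$ almost surely, so this polytope is a function of the rows of $A$ indexed by $I$ alone. The crucial point is the equivalence $1 \Leftrightarrow 3$ in the same lemma: almost surely, $\{I_r(A) = I\}$ coincides with the event that $(A_{j,[n]})^\top \in K_r(A_{I,[n]})$ for every $j \in [n] \setminus I$.

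Next I would condition on the rows $A_{I,[n]}$. Since the remaining $n-r$ rows of $A$ are i.i.d.\ standard Gaussian vectors in $\R^n$, independent of $A_{I,[n]}$, the conditional probability that all of them fall into $K_r(A_{I,[n]})$ equals $\sigma_n(K_r(A_{I,[n]}))^{n-r}$. Thus
$$
\Prob\{I_r(A) = I \mid A_{I,[n]}\} = \sigma_n\bigl(K_r(A_{I,[n]})\bigr)^{n-r}\quad\text{a.s.}
$$
Summing over all $\binom{n}{r}$ subsets $I$ of size $r$ and using the displayed identity together with the fact that $K_r(A) = K_r(A_{I,[n]})$ on $\{I_r(A)=I\}$,
\begin{align*}
\Prob\{\sigma_n(K_r(A)) \leq n^{-t}\} &= \sum_{|I|=r}\Prob\bigl\{\sigma_n(K_r(A_{I,[n]})) \leq n^{-t},\,I_r(A)=I\bigr\}\\
&= \sum_{|I|=r}\Exp\Bigl[\mathbbm{1}_{\{\sigma_n(K_r(A_{I,[n]}))\le n^{-t}\}}\,\sigma_n\bigl(K_r(A_{I,[n]})\bigr)^{n-r}\Bigr]\\
&\le \binom{n}{r}\,n^{-t(n-r)}.
\end{align*}
Finally, $\binom{n}{r}=\binom{n}{n-r}\leq n^{n-r}$, so for $t \geq 2$ this is at most $n^{(1-t)(n-r)} \leq n^{-t(n-r)/2}$, which is the required bound.

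Conceptually, the only delicate step is organizing the conditioning correctly: one wants to treat $K_r(A)$ as a deterministic polytope while simultaneously using the randomness of the very rows that generated it. Lemma~\ref{lem: secPolyBasic} resolves this cleanly by reformulating $\{I_r(A)=I\}$ as the statement that all non-pivot rows land in a polytope determined by the pivot rows; after that, the argument is essentially a moment-type computation, with the $(n-r)$-th power of the polytope's Gaussian measure (the cost of $I_r(A) = I$) more than compensating the $\binom{n}{r}$ union-bound loss precisely when $t \geq 2$.
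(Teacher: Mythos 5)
Your proposal is correct and follows essentially the same route as the paper: decompose over the realizations $I_r(A)=I$, use Lemma~\ref{lem: secPolyBasic} to rewrite $\{I_r(A)=I\}$ as the event that all non-pivot rows fall in $K_r(A_{I,[n]})$, exploit the independence of that polytope from the remaining $n-r$ Gaussian rows to pick up the factor $\sigma_n(K_r(A_{I,[n]}))^{n-r}\le n^{-t(n-r)}$, and absorb the $\binom{n}{r}\le n^{n-r}$ union-bound loss using $t\ge 2$. The only cosmetic difference is that you phrase the key step as a conditional expectation identity while the paper bounds the corresponding conditional probability; the content is identical.
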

\begin{proof}
We start by writing
\begin{align*}
\Prob \{ \sigma_n(K_r(A)) \le n^{-t} \} 
& = 
\sum_{ I \subset [n],\, |I|=r}
\Prob \big\{ I_r(A) = I \mbox{ and }\sigma_n(K_r(A)) \le n^{-t} \big\}.
\end{align*}
For each summand, we apply Lemma~\ref{lem: secPolyBasic} to get
\begin{align}
	 \Prob &\big\{ I_r(A) = I \mbox{ and }\sigma_n(K_r(A)) \le n^{-t} \big\} \nonumber \\
	&=	\Prob \big\{ I_r(A) = I \mbox{ and } \sigma_n(K_r(A_{I,[n]})) \le n^{-t}  \big\} 
	\nonumber \\
	&=	\Prob \Big\{  \sigma_n(K_r(A_{I,[n]})) \le n^{-t} 
		\mbox{ and } \forall i \in [n]\backslash I,\, 
               (A_{i,[n]})^\top  \in K_r(A_{I,[n]}) \Big\}. \label{eq: poly1}
\end{align}
Since $K_r(A_{I,[n]}) $ and $(A_{i,[n]})^\top$ for
$i\in [n]\backslash I$ are independent, we get
\begin{align*}
	\eqref{eq: poly1} \;&\le   \Prob \big\{  \sigma_n(K_r(A_{I,[n]})) \le n^{-t} \big\}
	\cdot \Prob \Big \{ \forall i \in [n]\backslash I,\, 
               (A_{i,[n]})^\top  \in K_r(A_{I,[n]}) \; \big\vert \;
               \sigma_n(K_r(A_{I,[n]})) \le n^{-t} \Big\} \\
	&\le 1\cdot (n^{-t})^{n-r}. 
\end{align*}
Finally, in view of
the standard bound ${ n \choose n-r} \le n^{n-r} $ for the number of subsets $I \subset [n]$ of size $r$,
and by the union bound argument, the result follows.
\end{proof}

\begin{lemma} \label{lem: polyToDist}
Let $A$ be an $n\times n$ Gaussian matrix, and let $r\in[n]$ and $\tau\in(0,1)$
be parameters.
Then, conditioned on the event $\big\{\sigma_n(K_r(A)) \ge \tau\big\}$,
\begin{align*}
		\dist( H ,  (A_{i_r,[r]})^\top )  \ge 
		\sqrt{\frac{\pi}{2} }\, \tau,
	\end{align*}
where $H$ is the subspace of $\R^r$ spanned by vectors
$(A_{i_s,[r]})^\top$, $s\in [r-1]$. 
\end{lemma}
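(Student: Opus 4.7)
The plan is to exploit the observation that each defining constraint of $K_r(A)$ is a slab perpendicular to $v_s(A)$, and that the vector $v_r(A)$ is (up to normalization) the unit normal to the hyperplane $H$.

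First I would reduce the problem from $\R^n$ to $\R^r$ using the identity \eqref{eq: polyDim}, which gives
$$\sigma_n(K_r(A)) = \sigma_r(K_r(A_{[n],[r]})).$$
This is the right place to work because the remaining polytope $K_r(A_{[n],[r]}) \subset \R^r$ has among its defining constraints the single constraint
$$|\langle v_r(A), x\rangle| \le |\langle v_r(A), (A_{i_r,[r]})^\top\rangle|,$$
(treating $v_r(A)$ as its nonzero $r$-dimensional truncation), so $K_r(A_{[n],[r]})$ is contained in the symmetric slab
$$S := \bigl\{ x \in \R^r : |\langle v_r(A), x\rangle| \le |\langle v_r(A), (A_{i_r,[r]})^\top\rangle| \bigr\}.$$

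Second, I would identify the direction $v_r(A)/\|v_r(A)\|$ as the unit normal to $H$ inside $\R^r$. By its definition, $v_r(A)$ is a null vector of $A_{I_{r-1},[r]}$, which means $v_r(A)$ is orthogonal to each row $(A_{i_s,[r]})^\top$ for $s \in [r-1]$, i.e.\ to the subspace $H$. Assuming $H$ has dimension $r-1$ (which holds almost surely in the Gaussian setting), the distance formula gives
$$\dist\bigl(H, (A_{i_r,[r]})^\top\bigr) = \frac{|\langle v_r(A), (A_{i_r,[r]})^\top\rangle|}{\|v_r(A)\|_2}.$$

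Third, I would estimate $\sigma_r(S)$ from above by projecting onto the one-dimensional span of $v_r(A)$: since the pushforward of the standard Gaussian measure on $\R^r$ under $x \mapsto \langle v_r(A)/\|v_r(A)\|_2, x\rangle$ is the standard Gaussian on $\R$,
$$\sigma_r(S) = \Prob\!\left(|Z| \le \frac{|\langle v_r(A), (A_{i_r,[r]})^\top\rangle|}{\|v_r(A)\|_2}\right) \le \sqrt{\tfrac{2}{\pi}}\, \dist\bigl(H, (A_{i_r,[r]})^\top\bigr),$$
using the elementary bound $\Prob(|Z|\le t) \le t\sqrt{2/\pi}$ coming from the standard normal density at the origin.

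Combining the three steps,
$$\tau \le \sigma_n(K_r(A)) = \sigma_r(K_r(A_{[n],[r]})) \le \sigma_r(S) \le \sqrt{\tfrac{2}{\pi}}\, \dist\bigl(H, (A_{i_r,[r]})^\top\bigr),$$
which rearranges to the desired $\dist(H, (A_{i_r,[r]})^\top) \ge \sqrt{\pi/2}\, \tau$. There is no real obstacle beyond bookkeeping; the only points to be careful about are (i) checking that $v_r(A)$ indeed sits in the kernel of $A_{I_{r-1},[r]}$ (immediate from the definition of $v_r$), and (ii) handling the measure-zero event where $H$ fails to have full dimension $r-1$, which can be ignored since the hypothesis $\sigma_n(K_r(A))\ge\tau>0$ forces $v_r(A)\ne 0$ and $H$ has dimension exactly $r-1$ almost surely under the continuous distribution of $A$.
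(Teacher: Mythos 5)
Your proof is correct and follows essentially the same route as the paper's: restrict $K_r$ to the single slab constraint coming from $v_r(A)$, recognize $v_r(A)/\|v_r(A)\|_2$ as the unit normal to $H$ so the slab half-width equals $\dist(H,(A_{i_r,[r]})^\top)$, and bound the Gaussian measure of that slab by its width times $\sqrt{2/\pi}$. The only cosmetic difference is that you reduce to $\R^r$ via \eqref{eq: polyDim} first, while the paper stays in $\R^n$ and instead uses that $v_r(A)$ is supported on the first $r$ coordinates.
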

\begin{proof}
  Let $v := v_r(A)/ \|v_r(A)\|_2$ and let
  $P:\R^n\to\R^r$ be the orthogonal projection onto the span of $\{e_s\}_{s\in[r]}$. 
From the definition of $v$, we have that
$Pv$ is a unit normal to the hyperplane
$H$ in $\R^r$. Then
$$
\dist( H ,  (A_{i_r,[r]})^\top )
                =  |\langle Pv, (A_{i_r,[r]})^\top \rangle| = |\langle v, (A_{i_r,[n]})^\top \rangle| :=s.
$$
It remains to note that, by the definition of $K_r(A)$, on the event
$\big\{\sigma_n(K_r(A)) \ge \tau\big\}$ we have 
\begin{align*}
		\tau &\le  \sigma_n( K_r(A) ) 
		\le \sigma_n\big( \{ x \in \mathbb{R}^n\,:\, 
			| \langle v_r(A), x \rangle | \le | \langle v_r(A), (A_{i_r,[n]})^\top \rangle | 
			 \} \big) \\
			 & = \sigma_n\big( \{ x \in \R^n \,:\, 
			| \langle v, x \rangle | \le s
			 \} \big) 
		=  \int_{ -s }^{s}  \frac{1}{\sqrt{2\pi}} \exp( - t^2/2) {\rm d}t 
		\le \frac{2s}{\sqrt{2\pi}}.
	\end{align*}
\end{proof}

As a corollary, we obtain the following probabilistic bound
on the distance between $(A_{i_r,[r]})^\top$
and the span of ``previous'' rows (selected at previous steps of the GEPP process)
$(A_{i_s,[r]})^\top$, $s\in [r-1]$:
\begin{cor} \label{cor: polyDist}
Let $A$ be an $n\times n$ Gaussian matrix. For $ t \ge 2$ and $ r \in [n-1]$, with probability at least $1-n^{ - t(n-r)/2 }$ we have 
$$
\dist( H ,  (A_{i_r,[r]})^\top ) \geq \sqrt{\frac{\pi}{2} }\, n^{-t},
$$
where $H$ is the random subspace of $\R^r$
spanned by vectors $(A_{i_s,[r]})^\top$, $s\in [r-1]$. 
\end{cor}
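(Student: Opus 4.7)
The plan is to deduce the corollary immediately by chaining Proposition~\ref{prop: polyMain} with Lemma~\ref{lem: polyToDist}, with $\tau$ chosen equal to $n^{-t}$.

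First I would invoke Proposition~\ref{prop: polyMain} with this parameter $t$, which yields
$$
\Prob\big\{\sigma_n(K_r(A)) \le n^{-t}\big\} \le n^{-t(n-r)/2},
$$
so the complementary event $\mathcal{E}:=\{\sigma_n(K_r(A)) \ge n^{-t}\}$ has probability at least $1-n^{-t(n-r)/2}$.

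Next I would apply Lemma~\ref{lem: polyToDist} with $\tau := n^{-t}$, which requires $\tau \in (0,1)$; this holds since $t\ge 2$ and $n\ge 2$ (the case $n=1$ is vacuous because we need $r\in[n-1]$). The lemma asserts that conditioned on $\mathcal{E}$, the distance $\dist(H, (A_{i_r,[r]})^\top)$ is at least $\sqrt{\pi/2}\,\tau = \sqrt{\pi/2}\, n^{-t}$, where $H$ is exactly the span of the vectors $(A_{i_s,[r]})^\top$ for $s\in[r-1]$ described in the statement.

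Combining these two observations gives the bound on the same event $\mathcal{E}$, whose probability is at least $1-n^{-t(n-r)/2}$, yielding the corollary. There is no real obstacle here beyond verifying the mild parameter conditions $\tau\in(0,1)$ and $t\ge 2$ needed to call the two preceding results; all heavy lifting was done in Proposition~\ref{prop: polyMain} (the subset-counting union bound via Lemma~\ref{lem: secPolyBasic}) and in Lemma~\ref{lem: polyToDist} (the identification of $\dist(H,(A_{i_r,[r]})^\top)$ with the half-width of the strip $\{|\langle v_r(A), x\rangle| \le s\}$ whose Gaussian measure lower-bounds $\sigma_n(K_r(A))$).
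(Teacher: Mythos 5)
Your proposal is correct and matches the paper's proof exactly: the corollary is obtained by combining Proposition~\ref{prop: polyMain} (to bound the probability that $\sigma_n(K_r(A)) < n^{-t}$) with Lemma~\ref{lem: polyToDist} applied with $\tau = n^{-t}$. The additional parameter checks you mention are harmless but not needed beyond what the two cited results already require.
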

\begin{proof}
In view of Lemma~\ref{lem: polyToDist}, the statement would follow
as long as $\Prob\big\{\sigma_n(K_r(A)) \ge n^{-t}\big\}\geq
1-n^{ - t(n-r)/2 }$.
The latter is verified in Proposition~\ref{prop: polyMain}.
\end{proof}

\section{A recursive argument}\label{s: fpenfqpwiufnpin}

The goal of this section is to bound from below the
intermediate singular values $s_{r-k}(A_{I_r,[r]})$ for every $r$ greater than some absolute constant and for $k$ of a constant order. 
We will start with bounding the intermediate singular values in the bulk of the singular spectrum first and then will recursively apply Proposition~\ref{prop: sec4Main} to provide lower bounds for smaller and smaller intermediate singular values. 

As we mentioned in the overview of the proof, the
intermediate singular values $s_{r-k}(A_{I_r},[r])$ for $k \gg r^{1/2}\,{\rm polylog}(n)$
can be easily estimated from below with high probability
by taking the union bound over the estimates
of $s_{r-k}(A_{I,[r]})$ (see Proposition \ref{p: small sing shifted})
for $I \subset[n]$ with $|I|=r$.
To bound $s_{r-k}(A_{I_r,[r]})$ from below for smaller values of $k$ we apply the following strategy. We choose an appropriate positive integer $r'<r$, condition on a realization of $I_{r'}$ and $A_{I_{r'},[r']}$, and,
for any $I$ with $I_{r'} \subset I \subset [n]$ and $|I|=r$, apply Proposition~\ref{prop: sec4Main} with $B:=A_{I,[r]}$ and $F:=A_{I_{r'},[r']}$. This way,
$s_{r-k}(A_{I,[r]})$ is bounded below with high probability conditioned on
an event that the intermediate singular values
$s_{r-k'}(A_{I_{r'},[r']})$ are well bounded for every $k'$ slightly bigger than $k$.

\medskip

\begin{defi}\label{def: eventIntSing}
For an integer $k \in [n]$ and parameters
$p,\beta \ge 1$, let $\Event_{\rm is}(p,k,\beta)$ be the event that 
\begin{align*}
	\forall r \in [k+1,n],\quad\quad
			s_{r - k}(A_{I_r,[r]}) \ge n^{-\beta/(50p)}.
\end{align*}
and $\Event_{\rm rec}(p,k,\beta)$ be the event that 
\begin{align*}
	\forall r \in [k+1,n-2k],\quad\quad
			s_{\min}\big((A_{I_r,[r+2k]})^\top\big) \ge n^{-\beta/(20p)}.
\end{align*}
\end{defi}
Note that although $n$ is not mentioned explicitly in the list of parameters
for $\Event_{\rm is}(p,k,\beta)$, it clearly depends on the underlying matrix dimension.

\medskip

The next proposition is the main result of this section:
\begin{prop}\label{prop: sec5Main}
There is a universal constant $C>0$ with the following property.
Let $p \ge 1$. Then there exist positive integers $n_0:=n_0(p)$, $120p\leq k_0:=k_0(p)\leq Cp$, and a positive real number $300p \le \beta_0:=\beta_0(p)\leq Cp$, so that for any $n\ge n_0$ and $\beta \ge \beta_0$, 
\begin{align*}
		\Prob \big( \Event_{\rm is}(p,k_0(p),\beta)^c \big)
	\le n^{-2\beta+o_n(1)}. 
\end{align*}
\end{prop}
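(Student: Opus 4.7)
The proof proceeds by downward induction on $k$, starting from a bulk regime $k \asymp \sqrt{n}\,\mathrm{polylog}(n)$ where a straightforward union bound suffices, and descending geometrically to $k_0$ of order $p$. At each descending step, Proposition~\ref{prop: sec4Main} is applied with $F$ taken to be a pivot submatrix from the previous step. The delicate point is that conditioning on which indices form $I_{r'}$ destroys the Gaussianity of the remaining rows; this is handled by the polytope machinery of Section~\ref{fepijnfeofiniqwjnpij}.

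\textbf{Base case.} Fix $k_J \asymp \sqrt{n \log n}$. For each $I \subset [n]$ with $|I| = r$, the matrix $A_{I,[r]}$ is a standard $r\times r$ Gaussian, and Proposition~\ref{p: small sing shifted} gives
$$
\Prob\bigl\{ s_{r - k_J}(A_{I,[r]}) \le n^{-\beta/(50p)} \bigr\} \le r^{k_J/2}\,n^{-\beta k_J^2 / (1600 p)}.
$$
With $k_J \gtrsim \sqrt{n \log n}$, the superpolynomial decay in $k_J$ absorbs the union bound over the at most $\binom{n}{r} \le 2^n$ choices of $I$ and over $r \le n$. This establishes $\Event_{\rm IntSing}(p, k_J, \beta)$ with probability at least $1 - n^{-3\beta}$.

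\textbf{Recursive descent.} Define a geometric sequence $k_J > k_{J-1} > \dots > k_0 \ge 120p$ with ratio $1 + \tilde\varepsilon$, for some $\tilde\varepsilon \asymp 1/p$ compatible with the constraints of Proposition~\ref{prop: sec4Main}. At stage $j \to j - 1$, assume $\Event_{\rm IntSing}(p, k_j, \beta_j)$ holds, and for each $r \in [k_{j-1}+1, n]$ choose $r' = r - \lfloor \tilde\varepsilon r \rfloor$. Partition
$$
A_{I_r, [r]} = \begin{bmatrix} A_{I_{r'},[r']} & A_{I_{r'},[r'+1,r]} \\ A_{I_r \setminus I_{r'},[r']} & A_{I_r \setminus I_{r'},[r'+1,r]} \end{bmatrix},
$$
with $F := A_{I_{r'},[r']}$. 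The singular values of $F$ satisfy the growth condition \eqref{p498h3-98y30t8743yt83} with function $g(\cdot) = n^{-\beta_j/(50p)}$ scaled appropriately, by the inductive hypothesis. Proposition~\ref{prop: sec4Main} applied at depth $i = i(j)$ then yields the required lower bound on $s_{r - k_{j-1}}(A_{I_r,[r]})$, \emph{provided the lower block is independent Gaussian}.

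\textbf{Decoupling the dependencies.} This is the principal obstacle: conditioning on $I_{r'}(A) = I_{r'}$ constrains each non-pivot row $A_{j,[n]}$ to lie in the random polytope $K_{r'}(A)$. We combine two devices. First, condition additionally on $\{\sigma_n(K_{r'}(A)) \ge n^{-t}\}$, which by Proposition~\ref{prop: polyMain} costs at most $n^{-t(n-r')/2}$ in probability, for $t$ to be chosen. Second, on this good event, Lemma~\ref{lem: secPolyBasic} allows us to pay a factor of $\sigma_n(K_{r'}(A))^{-(n-r')} \le n^{t(n-r')}$ and replace the constrained rows by i.i.d.\ standard Gaussians; we then apply Proposition~\ref{prop: sec4Main} to each fixed $I \supset I_{r'}$ with $|I| = r$ and union-bound over the at most $\binom{n - r'}{r - r'}$ such $I$. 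The superpolynomial tail in Proposition~\ref{prop: sec4Main}, roughly $x^{\tilde\varepsilon x/2} h^{(\tilde\varepsilon x)^2 / 64}$ with $\tilde\varepsilon x \gtrsim p$, easily absorbs both the union bound and the decoupling multiplier for suitable $h, t$.

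\textbf{Main difficulty.} The calibration is the bottleneck: at every one of the $J \asymp p \log n$ recursive stages, the parameters $(\tilde\varepsilon, h, i, r', x, t)$ must be chosen to simultaneously satisfy the structural hypotheses of Proposition~\ref{prop: sec4Main}, preserve the growth condition \eqref{p4t8yfip-98y-8} on $g$ so it can be re-used in the next stage, and keep the per-stage probability loss at $n^{-\Omega(\beta)}$ so that the cumulative loss is only $n^{-2\beta + o_n(1)}$. No new probabilistic idea is required beyond those already present in Sections~\ref{s: poienfpq49f3p9-} and~\ref{fepijnfeofiniqwjnpij}, but verifying the chain of inequalities across all $J$ stages with constants independent of $n$ is the main technical burden.
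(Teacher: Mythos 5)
Your high-level outline (dyadic/geometric descent on the controlled singular index, feeding Proposition~\ref{prop: sec4Main} at each stage via the polytope machinery) matches the structure of the paper's proof, which is split between Lemma~\ref{lem: sec5Main}, the base case Lemma~\ref{lem: sec5Base}, and the inductive step Lemma~\ref{lem: sec5Ind}. However, there is a substantive error in your decoupling accounting that would make the argument fail.

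You claim the decoupling cost when passing from the constrained (conditioned) rows to i.i.d.\ Gaussians is $\sigma_n(K_{r'}(A))^{-(n-r')}\le n^{t(n-r')}$, i.e.\ you pay the density ratio for \emph{all} $n-r'$ non-pivot rows. This multiplier has exponent $\Theta(n)$ for $r'$ bounded away from $n$, and the tail from Proposition~\ref{prop: sec4Main} is only polynomial in $n$ with exponent governed by $\tilde\varepsilon x$ and $\log(1/h)$; to overcome a multiplier of size $n^{\Theta(n)}$ you would need $h$ exponentially small in $n$, which makes the resulting singular-value threshold $\tau=h^4\,g(\cdot)$ exponentially small rather than polynomial, destroying the statement you are trying to prove. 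The correct accounting is that only the $x=r-r'$ new rows (those indexed by $J=I\setminus I_{r'}$) need to be decoupled: in the block
\[
\begin{bmatrix}A_{I_{r'},[r']}&A_{I_{r'},[r'+1,r]}\\ A_{J,[r']}&A_{J,[r'+1,r]}\end{bmatrix},
\]
only the lower-left $x\times r'$ block $A_{J,[r']}$ is constrained by $\{I_{r'}(A)=I_{r'}\}$; the blocks $A_{I_{r'},[r'+1,r]}$ and $A_{J,[r'+1,r]}$ live in columns beyond $r'$ and are independent of the pivot history. The paper achieves the exponent $x$ by writing the conditional probability as a ratio (see the display~\eqref{pifnfq034ufn0384nqpf34npu} in Lemma~\ref{lem: sec5Ind}) in which the polytope constraints on the remaining $n-r'-x$ non-pivot rows appear identically in both numerator and denominator and cancel, leaving only a factor $n^{\alpha x/2}$ from bounding $\Prob\{\forall j\in J: (A_{j,[r']})^\top\in K\}\ge n^{-\alpha x/2}$. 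This is not a mere calibration nuisance: the exponent-$x$ bookkeeping is the mechanism that makes the whole recursion close, and with your exponent-$(n-r')$ version it does not.

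A secondary but real gap is the base case. Taking $k_J\asymp\sqrt{n\log n}$ cannot cover small $r$: for $r\le k_J$ the index $r-k_J$ is not even defined, yet $\Event_{\rm IntSing}$ requires control at every $r\in[k_0+1,n]$, and $k_0=O(p)$ can be far below $\sqrt{n}$. The paper handles this by working on dyadic scales $r\in[m_s,m_{s+2}]$ with $m_0=\lceil L/\tilde\varepsilon^5\rceil=O_p(1)$, and at each scale the base case (Lemma~\ref{lem: sec5Base}) controls $s_{r-q}$ for $q$ a \emph{constant fraction} of $m_s$, which then feeds a recursion (over the depth index $i$, with the growth function $g_s$) that terminates after $O(\log m_s/\tilde\varepsilon)$ steps with a fixed target $k_0=\lceil 9L/\tilde\varepsilon\rceil$ independent of $s$. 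Relatedly, your parameter choice $\tilde\varepsilon\asymp 1/p$ with $x\asymp k\asymp p$ gives $\tilde\varepsilon x=O(1)$, violating the hypothesis $\tilde\varepsilon x\ge 4$ of Proposition~\ref{prop: sec4Main}; the paper instead fixes $\tilde\varepsilon=1/100$ and makes $L$ (hence $m_0$, $k_0$, $\beta_0$) scale with $p$, which forces $\tilde\varepsilon x\gtrsim L\gtrsim p$ along the recursion.
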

We remark that the lower bounds on $k_0(p)$ and $\beta_0(p)$ in the assumptions of the proposition are not required in the proof but will be needed later.
As a corollary of the proposition (proved in the end of this section), we have
\begin{cor} \label{cor: sminTall}
For any $p\ge 1$, $\beta \ge \beta_0(p)$,
\begin{align} \label{eq: sminTall}
\Prob\big(  \Event_{\rm rec}(p,k_0(p), \beta)^c \big) =
n^{-2\beta +o_n(1)}. 
\end{align}
\end{cor}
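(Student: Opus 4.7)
The plan is to combine Proposition~\ref{prop: sec5Main} with a direct ``column--extension'' argument that promotes an intermediate singular value bound for the square matrix $A_{I_r,[r]}$ into a smallest singular value bound for the rectangular matrix $A_{I_r,[r+2k_0]}$. Throughout, abbreviate $k:=k_0(p)$, $\sigma := n^{-\beta/(50p)}$, $\tau := n^{-\beta/(20p)}$, so that $\tau/\sigma = n^{-3\beta/(100p)}$, which is $\le n^{-9}$ once $\beta \ge \beta_0(p) \ge 300p$.

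Fix $r \in [k+1, n-2k]$ and condition on $\mathcal{F}_r := \sigma(A_{[n],[r]})$. Under this conditioning the pivot set $I_r$ and the matrix $F:=A_{I_r,[r]}$ are deterministic, while the block $M := A_{I_r,[r+1,r+2k]}$ remains a genuine $r\times 2k$ standard Gaussian matrix, since its entries come from columns $r+1,\ldots,r+2k$, which are independent of $\mathcal{F}_r$. Let $U_s$ be the $r\times k$ isometry whose columns are the left singular vectors of $F$ for its smallest $k$ singular values; $U_s$ is $\mathcal{F}_r$-measurable. By rotational invariance, $U_s^\top M$ is then a $k\times 2k$ standard Gaussian matrix conditionally on $\mathcal{F}_r$. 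Setting $\tau_\ast := 4\sqrt{n}\,\tau/\sigma$, define
$$
\Event_M(r) := \bigl\{ s_k(U_s^\top M) \geq \tau_\ast\bigr\}\;\cap\;\bigl\{ \|M\|_{op} \leq 2\sqrt{n}\bigr\}.
$$
Standard Gaussian concentration of the operator norm, together with the Edelman--type small--ball bound $\Pr\{s_k(G)\le t\} \le (Ct)^{k+1}$ for a $k\times 2k$ standard Gaussian $G$, give $\Pr(\Event_M(r)^c) \le 3 n^{-2\beta-1}$, using $\beta \geq 300p$ and $k \geq 120p$ to make the exponents work out.

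The key deterministic step is: on $\Event_{\rm IntSing}(p,k,\beta) \cap \Event_M(r)$, we have $s_{\min}(A_{I_r,[r+2k]}) \ge \tau$. For a unit $v\in\R^r$, decompose $v=v_b+v_s$ with $v_b \in \mathrm{range}(U_s)^\perp$ and $v_s \in \mathrm{range}(U_s)$. Orthogonality of the right singular subspaces of $F$ yields $\|v^\top F\|_2^2 \geq \sigma^2 \|v_b\|_2^2$, so
$$
\|v^\top [F\mid M]\|_2^2 \;\ge\; \sigma^2\|v_b\|_2^2 + \|v^\top M\|_2^2.
$$
If $\|v_b\|_2 \ge \tau/\sigma$ the first term already exceeds $\tau^2$. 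Otherwise $\|v_s\|_2 \ge 1/\sqrt{2}$ and the triangle inequality gives
$$
\|v^\top M\|_2 \;\ge\; \|v_s^\top M\|_2 - \|v_b^\top M\|_2 \;\ge\; s_k(U_s^\top M)\|v_s\|_2 - \|M\|_{op}\|v_b\|_2 \;\ge\; \tfrac{\tau_\ast}{\sqrt 2} - 2\sqrt{n}\cdot\tfrac{\tau}{\sigma} \;\ge\; \tau,
$$
by the choice of $\tau_\ast$. Taking the infimum over $v$ yields the claim.

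Union bounding $\Event_M(r)^c$ over $r\in[k+1,n-2k]$ contributes at most $3n^{-2\beta}$, which combined with $\Pr(\Event_{\rm IntSing}(p,k,\beta)^c) \le n^{-2\beta + o_n(1)}$ from Proposition~\ref{prop: sec5Main} yields the corollary. The main obstacle is the calibration of the two singular value scales: the ``leakage'' term $\|M\|_{op}\|v_b\|_2$ can be as large as $\sqrt{n}\,\tau/\sigma$, vastly exceeding the target $\tau$; this is only absorbed because the ratio $\tau/\sigma = n^{-3\beta/(100p)}$ decays as a strong polynomial, and this is precisely what forces the lower bounds $\beta_0(p)\ge 300p$ and $k_0(p)\ge 120p$ declared in Proposition~\ref{prop: sec5Main} (so that also the small--ball bound $(C\tau_\ast)^{k+1}$ beats $n^{-2\beta-1}$).
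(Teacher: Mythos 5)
The proposal is correct and follows essentially the same approach as the paper: conditioning on $\sigma(A_{[n],[r]})$ to freeze $I_r$ and the singular subspace of $A_{I_r,[r]}$ associated with its $k_0$ smallest singular values, then using the restriction of the independent Gaussian extension block $A_{I_r,[r+1,r+2k_0]}$ to this subspace to control $s_{\min}$ via a two-case decomposition. The differences are cosmetic: you parametrize the thresholds slightly differently, bound $\|M\|_{op}$ rather than the restricted operator $W$ as the paper does, and invoke a generic Edelman-type small-ball bound for the $k_0\times 2k_0$ Gaussian rather than re-deriving it through the column-projection argument the paper spells out.
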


Now, we present a technical version of the above proposition. We
introduce several ``section-level'' parameters.
Let $\tilde \varepsilon>0$ be a small constant
and $L$ be a large integer to be determined later. The parameter $\tilde \varepsilon$ will play the same role as in Proposition~\ref{prop: sec4Main}.
Next, let $$m_0:= \lceil L/ \tilde \varepsilon^5 \rceil$$
and let $s_1$ be the smallest integer such that $ 2^{s_1} m_0 \ge n$. Then we define the finite sequence
$m_1,\dots,m_{s_1+1}$, where
$$
	\forall s \in [s_1-1],\quad m_s := 2^s m_0 \quad
	\mbox{ and }\quad m_{s_1+1} := n. 
$$
The main technical result in this section is the following
\begin{lemma} \label{lem: sec5Main}
  Fix $\tilde \varepsilon \in (0, 1/100]$ and $L\ge 1/ \tilde \varepsilon$. Then there exists a positive integer $n_0$ (depending on $\tilde \varepsilon$ and $L$)
  such that for any $n \ge n_0$ and $s \in [0, s_1-1]$, we have for every $\alpha\geq 4$:
\begin{align}
		\Prob \big\{  \exists r \in [m_{s+1}, m_{s+2}]
			\mbox{ s.t.\ } 
			s_{r - \lceil 9L/ \tilde \varepsilon \rceil }(A_{I_r,[r]}) 
			\le
			n^{-C( \tilde \varepsilon )\alpha }
			\big\}
	\le 
		n^{-c( \tilde \varepsilon) \alpha L },
	\end{align}
where $ c( \tilde \varepsilon )$ and $C( \tilde \varepsilon )$ are positive
constants which depend on $c, \tilde C$ from Proposition \ref{prop: sec4Main} and on $ \tilde \varepsilon$. 
\end{lemma}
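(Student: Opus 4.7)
My plan is to prove Lemma~\ref{lem: sec5Main} by induction on $s$, with a strengthened inductive hypothesis. At each level $s$ I will establish not only a lower bound on $s_{r-\lceil 9L/\tilde\varepsilon\rceil}(A_{I_r,[r]})$ but an entire growth profile
\begin{equation*}
s_{\lfloor(1-(1+\tilde\varepsilon)^{-j})r\rfloor}\bigl(A_{I_r,[r]}\bigr) \ge g_s(j),\quad j=1,\dots,j_s^{\max},\quad r\in[m_{s+1},m_{s+2}],
\end{equation*}
where $g_s$ satisfies the doubling condition \eqref{p4t8yfip-98y-8}. This strengthening is forced on us by Proposition~\ref{prop: sec4Main}, which requires a growth condition on its inner block $F$ at \emph{every} depth $j=1,\dots,i$ in order to deliver a single output bound.

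For the base case $s=0$ the range $r\in[m_1,m_2]$ gives $r=O(L/\tilde\varepsilon^5)$, so the direct union bound is affordable: apply Proposition~\ref{p: small sing shifted} to each square Gaussian matrix $A_{I,[r]}$ with $I\subset[n]$, $|I|=r$, at each depth $j$, and sum over the $\binom{n}{r}\leq n^r$ choices of $I$. The small-ball bound $r^{i/2}s^{i^2/32}$ at depth $j$ (with $i=r-\lfloor(1-(1+\tilde\varepsilon)^{-j})r\rfloor$) dominates the $n^r$ union-bound factor once $i^2\gg r$, which holds throughout the admissible range of $j$ thanks to the choice $m_0=\lceil L/\tilde\varepsilon^5\rceil$.

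For the inductive step from level $s-1$ to level $s$, fix $r\in[m_{s+1},m_{s+2}]$ and an admissible depth $i$, and pick $r'\in[m_s,m_{s+1}]$ comparable to $r/(1+\tilde\varepsilon)$ so that $x:=r-r'$ together with $i,\tilde\varepsilon,r'$ satisfy the parameter constraints of Proposition~\ref{prop: sec4Main}. Condition on realizations of $I_{r'}(A)$ and of $A_{I_{r'},[r']}$; on the inductive event, the singular spectrum of $F:=A_{I_{r'},[r']}$ obeys \eqref{p498h3-98y30t8743yt83} with $g=g_{s-1}$. For every $I\supset I_{r'}(A)$ with $|I|=r$, Proposition~\ref{prop: sec4Main} applied to $B:=A_{I,[r]}$ then yields a lower bound on $s_{\lfloor(1-(1+\tilde\varepsilon)^{-i-1})r\rfloor}(A_{I,[r]})$; union-bounding over the $\binom{n-r'}{r-r'}\le n^{x}$ admissible $I$'s and letting $i$ range through the admissible values builds the next profile $g_s$. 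At the final depth, the choice $\lfloor(1-(1+\tilde\varepsilon)^{-j})r\rfloor=r-\lceil 9L/\tilde\varepsilon\rceil$ produces the asserted bound.

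The chief obstacle is that once $I_{r'}(A)$ and $A_{I_{r'},[r']}$ are fixed, the rows $A_{j,[n]}$ with $j\in[n]\setminus I_{r'}$ are \emph{not} unconditional Gaussians: by Lemma~\ref{lem: secPolyBasic} they are Gaussians restricted to the pivot polytope $K_{r'}(A)$. I plan to offset this by further conditioning on $\{\sigma_n(K_{r'}(A))\ge n^{-t}\}$ for a suitable $t=t(\tilde\varepsilon,L,\alpha)$: by Proposition~\ref{prop: polyMain} the complementary event has probability at most $n^{-t(n-r')/2}$, which is harmless for the target tail, while on the good event any Gaussian tail probability for the restricted rows is inflated by at most $n^{t(n-r')}$. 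The super-polynomial factor $h^{(\tilde\varepsilon x)^2/64}$ supplied by Proposition~\ref{prop: sec4Main} (through its free parameter $h$, which I will take as a small negative power of $n$) is then strong enough to absorb both this polytope inflation and the $n^x$ union-bound factor, at the cost only of slightly worse constants $c(\tilde\varepsilon),C(\tilde\varepsilon)$ in the conclusion.
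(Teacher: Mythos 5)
Your plan inverts the paper's recursion structure: the paper recurses on the \emph{depth} index $i$ within a \emph{fixed} size window $[m_s,m_{s+2}]$, whereas you propose to recurse on the window $s$ and establish the full depth profile at each level. This inversion creates three concrete gaps.

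\textbf{Base case.} Your claim that ``$i^2\gg r$ holds throughout the admissible range of $j$'' at $s=0$ is false at the depths the lemma actually needs. To reach $s_{r-\lceil 9L/\tilde\varepsilon\rceil}$, the deepest index satisfies $q:=r-\lfloor(1-(1+\tilde\varepsilon)^{-j})r\rfloor\approx 9L/\tilde\varepsilon$ while $r\approx 4m_0\approx 4L/\tilde\varepsilon^5$, so $q^2/r\approx 81L\tilde\varepsilon^3/4$. Under the hypothesis $L\ge 1/\tilde\varepsilon$ this is only $\gtrsim\tilde\varepsilon^2$, not $\gg 1$, and the bound $r^{q/2}s^{q^2/32}$ from Proposition~\ref{p: small sing shifted} cannot beat the $\binom{n}{r}\le n^r$ union factor. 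The paper only runs the direct union bound for depths $i\le\izero$, where $q\ge\tilde\varepsilon m_s/10$ and hence $q^2\gtrsim\tilde\varepsilon^2 m_s^2\gg m_s\ge r$; deeper depths must be reached via the recursive step, even in the smallest window.

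\textbf{Size mismatch in the inductive step.} For $r\in[m_{s+1},m_{s+2}]$ and depth $i$, the inner matrix has size $r'=f_i(r)=\lfloor r/(1+(1+\tilde\varepsilon)^{-i})\rfloor$. For $i$ near $\ione$ this is $\approx r-L/\tilde\varepsilon$, which for most $r$ in $[m_{s+1},m_{s+2}]$ lies again in $[m_{s+1},m_{s+2}]$, \emph{not} in the previous window $[m_s,m_{s+1}]$. So the level-$(s-1)$ hypothesis never covers the inputs needed for the deeper depths at level $s$. The paper side-steps this by recursing on $i$ with the base case $\Event(r,[\izero])$ established over the \emph{whole} window $[m_s,m_{s+2}]$ at once, and using Lemma~\ref{lem: r_iBound} to confirm the sequence $r_i$ stays within it.

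\textbf{Polytope conditioning loss.} The claimed inflation $n^{t(n-r')}$ is too large: the conditioning is only needed for the $x=r-r'$ rows indexed by $J$, and the actual loss (as in Lemma~\ref{lem: sec5Ind}, via the ratio in~\eqref{pifnfq034ufn0384nqpf34npu}) is $n^{\alpha x/2}$. With your overestimate and the plan to take $h=n^{-\gamma}$ and absorb it via $h^{(\tilde\varepsilon x)^2/64}=n^{-\gamma\tilde\varepsilon^2 x^2/64}$, you would need $\gamma\gtrsim tn/(\tilde\varepsilon x)^2$, which grows unboundedly in $n$ when $x$ is of constant size; but the conclusion of Proposition~\ref{prop: sec4Main} is $s_{\cdots}\ge c'\tilde\varepsilon h^5 g/32$, so $\gamma$ growing with $n$ would force the output singular value bound to decay super-polynomially in $n$, contradicting the $n^{-C(\tilde\varepsilon)\alpha}$ statement you are trying to prove. (The paper's $h_s(i)$ in Definition~\ref{apskjfnoiuoiuniunp} is in fact close to a constant for $i$ near $\ione$, not a decaying power of $n$.)
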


\begin{proof}[Proof of Proposition~\ref{prop: sec5Main}]
Let $\tilde{\varepsilon}:= 1/100$.
We can safely assume that the constants $ c( \tilde \varepsilon )$ and $C( \tilde \varepsilon )$
from Lemma~\ref{lem: sec5Main} satisfy $c( \tilde \varepsilon )\in(0,1]$
and $C( \tilde \varepsilon )\geq 1$.
Choose
\begin{align*}
		L:=\max\bigg(\frac{1}{\tilde \varepsilon},
		80p\, \frac{ C( \tilde{\varepsilon}) }{ c(\tilde{\varepsilon})}\bigg).
\end{align*}	
Let $\beta_0:= \max\{ 4 c(\tilde{\varepsilon})L,300p\}$,
$k_0(p):= \max\{ \lceil 9L/ \tilde \varepsilon \rceil, 120p\}$, and
let $\beta\geq\beta_0$. Applying Lemma~\ref{lem: sec5Main} with
$\alpha\geq 4$ satisfying $ \beta/(40p) = C(\tilde{\varepsilon})\alpha$, we get
$$
\Prob \big\{\exists\,\,r\geq m_1\mbox{ s.t.\ } 
			s_{r - \lceil 9L/ \tilde \varepsilon \rceil }(A_{I_r,[r]}) 
			\le
			n^{-\beta/(40p) }
			\big\}
	\le 
		(s_1+1) n^{-c( \tilde \varepsilon) \alpha L }
		\leq (s_1+1) n^{-2\beta},
$$
implying the result for large enough $n$.
\end{proof}

\bigskip

\noindent {\it{}For the rest of the section, we fix $s \in [0, s_1-1]$.}

\medskip

\subsection{Choice of parameters and the growth function}\label{akjdfnoifuwbfoiubfo}

%For $r \in [m_s,m_{s+2}]$, we want to bound
%$$ 
%s_{ r - \lfloor ( 1 - ( 1+ \tilde \varepsilon)^{-i-1})r \rfloor}(A_{I_r,[r]})
%$$ for various suitable choices of $i$. The choice of these specific intermediate singular values is due to Proposition \ref{prop: sec4Main}. Depending on the values of $\tilde \varepsilon$ and $L$ appeared in Lemma \ref{lem: sec5Base}, we will choose two specific values $\izero \le \ione$ in the sense that we can bound $ 
%s_{ r - \lfloor ( 1 - ( 1+ \tilde \varepsilon)^{-i-1})r \rfloor}(A_{I_r,[r]})
%$ for $i \in [\izero]$ by simply applying Proposition \ref{p: small sing shifted} together with an union bound for every $s_{ r - \lfloor ( 1 - ( 1+ \tilde \varepsilon)^{-i-1})r \rfloor}(A_{I,[r]})$ with $|I|=r$. As for $\ione$, it is chosen so that $r - \lfloor ( 1 - ( 1 + \tilde \varepsilon)^{-\ione-1})r \rfloor \le  9L/ \tilde \varepsilon$. So for $i \in [\izero+1,\ione+1]$, we need to apply Proposition \ref{prop: sec5Main} to obtain the the lower bound for $ s_{ r - \lfloor ( 1 - ( 1+ \tilde \varepsilon)^{-i-1})r \rfloor}(A_{I_r,[r]})$.

\begin{defi}[Definition of $\izero,\ione$, $f_i$, $r_i$]\label{iifrdef}
For a given positive integer $L$ and for $ \tilde \varepsilon \in (0,1/4]$, let $\izero$ be the integer such that 
$$ ( 1 + \tilde \varepsilon)^{-\izero} m_s  \ge  
   \tilde \varepsilon m_s/10 
	> ( 1+ \tilde \varepsilon)^{-\izero-1} m_s
  .$$  
and let $\ione$ be the integer such that 
\begin{align} \label{eq: i_1}
	 ( 1 + \tilde \varepsilon )^{-\ione} m_s \ge L/\tilde \varepsilon
	> ( 1+ \tilde \varepsilon)^{-\ione-1} m_s. 
\end{align}
Note that $\tilde \varepsilon m_s/10\geq L/\tilde \varepsilon$, and hence $\izero\leq \ione$.

\medskip

For every $i \in [\izero,\ione]$, we define a non-decreasing function 
\begin{equation}\label{akejnfpifunfpi3unfp3i}
	f_i(r):= \Big\lfloor \frac{r}{1 + (1 + \tilde \varepsilon )^{-i} } 
	\Big\rfloor,\quad r\in\N. 
\end{equation}
Further, we define a collection of integers $\{r_i\}_{i\in[\ione+1]}$ inductively as follows.
Whenever $i \in [\izero]$, we set $r_i := m_s $.
Further, assuming that $r_i$ has been defined for some $i\in[\izero,\ione]$, we let $r_{i+1}$
be the smallest integer such that $f_i(r_{i+1}) \ge r_i$. 
Note that $m_s=r_1\le r_2 \le \dots \le r_{\ione+1}$.
\end{defi}

We recall our strategy:
to bound the singular value
$s_{\lfloor ( 1 - ( 1+ \tilde \varepsilon)^{-i-1})r \rfloor}(A_{I_r,[r]})$ from below, we will select an appropriate integer $r'<r$ and apply Proposition~\ref{prop: sec4Main} with $B := A_{I,[r]}$ and $F:=A_{I_{r'},[r']}$, taking the union
bound over all subsets $I\subset[n]$ with $I_{r'}\subset I$ and $|I|=r$. The function $f_i$ defined above, determines the choice of $r'$, namely, we choose
 $$
 r' := f_i(r),
 $$
 for $r_i \le r \le m_{s+2}$. The indices $\izero$ and $\ione$ defined above, determine the
 range of application for the inductive strategy; namely, $\ione$ marks the largest index $i$ for which our induction argument can be applied, and $\izero$ indicates a threshold value
 below which the corresponding singular values $s_{\lfloor ( 1 - ( 1+ \tilde \varepsilon)^{-i})r \rfloor}(A_{I_r,[r]})$ concentrate very strongly and can bounded directly with help of Proposition~\ref{p: small sing shifted} and a simple union bound argument.
 
 The goal of this subsection is to verify certain relations between the introduced parameters, that need to be satisfied in order to apply the results on the singular values established earlier. Since the results here are of purely computational nature, we present
 the proofs in the Appendix.

\begin{lemma}[Inequalities for $\ione$] \label{lem: sec5i0i1bound}
Let $ \tilde \varepsilon \in (0, 1/4)$ and $L\ge 1/ \tilde \varepsilon $. For $r \in [m_s, m_{s+2} ]$, 
\begin{align} \label{eq: sec5i1DefBound} 
		r - \lfloor ( 1 - ( 1 + \tilde \varepsilon)^{-\ione-1})r \rfloor 	
	\le  9L/ \tilde \varepsilon. 
	\end{align}
Further, 
\begin{align} \label{eq: sec5i0i1bound}
		\ione \le  2\log (m_s) / \tilde \varepsilon. 
	\end{align}
\end{lemma}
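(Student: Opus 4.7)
The lemma is a purely computational statement about the geometric sequences $\{(1+\tilde\varepsilon)^{-j}m_s\}_j$ and $\{m_s\}_s$, so I would prove both inequalities by direct unrolling of the definitions. The plan is to exploit the two defining inequalities of $\ione$, namely
$$
(1+\tilde\varepsilon)^{-\ione}m_s \;\geq\; L/\tilde\varepsilon \;>\; (1+\tilde\varepsilon)^{-\ione-1}m_s,
$$
together with the elementary geometric-growth bound $m_{s+2}\leq 4\,m_s$ valid for every $s\in[0,s_1-1]$.

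For the first inequality \eqref{eq: sec5i1DefBound}, I would start from $\lfloor y\rfloor\geq y-1$ with $y=(1-(1+\tilde\varepsilon)^{-\ione-1})r$, giving
$$
r-\lfloor(1-(1+\tilde\varepsilon)^{-\ione-1})r\rfloor \;\leq\; (1+\tilde\varepsilon)^{-\ione-1}r+1.
$$
Next I would verify the ratio bound $m_{s+2}\leq 4\,m_s$ by a short case check: for $s\leq s_1-2$ this is immediate from $m_{s+2}=2^{s+2}m_0=4\,m_s$, while for $s=s_1-1$ one uses the minimality of $s_1$ to conclude $m_{s_1+1}=n\leq 2^{s_1}m_0=2\,m_{s_1-1}\leq 4\,m_s$. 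Since $r\leq m_{s+2}\leq 4\,m_s$, the right-hand defining inequality of $\ione$ yields
$$
(1+\tilde\varepsilon)^{-\ione-1}r \;\leq\; 4\,(1+\tilde\varepsilon)^{-\ione-1}m_s \;<\; 4L/\tilde\varepsilon.
$$
Combined with the assumption $L\geq 1/\tilde\varepsilon$ (which gives $1\leq \tilde\varepsilon L\leq 5L/\tilde\varepsilon$), this produces the desired bound $4L/\tilde\varepsilon+1\leq 9L/\tilde\varepsilon$.

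For \eqref{eq: sec5i0i1bound}, I would use the left-hand defining inequality of $\ione$: since $L/\tilde\varepsilon\geq 1$, it gives $(1+\tilde\varepsilon)^{\ione}\leq m_s$, hence
$$
\ione \;\leq\; \frac{\log m_s}{\log(1+\tilde\varepsilon)}.
$$
I would then invoke the standard inequality $\log(1+x)\geq x/2$ for $x\in(0,1/2]$, which applies since $\tilde\varepsilon\in(0,1/4]$, to conclude $\ione\leq 2\log(m_s)/\tilde\varepsilon$. There is no real obstacle here; the only point requiring a bit of care is the boundary case $s=s_1-1$ in the first part, where $m_{s+2}$ is defined as $n$ rather than by the doubling rule, but the minimality of $s_1$ supplies the needed bound.
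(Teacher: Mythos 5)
Your proof is correct and follows essentially the same route as the paper's: floor bound plus $r\le m_{s+2}\le 4m_s$ plus the right-hand defining inequality of $\ione$ for \eqref{eq: sec5i1DefBound}, and the left-hand defining inequality together with $\log(1+x)\ge x/2$ for \eqref{eq: sec5i0i1bound}. The only difference is that you spell out the verification of $m_{s+2}\le 4m_s$ (including the boundary case $s=s_1-1$ via minimality of $s_1$), which the paper uses without comment.
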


\begin{lemma}[Assumptions in Proposition~\ref{prop: sec4Main}] \label{lem: irxepsilon}
Let $\tilde \varepsilon \in (0, \frac{1}{28})$ and $L\ge 4$. Fix $i \in [\izero, \ione]$ and assume that
$\tilde r$ satisfies $r_{i+1}\leq \tilde r\leq m_{s+2}$.
Let $ r : =  f_i( \tilde r) $ and $ x :=  \tilde r - r$. Then, 
\begin{align} \label{eq: xRange} 
		( 1 + \tilde \varepsilon )^{-i} r \le 
		x
		\le \frac{21}{20} ( 1 + \tilde \varepsilon )^{-i} r. 
	\end{align}
Moreover, $ i, r , x$, and $\tilde \varepsilon$ satisfy the assumptions in Proposition \ref{prop: sec4Main}, specifically, 
	\begin{align*}
		&r - \lfloor (1 - ( 1 + \tilde \varepsilon)^{-i}) r \rfloor \le x \le r, & 
		\tilde \varepsilon x &\ge 4, \\ 
		&3 ( 1 + \tilde \varepsilon )^{-i-1} r - ( 1+ \tilde \varepsilon )^{-i}r 
				      \ge x + 1 + 11 \tilde \varepsilon x, &
		\tilde \varepsilon (1 + \tilde \varepsilon )^{-i} r & \ge 2 .
	\end{align*}
\end{lemma}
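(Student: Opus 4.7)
The proof is entirely a rounding/bookkeeping argument, so the plan is to first extract the two-sided estimate \eqref{eq: xRange} directly from the floor definition of $f_i$, and then read off each of the four constraints of Proposition~\ref{prop: sec4Main} from that estimate combined with the global bounds on $i$ and $r$.

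\textbf{Step 1: the estimate on $x$.} From \eqref{akejnfpifunfpi3unfp3i} we have $r(1+(1+\tilde\varepsilon)^{-i})\le\tilde r<(r+1)(1+(1+\tilde\varepsilon)^{-i})$, so
$$ (1+\tilde\varepsilon)^{-i}r \;\le\; x \;<\; (1+\tilde\varepsilon)^{-i}r + 2. $$
The lower bound in \eqref{eq: xRange} is immediate, and the upper bound reduces to checking $(1+\tilde\varepsilon)^{-i}r\ge 40$. For this, I use the chain $r=f_i(\tilde r)\ge f_i(r_{i+1})\ge r_i\ge m_s$ (monotonicity of $f_i$, the inductive definition of $r_{i+1}$, and $r_{\izero}=m_s$) together with \eqref{eq: i_1} to obtain
$$ (1+\tilde\varepsilon)^{-i}r \;\ge\; (1+\tilde\varepsilon)^{-\ione} m_s \;\ge\; L/\tilde\varepsilon \;\ge\; 112, $$
where the last inequality uses $L\ge 4$ and $\tilde\varepsilon\le 1/28$.

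\textbf{Step 2: three of the four constraints.} The $x$-estimate immediately gives $r-\lfloor(1-(1+\tilde\varepsilon)^{-i})r\rfloor=\lceil(1+\tilde\varepsilon)^{-i}r\rceil\le x$, which is the left half of the first constraint. The right half $x\le r$ follows from the upper bound in \eqref{eq: xRange} combined with the fact that the definition of $\izero$ yields $(1+\tilde\varepsilon)^{-\izero}<\tilde\varepsilon(1+\tilde\varepsilon)/10$, which for $\tilde\varepsilon\le 1/28$ forces $\tfrac{21}{20}(1+\tilde\varepsilon)^{-i}\le\tfrac{21}{20}(1+\tilde\varepsilon)^{-\izero}<1$. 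The two remaining sized conditions $\tilde\varepsilon x\ge 4$ and $\tilde\varepsilon(1+\tilde\varepsilon)^{-i}r\ge 2$ follow directly from $(1+\tilde\varepsilon)^{-i}r\ge L/\tilde\varepsilon$ and $L\ge 4$.

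\textbf{Step 3: the balance inequality.} The only constraint needing real arithmetic is $3(1+\tilde\varepsilon)^{-i-1}r-(1+\tilde\varepsilon)^{-i}r\ge x+1+11\tilde\varepsilon x$. Rewriting the left side as $(1+\tilde\varepsilon)^{-i}r\cdot\frac{2-\tilde\varepsilon}{1+\tilde\varepsilon}$ and substituting the upper bound on $x$ into the right side reduces the inequality to
$$ (1+\tilde\varepsilon)^{-i}r\left(\frac{2-\tilde\varepsilon}{1+\tilde\varepsilon}-\frac{21}{20}(1+11\tilde\varepsilon)\right) \;\ge\; 1. $$
For $\tilde\varepsilon\le 1/28$ a direct evaluation shows the bracket is bounded below by an absolute positive constant (numerically around $0.43$), and since $(1+\tilde\varepsilon)^{-i}r\ge 112$ from Step~1, the $1$ on the right is absorbed with enormous slack.

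\textbf{Main obstacle.} There is no real obstacle: the entire lemma is a careful tracking of floor rounding. The only delicate point is Step~3, where the quantitative hypothesis $\tilde\varepsilon\le 1/28$ is genuinely needed to keep the coefficient $\tfrac{21}{20}(1+11\tilde\varepsilon)$ strictly below $\tfrac{2-\tilde\varepsilon}{1+\tilde\varepsilon}$; a materially larger choice of $\tilde\varepsilon$ would destroy this margin.
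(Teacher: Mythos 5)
Your proof is correct and follows essentially the same route as the paper: both derive the two-sided bound on $x$ from the floor definition of $f_i$, use the definitions of $\izero$ and $\ione$ to get the lower bound $(1+\tilde\varepsilon)^{-i}r\ge L/\tilde\varepsilon$ (resp.\ the smallness of $(1+\tilde\varepsilon)^{-\izero}$), and then check each constraint by the same arithmetic. The only cosmetic differences are that the paper absorbs the $+1$ in the balance inequality via $\tilde\varepsilon x\ge 4$ rather than via the size of $(1+\tilde\varepsilon)^{-i}r$, and routes $x\le r$ through $r\le m_{s+2}\le 4m_s$ instead of your direct comparison.
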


For a given $i \in [\izero, \ione]$, the number $\tilde r$
satisfying the assumptions of the above lemma can only be chosen if $r_{i+1}\leq m_{s+2}$.
In the next statement, we show that the inequality is satisfied for every admissible $i$
(and in fact verify a slightly stronger bound):

\begin{lemma}[An upper bound on $r_{\ione+1}$] \label{lem: r_iBound}
Let $ \tilde \varepsilon \in (0, 1/28)$ and $L\ge 4$. Then, $r_{\ione+1}\le 2m_s=m_{s+1}$.
\end{lemma}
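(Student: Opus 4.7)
The plan is to unroll the defining recursion. By construction, $r_{i+1}$ is the smallest integer with $\lfloor r_{i+1}/(1+(1+\tilde\varepsilon)^{-i})\rfloor \ge r_i$, which forces
$$r_{i+1}\le r_i\bigl(1+(1+\tilde\varepsilon)^{-i}\bigr)+1.$$
Since $r_{\izero}=m_s$, applying this bound for $i=\izero,\izero+1,\dots,\ione$ and unrolling gives
$$r_{\ione+1}\le m_s\prod_{i=\izero}^{\ione}\bigl(1+(1+\tilde\varepsilon)^{-i}\bigr) + \sum_{j=\izero}^{\ione}\prod_{i=j+1}^{\ione}\bigl(1+(1+\tilde\varepsilon)^{-i}\bigr),$$
with the convention that the empty product equals $1$. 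The task is then to bound the multiplicative factor by a constant close to $1$, and to show that the additive term is negligible compared to $m_s$.

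For the product, I would use $\ln(1+x)\le x$ together with the geometric series estimate
$$\sum_{i=\izero}^{\ione}(1+\tilde\varepsilon)^{-i} \le \frac{(1+\tilde\varepsilon)^{-\izero}(1+\tilde\varepsilon)}{\tilde\varepsilon}.$$
The defining inequality $(1+\tilde\varepsilon)^{-\izero-1}<\tilde\varepsilon/10$ yields $(1+\tilde\varepsilon)^{-\izero}<\tilde\varepsilon(1+\tilde\varepsilon)/10$, so the right-hand side is at most $(1+\tilde\varepsilon)^2/10$, which is strictly less than $1/9$ for $\tilde\varepsilon\le 1/28$. Consequently
$$\prod_{i=\izero}^{\ione}\bigl(1+(1+\tilde\varepsilon)^{-i}\bigr)\le e^{1/9}<\tfrac{9}{8}.$$
Each summand in the additive term is itself bounded by $9/8$, so the sum is at most $(9/8)(\ione-\izero+1)\le (9/8)(\ione+1)$. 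By Lemma~\ref{lem: sec5i0i1bound}, $\ione\le 2\log(m_s)/\tilde\varepsilon$, and from $L\ge 4$ we have $m_s\ge m_0=\lceil L/\tilde\varepsilon^5\rceil\ge 4/\tilde\varepsilon^5$; a short numerical check then yields $(9/8)(2\log(m_s)/\tilde\varepsilon+1)\le (7/8)m_s$. Combining the two estimates gives $r_{\ione+1}\le (9/8)m_s+(7/8)m_s=2m_s$, as required.

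The only mildly delicate point is the final numerical comparison, in which the logarithmic factor $\log(m_s)/\tilde\varepsilon$ coming from the number of iterations must be dominated by $m_s$ itself; this is exactly where the construction's lower bound $m_s\gtrsim 1/\tilde\varepsilon^5$ is used. Everything else reduces to routine geometric-series arithmetic, which matches the statement of the preceding subsection that these estimates are of a purely computational nature.
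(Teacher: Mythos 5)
Your proof is correct, and it is structurally a bit different from the paper's. The paper argues by induction on the statement $r_{i}\le 2m_s$: it bounds each increment $r_{i'+1}-r_{i'}$ via Lemma~\ref{lem: irxepsilon} (the bound $x\le\frac{21}{20}(1+\tilde\varepsilon)^{-i'}r_{i'}$), factors out $(1+\tilde\varepsilon)^{-\izero}$ using the defining inequality for $\izero$ to get $r_{i'+1}-r_{i'}\le\frac{\tilde\varepsilon m_s}{2}(1+\tilde\varepsilon)^{-(i'-\izero)}$, and sums the telescoping geometric series to obtain $r_{i+1}-m_s\le m_s$. You instead bypass Lemma~\ref{lem: irxepsilon} entirely, read off $r_{i+1}\le r_i\bigl(1+(1+\tilde\varepsilon)^{-i}\bigr)+1$ directly from the definition of $f_i$, unroll the whole recursion, and bound the multiplicative product by $e^{1/9}<9/8$ using $\log(1+x)\le x$ plus the same geometric tail estimate driven by $(1+\tilde\varepsilon)^{-\izero}<\tilde\varepsilon(1+\tilde\varepsilon)/10$. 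Both arguments exploit the same geometric decay; the paper's bookkeeping is tighter because Lemma~\ref{lem: irxepsilon} packages the floor-induced $+1$'s multiplicatively, whereas your version accumulates an additive error of order $\ione$ and has to swallow it with the crude bound $\ione\le 2\log(m_s)/\tilde\varepsilon\ll m_s$, which is exactly where the assumption $m_s\ge m_0\ge L/\tilde\varepsilon^5\ge 4/\tilde\varepsilon^5$ enters. Your approach is self-contained and does not need the earlier lemma, but it spends the slack in a blunter way; both reach the same $2m_s$. One minor point: you might state the final numerical inequality $18\log(m_s)+9\tilde\varepsilon\le 7m_s\tilde\varepsilon$ explicitly and observe that the left side is increasing much slower than the right as $m_s$ grows, so it suffices to check it at $m_s=4/\tilde\varepsilon^5$ and $\tilde\varepsilon=1/28$; as written the ``short numerical check'' is a touch terse, but the claim is true.
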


\medskip

To construct the growth function $g(\cdot)$ from 
\eqref{p4t8yfip-98y-8}, we first define an auxiliary positive function $g_{s}(\cdot)$,
and then set
$$
g\big(\lfloor(1-(1+\tilde\varepsilon)^{-j})r\rfloor\big):=g_s(j)
$$
for all admissible $j$. The formal definition of $g_s(\cdot)$
is given below.

\begin{defi}\label{apskjfnoiuoiuniunp}
Let $\alpha \ge 1$ be a parameter. For $ i \in [\izero]$, we set
\begin{align}
	g_s (i)  :=
		\frac{c'}{2\sqrt{m_s}}  16^{-i}m_s n^{-\alpha},
\end{align}
where $c'$ is the constant from Proposition~\ref{p: small sing shifted}.

For $i \in [\izero,\ione]$, we apply a recursive definition:
$$
	g_s(i+1)
:=
	\frac{c' \tilde \varepsilon}{32}  h_s(i)^5 
	g_s(i),
$$
where $h_s(i)$ is given by
\begin{align} \label{eq: defHi}
	h_s(i) := &  \exp  \Big( - \max \Big\{ \frac{128 \alpha\,\log n }{
		\tilde \varepsilon^2 ( 1+ \tilde \varepsilon )^{-i} m_s } ,\, 
	    C_h \Big\} \Big),
\end{align}
and where $C_h\geq -\log\big(2^{-11}(c')^2\tilde\varepsilon\big)$ is a constant depending only
$c, \tilde C$ (from Proposition \ref{prop: sec4Main}) and $c'$,
and which we shall determine in Lemma~\ref{lem: Probhx}.
\end{defi}

The function $h_s(i)$ corresponds to the parameter $h$ in Proposition \ref{prop: sec4Main},
and is constructed in such a way that certain union bound argument that we are going to apply further works.
The next lemma clarifies the choice of the constant $C_h$ from the above definition:

\begin{lemma} \label{lem: Probhx}
The constant $C_h$ can be chosen so that the following holds.
For $i \in [\izero, \ione]$ and $ \tilde r \in [r_{i+1}, m_{s+2}]$, let $r := f_i( \tilde r )$ and $ x := \tilde r - r$. 
Then, 
\begin{align}
		&2x^{\tilde\varepsilon x/2}\,
h_s(i)^{(\tilde\varepsilon x)^2/64}
+4\exp\big(-cx^2\,\tilde\varepsilon/h_s(i)^2\big)
+\tilde C\exp\big(-c\tilde\varepsilon^2 (1+\tilde\varepsilon)^{-i}r x/h_s(i)^2\big) 
\nonumber \\
  &\hspace{1cm}\le \exp \big( - \alpha \log(n) x \big).
\end{align}
\end{lemma}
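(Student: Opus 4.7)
The plan is to show each of the three summands on the left-hand side is at most $\tfrac{1}{3}\exp(-\alpha x\log n)$, so that the sum is at most $\exp(-\alpha x \log n)$. Throughout, write $y := (1+\tilde\varepsilon)^{-i} m_s$. By Lemma~\ref{lem: irxepsilon}, together with $r = f_i(\tilde r) \ge f_i(r_{i+1}) \ge r_i \ge m_s$, one has $y \le x \le \tfrac{21}{5}y$ and $\tilde\varepsilon x \ge 4$. Two key lower bounds follow from the definition of $h_s(i)$: directly,
\[
|\log h_s(i)| \ge \frac{128\alpha \log n}{\tilde\varepsilon^2 y},
\]
and, using $\max(a,b) \ge \tfrac{1}{2}(a+b)$,
\[
\frac{1}{h_s(i)^2} \ge e^{C_h}\cdot n^{128\alpha/(\tilde\varepsilon^2 y)}.
\]

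Term~1 is handled as follows. Taking logarithms and substituting the first bound gives
\[
\log\!\bigl(2 x^{\tilde\varepsilon x/2} h_s(i)^{(\tilde\varepsilon x)^2/64}\bigr) \le \log 2 + \tfrac{\tilde\varepsilon x}{2}\log x - \frac{2\alpha x^2 \log n}{y}.
\]
Since $x\ge y$, the last summand is $\le -2\alpha x\log n$; since $\tilde\varepsilon\le 1$ and $\log x\le \log n$, the middle summand is $\le \tfrac{x\log n}{2}$. Combining, the log of Term~1 is $\le -\alpha x\log n-\log 3$ for $\alpha\ge 1$, $\tilde\varepsilon x\ge 4$, and $n$ sufficiently large.

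For Terms~2 and~3, the key computation is the one-variable minimization $\min_{y>0} y\,n^{A/y} = eA\log n$ with $A := 128\alpha/\tilde\varepsilon^2$ (attained at $y = A\log n$, giving $n^{A/y}=e$). Combined with $x^2\ge xy$ and the second bound on $1/h_s(i)^2$,
\[
\frac{c\tilde\varepsilon x^2}{h_s(i)^2} \ge c\tilde\varepsilon\,e^{C_h}\,x\cdot y\,n^{A/y} \ge \frac{128\,c\,e\,e^{C_h}\,\alpha\, x \log n}{\tilde\varepsilon}.
\]
Choosing $C_h$ large enough that $\tfrac{128 c e e^{C_h}}{\tilde\varepsilon} \ge 2$ (compatible with the imposed lower bound $C_h \ge -\log(2^{-11}(c')^2\tilde\varepsilon)$, after possibly increasing $C_h$ by a universal constant depending on $c,\tilde C,c'$) yields $T_2 \le \tfrac{1}{3}e^{-\alpha x \log n}$ for $n$ large enough. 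Term~3 is handled in exactly the same way: since $x\le \tfrac{21}{20}(1+\tilde\varepsilon)^{-i}r$, one has $(1+\tilde\varepsilon)^{-i}r\cdot x\ge \tfrac{20}{21}x^2$, so the exponent in $T_3$ is at least $\tfrac{20}{21}\,c\tilde\varepsilon^2 x^2/h_s(i)^2$, and the analogous chain gives a lower bound of order $c\,e^{C_h}\alpha x\log n$, dominating $\alpha x\log n + \log(3\tilde C)$ for $C_h$ chosen sufficiently large.

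The only nontrivial step is the minimization $\min_y y\,n^{A/y}=eA\log n$, used in tandem with the decomposition $\max(a,b)\ge (a+b)/2$ which lets both pieces of the definition of $h_s(i)$ contribute simultaneously; the rest is routine bookkeeping. The main delicate point is to ensure that a single choice of $C_h$—depending only on $c,\tilde C,c'$ (and implicitly on $\tilde\varepsilon$ through the stated lower bound)—simultaneously satisfies the constraints arising from all three terms.
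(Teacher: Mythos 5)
Your proposal is correct, and it takes a somewhat different path than the paper. The paper's proof first shows that each of the three terms is dominated by a quantity of the form $\frac{1}{2}h_s(i)^{(\tilde\varepsilon x)^2/128}$ — for the exponential terms this requires imposing \emph{two} constraints on $C_h$, namely $4+\tilde C\le\frac12 e^{c/h_s(i)^2}$ and $h_s(i)^2\log(1/h_s(i))\le 64c$ — and only at the very end applies the bound $h_s(i)\le\exp(-128\alpha\log n/(\tilde\varepsilon^2 y))$ together with $x\ge y$ to conclude $h_s(i)^{(\tilde\varepsilon x)^2/128}\le\exp(-\alpha x\log n)$. You instead bound each term by $\tfrac13\exp(-\alpha x\log n)$ directly, using the observation $\min_{y>0} y\,n^{A/y}=eA\log n$ in tandem with the split $\max(a,b)\ge\tfrac12(a+b)$ to extract simultaneously the $n^{A/y}$ main term (which, via the minimization, supplies the $\alpha\log n$ factor) and the $e^{C_h}$ floor (which absorbs the constants $c,\tilde C,c',\tilde\varepsilon$). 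This has the minor advantage of making explicit why both pieces of the definition of $h_s(i)$ are needed — the first kills the polynomial term $T_1$, the second kills the two exponential terms — and of avoiding the somewhat opaque auxiliary condition $h^2\log(1/h)\le 64c$. The cost is a slightly longer case split at the end, since the binding constraint on $C_h$ differs between $T_2$ (which enjoys a free $1/\tilde\varepsilon$ boost) and $T_3$ (where the $\tilde\varepsilon^2$ cancels against $A=128\alpha/\tilde\varepsilon^2$). Both routes land in the same place, and you correctly note that the resulting $C_h$ may depend on $c,\tilde C,c'$ and $\tilde\varepsilon$, consistent with the paper's conventions.
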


In the next lemma we verify the crucial bound on the growth function which will ultimately guarantee
a polynomial in $n$ bound on the intermediate singular values:
\begin{lemma}\label{fekjfnwefkjnfpwijfnwelkfjn}
	There exists $C( \tilde \varepsilon )>1$ which depends on $c', \tilde C$ from Proposition \ref{prop: sec4Main}, on $C_h$, and on $ \tilde \varepsilon$, such that 
	\begin{align}
          \forall \alpha \ge 1,\,\quad\quad
	g_s(\ione+1) 
	\ge  n^{ - C( \tilde \varepsilon )\alpha}.
	\end{align}
\end{lemma}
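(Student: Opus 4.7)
The plan is to unfold the recursion defining $g_s$ and then bound each resulting factor from below by a power of $n$ with exponent depending only on $\tilde\varepsilon$, $c'$, $\tilde C$, and $C_h$. Writing
\begin{equation*}
g_s(\ione+1)
= g_s(\izero)\;\cdot\;\Big(\tfrac{c'\tilde\varepsilon}{32}\Big)^{\ione+1-\izero}\;\cdot\;\prod_{i=\izero}^{\ione} h_s(i)^{5},
\end{equation*}
the task reduces to lower-bounding the three factors separately. The first factor is dealt with immediately using the fact that $(1+\tilde\varepsilon)^{\izero}\leq 10/\tilde\varepsilon$ (from the defining inequality for $\izero$), so $\izero\leq C_1(\tilde\varepsilon)$ and hence $16^{-\izero}\geq c_1(\tilde\varepsilon)>0$, giving $g_s(\izero)\geq c_2(\tilde\varepsilon)\,n^{-\alpha}$ (the factor $\sqrt{m_s}$ only helps).

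For the second factor, I would use Lemma~\ref{lem: sec5i0i1bound} which yields $\ione\leq 2\log(m_s)/\tilde\varepsilon\leq 2\log(n)/\tilde\varepsilon$; therefore
\begin{equation*}
\Big(\tfrac{c'\tilde\varepsilon}{32}\Big)^{\ione+1-\izero}\;\geq\; n^{-C_2(\tilde\varepsilon)}
\end{equation*}
for a constant $C_2(\tilde\varepsilon)$ absorbing $\log(32/(c'\tilde\varepsilon))$ and the factor $2/\tilde\varepsilon$.

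The main step, and the main obstacle, is bounding the product $\prod_{i=\izero}^{\ione} h_s(i)^5$ from below, because the exponent in $h_s(i)$ grows geometrically in $i$. I would split the sum
\begin{equation*}
-\log\!\prod_{i=\izero}^{\ione} h_s(i)^5
\;=\;5\sum_{i=\izero}^{\ione}\max\!\Big\{\frac{128\alpha\log n}{\tilde\varepsilon^{2}(1+\tilde\varepsilon)^{-i}m_s},\,C_h\Big\}
\end{equation*}
into the two pieces by bounding the max by the sum. The $C_h$ contribution is at most $5C_h(\ione-\izero+1)\leq 10C_h\log(n)/\tilde\varepsilon+O(1)$, producing a factor $n^{-C_3(\tilde\varepsilon, C_h)}$. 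For the geometric part, the key computation is
\begin{equation*}
\sum_{i=\izero}^{\ione}(1+\tilde\varepsilon)^{i}
\;\leq\;\frac{(1+\tilde\varepsilon)^{\ione+1}}{\tilde\varepsilon}
\;\leq\;\frac{(1+\tilde\varepsilon)\,m_s}{L},
\end{equation*}
where the last inequality uses the defining bound $(1+\tilde\varepsilon)^{-\ione}m_s\geq L/\tilde\varepsilon$ from \eqref{eq: i_1}. Thus
\begin{equation*}
5\sum_{i=\izero}^{\ione}\frac{128\alpha\log n}{\tilde\varepsilon^{2}(1+\tilde\varepsilon)^{-i}m_s}
\;\leq\;\frac{640(1+\tilde\varepsilon)\alpha\log n}{\tilde\varepsilon^{2}L}
\;\leq\;\frac{640(1+\tilde\varepsilon)\alpha\log n}{\tilde\varepsilon},
\end{equation*}
where the final step uses the standing assumption $L\geq 1/\tilde\varepsilon$. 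This produces a factor $n^{-C_4(\tilde\varepsilon)\alpha}$, and crucially the explicit dependence on $L$ cancels (larger $L$ only helps).

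Combining the three bounds, $g_s(\ione+1)\geq n^{-C(\tilde\varepsilon)\alpha}$ with $C(\tilde\varepsilon):=1+C_2(\tilde\varepsilon)+C_3(\tilde\varepsilon,C_h)+C_4(\tilde\varepsilon)$, since $\alpha\geq 1$. The only subtle point is making sure that the $L$-dependent quantities cancel, which is precisely the reason why the definition \eqref{eq: i_1} pins $(1+\tilde\varepsilon)^{-\ione}m_s$ to the scale $L/\tilde\varepsilon$.
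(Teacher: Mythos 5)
Your proposal is correct and follows essentially the same route as the paper's proof: telescope the recursion for $g_s$, absorb the constant prefactors $\tfrac{c'\tilde\varepsilon}{32}e^{-5C_h}$ using $\ione\le 2\log(n)/\tilde\varepsilon$, and control the product of the $h_s(i)^5$ by summing the geometric series $\sum(1+\tilde\varepsilon)^{i}$ anchored at $(1+\tilde\varepsilon)^{-\ione}m_s\ge L/\tilde\varepsilon$ together with $L\ge 1/\tilde\varepsilon$. The only (harmless) cosmetic difference is that you bound $\izero$ by a constant via its defining inequality, whereas the paper uses the cruder bound $\izero\le 2\log(n)/\tilde\varepsilon$; both yield $g_s(\izero)\ge n^{-O_{\tilde\varepsilon}(1)-\alpha}$.
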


\subsection{Good events, and probability estimates}

\begin{defi}\label{apfjnepfiunfpweifunwpin}
For $ i \in [\ione+1]$ and $r \in [r_i, n]$, let $ \Event(r, i)$ be the event that
$$
	s_{\lfloor ( 1 - ( 1+ \tilde \varepsilon)^{-i})r \rfloor  } 
	(A_{I_r, [r]})  \ge 
	g_s(i),
$$
where $g_s(\cdot)$ is given in Definition~\ref{apskjfnoiuoiuniunp}
and $r_i,\ione$ are taken from Definition~\ref{iifrdef}.
Further, we denote $ \Event(r, [i]) := \bigcap_{ j \in [ i ] } \Event(r,j)$. 
\end{defi}

\begin{lemma} \label{lem: sec5Base}
For 
$\tilde \varepsilon \in (0, 1/100]$, $L\ge 4$ and $ \alpha \ge 4$,
\begin{align}
		\Prob \Big( 
			\bigcup\limits_{r \in [m_s, m_{s+2}]} \Event( r, [\izero] )^c \Big)
	\le 
		\exp \big( - \alpha L \log n \big). 
	\end{align}
\end{lemma}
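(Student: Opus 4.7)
The key observation is that for $i\le\izero$, the singular value $s_{\lfloor(1-(1+\tilde\varepsilon)^{-i})r\rfloor}(A_{I_r,[r]})$ sits in the bulk of the spectrum: its position measured from the smallest end, $k_i:=r-\lfloor(1-(1+\tilde\varepsilon)^{-i})r\rfloor$, satisfies $k_i\ge(1+\tilde\varepsilon)^{-i}r\ge \tilde\varepsilon r/10$ (by the defining inequality $(1+\tilde\varepsilon)^{-\izero}\ge \tilde\varepsilon/10$). For such large $k_i$, Proposition~\ref{p: small sing shifted} applied to a \emph{fixed} $r\times r$ Gaussian matrix produces a bound so strong that the naive union bound over all $I\subset[n]$, $|I|=r$, survives. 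I would therefore decouple from $I_r$ via
$$
\Prob\bigl(\Event(r,i)^c\bigr)\;\le\;\sum_{I\subset[n],\,|I|=r}\Prob\bigl\{s_{r-k_i}(A_{I,[r]})\le g_s(i)\bigr\},
$$
which is valid since $A_{I_r,[r]}=A_{I,[r]}$ on $\{I_r(A)=I\}$.

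For each fixed $I$, $A_{I,[r]}$ is a standard $r\times r$ Gaussian matrix. I would invoke Proposition~\ref{p: small sing shifted} with $u=t=r$, its index set equal to $k_i$, and its free parameter $\sigma:=g_s(i)\sqrt r/(c'k_i)$. Plugging in $g_s(i)=(c'/(2\sqrt{m_s}))\cdot 16^{-i}m_s n^{-\alpha}$, using $k_i\ge(1+\tilde\varepsilon)^{-i}r$, $((1+\tilde\varepsilon)/16)^i\le 1$, and $m_s\le r$, one checks $\sigma\le \tfrac12 n^{-\alpha}\le 1$. The technical assumptions $4\le k_i\le r-1$ are met, as $k_i\ge \tilde\varepsilon m_s/10\ge L/(10\tilde\varepsilon^{4})$ is large under $L\ge 4$, $\tilde\varepsilon\le 1/100$, and $n$ (hence $m_s$) large enough. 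The proposition then yields
$$
\Prob\bigl\{s_{r-k_i}(A_{I,[r]})\le g_s(i)\bigr\}\;\le\; r^{k_i/2}\sigma^{k_i^2/32}\;\le\; n^{k_i/2-\alpha k_i^2/32}.
$$

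Combining with $\binom{n}{r}\le n^r$ and summing over $r\in[m_s,m_{s+2}]$ and $i\in[\izero]$ (at most $n^2$ pairs) gives
$$
\Prob\Bigl(\bigcup_{r\in[m_s,m_{s+2}]}\Event(r,[\izero])^c\Bigr)\;\le\; n^{\,O(1)+r+k_i/2-\alpha k_i^2/32}.
$$
The remaining task is to verify $\alpha k_i^2/32\ge r+k_i/2+\alpha L+O(1)$. Since $k_i\ge \tilde\varepsilon r/10$ and $r\ge m_s\ge L/\tilde\varepsilon^5$, the quadratic gain satisfies
$$
\frac{\alpha k_i^2}{32}\;\ge\;\frac{\alpha\tilde\varepsilon^2 r^2}{3200}\;\ge\;\frac{\alpha r L}{3200\,\tilde\varepsilon^3}\;\ge\;300\,\alpha rL
$$
for $\tilde\varepsilon\le 1/100$, which comfortably dominates $3r/2+\alpha L+O(1)$ for $L\ge 4$ and $\alpha\ge 4$. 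This yields the claimed bound $\exp(-\alpha L\log n)=n^{-\alpha L}$.

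The only real obstacle is bookkeeping: one must confirm that the Gaussian-like quadratic gain $\sigma^{k_i^2/32}$ from Proposition~\ref{p: small sing shifted} overwhelms both the entropy factor $\binom{n}{r}\le n^r$ and the $r^{k_i/2}$ prefactor. Everything funnels through the arithmetic inequality above, which is exactly what the choice $m_0=\lceil L/\tilde\varepsilon^5\rceil$ was engineered to make pass.
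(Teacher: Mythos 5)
Your proposal is correct and follows essentially the same route as the paper's own proof: decouple from the random index set via a union bound over all $I\subset[n]$ with $|I|=r$, apply Proposition~\ref{p: small sing shifted} to the fixed Gaussian matrix $A_{I,[r]}$ with $q:=r-\lfloor(1-(1+\tilde\varepsilon)^{-i})r\rfloor$ (your $k_i$) in the bulk of the spectrum, and use the quadratic gain $\alpha q^2/32$ in the exponent to overwhelm the entropy cost $\binom{n}{r}\le n^r$ together with the $r^{q/2}$ prefactor and the sum over $(r,i)$. The only cosmetic differences are in the bookkeeping: you solve for the parameter $\sigma$ explicitly and verify $\sigma\le\tfrac12 n^{-\alpha}$, whereas the paper verifies the inequality $g_s(i)\le\frac{c'}{\sqrt r}q\,n^{-\alpha}$ directly and plugs $s=n^{-\alpha}$ into the proposition; both manipulations are equivalent, and both rely on the same ingredients $q\ge(1+\tilde\varepsilon)^{-i}r\ge 16^{-i}m_s$, $r\le 4m_s$, and $m_s\ge\lceil L/\tilde\varepsilon^5\rceil$ to make the arithmetic close.
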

\begin{proof}
Fix $i \in [\izero]$ and $r \in [m_s, m_{s+2}]$. Let $q:= r - \lfloor (1 - (1+ \tilde \varepsilon)^{-i})r \rfloor $.
Then
\begin{align} \label{eq: qEstimate}
	q \ge  ( 1 + \tilde \varepsilon )^{-i}r \ge (1 + \tilde \varepsilon)^{-i}m_s \ge 16^{-i}m_s.
\end{align}
We recall that in view of the definition of $m_s$ and $\izero$, necessarily
$q<r$; furthermore,
\begin{equation}\label{akdfjnpfiwunfpwi}
q \ge (1 + \tilde \varepsilon )^{-i}m_s \ge (1+ \tilde \varepsilon )^{-\izero}m_s \ge 
\tilde \varepsilon m_s/10  \ge \tilde \varepsilon m_0/10 \ge  L/ \tilde \varepsilon^4 \ge 32.
\end{equation}
For each $I \subset [n]$ with $|I|=r$,  
\begin{align*}
	\Prob\big\{ 
		s_{r-q}(A_{I,[r]}) < g_s(i)		
		\big\}
&= 	\Prob \Big\{ 
		s_{r-q}(A_{I,[r]}) < \frac{ c' }{  2 \sqrt{m_s} } 
		 16^{-i}\,m_s\,  n^{-\alpha} 
		\Big\} \\
&\le  	\Prob \Big\{ 
		s_{r-q}(A_{I,[r]}) < \frac{ c' }{\sqrt{r} } 
		 q n^{-\alpha} 
		\Big\} \, \, \quad ( \mbox{by \eqref{eq: qEstimate} and the definition of $m_s,m_{s+2}$}) \\
&\le    \exp\big( \log(r)q/2 - \log(n)\alpha q^2/32 \big) \,\, \quad
( \mbox{by Proposition \ref{p: small sing shifted}}).
\end{align*}
Applying \eqref{akdfjnpfiwunfpwi},
we conclude that 
\begin{align*}
	\Prob\big\{ 
		s_{r - q}(A_{I,[r]}) < g_s(i)		
		\big\}
&\le  \exp\big( \log(n)q^2/64 - \log(n) \alpha q^2/32 \big) \\
&\le  
\exp \Big( - \log(n) \alpha  \Big(\frac{\tilde \varepsilon m_s}{10} \Big)^{2}/64 \Big).
\end{align*}
We complete the proof with the
union bound argument.
There are ${n \choose r} \le (en/r)^r \le \exp( r\log n) $ subsets $I \subset [n]$ with $|I|=r$. As $r \le m_{s+2}\le 4m_s$, and in view of the definition of $m_s$ and our choice of $\tilde\varepsilon$,
\begin{align*}
	\Prob\big\{ s_{r-q}(A_{I_r,[r]} ) <g_s(i) \big\} &\le 
	\Prob \big\{ 
		\exists I\subset [n] \mbox{ with }|I|=r \mbox{ such that }  
		s_{r - q}(A_{I,[r]}) < g_s(i)		
	\big\} \\
	&\le   \exp\Big( - \log(n)\alpha \Big(\frac{\tilde \varepsilon m_s}{10} \Big)^{2}/128 \Big),
\end{align*}
By applying the union bound argument again over all
$i \in [\izero]$ and all $ r \in [m_s, m_{s+2}]$, the statement of the lemma follows. 
\end{proof}

\begin{lemma} \label{lem: sec5Ind}
Assume 
$\tilde \varepsilon \in (0, 1/100]$, $L\ge 4$ and $ \alpha \ge 4$.
For $ i \in [\izero,\ione]$ and $ \tilde r \in [r_{i+1}, m_{s+2}] $, set $r := f_i( \tilde r)$
and $x:=\tilde r-r$.
Then
	\begin{align*}
		\Prob \Big( \Event(\tilde{r}, i+1)^c 
			\cap \Event( r, [i] ) 
			\cap \{ 
				\sigma_n( K_r(A) ) \ge n^{-\alpha/2} 
			\} 
		\Big)
	\le 
		\exp \Big( - \frac{1}{4} \alpha \log(n) x \Big)
	\le \exp \Big( - \frac{1}{4} \alpha \log(n) \frac{L}{\tilde \varepsilon} \Big),
	\end{align*}
	where the random polytope $K_r(A) \subset \R^n$ was
	defined in \eqref{eq: polyDef}, and where $\sigma_n$ is the standard Gaussian measure in $\R^n$. 
\end{lemma}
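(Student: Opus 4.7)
The strategy is to decompose the probability according to the realization $I_r(A)=I$, and for each such $I$ apply Proposition~\ref{prop: sec4Main} to the submatrix $A_{\tilde I,[\tilde r]}$ for every superset $\tilde I\supset I$ with $|\tilde I|=\tilde r$, paying for the pivoting-induced dependence via the polytope mass lower bound supplied by the conditioning event.

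Fix $I$ with $|I|=r$ and condition on $\mathcal F_I:=\sigma(A_{I,[r]})$ together with the event $\{I_r(A)=I\}$. On the conditioning event $\Event(r,[i])\cap\{\sigma_n(K_r(A))\geq n^{-\alpha/2}\}$, the matrix $F:=A_{I,[r]}$ satisfies the singular-value hypothesis \eqref{p498h3-98y30t8743yt83} of Proposition~\ref{prop: sec4Main} with the growth function $g\big(\lfloor(1-(1+\tilde\varepsilon)^{-j})r\rfloor\big):=g_s(j)$; the ratio condition \eqref{p4t8yfip-98y-8} follows from Definition~\ref{apskjfnoiuoiuniunp} together with the bound $h_s(j)\leq 2^{-11}(c')^2\tilde\varepsilon$, while all remaining parameter requirements of Proposition~\ref{prop: sec4Main} are verified in Lemma~\ref{lem: irxepsilon}. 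By Lemma~\ref{lem: secPolyBasic}, on $\{I_r(A)=I\}$ each row $A_{j,[r]}$ with $j\in[n]\setminus I$ is distributed as a standard Gaussian on $\R^r$ restricted to $K_r(A_{I,[r]})$, while $A_{[n],[r+1,n]}$ remains an unrestricted standard Gaussian matrix, independent of $\mathcal F_I$ and of the restriction (this uses that $K_r(A)=K_r(A_{[n],[r]})\times\R^{n-r}$, see \eqref{eq: polyDim}).

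Consequently, for any fixed $\tilde I$ with $I\subset\tilde I\subset[n]$ and $|\tilde I|=\tilde r$, writing
$$A_{\tilde I,[\tilde r]}=\begin{bmatrix} F & M \\ W & Q \end{bmatrix},\quad M:=A_{I,[r+1,\tilde r]},\;W:=A_{\tilde I\setminus I,[r]},\;Q:=A_{\tilde I\setminus I,[r+1,\tilde r]},$$
the blocks $M$ and $Q$ are unrestricted Gaussian while only the $x=\tilde r-r$ rows of $W$ are restricted to $K_r(A_{I,[r]})$. Hence the conditional density of $(M,W,Q)$ is at most $\sigma_r(K_r(A_{I,[r]}))^{-x}\leq n^{\alpha x/2}$ times the unrestricted Gaussian density. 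Applying Proposition~\ref{prop: sec4Main} with $h:=h_s(i)$ to the unrestricted model yields $s_{\lfloor(1-(1+\tilde\varepsilon)^{-i-1})\tilde r\rfloor}(A_{\tilde I,[\tilde r]})\geq g_s(i+1)$ except on an event of Gaussian probability at most $\exp(-\alpha\log(n)x)$ by Lemma~\ref{lem: Probhx}. Since the actual pivot set $I_{\tilde r}(A)$ must lie among the ${n-r\choose x}\leq n^x$ supersets of $I$ of size $\tilde r$, a union bound gives
$$\Prob\big\{\Event(\tilde r,i+1)^c\,\big|\,\mathcal F_I,I_r(A)=I\big\}\leq n^x\cdot n^{\alpha x/2}\cdot n^{-\alpha x}=n^{x(1-\alpha/2)}\leq n^{-\alpha x/4},$$
where the last inequality uses $\alpha\geq 4$, and the whole bound is deterministic on the conditioning event. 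Summing over $I$ with $\sum_I\Prob\{\Event(r,[i])\cap\{\sigma_n(K_r(A))\geq n^{-\alpha/2}\}\cap\{I_r(A)=I\}\}\leq 1$ yields the first claimed estimate; the second follows from Lemma~\ref{lem: irxepsilon} together with \eqref{eq: i_1}, which give $x\geq(1+\tilde\varepsilon)^{-i}r\geq(1+\tilde\varepsilon)^{-\ione}m_s\geq L/\tilde\varepsilon$.

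\textbf{Main obstacle.} The delicate step is the density-ratio decoupling: it hinges on the fact (from \eqref{eq: polyDim}) that the polytope $K_r(A)$ depends only on the first $r$ columns, so that conditioning on the pivoting event imposes no restriction at all on the columns $[r+1,\tilde r]$, and only the $x$ rows of $W$ absorb the density factor $\sigma_r(K_r)^{-x}$. The resulting cost $n^{\alpha x/2}$ is then balanced against the union-bound cost $n^x$ and the unrestricted-Gaussian bound $n^{-\alpha x}$ from Lemma~\ref{lem: Probhx}; the calibration $\alpha\geq 4$ is precisely what is needed for these three contributions to combine into the desired $n^{-\alpha x/4}$.
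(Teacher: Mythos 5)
Your proof is correct and follows the same essential strategy as the paper: decompose over realizations of $I_r(A)=I$, union-bound over the pivot supersets $\tilde I = I\cup J$, apply Proposition~\ref{prop: sec4Main} conditionally on $A_{I,[r]}$, and absorb the cost of the pivoting constraint via the Gaussian mass lower bound on $K_r(A)$. The only cosmetic difference is in how you extract the $n^{\alpha x/2}$ factor: you phrase it as a direct density-ratio bound on the $x$ constrained rows of $W$ (the conditional density is at most $\sigma_r(K(I))^{-x}$ times the unconditional), whereas the paper reaches the same conclusion by writing the conditional probability as a ratio of two unconditional probabilities, factoring numerator and denominator by Fubini, and lower-bounding the denominator through $\Prob\{\forall j\in J:\;(A_{j,[r]})^\top\in K(I)\mid A_{I,[r]}\}\ge n^{-\alpha x/2}$. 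These are two packagings of the identical computation.
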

\begin{proof}
We start by noting that the
last inequality in the statement of the lemma follows from the estimate $x \ge (1 + \tilde \varepsilon)^{-\ione}m_s \ge L/ \tilde \varepsilon$ (see Lemma~\ref{lem: irxepsilon}
and the definition of $\ione$). 

We further partition the event in question so that
\begin{align} \label{eq: secInd00}
	 \Prob &\Big( \Event(\tilde{r}, i+1)^c \cap \Event(r, [i]) \cap 
	\{\sigma(K_r(A)) \ge n^{-\alpha/2}\}  \Big) \nonumber \\
	&=  
	\sum_{I \subset [n], |I|=r } 
	\Prob \Big( \Event(\tilde{r}, i+1)^c \cap \Event(r, [i]) \cap 
		\{\sigma_n(K_r(A)) \ge n^{-\alpha/2}\} \cap \{ I_r(A) = I\}  \Big) .
\end{align}
For each $I \subset [n]$ with $|I|=r$, we define $\Event(I)$ to be the event
\begin{align*}
	\forall j \in [i], \, 
	s_{\lfloor ( 1 - ( 1+ \tilde \varepsilon)^{-j})r \rfloor  } 
	(A_{I,[r]} )  \ge 
	g_s(j),
\end{align*}
and note that for each admissible $I$, $\Event(r, [i])\cap \{ I_r(A) = I\}
\subset \Event(I)$.

For $I\subset [n]$ with $|I|=r$ and $J \subset [n] \backslash I $ with $|J| = x $, let $\Event(I, J)$ be the event that 
$$
	s_{\lfloor (1 - (1 + \tilde \varepsilon)^{-i-1})(\tilde r) \rfloor }
	( A_{ I\cup J, [\tilde r]} )
\ge
	g_s(i+1).
$$
Denote $K(I):= K_r(A_{I,[r]})\subset \R^r$. 
Then each term in \eqref{eq: secInd00} can be bounded as  
\begin{align}\label{eq: Eventri0}
	\Prob &\Big( \Event(\tilde{r}, i+1)^c \cap \Event(r, [i]) \cap 
		\{\sigma_n(K_r(A)) \ge n^{-\alpha/2}\} \cap \{ I_r(A) = I\}  \Big)  \nonumber \\
&\hspace{1cm}\le 
	\sum_{ J \subset [n] \backslash I ,\, |J| =x }
	\Prob \Big( \Event(I,J)^c \cap \Event(I) \cap 
		\{\sigma_n(K(I)) \ge n^{-\alpha/2}\} \cap \{ I_r(A) = I\}  \Big).
\end{align}

\medskip

Now, assume
that for every $I \subset [n]$ and $J \subset [n] \backslash I$ with $|I| = r$ and $|J|=x$, 
\begin{align} \label{eq: Eventri1}
\Prob \Big( \Event(I,J)^c \cap 
		\Event(I)\; \Big\vert\;\{\sigma_n(K(I)) \ge n^{-\alpha/2}\} \cap  \{ I_r(A) = I\}  \Big)
\le \exp \Big( - \frac{1}{2} \alpha \log (n) x \Big). 
\end{align}
Clearly, for each $I \subset [n]$ with $|I|=r$, 
$$
|\{ J \subset [n]\backslash I\,:\, |J|=x\}| = {n-r \choose x} \le \Big( \frac{en}{x}\Big)^x \le \exp( \log(n)x) \le \exp\Big( \frac{1}{4}\alpha \log (n)x\Big).
$$
Together with \eqref{eq: Eventri0} and \eqref{eq: Eventri1}, this gives 
\begin{align*}
	\eqref{eq: secInd00} \le & 
	\sum_{I \subset [n], |I|=r }
	\exp\Big( \frac{1}{4}\alpha \log (n)x\Big)
	\exp \Big(-\frac{1}{2} \alpha \log (n) x \Big) \,
\Prob\{ I_r(A) = I\}= \exp \Big(-\frac{1}{4} \alpha \log (n) x \Big),
\end{align*}
and the result follows.

\bigskip

Thus, it remains to show \eqref{eq: Eventri1}.
By Lemma \ref{lem: secPolyBasic}, almost everywhere on the probability space
we have 
$$ {\bf 1}_{\{ I_r(A) = I\}}
= {\bf 1}_{\{ \forall j \in [n]\backslash I,\, (A_{j,[n]})^\top \in K_r(A_{I,[n]})\}}
={\bf 1}_{\{ \forall j \in [n]\backslash I,\, (A_{j,[r]})^\top \in K(I)\}}.
$$
Hence,
\begin{align}\nonumber
 \Prob &\Big( \Event(I,J)^c \cap \Event(I) \,\Big\vert  \,
		\{\sigma_r(K(I)) \ge n^{-\alpha/2}\} \cap  \{ I_r(A) = I\}  \Big)\\
&\le  
\frac{
\Prob \Big( \Event(I,J)^c \cap \Event(I) \cap  
		\{\sigma_r(K(I)) \ge n^{-\alpha/2}\} \cap \big\{ \forall j \in [n]\backslash (I\cup J),\, (A_{j,[r]})^\top \in K(I)\big\} \Big)
}{
\Prob \Big(
		\{\sigma_r(K(I)) \ge n^{-\alpha/2}\} \cap   \big\{ \forall j \in [n]\backslash I,\, (A_{j,[r]})^\top \in K(I)\big\} \Big)
}.\label{pifnfq034ufn0384nqpf34npu}
\end{align}
In view of the joint independence of the entries of $A$, we obtain
\begin{align}
 \Prob &\Big( \Event(I,J)^c \cap \Event(I) \cap  
		\{\sigma_r(K(I)) \ge n^{-\alpha/2}\} \cap \big\{ \forall j \in [n]\backslash (I\cup J),\, (A_{j,[r]})^\top \in K(I)\big\} \Big) \nonumber \\
&=  
\mathbb{E}_{A_{I,[r]}} \Big[ {\bf 1}_{ \Event(I) \cap \{\sigma_r(K(I)) \ge n^{-\alpha/2}\}}
\cdot \Prob\big\{ \Event(I,J)^c \,\big|\, A_{I,[r]}\big\} \nonumber \\
&\hspace{3cm}\cdot \Prob\big\{ \forall j \in [n]\backslash (I\cup J) ,\, (A_{j,[r]})^\top \in K(I)\,\big|\, A_{I,[r]} \big\}
\Big], \label{eq: secInd01}
\end{align}
where the outer expectation is with respect to $A_{I,[r]}$.

\medskip

For each realization
of $A_{I,[r]}$ such that the event $\Event(I)$ holds,
we apply Proposition \ref{prop: sec4Main} with 
$$
\begin{bmatrix}
	F& M  \\
	W& Q 
\end{bmatrix}
:=
\begin{bmatrix}
	A_{I,[r]}& A_{I,[r+1,r+x]}  \\
	A_{J,[r]}& A_{J,[r+1,r+x]} 	
\end{bmatrix}
$$
to bound $ \Prob( \Event(I,J)^c\, |\,A_{I,[r]} )$.
Let $g(\cdot)$ be a growth function satisfying 
\begin{align*}
	\forall j \in [i],\quad
	 g(\lfloor ( 1 - ( 1 + \tilde \varepsilon)^{-j})r \rfloor ) = 
	 g_s(j),
\end{align*}
where $g_s(\cdot)$ is given in Definition~\ref{apskjfnoiuoiuniunp}.
Since $16\, g_s(j+1) \le g_s(j)$ for $j \in [i-1]$ and $g_s(j) \le 1$ for $ j \in [i]$,
the function $g(\cdot)$ defined this way
satisfies \eqref{p4t8yfip-98y-8}. Recall that on the event
$\Event(I)$ we have 
$$
	s_{\lfloor (1 - ( 1 + \tilde \varepsilon)^{-j})r \rfloor }(A_{I,[r]})
	\ge g(\lfloor ( 1 - ( 1 + \tilde \varepsilon)^{-j})r \rfloor ),\quad  j \in [i]. 
$$
We apply Proposition~\ref{prop: sec4Main} with $g(t)$
and with $h:=h_s(i)$ (see Definition~\ref{apskjfnoiuoiuniunp}) so that
$$ \frac{c' \tilde\varepsilon h^5\,g\big(\lfloor(1-(1+\tilde\varepsilon)^{-i})r\rfloor\big)}{32}
= g_s(i+1)
$$
(observe that our parameters $r,x$ satisfy the assumption of the proposition
due to Lemma \ref{lem: irxepsilon}, and that $h$ satisfies the assumption $h\leq 2^{-11}
(c')^2 \tilde\varepsilon$ in view of the assumptions on the constant $C_h$
in Definition~\ref{apskjfnoiuoiuniunp}).
We get
\begin{align*}
\Prob\big( \Event(I,J)\,\big|\,
A_{I,[r]}\big) &=\Prob\Big(
s_{\lfloor(1-(1+\tilde\varepsilon)^{-i-1})(r+x)\rfloor}(A_{I\cup J,
[\tilde r]})\geq g_s(i+1)\;\big|\;
A_{I,[r]}
\Big)\\
&\geq 1-2x^{\tilde\varepsilon x/2}\,
h^{(\tilde\varepsilon x)^2/64}
-4\exp\big(-cx^2\,\tilde\varepsilon/h^2\big)
-\tilde C\exp\big(-c\tilde\varepsilon^2 (1+\tilde\varepsilon)^{-i}r x/h^2\big).
\end{align*}
In view of Lemma~\ref{lem: Probhx}, this implies
$$
\Prob\big( \Event(I,J)^c \,\big|\, A_{I,[r]}\big) \le \exp\big( - \alpha \log(n)x\big).
$$
Combining the last inequality with \eqref{eq: secInd01}, we obtain
\begin{align}
\Prob &\Big( \Event(I,J)^c \cap \Event(I) \cap  
		\{\sigma_r(K(I)) \ge n^{-\alpha/2}\} \cap \big\{ \forall j \in [n]\backslash (I\cup J),\, (A_{j,[r]})^\top \in K(I)\big\} \Big) \nonumber \\
&\le 
\exp\big( - \alpha \log(n)x\big)\cdot
\Prob \Big( % \Event(I) \cap  
		\{\sigma_r(K(I)) \ge n^{-\alpha/2}\} \cap \big\{ \forall j \in [n]\backslash (I\cup J),\, (A_{j,[r]})^\top \in K(I)\big\} \Big).
		\label{eq: secIndNominator}
\end{align}
Next, we will treat the denominator in the estimate
\eqref{pifnfq034ufn0384nqpf34npu}.
By Fubini's theorem,
\begin{align*}
    \Prob &\Big(   
		\{\sigma_r(K(I)) \ge n^{-\alpha/2}\} \cap   \big\{ \forall j \in [n]\backslash I,\, (A_{j,[r]})^\top \in K(I)\big\} \Big)\\
&=
\Exp_{A_{I,[r]}} \Big[ {\bf 1}_{\{\sigma_r(K(I)) \ge n^{-\alpha/2}\}}
\cdot \Prob\big( \forall j \in J,\, (A_{j,[r]})^\top \in K(I)\,  \big|\, A_{I,[r]}\big)\cdot \\
& \hspace{4.55cm} \cdot \Prob\big( \forall j \in [n]\backslash (I\cup J),\, (A_{j,[r]})^\top \in K(I)\, \big| \,A_{I,[r]} \big)
\Big] .
\end{align*}
Almost everywhere on the event $\{\sigma_r(K(I)) \ge n^{-\alpha/2}\}$ we have 
$$
\Prob\big( \forall j \in J,\, (A_{j,[r]})^\top \in K(I)\,  \big|\, A_{I,[r]}\big)
\geq  n^{-\alpha x/2},
$$
whence
\begin{align*}
\Prob& \Big(   
		\{\sigma_r(K(I)) \ge n^{-\alpha/2}\} \cap   \big\{ \forall j \in [n]\backslash I,\, (A_{j,[r]})^\top \in K(I)\big\} \Big)\\
&\geq n^{-\alpha x/2}\, \Prob\Big(   
		\{\sigma_r(K(I)) \ge n^{-\alpha/2}\} \cap   \big\{ \forall j \in [n]\backslash (I\cup J),\, (A_{j,[r]})^\top \in K(I)\big\} \Big).
\end{align*}
Together with \eqref{eq: secIndNominator} and \eqref{pifnfq034ufn0384nqpf34npu},
this yields
$$
\Prob \Big( \Event(I,J)^c \cap \Event(I) \,\Big\vert  \,
		\{\sigma_r(K(I)) \ge n^{-\alpha/2}\} \cap  \{ I_r(A) = I\}  \Big)
\le \exp\Big( - \frac{1}{2} \alpha \log(n) x\Big),
$$
and the proof of \eqref{eq: Eventri1} is complete.
\end{proof}

\medskip

At this point, we are ready to prove the main lemma in this section.

\begin{proof}[Proof of Lemma \ref{lem: sec5Main}]
First, recall that in view of Lemma~\ref{lem: r_iBound}, $r_{\ione+1}\leq m_{s+1}$,
and that in view of \eqref{eq: sec5i1DefBound} we have
$r - \lceil 9/ \tilde \varepsilon \rceil\leq \lfloor(1-(1+\tilde\varepsilon)^{-\ione+1})r\rfloor$,
whence
\begin{align*}
\bigcup_{ r \in [m_{s+1}, m_{s+2}] } 
	&\big\{ s_{r - \lceil 9/ \tilde \varepsilon \rceil}(A_{I_r,[r]})
	< g_s(\ione+1) \big\}\\
	&\subset
	\bigcup_{ r \in [r_{\ione+1}, m_{s+2}] } 
	\big\{ s_{\lfloor(1-(1+\tilde\varepsilon)^{-\ione+1})r\rfloor}(A_{I_r,[r]})
	< g_s(\ione+1) \big\}\\
	&=\bigcup_{r \in [r_{\ione+1}, m_{s+2}] } \Event^c(r,\ione+1),
\end{align*}
where we used the definition of the events $\Event(r,i)$ (Definition~\ref{apfjnepfiunfpweifunwpin}).
To estimate the probability of the union of the events in the last line,
we shall embed it into a specially structured collection.

Let $r' := f_{\ione}(m_{s+2})$, where $f_{\cdot}(\cdot)$
was defined in \eqref{akejnfpifunfpi3unfp3i}. %Further, $m_{s+2} - r' \ge L $. 
We have
\begin{align*}
\bigcup_{r \in [r_{\ione+1}, m_{s+2}] } &\Event^c(r,\ione+1)
\subset \bigcup_{r\in[m_s,r']}\{ \sigma_n(K_r(A)) < n^{-\alpha/2} \}\\
&\cup\;
\bigcup_{r \in [r_{\ione+1}, m_{s+2}] } \Big(\Event^c(r,\ione+1)\cap \Big(
\bigcap_{r\in[m_s,r']}\{ \sigma_n(K_r(A)) \geq n^{-\alpha/2} \}\Big)\Big).
\end{align*}
To be able to apply a recursive bound from the last lemma,
we use the bounds $r_i\leq f_i( \tilde r ) \leq m_{s+2}$, $ \tilde r \in [r_{i+1}, m_{s+2}]$,
$i \in [\izero+1,\ione]$, to
write
\begin{align*}
&\bigcup_{r \in [r_{\ione+1}, m_{s+2}] } \Big(\Event^c(r,\ione+1)\cap \Big(
\bigcap_{r\in[m_s,r']}\{ \sigma_n(K_r(A)) \geq n^{-\alpha/2} \}\Big)\Big)
\subset \bigcup_{i \in [\izero]}\bigcup_{r \in [m_s,m_{s+2}]}
		\Event^c(r,i)\\
&\hspace{1.5cm}\cup\bigcup_{ i \in [\izero,\ione] } \bigcup_{ \tilde{r} \in [r_{i+1}, m_{s+2} ] }
	\Big( \Event^c( \tilde r , i+1 ) \cap \Event( f_i( \tilde r ) , [i] ) \cap 
		\Big(
\bigcap_{r\in[m_s,r']}\{ \sigma_n(K_r(A)) \geq n^{-\alpha/2} \}\Big)\Big).
\end{align*}
Thus, using that $f_i( \tilde r )\leq r'$ for $i \in [\izero+1,\ione]$ and
that $m_{s+2} - r' \ge L $, we get
\begin{align}
	\Prob&\Big(
		\bigcup_{r \in [r_{\ione+1}, m_{s+2}] } \Event^c(r,\ione+1) 	\Big) \nonumber \\
&\le  
	\sum_{ r \in [m_s,r'] } \Prob\{ \sigma_n(K_r(A)) < n^{-\alpha/2} \} 
	+ \sum_{ i \in [\izero] } \sum_{ r \in [m_s,m_{s+2}] } 
	\Prob ( \Event^c(r,i) )  \nonumber \\
    &\hspace{1cm} + \sum_{ i \in [\izero,\ione] } \sum_{ \tilde{r} \in [r_{i+1}, m_{s+2} ] }
	\Prob \Big( \Event^c( \tilde r , i+1 ) \cap \Event( f_i( \tilde r ) , [i] ) \cap 
		\{ \sigma(K_{ f_i( \tilde r )}) \ge n^{-\alpha/2} \} \Big) \nonumber \\
&\le  
	\underbrace{n\cdot n^{-\alpha L/4}}_{\mbox{ by Proposition \ref{prop: polyMain}}}
	+ \underbrace{n^2\exp \big( - \alpha L \log(n) \big)}_{\mbox{ by Lemma \ref{lem: sec5Base}}}
	+ \underbrace{ n^2 \exp \Big( - \frac{1}{4} \alpha \log(n) \frac{L}{\tilde \varepsilon} \Big)}_{\mbox{ by Lemma~\ref{lem: sec5Ind}}}
\le  
	n^{- c(\tilde \varepsilon )\alpha L }
\end{align}
for some $c( \tilde \varepsilon )>0$. 
It remains to note that in view of Lemma~\ref{fekjfnwefkjnfpwijfnwelkfjn},
$g_s(\ione+1)\geq n^{-C(\tilde\varepsilon)\alpha}$ for some $C(\tilde\varepsilon)$.
\end{proof}

\begin{proof}[Proof of Corollary~\ref{cor: sminTall}]
For brevity, we denote $k_0:=k_0(p)$.
We fix $r \in [k_0+1,n-2k_0]$ and 
let $F_{r} \subset \R^{I_r}$ be the right singular subspace of the matrix
$(A_{I_r,[r]})^\top$ corresponding to $k_0$ smallest singular values of $(A_{I_r,[r]})^\top$ 
(since almost everywhere on the probability space $I_r$ is unambiguously determined, and the singular values of $(A_{I,[r]})^\top$ are distinct for every
$I\subset[n]$ with $|I|=r$, $F_r$ is uniquely defined).
Now, let us define the event $\tilde\Event_r(\beta)$ that 
\begin{align}
s_{\min}\big((A_{I_r,[r+1,r+2k_0]})^\top \big\vert_{F_r} \big) &\ge  n^{-\beta/(40p)}&
&\mbox{and}& 
\big\| (A_{I_r,[r+1,r+2k_0]})^\top \big\vert_{F_r^\perp} \big\| &\le 3\sqrt{\beta n},
\end{align}
where 
$(A_{I_r,[r+1,r+2k_0]})^\top \big\vert_{F_r} $ and $(A_{I_r,[r+1,r+2k_0]})^\top \big\vert_{F_r^\perp}$
are linear operators
obtained by restricting the domain of $(A_{I_r,[r+1,2k_0]})^\top$
to $F_r$ and $F_r^\perp$, respectively. 
Then, conditioned on the intersection
$\tilde\Event_r(\beta) \cap \Event_{\rm is}(p,k_0,\beta)$,
for any $v \in \R^{I_r} \backslash \{0\}$,  
\begin{align*}
    \big\| (A_{I_r,[r+2k_0]})^\top \,v\big\|_2 &= \Bigg\| \Bigg( 
   \begin{matrix}
      (A_{I_r,[r]})^\top v \\  (A_{I_r,[r+1,r+2k_0]})^\top v 
   \end{matrix} 
   \Bigg) \Bigg\|_2\\
&\ge 
\max \Big\{ 
n^{-\beta/(50p)}\,\big\|P_{F_r^\perp}\,v\big\|_2   ,\, 
n^{-\beta/(40p)} \,\big\|P_{F_r}\,v\big\|_2  - 3\sqrt{\beta n} \,\big\|P_{F_r^\perp}\,v\big\|_2
\Big\},
\end{align*}
where $P_{F_r}$ and $P_{F_r^\perp}$ are orthogonal projections onto $F_r$ and $F_r^\perp$, respectively. 
Consider two cases.
\begin{itemize}

\item Suppose 
$ \|P_{F_r^\perp}v\|_2 \ge \frac{1}{4} \frac{n^{-\beta/(40p)}}{ 3\sqrt{\beta n }  } \|P_{F_r}v\|_2$. Then, 
\begin{align*}
\|v\|_2 &=  \sqrt{ \|P_{F_r^\perp}v\|_2^2 + \|P_{F_r} v\|_2^2 } \le  \|P_{F_r^\perp}v\|_2 
\sqrt{1^2 + \Big( 
   \frac{1}{4} \frac{n^{-\beta/(40p)}}{ 3\sqrt{\beta n }  }\Big)^{-2} 
   } \\
&\le O(\sqrt{\beta})\,n^{\beta/(40p) + \frac{1}{2}}\,\|P_{F_r^\perp}v\|_2,
\end{align*}
which implies 
\begin{align*}
   n^{-\beta /(50p)} \|P_{F_r^\perp}v\|_2  
   \ge O(\beta^{-1/2})\,n^{-\beta/ (50p) - \beta/(40p) - \frac{1}{2}}\,\|P_{F_r^\perp}v\|_2
   \ge n^{-\beta/(20p)} \|v\|_2,
\end{align*}
where the last inequality holds because $\beta \ge \beta_0(p) \ge  300p$ and since $n$ is sufficiently large depending on $p$. 

\item In the case $ \|P_{F_r^\perp}v\|_2 < \frac{1}{4} \frac{n^{-\beta/(40p)}}{ 3\sqrt{ \beta n }  } \|P_{F_r}v\|_2$, we have 
$$
n^{-\beta/(40p)} \big\|P_{F_r}v\|_2  - 3\sqrt{\beta n}  \big\|P_{F_r^\perp}v\big\|_2
\ge \frac{3}{4} n^{-\beta/(40p)} \| P_{F_r}v\|_2 
\ge n^{-\beta/(20p)} \|v\|_2.
$$
\end{itemize}
Since the above estimate holds for all $v \in \R^{I_r}\backslash \{0\}$,
we conclude that everywhere on the intersection $\tilde\Event_r(\beta) \cap \Event_{\rm is}(p,k_0,\beta)$,
\begin{align*}
    s_{\min}\big( (A_{I_r,[r+2k_0]})^\top \big) \ge n^{-\beta/(20p)}.  
\end{align*}
Therefore, for $p\ge 1$ and $\beta\ge \beta_0(p)$,
$$
\Event_{\rm is}(p,k_0,\beta) \cap \Big( \bigcap_{r \in [k_0+1, n-2k_0]} \tilde\Event_r(\beta) \Big)\subset
\Event_{\rm rec}(p,k_0,\beta),
$$
and thus 
\begin{align*}
\Prob \big( \Event_{\rm rec}(p,k_0,\beta)^c \big) \le 
\Prob \big( \Event_{\rm is}(p,k_0,\beta)^c \big)+
\sum_{r \in [k_0+1, n-2k_0]} \Prob( \tilde \Event_r(\beta)^c ). 
\end{align*}
Since in view of Proposition~\ref{prop: sec5Main},
$\Prob \big( \Event_{\rm is}(p,k_0,\beta)^c \big)
\le n^{-2\beta +o_n(1)}$, the corollary will follow
if we show that $\Prob( \tilde \Event_r(\beta)^c )\le n^{-2\beta -1 +o_n(1)}$. 

\medskip

{\it From now on, we fix
$r \in [k_0+1, n-2k_0]$ and
condition on a realization of $A_{[n],[r]}$
such that the set $I_r$ and the space $F_r$ are uniquely determined.
We will write $\tilde\Prob$ and $\tilde\Exp$
to denote the corresponding conditional probability and conditional expectation.}

\medskip

The independence of the entries of the matrix $A$ implies that 
$Q:=(A_{I_r,[r+1,r+2k_0]})^\top \big\vert_{F_r}$ and 
$W:=(A_{I_r,[r+1,r+2k_0]})^\top \big\vert_{F_r^\perp}$
are (standard) Gaussian linear operators from $F_r$
to $\R^{2k_0}$ and from $F_r^\perp$ to $\R^{2k_0}$, respectively. 
For the purpose of estimating the operator norm and least singular values, 
we can view
$W$ and $Q$ as matrices with i.i.d $N(0,1)$ entries of dimensions
$2k_0\times (r-k_0)$ and $2k_0\times k_0$, respectively;
more specifically, we can define standard Gaussian matrices $\tilde W$
and $\tilde Q$ of dimensions $2k_0\times (r-k_0)$ and $2k_0\times k_0$ such that
everywhere on the probability space the singular spectrum of $W$ and $\tilde W$,
and of $Q$ and $\tilde Q$, agree.

It is well known
that the expected operator norm of any standard
Gaussian matrix is no more than the sum
of square roots of its dimensions (see, for example, \cite[Section~7.3]{Vershynin}).
Hence,
\begin{align*}
\tilde\Exp\, \|W\|=\tilde\Exp\, \|\tilde W\| %\le  \mathbb{E} \|W_{[2k_0],1}\|_2 + \mathbb{E} \big\| (W_{1,[r-k_0]})^\top\big\|_2\le 
%\big(\mathbb{E} \|W_{[2k_0],1}\|_2^2\big)^{1/2} + \big(\mathbb{E} \big\| (W_{1,[r-k_0]})^\top\big\|_2^2\big)^{1/2} \\
\le  \sqrt{2k_0} + \sqrt{r-k_0} \le \sqrt{2n}.
\end{align*}
Since the spectral norm is $1$--Lipschitz,
the standard Gaussian concentration inequality (see, for example,
\cite[Section~5.2]{Vershynin}) implies
\begin{align} \label{eq: normW}
    \tilde\Prob \big\{ \|W\| \ge 3\sqrt{\beta n} \big\} 
\le \tilde\Prob \big\{ \|W\| \ge \tilde\Exp\,\|W\| + \sqrt{\beta n } \big\}
\le 2\exp\Big( - \frac{ ( \sqrt{\beta n })^2}{2}\Big) 
= 2\exp(-\beta n/2).
\end{align}
Next, we derive an estimate for $s_{\min}(Q)=s_{\min}(\tilde Q)$. 
For $i \in [k_0]$,
let $P_i:\R^{k_0}\to\R^{k_0}$ be the orthogonal projection to the subspace which is orthogonal to the columns vectors $\tilde Q_{[2k_0],j}$ for $j \in [k_0]\backslash \{i\}$. Then, 
\begin{align*}
s_{\min}(Q) &= \min_{v \in S^{k_0-1}} \|\tilde Qv\| 
\ge \min_{v \in S^{k_0-1}} \max_{i\in [k_0]} \|P_i\tilde Qv\|_2
= \min_{v \in S^{k_0-1}} \max_{i\in [k_0]}  \|P_i(\tilde Q_{[2k_0],i})\|_2 |v_i|\\
&\geq \frac{\min_{j \in [k_0]}\|P_j(\tilde Q_{[2k_0],j})\|_2}{\sqrt{k_0}}.
\end{align*}

Since $P_j$ and $\tilde Q_{[2k_0],j}$ are independent, the norm
$\|P_j(\tilde Q_{[2k_0],j})\|_2$ is equidistributed
with that of a $2k_0-(k_0-1)=(k_0+1)$--dimensional standard Gaussian vector.
Since the probability density function of a $(k_0+1)$--dimensional Gaussian vector is bounded above by $(2\pi)^{-(k_0+1)/2}$, we obtain 
$$
\tilde\Prob \big\{ \| P_j (\tilde Q_{[2k_0],j}) \|_2 \le t \big\} \le   \Big(\frac{t}{\sqrt{2\pi}}\Big)^{k_0+1} |B_2^{k_0+1}|,\quad t>0,
$$
where $|B_2^{k_0+1}|$ is the Lebesgue measure of the unit Euclidean ball $B_2^{k_0+1}$ in $\R^{k_0+1}$. Therefore, in view of the previous computations,
\begin{align*}
    \tilde\Prob \big\{ s_{\min}(Q) \le t \big\} \le k_0\Big(\frac{t\sqrt{k_0}}{\sqrt{2\pi}}\Big)^{k_0+1} |B_2^{k_0+1}|,   \quad t>0.
\end{align*}
Applying the bound $(|B_2^{k_0+1}|)^{1/(k_0+1)}
= O( k_0^{-1/2})$, we get that
there exists a universal constant $C_{\rm b}\ge 1$ so that 
$$
\tilde\Prob \big\{ s_{\min}(Q) \le t \big\} \le (C_{\rm b} \,t)^{k_0},\quad t>0. 
$$
Now, setting $t := n^{-\beta/(40p)}$, we get
\begin{align} \label{eq: sminQ}
   \tilde\Prob \Big\{ s_{\min}(Q) \le  n^{-\beta/(40p)} \Big\} \le 
   n^{ - (1-o_n(1))\beta k_0 /(40p) }
   \le n^{-2\beta-1+o_n(1)},
\end{align}
where the last inequality holds since $k_0=k_0(p)\ge 120p$. 

\bigskip

As a final step of the proof, rewriting 
\eqref{eq: normW} and \eqref{eq: sminQ} on the entire probability space, we get
\begin{align*}
&\Prob \big\{ \|W\| \ge 3\sqrt{\beta n}\;\big|\;A_{[n],[r]} 
\big\} \leq 2\exp(-\beta n/2)\quad a.s;\\
&\Prob \Big\{ s_{\min}(Q) \le  n^{-\beta/(40p)}\;\big|\;A_{[n],[r]} \Big\} 
   \le n^{-2\beta-1+o_n(1)}\quad a.s.
\end{align*}
We conclude that $ \Prob( \Event_r(\beta)^c ) \le n^{-2\beta -1 +o_n(1)}$, and the result follows.
\end{proof} 

\section{The smallest singular value and the growth factor in exact arithmetic}

\subsection{Distance to subspaces}

Recall that by $i_t=i_t(A)$, $1\leq t\leq n$, we denote
the indices of the pivot rows in the GEPP process (see Section~\ref{fepijnfeofiniqwjnpij}).

\begin{defi}[Subspaces generated by row vectors of submatrices of $A$] 
For $x,r \in [n]$ with $1\leq x \le r$, let 
$$
H_{r,x} \subset \R^r \mbox{ be the random subspace spanned by } (A_{ i_t, [r] })^\top
\mbox{ for } t\in[x],
$$
and let $H_{r,0}:=\{0\}$.
Additionally, for $s \in [x]$, let 
$$ H_{r,x,s} \subset \R^r
\mbox{ be the random subspace spanned by }
  (A_{ i_t, [r] })^\top \mbox{ for }  t \in [x]\backslash \{s\},
$$
where we set $H_{r,1,1}:=\{0\}$.
\end{defi}

\begin{defi}
For $\beta >0$, let $\Event_{\row}(r,\beta)$ be the event that 
\begin{align} \label{eq: defEventRow}
	\dist\big( (A_{ i_r, [r]})^\top,\, H_{r,r-1} \big)  \ge \sqrt{2/ \pi }\, n^{- 4(1+\beta / (n-r)) }\;\;
	\mbox{ and }\;\; \| A_{ i_r, [n]} \|_2 \le  \sqrt{n}+ 3 \sqrt{ \beta \log(n)}
\end{align}
and set
$$ \Event_{\rm row}(\beta) := \bigcap_{r \in [n-1 ]} \Event_{\row}(r,\beta).$$
Further, let $\Event_{\dist}(\beta)$ be the event that 
\begin{align*}
		\forall\; r,k,s \in [n-1]  &\mbox{ with } s \le r-k \le r,\\
			&\dist \big(  (A_{i_s, [r]})^\top , H_{r,r-k,s} \big) 
			\le  \exp\Big( 6k \Big( 1 + \frac{\beta}{n-r}\Big)\log n\Big)
			\dist\big( (A_{i_s, [r]})^\top, H_{r,r,s}\big) .
	\end{align*}
\end{defi}

The goal in this section is to prove
\begin{prop} \label{prop: sec6Main}
There exists $\beta_{\text{\tiny\ref{prop: sec6Main}}} \ge 2$
so that the following holds. For $\beta \ge \beta_{\text{\tiny\ref{prop: sec6Main}}}$, we have
$\Event_{\dist}(\beta) \supset \Event_{\row}(\beta)$, and 
	\begin{align*}
		\Prob ( \Event_{\dist}(\beta)^c ) \le  
		\Prob ( \Event_{\row}(\beta)^c ) \le
		n^{-\beta}.
	\end{align*}
\end{prop}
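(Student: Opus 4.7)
The plan is to handle the two conclusions separately. I would first establish the tail bound $\Prob(\Event_{\rm row}(\beta)^c) \leq n^{-\beta}$, which is essentially an assembly of already-proved ingredients, and then deduce the inclusion $\Event_{\rm row}(\beta) \subset \Event_{\dist}(\beta)$ deterministically.

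For the tail bound, I would fix $r \in [n-1]$ and decompose $\Event_{\row}(r,\beta)^c$ into its distance-failure and norm-failure halves. Corollary~\ref{cor: polyDist} applied with $t := 4(1+\beta/(n-r)) \geq 2$ dominates the distance-failure probability by $n^{-2(n-r+\beta)}$; note that the event uses the weaker constant $\sqrt{2/\pi}$ in place of the stronger $\sqrt{\pi/2}$ from the corollary, so the bound applies a fortiori. The norm-failure is handled by Gaussian concentration of the ($1$-Lipschitz) Euclidean norm: each row $A_{i,[n]}$ satisfies $\Prob\{\|A_{i,[n]}\|_2 > \sqrt{n}+3\sqrt{\beta\log n}\} \le n^{-9\beta/2}$, and a union bound over all $i\in[n]$ (to cover every possible value of $i_r$) yields $n^{1-9\beta/2}$. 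Summing the distance bound over $r\in[n-1]$ gives $\sum_{r=1}^{n-1}n^{-2(n-r+\beta)}\le 2n^{-2\beta-2}$, so altogether $\Prob(\Event_{\rm row}(\beta)^c) \le 2n^{-2\beta-2}+n^{1-9\beta/2} \le n^{-\beta}$ once $\beta_{\text{\tiny\ref{prop: sec6Main}}}$ is taken to be an absolute constant (any value $\geq 2$ works for large $n$).

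For the inclusion, the key is a volume-ratio identity. Writing $v_t := (A_{i_t,[r]})^\top$ and $d_s(x) := \dist(v_s, H_{r,x,s})$, I would apply the base identity ${\rm vol}_x(v_1,\dots,v_x)={\rm vol}_{x-1}(v_t:\,t\in[x]\setminus\{s\})\cdot d_s(x)$ at both $x=r$ and $x=r-k$, and combine with the ordered product formula ${\rm vol}_x(v_1,\dots,v_x)=\prod_{j=1}^x d_j(j)$. Cancelling the common factor ${\rm vol}_{r-k-1}(v_t:\,t\in[r-k]\setminus\{s\})$ yields the clean identity
\[
\frac{d_s(r-k)}{d_s(r)} \;=\; \prod_{j=r-k+1}^{r} \frac{\dist(v_j,\,\spn\{v_t:\,t\in[j-1]\setminus\{s\}\})}{d_j(j)}.
\]
I would bound each numerator trivially by $\|v_j\|_2 \le \|A_{i_j,[n]}\|_2 \le \sqrt n+3\sqrt{\beta\log n}$ on $\Event_{\rm row}(\beta)$. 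For each denominator, the coordinate projection $\R^r\to\R^j$ is a contraction sending $H_{r,j-1}$ to $H_{j,j-1}$ and $v_j$ to $(A_{i_j,[j]})^\top$, so $d_j(j)\ge\dist((A_{i_j,[j]})^\top,H_{j,j-1})\ge\sqrt{2/\pi}\,n^{-4(1+\beta/(n-j))}$ by $\Event_{\row}(j,\beta)$. Using $\beta/(n-j)\le\beta/(n-r)$ for $j\le r$, the full product is at most $(\sqrt{\pi/2})^k(\sqrt n+3\sqrt{\beta\log n})^k\,n^{4k(1+\beta/(n-r))}$, which is below $n^{6k(1+\beta/(n-r))}=\exp(6k(1+\beta/(n-r))\log n)$ for large $n$; the slack (between the exponent $4$ in the distance bound and the target $6$) absorbs both $\sqrt{\pi/2}$ and the polylogarithmic row norm, and is precisely the source of the constant $6$ in the definition of $\Event_{\dist}(\beta)$.

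The only genuinely delicate step I expect is verifying the volume-ratio identity itself, since in the case $s\le r-k$ the vector $v_s$ must be deleted from both the $r$-fold and $(r-k)$-fold volume factorizations; this follows by the same Gram--Schmidt-style telescoping once the indexing is pinned down. Every subsequent estimate is a direct application of the two deterministic inputs built into $\Event_{\rm row}(\beta)$.
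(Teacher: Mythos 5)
Your proof is correct and follows essentially the same route as the paper's (Lemmas~\ref{fpnuafpiufnpifqnpi} and~\ref{apijnfpoeiufnofiunfpi}): the tail bound combines Corollary~\ref{cor: polyDist} with Gaussian concentration of the row norm, and the inclusion is obtained from the coordinate-projection contraction (Lemma~\ref{lem: projDist}) together with a Gram--Schmidt telescoping of distance ratios. Your closed-form volume-ratio identity is exactly the telescoped form of the paper's two-vector Lemma~\ref{lem: distInductive}, which the paper applies iteratively for $t=r-k+1,\dots,r$, so the underlying computation and the source of the constant $6$ coincide.
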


The statement is obtained as a combination of Lemmas~\ref{fpnuafpiufnpifqnpi}
and~\ref{apijnfpoeiufnofiunfpi} below.
First, we consider two simple facts from Euclidean geometry.

\begin{lemma}\label{lem: projDist}
Let $u \in \R^r$ and let
$H \subset \R^r$ be a subspace. Then for any orthogonal projection $P$ in $\R^r$, we have 
$$
\dist( u,H) \ge \dist(Pu,PH). 
$$
\end{lemma}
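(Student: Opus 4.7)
The plan is extremely short since this is a one-line consequence of the fact that an orthogonal projection is a contraction.

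First I would pick $h^{\ast} \in H$ achieving (or approximating to within $\varepsilon$) the infimum, so that $\dist(u,H) = \|u - h^{\ast}\|_2$ (using that $H$ is closed, which is automatic in finite dimensions; otherwise work with an $\varepsilon$-minimizer and let $\varepsilon \to 0$). Then note $P h^{\ast} \in PH$ by definition of the image of a set under $P$, so $\dist(Pu, PH) \le \|Pu - P h^{\ast}\|_2 = \|P(u - h^{\ast})\|_2$. Finally, since $P$ is an orthogonal projection we have the standard bound $\|Pv\|_2 \le \|v\|_2$ for every $v \in \R^r$, giving $\|P(u-h^{\ast})\|_2 \le \|u - h^{\ast}\|_2 = \dist(u,H)$. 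Chaining the two inequalities yields the claim.

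The only thing worth a moment of care is the set-theoretic definition of $PH$: we interpret $PH := \{Ph : h \in H\}$, which is itself a subspace of $\R^r$; under this reading $Ph^{\ast} \in PH$ is immediate, and the argument above is complete. There is no obstacle here — the lemma is purely a statement that orthogonal projections are $1$-Lipschitz, and it will be used later as a convenient tool for dimension-reducing distance estimates when controlling $\dist((A_{i_s,[r]})^\top, H_{r,r-k,s})$.
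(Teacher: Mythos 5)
Your proof is correct and is exactly the paper's approach: the paper's entire argument is the one-line remark that the statement follows from $P$ being a contraction. You have simply spelled out the standard details (pick a minimizer $h^\ast\in H$, note $Ph^\ast\in PH$, and apply $\|Pv\|_2\le\|v\|_2$), which is what the paper leaves implicit.
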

\begin{proof}
The statement follows immediately by observing that $P$ is a contraction.
%Recall that 
%\begin{align*}
% 	\dist(u,H) =& \max_{ v \in H^\perp \cap S^{r-1} }  |\langle u,v \rangle| &&\mbox{ and } &
%	\dist(u,H) =& \max_{ v \in (PH)^\perp \cap S^{r-1} }  |\langle Pu,v \rangle|.
%\end{align*}
%Since $P$ is self-adjoint, we have $v \in (PH)^\perp \Rightarrow Pv \in H^\perp$. To verify that, suppose $v \in (PH)^\perp$ and for any $w \in H$ we have 
%$$
%\langle Pv,\, w\rangle =  \langle v,\, Pw\rangle = 0.
%$$
%With this implication, 
%\begin{align*}
%	\dist( Pu,\, PH) & =
%	\max_{ v \in (PH)^\perp \cap S^{r-1} }|\langle Pu,\,  v \rangle| 
%	= \max_{ v \in (PH)^\perp \cap S^{r-1} } |\langle  u,\, Pv \rangle |\\
%	& \hspace{1cm} \le   
%	\max_{ v' \in H^\perp ,\, \|v'\|\le 1  } 
%	|\langle u,\, v' \rangle | = \dist(u,\,H). 
%\end{align*}
\end{proof}

\begin{lemma}\label{lem: distInductive}
Let $F$ be a subspace of $\R^k$, and let
$v_1, v_2 \in \R^k$ be vectors such that
$$\dim\spn(F,v_1,v_2)=\dim(F)+2.$$
For $i \in [2]$, let $F_i$ be the linear span of $F$ and $v_i$. Then, 
	$$
		{\rm dist}(v_1, F) \le \frac{{\rm dist}(v_1,F_2)\,\|v_2\|_2}{{\rm dist}(v_2,F_1) }.
	$$
\end{lemma}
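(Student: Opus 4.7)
The plan is to reduce the inequality to a two-dimensional computation by projecting onto $F^\perp$. Let $P$ denote the orthogonal projection onto $F^\perp$, and set $w_i := Pv_i$ for $i\in\{1,2\}$. By definition, $\dist(v_i,F)=\|w_i\|_2$. The hypothesis $\dim\spn(F,v_1,v_2)=\dim F+2$ guarantees that $w_1,w_2$ are linearly independent, so they span a $2$-dimensional subspace $V\subset F^\perp$.

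Next I would rewrite $\dist(v_1,F_2)$ and $\dist(v_2,F_1)$ in this picture. Since $F_2=F+\R v_2=F+\R w_2$ is an orthogonal direct sum (as $w_2\in F^\perp$), the projection onto $F_2$ decomposes as $P_{F_2}=P_F+P_{\R w_2}$. A direct Pythagorean calculation then gives $\dist(v_1,F_2)=\dist(w_1,\R w_2)$, and symmetrically $\dist(v_2,F_1)=\dist(w_2,\R w_1)$. (This step is essentially an application of Lemma~\ref{lem: projDist} to the projection $P$, together with the fact that $PF_2=\R w_2$.)

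Inside the two-dimensional subspace $V$, let $\theta\in(0,\pi)$ be the angle between $w_1$ and $w_2$. Then elementary planar geometry gives
\[
\dist(w_1,\R w_2)=\|w_1\|_2\sin\theta,\qquad \dist(w_2,\R w_1)=\|w_2\|_2\sin\theta,
\]
so dividing and using the identifications from the previous step,
\[
\frac{\dist(v_1,F_2)}{\dist(v_2,F_1)}=\frac{\|w_1\|_2}{\|w_2\|_2}=\frac{\dist(v_1,F)}{\dist(v_2,F)}.
\]
Rearranging and bounding $\dist(v_2,F)=\|w_2\|_2\le\|v_2\|_2$ produces the claimed inequality. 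There is no substantive obstacle; the only thing to be careful about is verifying the orthogonal decomposition $F_2=F\oplus\R w_2$ (and its analogue for $F_1$), which is what makes the reduction to the planar picture clean.
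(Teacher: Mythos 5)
Your proof is correct and is essentially the paper's argument: both project onto $F^\perp$ and reduce to the $2$-dimensional picture spanned by $Pv_1$ and $Pv_2$, with your $\sin\theta$ being the paper's $\sqrt{1-\langle u_1,u_2\rangle^2}=\|u_2-\langle u_2,u_1\rangle u_1\|_2$. One small caveat: Lemma~\ref{lem: projDist} only gives an inequality $\dist(v_1,F_2)\ge\dist(w_1,\R w_2)$, so the equality you need is really justified by the orthogonal decomposition $F_2=F\oplus\R w_2$ (and the fact that $\ker P=F\subset F_2$), which you do observe but should cite as the actual reason.
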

\begin{proof}
%We will make the assumption that ${\rm dist}(v_i,F)=\| P_{F^\perp}v_i\| > 0$ for $i\in[2]$  since otherwise the result is immediate. 
For any subspace $E$, we let $P_E$ be the orthogonal projection onto $E$.
Let $u_i := \frac{P_{F^\perp}v_i }{ \| P_{F^\perp}v_i \|_2}$. Observe that, 
	$$
		{\rm dist}(v_2,F_1) = \|P_{F_1^\perp}v_2\|_2 = 
		\|  P_{F^\perp}v_2 - \langle P_{F^\perp}v_2, u_1 \rangle u_1 \|_2
		=\|  P_{F^\perp}v_2\|_2\, \|u_2 - \langle u_2,u_1\rangle u_1 \|_2,
	$$
whence
$$
\|u_2 - \langle u_2,u_1\rangle u_1 \|_2 = \frac{ {\rm dist}(v_2,F_1) }{ \|P_{F^\perp}v_2\|} \ge \frac{ \dist(v_2,F_1) }{ \|v_2\|_2}.
$$
On the other hand, 
$$
		\|u_2 - \langle u_2,u_1\rangle u_1\| _2
		= \sqrt{ 1 -  \langle u_2,u_1\rangle^2 }  
		= \|u_1 - \langle u_1,u_2 \rangle u_2\|_2,
$$
and therefore
	$$
		\dist(v_1,F) = \| P_{F^\perp} v_1\|_2
		= \frac{ \| P_{F^\perp} v_1\|_2\, \|u_1 - \langle u_1,u_2 \rangle u_2\|_2 }{
		 \|u_1 - \langle u_1,u_2 \rangle u_2\|_2} =  
		 \frac{\dist(v_1,F_2)}{\|u_1 - \langle u_1,u_2 \rangle u_2\|_2}
		 \le  \frac{\dist(v_1,F_2)\,\|v_2\|_2}{\dist(v_2,F_1) }.
	$$
\end{proof}

\begin{lemma}\label{fpnuafpiufnpifqnpi}
Let $s,k,r \in [n-1]$ such that $s\le r-k < r$. Fix a realization of $A$ such that
the event $\Event_{\row}(\beta)$ holds. Then, 
	\begin{align*}
	\dist \big(  (A_{ i_s, [r] })^\top, {H}_{r,r-k,s}  \big)  
	\le \exp\Big(  6k\Big( 1 + \frac{\beta  }{n-r} \Big)\log(n) \Big) \,
	\dist \big(   (A_{ i_s, [r] })^\top, {H}_{r,r,s}  \big)  .
	\end{align*}
	Thus, $\Event_{\dist}(\beta) \supset \Event_{\row}(\beta)$.
\end{lemma}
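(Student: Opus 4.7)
The plan is to go from $H_{r,r,s}$ down to $H_{r,r-k,s}$ by peeling off one row vector at a time, applying Lemma~\ref{lem: distInductive} at each step, and using Lemma~\ref{lem: projDist} combined with the definition of $\Event_{\row}(\beta)$ to control the denominator that appears in the recursion.

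Concretely, for each $t \in \{r-k, r-k+1, \dots, r-1\}$ I will apply Lemma~\ref{lem: distInductive} with $F := H_{r,t,s}$, $v_1 := (A_{i_s,[r]})^\top$ and $v_2 := (A_{i_{t+1},[r]})^\top$. Since $s \le r-k \le t$, the index $s$ differs from $t+1$, and the vector $v_1$ together with $v_2$ and $F$ span $H_{r,t+1}$, which has dimension $t+1 = \dim(F)+2$ because on $\Event_{\row}(\beta)$ the lower bounds $\dist((A_{i_{t'},[t']})^\top, H_{t',t'-1})>0$ for all $t'\le r$ force the rows $(A_{i_{t'},[r]})^\top$, $t'\in[r]$, to be linearly independent. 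With these choices $F_2 = H_{r,t+1,s}$ and $F_1 = H_{r,t}$, so the lemma gives
\begin{equation*}
\dist\bigl(v_1, H_{r,t,s}\bigr) \;\le\; \frac{\dist\bigl(v_1, H_{r,t+1,s}\bigr)\,\|(A_{i_{t+1},[r]})^\top\|_2}{\dist\bigl((A_{i_{t+1},[r]})^\top,\,H_{r,t}\bigr)}.
\end{equation*}

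To lower bound the denominator I use Lemma~\ref{lem: projDist} with $P$ the coordinate projection onto the first $t+1$ coordinates; this maps $(A_{i_{t+1},[r]})^\top$ to $(A_{i_{t+1},[t+1]})^\top$ and $H_{r,t}$ onto $H_{t+1,t}$, so
\begin{equation*}
\dist\bigl((A_{i_{t+1},[r]})^\top,\,H_{r,t}\bigr) \;\ge\; \dist\bigl((A_{i_{t+1},[t+1]})^\top,\,H_{t+1,t}\bigr) \;\ge\; \sqrt{2/\pi}\,n^{-4(1+\beta/(n-t-1))},
\end{equation*}
where the second inequality is the first half of the definition of $\Event_{\row}(t+1,\beta)$. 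The numerator is bounded using the second half of $\Event_{\row}$: $\|(A_{i_{t+1},[r]})^\top\|_2 \le \|A_{i_{t+1},[n]}\|_2 \le \sqrt{n} + 3\sqrt{\beta\log n}$. Thus each per-step multiplicative factor is at most $\sqrt{\pi/2}\,(\sqrt{n}+3\sqrt{\beta\log n})\,n^{4(1+\beta/(n-t-1))}$, and since $t+1\le r$ we have $n-t-1 \ge n-r$, so the factor is bounded by $n^{5(1+\beta/(n-r)) + o(1)}$ for $n$ large.

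Iterating the recursion from $t=r-k$ up to $t=r-1$ (that is, $k$ times) yields
\begin{equation*}
\dist\bigl((A_{i_s,[r]})^\top,\,H_{r,r-k,s}\bigr) \;\le\; \Bigl(\prod_{t=r-k}^{r-1} \tfrac{\|(A_{i_{t+1},[r]})^\top\|_2}{\dist((A_{i_{t+1},[r]})^\top,H_{r,t})}\Bigr)\cdot \dist\bigl((A_{i_s,[r]})^\top,\,H_{r,r,s}\bigr).
\end{equation*}
The product of denominators contributes $n^{4\sum_{t=r-k}^{r-1}(1+\beta/(n-t-1))} \le n^{4k(1+\beta/(n-r))}$ (using $\sum_{j=n-r}^{n-r+k-1} 1/j \le k/(n-r)$), and the product of numerators together with the $\sqrt{\pi/2}$ factors contributes at most $n^{k(1+o(1))}$ for $n$ sufficiently large. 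Hence the total factor is at most $n^{(5+o(1))k(1+\beta/(n-r))} \le \exp(6k(1+\beta/(n-r))\log n)$, which is the desired inequality.

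I do not anticipate a real obstacle here: the mechanism is the classical ``one-at-a-time'' projection-distance bookkeeping, and all the probabilistic content has been packaged in $\Event_{\row}(\beta)$. The only mildly delicate point is verifying the dimension condition $\dim\spn(F,v_1,v_2)=\dim(F)+2$ deterministically on $\Event_{\row}(\beta)$, which follows because the positivity of the distances $\dist((A_{i_{t'},[t']})^\top,H_{t',t'-1})$ guaranteed by $\Event_{\row}$ inductively forces $A_{I_t,[t]}$ to be invertible for each $t\le r$.
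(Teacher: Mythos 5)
Your proposal follows essentially the same route as the paper's own proof: peel off one pivot row at a time from $H_{r,r,s}$ down to $H_{r,r-k,s}$ via Lemma~\ref{lem: distInductive}, control the denominator $\dist((A_{i_{t+1},[r]})^\top, H_{r,t})$ through Lemma~\ref{lem: projDist} combined with the first part of $\Event_{\row}$, bound the numerator $\|(A_{i_{t+1},[r]})^\top\|_2$ by the second part of $\Event_{\row}$, and iterate $k$ times using $n-t-1\geq n-r$. The only cosmetic differences are the index shift ($t$ from $r-k$ to $r-1$ rather than $r-k+1$ to $r$), your explicit verification of the dimension hypothesis via linear independence (the paper instead notes the degenerate case is harmless), and your asymptotic $o(1)$ bookkeeping where the paper uses the explicit inequality $\sqrt{n}+3\sqrt{\beta\log n}\leq 2n^{1+\beta/(n-t)}$ — none of which change the substance.
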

\begin{proof}
First, we note that for every $t \in [2,r]$, 
$$
	H_{t,t-1} = P_t (H_{r,t-1})
$$
where $P_t: \R^r \mapsto \R^t$ is the coordinate
projection onto the first $t$ components. Applying Lemma~\ref{lem: projDist} for
every $2\leq t\leq r$, we obtain 		
\begin{align*}
		\dist\big( (A_{i_t , [r]})^\top,\, H_{r,t-1}   \big) \ge 
		\dist\big(  (A_{i_t , [t]})^\top,\, H_{t,t-1} \big) 
		\ge  \sqrt{2/\pi }\, n^{-4(1+ \beta/ (n-t))},
	\end{align*}
where in the last inequality we used the definition of $\Event_{\row}(\beta)$.
Further, for $t \in [r-k+1,r]$, we will apply Lemma~\ref{lem: distInductive} with 
$$
	F := {H}_{r,t-1,s}, \quad v_1 := (A_{i_s, [r] })^\top ,\quad \mbox{and } 
	v_2 :=  (A_{ i_t, [r] })^\top,
$$
so that $F_1 = H_{r,t-1}$ and $F_2 = {H}_{r,t,s}$, and from the previous inequality
and the definition of $\Event_{\row}(\beta)$ we have
$$
		\dist(v_2,\, F_1)  
		\ge \sqrt{2/\pi }\, n^{-4(1+ \beta/ (n-r))}\;
		\mbox{ and }\; \|v_2\|_2 \le \|A_{ i_t, [n] } \|_2 
		\le \sqrt{n} + 3 \sqrt{ \beta \log n }.
$$
Lemma~\ref{lem: distInductive} implies
\begin{align*}
	{\dist}(  (A_{ i_s, [r]})^\top, {H}_{r,t-1,s}  ) \le 
		\sqrt{\pi /2 }\, n^{4 (1 + \beta / (n-t))} ( \sqrt{n} + 3 \sqrt{ \beta \log n } )\,
		\dist(  (A_{ i_s, [r] })^\top, {H}_{r,t,s} )
	\end{align*}
(it is easy to see that
in the case $\dim\spn\{F,v_1,v_2\}<\dim(F)+2$ when the lemma cannot be applied, the
above inequality holds as well).
Together with the inequality $ \sqrt{n} + 3 \sqrt{ \beta \log n } \le 2n^{1+ \beta / (n-t) }$ for $\beta >0$,  
\begin{align*}
	{\dist}\big( {H}_{r,t-1,s} ,\, (A_{ i_s, [r]})^\top \big) \le 
		\exp \bigg( \Big( 1 + \frac{\beta}{n-t} \Big) 6\log n \bigg)
		{\dist}\big(  {H}_{r,t,s},\, (A_{ i_s, [r]})^\top \big),\quad
		t \in [r-k+1,r].
	\end{align*}
Finally, applying the above inequality inductively for $t$ from $r-k+1$ to $r$ we obtain  
\begin{align*}
		{\dist}\big( {H}_{r,r-k,s},\, (A_{ i_s, [r]})^\top  \big) \le 
		\exp\bigg( 6k \Big( 1+ \frac{\beta}{n-r} \Big) \log n \bigg)\,
		{\dist}\big(  {H}_{r,r,s},\, (A_{ i_s, [r]})^\top \big).  
	\end{align*}
\end{proof}

\begin{lemma}\label{apijnfpoeiufnofiunfpi}
For $\beta \ge 2$ and $1\leq r\leq n$, the following probability estimate holds:
$$
	\Prob ( \Event_{\row}(r,\beta)^c ) \le (1+o_n(1))n^{-2\beta}
$$
and 
$$
	\Prob ( \Event_{\row}(\beta)^c ) \le n^{-\beta}.
$$
\end{lemma}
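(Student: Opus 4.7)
The event $\Event_{\row}(r,\beta)$ is a conjunction of a distance estimate and a row-norm estimate, so my plan is to bound the failure probability of each part separately and combine them via a union bound. All the work on the distance side has already been packaged in Corollary~\ref{cor: polyDist}, which reduces everything to picking the right exponent.

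For the distance piece, I would apply Corollary~\ref{cor: polyDist} with $t:=4\bigl(1+\beta/(n-r)\bigr)\ge 4$, valid for $r\in[n-1]$. Since $\sqrt{\pi/2}\ge \sqrt{2/\pi}$, this gives
$$
\dist\bigl((A_{i_r,[r]})^\top,\,H_{r,r-1}\bigr)\;\ge\;\sqrt{2/\pi}\,n^{-4(1+\beta/(n-r))}
$$
with probability at least $1-n^{-t(n-r)/2}=1-n^{-2\beta-2(n-r)}\ge 1-n^{-2\beta-2}$. For $r=n$ the claimed individual bound can be dealt with separately, but for the global estimate it suffices to consider $r\in[n-1]$ since $\Event_{\row}(\beta)$ is the intersection over that range.

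For the norm piece, the subtlety is that $i_r=i_r(A)$ depends on $A$, so I cannot invoke concentration on a fixed Gaussian row. The remedy is simply to bound the maximum over all $n$ candidate rows: for each fixed $i\in[n]$, the map $g\mapsto \|g\|_2$ is $1$-Lipschitz with $\Exp\|A_{i,[n]}\|_2\le\sqrt n$, so by Gaussian concentration
$$
\Prob\bigl\{\|A_{i,[n]}\|_2>\sqrt n+3\sqrt{\beta\log n}\bigr\}\le \exp(-9\beta\log n/2)=n^{-9\beta/2}.
$$
A union bound over $i\in[n]$ then yields $n^{1-9\beta/2}$, which is $o(n^{-2\beta})$ for $\beta\ge 2$. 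Combining the two estimates gives
$$
\Prob\bigl(\Event_{\row}(r,\beta)^c\bigr)\le n^{-2\beta-2(n-r)}+n^{1-9\beta/2}\le (1+o_n(1))\,n^{-2\beta},
$$
which is the first stated inequality.

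For the second statement I would just take a union bound over $r\in[n-1]$. The distance contributions sum geometrically:
$$
\sum_{r=1}^{n-1}n^{-2\beta-2(n-r)}=n^{-2\beta}\sum_{k=1}^{n-1}n^{-2k}=O(n^{-2\beta-2}),
$$
while the row-norm contribution adds up to at most $n\cdot n^{1-9\beta/2}=n^{2-9\beta/2}$, which for $\beta\ge 2$ is smaller than $n^{-\beta}$. Hence $\Prob(\Event_{\row}(\beta)^c)\le n^{-\beta}$ once $n$ is large enough, as required. There is no real obstacle in this lemma — the heavy probabilistic content has already been absorbed into Proposition~\ref{prop: polyMain} and Lemma~\ref{lem: polyToDist}; the only mild point of care is handling the data-dependent index $i_r$ in the norm bound, which is resolved by the cheap union bound over the $n$ possible row indices.
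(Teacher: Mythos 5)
Your proof is correct and follows the same route as the paper: Corollary~\ref{cor: polyDist} with $t=4(1+\beta/(n-r))$ for the distance part, one-sided Gaussian concentration plus a union bound over the $n$ candidate rows for the norm part, and a final union bound over $r\in[n-1]$ for the global event. The handling of the data-dependent index $i_r$ and the $\sqrt{\pi/2}$ versus $\sqrt{2/\pi}$ slack are both exactly as in the paper's argument.
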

\begin{proof}
First, in view of Corollary \ref{cor: polyDist}, we have 
\begin{align*}
	\Prob \Big\{ \dist\big((A_{ i_r, [r]})^\top,\, H_{r,r-1} \big)  \le  
	\sqrt{ \frac{\pi}{2} }\, n^{-4(1+  \beta/ (n-r))}\Big\}
	\leq n^{-2(1+  \beta/ (n-r))(n-r)}
	= n^{-2(n-r+\beta)} \leq n^{-2\beta}. 
\end{align*}
Next, for each $i \in [n]$, applying the standard concentration inequality for Lipschitz functions of Gaussian variables,  
$$
\Prob \big \{ \|A_{ i,[n] } \|_2 \ge \Exp\,\|A_{i,[n]}\|_2 + t \big\} \le 2\exp( -t^2/2),
\quad t>0.
$$
With $\Exp\|A_{i,[n]}\|_2 \le (\Exp\|A_{i,[n]}\|_2^2)^{1/2}\le \sqrt{n}$, by taking
$t := 3\sqrt{ \beta \log n }$ we have 
$$
	\Prob \big \{ \|A_{ i, [n] } \|_2 \ge \sqrt{n} + 3\sqrt{ \beta \log n } \big\} 
	\le  2\, n^{-9\beta/2}. 
$$
Taking the union bound over $i\in[n]$ and taking into account the condition $\beta\geq 2$,
we get the first assertion of the lemma.

The second assertion follows from another application of 
the union bound. 
\end{proof}

\subsection{The smallest singular value of $A_{I_r,[r]}$}

\begin{defi}
For $k \in [n]$, $\beta,p \ge 1$, let $\Event_{\rm sq}(p,k,\beta)$ be the event that 
$$
\forall r \in [k,n-k],\quad s_{r}(A_{I_r,[r]}) \ge n^{-\beta/(6p)}. 
$$
\end{defi}

\begin{prop}\label{smin prop efwf}
There is a universal constant $C>0$ with the following property.
For any $p \ge 1$, there exist $n_0(p)$, $k_1(p)\leq Cp^2$ and $
\beta_{\text{\tiny\ref{prop: sec6Main}}}\leq
\beta_1(p)\leq Cp^2$
such that for $ n \ge n_0(p), \beta \ge \beta_1(p)$, and $k_1(p)$ we have 
$$
\Event_{\rm sq}(p,k_1(p),\beta) \supset \Event_{\row}(\beta) \cap 
 \Event_{\rm rec}(p,k_0(p),\beta),
$$
where $k_0(p)$ is
taken from Proposition~\ref{prop: sec5Main}, $\beta_{\text{\tiny\ref{prop: sec6Main}}}$ is defined in Proposition~\ref{prop: sec6Main}, and $\Event_{\rm rec}(\cdot)$
is taken from Definition~\ref{def: eventIntSing}.
Moreover,
$$
\Prob( \Event_{\rm sq}(p,k_1(p),\beta)^c ) \le 2n^{-\beta}.
$$
\end{prop}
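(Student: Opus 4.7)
The plan is to reason deterministically on the event $\Event_{\rm row}(\beta)\cap \Event_{\rm smin\_rect}(p,k_0(p),\beta)$, which by Proposition~\ref{prop: sec6Main} also implies $\Event_{\dist}(\beta)$. I set $k_0:=k_0(p)$ and choose $k_1(p):=C_1\,p\,k_0$ and $\beta_1(p):=\max\{\beta_{\text{\tiny\ref{prop: sec6Main}}},\,C_2\,p\,k_0\}$ with sufficiently large absolute constants $C_1,C_2$. For $r\in[k_1(p),\,n-k_1(p)]$ one then has $n-r\ge k_1$, so $\beta/(n-r)\le\beta/k_1=O(1/p)$ stays small, and $r-2k_0$ lies in the admissible range $[k_0+1,\,n-2k_0]$ of $\Event_{\rm smin\_rect}$. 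By the standard row-distance characterization of the smallest singular value,
\[
s_r(A_{I_r,[r]})\ge r^{-1/2}\min_{t\in[r]}\dist\big((A_{i_t,[r]})^\top,\,H_{r,r,t}\big),
\]
so it suffices to produce polynomial lower bounds on each of these $r$ distances.

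For "early" indices $t\in[r-2k_0]$, applying the same row-distance characterization to the $(r-2k_0)\times r$ matrix $A_{I_{r-2k_0},[r]}$ yields $\dist\big((A_{i_t,[r]})^\top,\,H_{r,r-2k_0,t}\big)\ge s_{\min}\big((A_{I_{r-2k_0},[r]})^\top\big)\ge n^{-\beta/(20p)}$ on $\Event_{\rm smin\_rect}$. Passing from the span of only the first $r-2k_0$ pivot rows to the full $(r-1)$-dimensional $H_{r,r,t}$ is precisely Lemma~\ref{fpnuafpiufnpifqnpi} with $k=2k_0$, which costs a multiplicative factor $n^{-12k_0(1+\beta/(n-r))}$, kept under control by the choice of $k_1$.

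The "late" indices $t\in(r-2k_0,\,r]$ constitute the auxiliary case the outline refers to: Lemma~\ref{fpnuafpiufnpifqnpi} no longer applies because the excluded index $t$ lies past the cut-off. Instead I run Lemma~\ref{lem: distInductive} from the opposite side: put $F_0:=H_{r,t-1}$ and inductively define $F_j:=F_{j-1}+\spn\{(A_{i_{t+j},[r]})^\top\}$ for $j=1,\dots,r-t$, noting $F_{r-t}=H_{r,r,t}$. Applying the lemma at each step with $v_1:=(A_{i_t,[r]})^\top$ and $v_2:=(A_{i_{t+j},[r]})^\top$, the critical denominator equals $\dist\big((A_{i_{t+j},[r]})^\top,\,H_{r,t+j-1}\big)$, which by Lemma~\ref{lem: projDist} and $\Event_{\rm row}$ is at least $\sqrt{2/\pi}\,n^{-4(1+\beta/(n-t-j))}$, while $\|v_2\|\le\sqrt{n}+3\sqrt{\beta\log n}\le 2n^{1+\beta/(n-t-j)}$. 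Iterating the at most $2k_0$ steps together with the base bound $\dist\big((A_{i_t,[r]})^\top,\,H_{r,t-1}\big)\ge\sqrt{2/\pi}\,n^{-4(1+\beta/(n-t))}$ (once again from Lemma~\ref{lem: projDist} and $\Event_{\rm row}$) gives an estimate of the form $n^{-(12k_0+5)(1+\beta/k_1)}$.

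Combining the two regimes, the relations $k_0/k_1=O(1/p)$ and $\beta\ge\beta_1=\Omega(p\,k_0)$ make the total exponent at most $\beta/(6p)-1/2$, so $s_r(A_{I_r,[r]})\ge n^{-\beta/(6p)}$ uniformly in $r\in[k_1,\,n-k_1]$. This establishes the inclusion $\Event_{\rm smin}(p,k_1(p),\beta)\supset\Event_{\rm row}(\beta)\cap \Event_{\rm smin\_rect}(p,k_0(p),\beta)$. The probability estimate then follows from Proposition~\ref{prop: sec6Main} and Corollary~\ref{cor: sminTall} by the union bound, $\Prob(\Event_{\rm smin}^c)\le\Prob(\Event_{\rm row}^c)+\Prob(\Event_{\rm smin\_rect}^c)\le n^{-\beta}+n^{-2\beta+o_n(1)}\le 2n^{-\beta}$. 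The principal obstacle is the late-index case: Schur-complement or Hadamard-type routes only produce factors exponential in $k_0$, whereas the inductive distance lemma combined with the partial-pivoting lower bound preserves polynomial control.
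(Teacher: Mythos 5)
Your argument is correct and is essentially the paper's proof, restated directly instead of by contradiction: both use the row-distance characterization $s_r(A_{I_r,[r]})\ge r^{-1/2}\min_t\dist\big((A_{i_t,[r]})^\top,H_{r,r,t}\big)$, split into the same two regimes (indices $t\le r-2k_0$, handled via $\Event_{\rm smin\_rect}$ together with $\Event_{\dist}$; indices $t>r-2k_0$, handled via $\Event_{\rm row}$), and conclude with the identical union bound. One small inaccuracy worth fixing: you claim Lemma~\ref{fpnuafpiufnpifqnpi} ``no longer applies'' for late $t$, but it does apply with $k:=r-t$ (giving $H_{r,t,t}=H_{r,t-1}$ directly), so your hand-rolled iteration of Lemma~\ref{lem: distInductive} in that regime merely re-derives a special case of Lemma~\ref{fpnuafpiufnpifqnpi}; this is exactly what the paper does in its Case~1.
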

\begin{proof}
Note that if the events' inclusion above holds then the
second assertion of the proposition follows immediately by
combining the bounds
$\Prob( \Event_{\rm rec}(p,k_0(p),\beta)^c ) \le n^{-2\beta +o_n(1)}$ from Corollary \ref{cor: sminTall} and 
$\Prob( \Event_{\rm row}(\beta)^c)\le n^{-\beta}$ from Lemma~\ref{apijnfpoeiufnofiunfpi}.
Thus, we can focus on proving the first assertion.

\medskip

Let $k_1(p)=\beta_1(p) \ge 400 k_0(p)\,p$ where $k_0(p)$ is
taken from Proposition~\ref{prop: sec5Main}.

Consider an argument by contradiction.
Fix any realization of $A$ such that both $\Event_{\row}(\beta)$
and $\Event_{\rm rec}(p, k_0(p),\beta)$ hold,
and such that for some
$r \in [k_1(p), n-k_1(p)]$, $s_{r}(A_{I_r,[r]}) \le n^{-\beta/(6p)}$, that is, there exists a unit vector $u \in \R^{I_r}$ such that 
$$
	\|(A_{I_r,[r]})^\top u \|_2 \le n^{-\beta/(6p)}
$$
(we assume here that the column of the matrix $(A_{I_r,[r]})^\top$ are indexed over the set $I_r$).
Since $\|u\|_2=1$, there is an index
$s \in [r]$ such that $ |u_{i_s}|\ge r^{-1/2} \ge n^{-1/2} $, whence
$$
\|(A_{I_r,[r]})^\top u \|_2 = \Big\| \sum_{t \in [r]} (A_{\{i_t\},[r]})^\top u_{i_t} \Big\|_2
\ge n^{-1/2}\, \dist\big( (A_{i_s,[r]})^\top ,\, H_{r,r,s} \big).
$$
Thus, our realization of $A$ and our choice of $s$ satisfy
\begin{align}\label{fpfoinqpfiq3unfpq3ifnq3p}
\dist\big( (A_{i_s,[r]})^\top , H_{r,r,s} \big) \le \exp( -\beta \log (n) /(6p) + \log(n) /2).
\end{align}
Set $k := \min\{ 2\,k_0(p), r-s\}$.

Assume first that $s<r$, i.e $k>0$. In view of the inclusion $ \Event_{\row}(\beta) \subset \Event_{\dist}(\beta)$
(see Proposition~\ref{prop: sec6Main}), we get
\begin{align*}
	\dist( (A_{i_s,[r]})^\top , H_{r,r-k,s} ) \le &
	\exp\Big( -\beta\log n/(6p) + \log(n)/2 + 6k \Big ( 1 + \frac{\beta}{n-r} \Big) \log n \Big).
\end{align*}
Since $n-r \ge k_1(p)$ and  $\beta \ge \beta_1(p)= k_1(p) \ge 400 k_0(p)\,p \ge 200kp$, 
we have
\begin{align*}
   \frac{1}{2}+ 6k\Big( 1 + \frac{\beta}{n-r}\Big)
 \le 7k\Big( 1 + \frac{\beta}{n-r}\Big)
 \le 7 \cdot \frac{k_1(p)}{200 p} \Big( 1 + \frac{\beta}{n-r}\Big)
 \le 7\cdot \frac{k_1(p)}{200 p} + 7\cdot \frac{\beta}{200p} 
 \le \frac{\beta}{12p},
\end{align*}
whence
\begin{align} \label{eq: secDist00}
	\dist \big( (A_{i_s,[r]})^\top, H_{r,r-k,s} \big) 
\le   n^{-\beta/(12p)}.      
\end{align}

Further, in the situation when $k=0$, the inequality \eqref{eq: secDist00} is still true as can be
immediately seen from \eqref{fpfoinqpfiq3unfpq3ifnq3p}.

\medskip

Next, we will show that \eqref{eq: secDist00} leads to contradiction.
The argument depends on whether $k=r-s$ or not.

{\bf Case 1: $k=r-s$.} 
By the definition of the event $\Event_{\row}(\beta)$, we have 
\begin{align*}
	\dist\big( (A_{i_s,[r]})^\top , \, H_{r,s,s} \big)
\ge 
	\dist \big( (A_{i_s,[s]})^\top, H_{s,s,s} \big)
	=
	\dist \big( (A_{i_s,[s]})^\top, H_{s,s-1} \big)
	&\ge \sqrt{2/\pi}\, n^{-4(1+ \frac{\beta}{n-s})}\\
&\ge \sqrt{2/\pi}\,n^{-4-\beta/(50k_0(p)p)},
\end{align*} 
where we used that
$n-s \ge n-r \ge k_1(p)\ge 200k_0(p)\,p$.
In view of the condition $\beta \ge 200k_0(p)\,p\ge 400p$,
$$
\sqrt{2/\pi}\,n^{-4-\beta/(50k_0(p)p)} \ge n^{-5-\beta/(50p)}
\ge n^{ - \beta/(25p) } > n^{- \beta/(12p)},
$$
which contradicts \eqref{eq: secDist00}. 

{\bf Case 2: $k=2k_0(p)<r-s$.} 
In this case,  $(A_{i_s,[r]})^\top$ is a column vector of $(A_{I_{r-k}, [r]})^\top$ and $H_{r,r-k,s}$ is the span of every other column vector $(A_{i_{s'},[r]})^\top$ for $s' \in [r-k]\backslash \{s\}$.
Hence, in view of \eqref{eq: secDist00},
$$
s_{\min}\big((A_{I_{r-k}, [r]})^\top\big)\leq
\dist \big( (A_{i_s,[r]})^\top, H_{r,r-k,s} \big)
    \le n^{-\beta/12p}.
$$
However, this contradicts the definition of the
event $\Event_{\rm sq}(p, k_0(p),\beta)$:
$$
\forall r' \in [k_0(p)+1,n-2k_0], \, s_{\min}\big( (A_{I_{r'},[r'+2k_0]})^\top \big) \ge n^{-\beta/(20p)}. 
$$
The result follows.
\end{proof}

The next simple lemma will be used to show that with high probability
indices of the pivot rows obtained in exact arithmetic coincide
with the results of the floating point computations.
\begin{lemma}\label{fakjnfapifjnewfpiqjwnfpijn}
There is a universal constant $C>0$ and a number $n_0\in\N$
such that, assuming $n\geq n_0$,
$$
\Prob\big\{
s_{\min}(A_{I_r,[r]})\leq t\,n^{-C}\mbox{ for some $1\leq r\leq n-1$}
\big\}\leq t,\quad t>0.
$$
\end{lemma}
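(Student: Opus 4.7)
The plan is to prove this anti-concentration estimate by partitioning the range $r\in[1,n-1]$ into an ``interior'' zone, in which Proposition~\ref{smin prop efwf} directly applies, and two ``edge'' zones, in which a brute-force union bound over subsets is cheap. Throughout I fix $p:=1$ in Proposition~\ref{smin prop efwf} and write $k_{\star}:=k_1(1)$, $\beta_{\star}:=\beta_1(1)$; both are absolute constants by that proposition. For $t\ge 1$ the conclusion is trivial, so I assume $t\in(0,1)$ and set $\epsilon:=t\,n^{-C}$ with $C$ a universal constant to be selected at the end. Note that once $C\ge\beta_{\star}/6$ this $\epsilon$ satisfies $\epsilon\le n^{-\beta_{\star}/6}$ automatically.

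For the edge zone $r\in[1,k_{\star}-1]\cup[n-k_{\star}+1,n-1]$ I would estimate
\[
\Prob\bigl\{s_{\min}(A_{I_r,[r]})\le\epsilon\bigr\}
\;\le\;\sum_{\substack{I\subset[n]\\|I|=r}}
\Prob\bigl\{I_r=I,\;s_{\min}(A_{I,[r]})\le\epsilon\bigr\}
\;\le\;\sum_{\substack{I\subset[n]\\|I|=r}}
\Prob\bigl\{s_{\min}(A_{I,[r]})\le\epsilon\bigr\},
\]
and for each fixed $I$ the submatrix $A_{I,[r]}$ is, marginally, a standard $r\times r$ Gaussian matrix, to which the classical Szarek/Edelman small-ball bound applies, giving $\Prob\{s_{\min}(A_{I,[r]})\le\epsilon\}\le C_0\sqrt{r}\,\epsilon$. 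Using $\binom{n}{r}\le n^{\min(r,n-r)}\le n^{k_{\star}-1}$ on both ends and summing over the $2(k_{\star}-1)$ admissible values of $r$ yields an overall edge contribution of order $k_{\star}^{3/2}\,n^{k_{\star}-1}\,\epsilon$, which is $\le t/3$ as soon as $C\ge k_{\star}+O(1)$.

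For the interior zone $r\in[k_{\star},n-k_{\star}]$ the matrix $A_{I_r,[r]}$ is square, so $s_{\min}(A_{I_r,[r]})=s_r(A_{I_r,[r]})$. Here I would apply Proposition~\ref{smin prop efwf} with $p=1$ but with the \emph{$t$-dependent} choice $\beta:=6\log(1/\epsilon)/\log n$, which exceeds $\beta_{\star}$ by the observation of the first paragraph. Since the proposition bounds all $r$ in the interior simultaneously, this gives
\[
\Prob\bigl\{\exists\,r\in[k_{\star},n-k_{\star}]\text{ with }s_{\min}(A_{I_r,[r]})\le\epsilon\bigr\}
\;\le\;2\,n^{-\beta}\;=\;2\,\epsilon^{6}.
\]
Because $\epsilon<1$ one has $\epsilon^6\le\epsilon=t\,n^{-C}$, so this interior contribution is at most $t/3$ once $C\ge 1+\log_n 6$.

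Combining the three zones and choosing any absolute $C$ dominating the thresholds above (for instance $C:=k_{\star}+\beta_{\star}+10$) delivers the claimed probability bound $\le t$ for all $t\in(0,1)$, hence for all $t>0$. The only genuinely subtle point of the argument is the interior zone, where one must let the parameter $\beta$ in Proposition~\ref{smin prop efwf} depend on the target threshold $\epsilon$ in order to convert the bound $2n^{-\beta}$ into the required linear-in-$t$ decay; the fortunate sixth-power gap between $\epsilon^6$ and the required $\epsilon$ leaves ample slack to absorb the factor~$n$ coming from the union over interior values of $r$. Everything else is a routine union bound, and no new probabilistic tools beyond Proposition~\ref{smin prop efwf} and the classical small-ball estimate for square Gaussian matrices are needed.
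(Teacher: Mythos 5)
Your proposal is correct and follows essentially the same route as the paper's own proof: Proposition~\ref{smin prop efwf} with $p=1$ and a $t$-dependent choice of $\beta$ handles the interior range $r\in[k_1(1),n-k_1(1)]$, while the two edge ranges are dispatched by a crude union bound over all subsets $I$ combined with the classical Szarek/Edelman smallest-singular-value estimate for square Gaussian matrices. The only difference is that you spell out the $\beta=6\log(1/\epsilon)/\log n$ bookkeeping that the paper leaves implicit.
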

\begin{proof}
In view of Proposition~\ref{smin prop efwf} (say, applied with $p=1$),
there are constants $C_1,n_0>0$ such that, assuming $n\geq n_0$,
$$
\Prob\big\{
s_{\min}(A_{I_r,[r]})\leq t\,n^{-C}\mbox{ for some $k_1(1)\leq r\leq n-k_1(1)$}
\big\}\leq t,\quad t>0.
$$
For indices $r<k_1(1)$, we use the trivial union bound:
\begin{align*}
\Prob&\big\{
s_{\min}(A_{I_r,[r]})\leq t\;\mbox{ for some $1\leq r<k_1(1)$}\big\}\\
&\leq \sum_{I\subset [n],\,1\leq|I|<k_1(1)}
\Prob\big\{
s_{\min}(A_{I,[|I|]})\leq t\big\}
\leq n^{k_1(1)}\,t,\quad t>0,
\end{align*}
where in the last line we used the standard bound on the smallest singular
value of a square Gaussian random matrix \cite{Edelman,Szarek}.
Similarly, we get
$$
\Prob\big\{
s_{\min}(A_{I_r,[r]})\leq t\;\mbox{ for some $n-k_1(1)<r\leq n-1$}\big\}
\leq n^{k_1(1)}\,t,\quad t>0.
$$
Combining the three estimates above, we get the result.
\end{proof}

\subsection{Estimating the growth factor in exact arithmetic}

\begin{defi}
For $\beta >1$, let $\Event_{\rm col}(\beta)$ be the event that 
\begin{align} \label{eq: defEventCol}
	\forall j\in[n],\, 
	\| A_{ [n],j} \|_2 \le  \sqrt{n}+ 3 \sqrt{ \beta \log(n)}
\end{align}
and for $\tau>1$, let $\Event_{\rm entry}(\tau)$ be the event that 
\begin{align} \label{eq: defEventEntry}
	\max_{i,j\in[n]}
	| A_{ i,j} | \ge  n^{-\tau}.
\end{align}
\end{defi}
\begin{lemma}\label{efojnefpifnqpifjnpiwjn}
For any $\beta \ge 2$, we have
$$
\Prob( \Event_{\rm col}(\beta)^c ) \le n^{-\beta};
$$
furthermore, for every $\tau\geq 1$,
$$
\Prob( \Event_{\rm entry}(\tau)^c ) \le n^{-\tau\,n^2}.
$$
\end{lemma}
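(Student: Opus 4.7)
The plan is to treat the two assertions separately since each reduces to elementary Gaussian concentration/anti-concentration combined with a union bound, mirroring the argument already used for $\Event_{\rm row}(\beta)$ in Lemma~\ref{apijnfpoeiufnofiunfpi}.

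For the first bound, I would fix $j\in[n]$ and observe that $A_{[n],j}$ is a standard Gaussian vector in $\R^n$, so $\Exp\,\|A_{[n],j}\|_2\leq \sqrt{n}$ and the map $x\mapsto\|x\|_2$ is $1$-Lipschitz. The Gaussian concentration inequality then yields
\[
\Prob\big\{\|A_{[n],j}\|_2\geq \sqrt{n}+3\sqrt{\beta\log n}\big\}\leq 2\exp\big(-(3\sqrt{\beta\log n})^2/2\big)=2\,n^{-9\beta/2}.
\]
A union bound over $j\in[n]$ and the assumption $\beta\geq 2$ give $\Prob(\Event_{\rm col}(\beta)^c)\leq 2n^{1-9\beta/2}\leq n^{-\beta}$ for $n$ sufficiently large (and trivially for small $n$ by adjusting constants, since the bound is void for small $n$; alternatively note $1-9\beta/2\leq -\beta-\log_n 2$ when $\beta\geq 2$).

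For the second bound, I would use the independence of the entries. Each $A_{i,j}$ is a standard Gaussian, so its density is bounded by $(2\pi)^{-1/2}$, giving the anti-concentration estimate
\[
\Prob\{|A_{i,j}|\leq n^{-\tau}\}\leq \frac{2n^{-\tau}}{\sqrt{2\pi}}\leq n^{-\tau}
\]
for all $\tau\geq 1$ and $n\geq 2$. Since the $n^2$ entries are mutually independent,
\[
\Prob(\Event_{\rm entry}(\tau)^c)=\Prob\Big\{\max_{i,j\in[n]}|A_{i,j}|<n^{-\tau}\Big\}=\prod_{i,j\in[n]}\Prob\{|A_{i,j}|<n^{-\tau}\}\leq (n^{-\tau})^{n^2}=n^{-\tau n^2}.
\]

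There is no real obstacle here: both statements are direct consequences of one-dimensional Gaussian tail / density estimates. The only mild care needed is verifying the numerical inequality $2n^{1-9\beta/2}\leq n^{-\beta}$ under the hypothesis $\beta\geq 2$, which is handled by noting that $9\beta/2-1\geq \beta+\log_n 2$ for all $n\geq 2$ and $\beta\geq 2$.
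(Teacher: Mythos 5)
Your proof is correct and follows essentially the same route as the paper: the column bound is obtained, exactly as for $\Event_{\rm row}(\beta)$, from Gaussian concentration of the $1$-Lipschitz norm plus a union bound over $j\in[n]$ with $\beta\geq 2$, and the entry bound from the density estimate $\Prob\{|A_{i,j}|<n^{-\tau}\}\leq n^{-\tau}$ together with independence of the $n^2$ entries. No gaps.
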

\begin{proof}
The upper bound on $\Prob( \Event_{\rm col}(\beta)^c)$
can be derived exactly the same way as in the argument for $\Event_{\rm row}(\beta)$
(see the proof of Lemma~\ref{apijnfpoeiufnofiunfpi}), so we skip the discussion.

To estimate the complement of $\Event_{\rm entry}(\tau)$, we write
\begin{align*}
    \Prob(\Event_{\rm entry}(\tau)^c) \le
    \Prob \big\{ |A_{i,j}| < n^{-\tau}\mbox{ for all $i,j$}\big\}
    \leq n^{-\tau\,n^2},
\end{align*}
where in the last inequality we used that
the probability density function of the standard Gaussian random variable is bounded by $\frac{1}{\sqrt{2\pi}}$.
\end{proof}

At this point, we are ready to prove the
``exact arithmetic'' counterpart of the
main statement of the paper:

\begin{prop}\label{npafjnepfijnfpijqnpijn}
There is a universal constant $C>1$ and a function
$\tilde n:[1,\infty)\to\N$ with the following property. 
Let $p\geq 1$, and let $n\geq \tilde n(p)$.
Then
$$
\Prob\bigg\{\frac{\max_{i,j,\ell}|A^{(\ell)}_{i,j}|}{\max_{i,j}|A_{i,j}|}
\geq n^{t}\bigg\} \leq 5n^{-p t},\quad t\geq Cp^2.
$$
\end{prop}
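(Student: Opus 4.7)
My plan is to reduce the growth-factor estimate to the Schur-complement identity \eqref{kejfnpfiunf9un},
\[
\prA^{(k)}_{j,i} = A_{j,i} - A_{j,[k]}\,(A_{I_k,[k]})^{-1}\,A_{I_k,i},\qquad k\in[n-1],\;j\in[n]\setminus I_k,\;i\in[k+1,n],
\]
and to bound every factor simultaneously on the explicit good event
\[
\Event^{\ast}\;:=\;\Event_{\rm smin}(p,k_1(p),\beta)\;\cap\;\Event_{\rm row}(\beta)\;\cap\;\Event_{\rm col}(\beta)\;\cap\;\Event_{\rm entry}(1),\qquad \beta:=2pt.
\]
Since $t\ge Cp^2$ with $C$ sufficiently large, $\beta$ exceeds $\beta_1(p)$ and $\beta_{\text{\tiny\ref{prop: sec6Main}}}$, so Proposition~\ref{smin prop efwf} and Lemmas~\ref{apijnfpoeiufnofiunfpi} and~\ref{efojnefpifnqpifjnpiwjn} together give $\Prob((\Event^\ast)^c)\le 5\,n^{-2pt}\le 5\,n^{-pt}$. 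Noting that $\max_{k,i,j}|\prA^{(k)}_{i,j}|$ is attained in the ``active'' portion ($k\ge 1$, $j\notin I_k$, $i>k$), it suffices to control that range.

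On $\Event^\ast$ and for $k$ in the bulk range $[k_1(p),n-k_1(p)]$, I will combine $s_{\min}(A_{I_k,[k]})\ge n^{-\beta/(6p)}=n^{-t/3}$ with the row and column norm bounds $\|A_{j,[k]}\|_2,\|A_{I_k,i}\|_2\le\sqrt n+3\sqrt{\beta\log n}$ and the triangle inequality in the Schur-complement identity, to conclude that
\[
|\prA^{(k)}_{j,i}|\;\le\;(\sqrt n+3\sqrt{\beta\log n})^2\,n^{t/3}+(\sqrt n+3\sqrt{\beta\log n})\;\le\;n^{t/3+1+o_n(1)},
\]
uniformly over admissible $j,i$.

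For the boundary indices $k<k_1(p)$ or $k>n-k_1(p)$, Proposition~\ref{smin prop efwf} does not apply, and pushing an $s_{\min}$-based bound through the crude union bound of Lemma~\ref{fakjnfapifjnewfpiqjwnfpijn} would only give an exponent of order $pt$ rather than $t/3$, which is too weak. I will instead fall back on the deterministic Wilkinson doubling inequality $\max_{i,j}|\prA^{(k)}_{i,j}|\le 2\max_{i,j}|\prA^{(k-1)}_{i,j}|$ (immediate from the partial pivoting rule): iterating from $k=0$ covers $k<k_1(p)$ at a cost of $2^{k_1(p)}$, and iterating from $k=n-k_1(p)$ (a bulk index, already controlled) covers $k>n-k_1(p)$ with the same factor. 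Combining with $\max_{i,j}|A_{i,j}|\ge n^{-1}$ from $\Event_{\rm entry}(1)$ yields
\[
\frac{\max_{k,i,j}|\prA^{(k)}_{i,j}|}{\max_{i,j}|A_{i,j}|}\;\le\;2^{k_1(p)}\cdot n^{t/3+2+o_n(1)}\;\le\;n^t,
\]
as soon as $t\ge Cp^2$ and $n\ge\tilde n(p)$, with $\tilde n(p)$ chosen so that $2^{k_1(p)}$ is subpolynomial in $n^{t/6}$ throughout the admissible range of $t$.

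The main obstacle will be the joint calibration of $\beta$, since three competing constraints must be met simultaneously: $\beta\ge\beta_1(p)=O(p^2)$ so Proposition~\ref{smin prop efwf} applies, $\beta/(6p)$ strictly less than $t$ so the bulk estimate beats $n^t$, and $2n^{-\beta}\le n^{-pt}$ so the union bound still yields the target probability. The choice $\beta=2pt$ is the natural balancing point of the first and third constraints, and it forces the range $t\ge Cp^2$ with $C$ absorbing the constants hidden in $\beta_1(p),k_1(p)$ and in the boundary Wilkinson factor $2^{Cp^2}$.
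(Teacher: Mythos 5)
Your proposal is correct and follows essentially the same route as the paper's proof: the same good event $\Event_{\rm entry}\cap\Event_{\rm row}\cap\Event_{\rm col}\cap\Event_{\rm smin}(p,k_1(p),\beta)$, the same Schur-complement bound via $s_{\min}(A_{I_k,[k]})^{-1}$ and row/column norms in the bulk range $[k_1(p),n-k_1(p)]$, and the same Wilkinson doubling inequality to absorb the first and last $k_1(p)$ steps at a cost of $2^{k_1(p)}$. The only (immaterial) difference is the calibration: you set $\beta=2pt$ and $\tau=1$ directly, whereas the paper fixes $\beta\geq C_1p^3$, $\tau=\beta/(100p)$ and proves the deterministic bound $n^{\beta/(3p)}$ on the good event before translating to the stated form.
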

\begin{proof}
Recall that the parameter $k_1(p)=O(p^2)$ was defined in Proposition~\ref{smin prop efwf}.
We can take a universal constant $C_1>0$ large enough so that
$C_1p^3 \ge 600 p\,k_1(p)$ for all $p \ge 1$. Fix $\beta \ge C_1p^3$, set
$\tau:=\beta/(100p)$, and assume $n\geq \sqrt{100p}$.
In view of the assertions of Lemma~\ref{apijnfpoeiufnofiunfpi}, Proposition~\ref{smin prop efwf},
and Lemma~\ref{efojnefpifnqpifjnpiwjn}, in order to show that
$$
\Prob\bigg\{\frac{\max_{i,j,\ell}|A^{(\ell)}_{i,j}|}{\max_{i,j}|A_{i,j}|}
> n^{\beta/(3p)} \bigg\} \le 5n^{-\beta}
$$
(which would imply the statement), it is sufficient to verify that everywhere on the intersection
$$
\Event_{\rm entry}(\tau)\cap \Event_{\rm row}(\beta) \cap \Event_{\rm col}(\beta) 
\cap \Event_{\rm sq}(p,k_1(p),\beta),
$$
we have
$$
\frac{\max_{i,j,\ell}|A^{(\ell)}_{i,j}|}{\max_{i,j}|A_{i,j}|} \le n^{\beta/(3p)}. 
$$
In what follows, we use the notation introduced at the beginning of Section~\ref{peofjnwqepifunfpijn}; in particular, we work with matrices
$\prA^{(\ell)}$, $0\leq \ell\leq n-1$, defined in
\eqref{nalfkjnfo34iuhoqfjj}.

Recall that
\begin{align} \label{eq: gAtMost2}
\forall r \in [n],\, \quad\frac{\max_{i,j} |\prA_{i,j}^{(r)}| }{ \max_{i,j}|\prA^{(r-1)}_{i,j}|} \le 2.
\end{align}
For $r \in [k_1(p)]$,  we simply use the bound above to get
$$
\frac{\max_{i,j} |\prA_{i,j}^{(r)}| }{ \max_{i,j}|A_{i,j}|} \le 2^{k_1(p)},
\quad r \in [k_1(p)].
$$
Further, for $r \in (k_1(p),n-k_1(p)]$, we write
\begin{align*}
	\frac{ \max_{i,j}|\prA^{(r)}_{i,j}|}{ \max_{i,j}|A_{i,j}|} 
= \max\bigg(
\frac{ \max\limits_{s\in [r],j\geq s}|\prA^{(s-1)}_{i_s,j}|}{ \max_{i,j}|A_{i,j}|} ,\, 
	\frac{\max\limits_{i \in [n]\backslash I_{r},j>r}|\prA^{(r)}_{i,j}|}{\max_{i,j}|A_{i,j}|}
\bigg).
\end{align*}
In view of formula \eqref{kejfnpfiunf9un} and our conditioning on the
event $\Event_{\rm sq}(p,k_1(p),\beta)$,
for $s\in [k_1(p),n-k_1(p)]$, $i \in [n]\backslash I_s$ and $j >s$, we have
\begin{align*}
	|\prA^{(s)}_{i,j}| &\le |A_{i,j}| 
	+ \|A_{i,[s]}\|_2\, \frac{1}{s_{\min}(A_{I_s,[s]})}\,
\|A_{[s],j}\|_2\\
&\le  \sqrt{n}+3\sqrt{\beta\log n}
+ (\sqrt{n}+3\sqrt{\beta\log n}) n^{\beta/(6p)} (\sqrt{n}+3\sqrt{\beta\log n}) \\
&<  2n^{\beta/(6p)}(\sqrt{n}+3\sqrt{\beta\log n})^2,
\end{align*}
and thus 
\begin{align*}
\frac{\max\limits_{i \in [n]\backslash I_{r},j>r}|\prA^{(r)}_{i,j}|}{\max_{i,j}|A_{i,j}|}
\le 2n^{\beta/(6p)+ \beta/(100p)}(\sqrt{n}+3\sqrt{\beta\log n})^2,
\end{align*}
and for every $s\in[r]$ with $s>k_1(p)$,
$$
\frac{\max\limits_{j\geq s}|\prA^{(s-1)}_{i_s,j}|}{ \max_{i,j}|A_{i,j}|}
\leq 2n^{\beta/(6p)+ \beta/(100p)}(\sqrt{n}+3\sqrt{\beta\log n})^2.
$$
By our earlier observation,
$$
\frac{\max\limits_{s\in [k_1(p)],j\geq s}|\prA^{(s-1)}_{i_s,j}|}{ \max_{i,j}|A_{i,j}|}
\leq 2^{k_1(p)}.
$$
Combining the estimates together, we conclude that for all $r \in [n-k_1(p)]$,
\begin{align*}
	\frac{ \max_{i,j}|\prA^{(r)}_{i,j}|}{ \max_{i,j}|A_{i,j}|} 
\le
	\max \big( 2^{k_1(p)},\, 
	2n^{\beta/(6p)+ \beta/(100p)}(\sqrt{n}+3\sqrt{\beta\log n})^2
	\big).
\end{align*}
For the ``last'' $k_1(p)$ admissible values of $r$, we rely on \eqref{eq: gAtMost2} again to get 
\begin{align*}
  \forall r \in (n-k_1(p),n],\, \quad
	\frac{\max_{i,j}|\prA_{i,j}^{(r)}|}{\max_{i,j}|A_{i,j}|}
\le \frac{\max_{i,j}|\prA_{i,j}^{(n-k_1(p))}|}{\max_{i,j}|A_{i,j}|}\,2^{k_1(p)}.
\end{align*}
In the end, we make use of our bound $\beta/(6p) \ge 100 k_1(p)$ to conclude that
for all large $n$,  
$$
\frac{\max_{i,j,\ell}|A^{(\ell)}_{i,j}|}{\max_{i,j}|A_{i,j}|}
=\frac{ \max_{r,i,j}|\prA^{(r)}_{i,j}|}{ \max_{i,j}|A_{i,j}|} 
	\le n^{\frac{\beta}{3p}}.
$$
This completes the proof.
\end{proof}

\section{GEPP in floating point arithmetic}\label{faonfpafjnpifjnpijn}

In this section we transfer the statement of Proposition~\ref{npafjnepfijnfpijqnpijn}
into the proper context of the floating point arithmetic.
We expect a part of the argument in this section (specifically, in the proof of 
Lemma~\ref{alfkjnapfjnpijnpijqn}) to be rather standard for experts
in numerical analysis. Still, we prefer to provide all the details
to make the paper self-contained.

\begin{lemma}\label{akdjfnapfjnwpfijfqpijn}
Let $A$ be an $n\times n$ Gaussian matrix and 
$A=\prA^{(0)},\prA^{(1)},\dots, \prA^{(n-1)}$ be the sequence of matrices generated by the GEPP in exact arithmetic (see \eqref{nalfkjnfo34iuhoqfjj}). Then, for every $1\leq k\leq n-1$,
\begin{align*}
\forall \delta \ge0, \quad
\Prob\big\{ (1-\delta)|\prA^{(k-1)}_{i_{k},k}| < \max_{i \notin I_{k}} 
|\prA^{(k-1)}_{i,k}| \big\}  \le \delta (n-k+1).
\end{align*}
\end{lemma}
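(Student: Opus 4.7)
The approach is to reduce the lemma to a near-tie estimate for iid standard Gaussians via a determinantal identity. By the Schur complement formula, for $i \notin I_{k-1}$,
\[
|\prA^{(k-1)}_{i,k}| \;=\; \frac{|\det A_{I_{k-1}\cup\{i\},[k]}|}{|\det A_{I_{k-1},[k-1]}|},
\]
so the denominator cancels in every ratio and the bad event is equivalent to a near-tie between the top two $|D_i|$ with $D_i:=\det A_{I_{k-1}\cup\{i\},[k]}$, $i\notin I_{k-1}$. Factoring out the $(k-1)$-dimensional volume of the rows of $A_{I_{k-1},[k]}$ gives $|D_i|=\mathrm{vol}_{k-1}(A_{I_{k-1},[k]})\cdot|\langle u,A_{i,[k]}\rangle|$, where $u\in\R^k$ is the (a.s.\ unique up to sign) unit vector orthogonal to the row span of $A_{I_{k-1},[k]}$. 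Hence the bad event is the near-tie event for the scalars $Y_i:=\langle u,A_{i,[k]}\rangle$, $i\notin I_{k-1}$.

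For each $I\subset[n]$ of size $k-1$, I would condition on the submatrix $A_{I,[k]}$, which determines the unit normal $u_I$. Since the rows $A_{i,[k]}$, $i\notin I$, are independent standard Gaussians in $\R^k$ and $\|u_I\|=1$, the projections $Y_i^I:=\langle u_I,A_{i,[k]}\rangle$ are iid $N(0,1)$ under this conditioning. Writing $\Prob(\text{bad})=\sum_I \Prob(\text{bad}\cap\{I_{k-1}=I\})$ and matching the bad event on $\{I_{k-1}=I\}$ with the near-tie event for $|Y_i^I|$, $i\notin I$, I plan to conclude by invoking the iid Gaussian near-tie bound, averaged over $A_{I,[k]}$ and then over $I$.

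The combinatorial core is the following bound: for iid $g_1,\dots,g_m\sim N(0,1)$,
\[
\Prob\bigl\{(1-\delta)\,\max_i|g_i|<\max_{i\ne\arg\max}|g_i|\bigr\}\;\le\;m\delta.
\]
I would prove this by exchangeability --- the LHS equals $m$ times $\Prob\{|g_1|\text{ is the argmax and }\max_{j\ne 1}|g_j|>(1-\delta)|g_1|\}$. Conditioning on $|g_1|=w$ and swapping the order of integration with $s:=\max_{j\ne 1}|g_j|$ yields the bound $\tfrac{m\delta}{1-\delta}\,\Exp[M_{m-1}\,\sqrt{2/\pi}\,e^{-M_{m-1}^2/2}]$, where $M_{m-1}$ is the maximum of $m-1$ iid half-normals and I used monotonicity of the half-normal density on $[0,\infty)$. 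The pointwise bound $\sup_{s\ge 0} s\sqrt{2/\pi}\,e^{-s^2/2}=\sqrt{2/(\pi e)}$ then gives $\tfrac{m\delta}{1-\delta}\sqrt{2/(\pi e)}\le m\delta$ for $\delta\in[0,1/2]$, while for $\delta\ge 1/2$ the desired inequality is trivial since $m\delta\ge 1$ already (for the relevant range $m\ge 2$; the case $m=1$ is vacuous).

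The obstacle I expect is the interplay in the second step between the random pivot set and the iid structure: the event $\{I_{k-1}=I\}$ depends on $A_{[n]\setminus I,[k-1]}$ and hence correlates with the $Y_i^I$'s through their dependence on $A_{i,[k-1]}$. My intended resolution is that the pivoting constraint decouples across rows (each $i\notin I$ must individually fail to be a pivot in any of steps $1,\dots,k-1$, which is a condition on $A_{i,[k-1]}$ alone given $A_{I,[k-1]}$), so that conditional on $A_{I,[k]}$ one gets $\Prob(\text{bad}_I\cap\{I_{k-1}=I\}\mid A_{I,[k]})\le\Prob(\text{bad}_I\mid A_{I,[k]})\cdot\Prob(\{I_{k-1}=I\}\mid A_{I,[k]})$ after integrating out the appropriate coordinates; summing over $I$ then yields $\Prob(\text{bad})\le\delta(n-k+1)\sum_I\Prob(I_{k-1}=I)=\delta(n-k+1)$.
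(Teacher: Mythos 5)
Your reduction to a near-tie estimate for the projections $Y_i=\langle v_k(A),(A_{i,[k]})^\top\rangle$ (your unit normal $u$ is proportional to the paper's $v_k(A)$) is the same first step as the paper's, and your order-statistics computation for iid variables is sound. The gap is in the final step, exactly at the obstacle you yourself flag. Conditionally on $A_{I,[k]}$ alone the $Y_i^I$ are indeed iid $N(0,1)$, but the event you must control is $\mathrm{bad}_I\cap\{I_{k-1}=I\}$, and on $\{I_{k-1}=I\}$ each row $A_{i,[k-1]}$, $i\notin I$, is constrained to lie in the polytope $K_{k-1}(A_{I,[n]})$ (Lemma~\ref{lem: secPolyBasic}). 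Since $v_k(A)$ generically has nonzero components in the first $k-1$ coordinates, each $Y_i$ is correlated with its own row's constraint, so under the relevant conditioning the $Y_i$ are still iid across rows but no longer Gaussian. Your proposed fix, the factorization $\Prob(\mathrm{bad}_I\cap\{I_{k-1}=I\}\mid A_{I,[k]})\le\Prob(\mathrm{bad}_I\mid A_{I,[k]})\cdot\Prob(\{I_{k-1}=I\}\mid A_{I,[k]})$, is a negative-correlation claim for which no argument is given: the row-wise decoupling of the pivoting constraint does not deliver it, because the near-tie event is a non-monotone function of all the $Y_i$ jointly and each $Y_i$ is tied to its own constraint. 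I do not see how to justify this inequality, and without it the standard-Gaussian near-tie bound does not apply.

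The paper closes this gap by working with the genuinely conditional law rather than factoring it away: conditioned on $\{I_{k-1}=I\}$ and $A_{I,[k]}$, the rows $A_{i,[n]}$, $i\notin I$, are iid with density equal to the standard Gaussian restricted to the polytope, which is symmetric and log-concave; log-concavity passes to one-dimensional marginals, so the $X_i=|Y_i|$ are iid with a density $\rho_X$ that is non-increasing on $[0,\infty)$. The order-statistics identity then gives $\Prob\{\text{no near-tie}\}\ge(1-\delta)^{n-k}\ge 1-\delta(n-k+1)$ using only $\int_0^{(1-\delta)r}\rho_X\ge(1-\delta)\int_0^{r}\rho_X$. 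Your explicit half-normal computation is thus both unnecessary and too narrow: the only property needed is monotonicity of the density, and that is precisely what survives the conditioning. Replacing your standard-Gaussian lemma by its monotone-density version and applying it to the conditional law would turn your argument into the paper's.
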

\begin{proof}
Fix any $1\leq k\in n-1$.
With the vector $v_{k}(A)$ defined at the beginning of Section~\ref{fepijnfeofiniqwjnpij}
and in view of \eqref{kejfnpfiunf9un}, for every 
$i \notin I_{k-1}(A)$ we have 
\begin{align*}
|\prA^{(k-1)}_{i,k}| = |\langle v_{k}(A), (\prA_{i,[n]})^\top \rangle|.
\end{align*}

Fix any subset $I\subset[n]$ of cardinality $k-1$ and any $(k-1)
\times k$ matrix $M$,
and condition on the realizations $I_{k-1}(A)=I$ and $A_{I_{k-1},[k]}=M$.
In what follows, we denote the conditional probability measure by $\tilde\Prob$.
Under this conditioning, $v_{k}(A)$ and the polytope $K:=K_{k-1}(A)$ (see Section~\ref{fepijnfeofiniqwjnpij}; here we adopt the convention $K_0(A):=\R^n$)
are fixed. 
For $i \notin I$, let
$$%Y_i:= \big( (A_{i,[n]})^{\top} \big\vert I_k=I,\, A_{I_k,[k+1]}=M \big) \mbox{ and }
X_i := |\langle v_{k}(A), (A_{i,[n]})^{\top} \rangle |.
$$
By Lemma \ref{lem: secPolyBasic}, under the conditioning the vectors
$A_{i,[n]}$ for $i\notin I$ are i.i.d., with the probability density function 
$$
\rho(y)= {\bf 1}_{K}(y)\, \frac{\exp( - \|y\|_2^2/2) } {
\int_{K} \exp( - \|y'\|_2^2/2)\, {\rm d}y'},\quad y\in \R^n,
$$
which is symmetric and log-concave (i.e
$ y \mapsto \log(\rho(y))$ is a concave function). Since log-concavity is preserved under taking marginals, the random variable
$ \langle \frac{v_{k}(A)}{\|v_{k}(A)\|_2},(A_{i,[n]})^{\top}\rangle$ is also log-concave and symmetric under the conditioning.
This implies, in particular, that the probability density function $\rho_X$
of $X_i$'s ($i \notin I$) is non-increasing on the positive semi-axis. 

Now, since
$X_{i_{k}} = \max_{i \notin I}X_i$, we have
$$
\tilde\Prob\big\{ (1-\delta) X_{i_{k}} \ge \max_{i \notin I_{k}}X_i\big\}
=(n-k+1)\,\int\limits_0^\infty 
\Big(\int_0^{(1-\delta)r} \rho_X(t)\,dt \Big)^{n-k}\,\rho_X(r)\,dr,
$$
whereas 
$$
(n-k+1)\,\int\limits_0^\infty 
\Big(\int_0^{r} \rho_X(t)\,dt \Big)^{n-k}\,\rho_X(r)\,dr=1.
$$
Combining the two identities and using the monotonicity of $\rho_X$, we get
$$
\tilde\Prob\big\{ (1-\delta) X_{i_{k}} \ge \max_{i \notin I_{k}}X_i\big\}
=\Bigg(
\frac{\int\limits_0^\infty 
\Big(\int_0^{(1-\delta)r} \rho_X(t)\,dt \Big)^{n-k}\,\rho_X(r)\,dr}
{\int\limits_0^\infty 
\Big(\int_0^{r} \rho_X(t)\,dt \Big)^{n-k}\,\rho_X(r)\,dr}
\Bigg)
%=\Bigg(\frac{\int_0^{(1-\delta)r} p_X(t){\rm d}t }{ \int_0^{r} p_X(t){\rm d}t } \Bigg)^{n-k-1}
\geq (1-\delta)^{n-k}
\ge 1- \delta(n-k+1).
$$
The result follows by applying Fubini's theorem. 
%Prob. dens. of $X_i$ conditioning on $X_{i_{k+1}}=r$ is 
%$$
%t\mapsto  {\bf 1}_{[0,r]}(t)\frac{ \rho_{X}(t)}{\int_0^r \rho_{X}(t') {\rm d}t'},
%$$
%and thus 
%$$
% \Prob\{ X_i\le (1-\delta) r \vert X_{i_{k+1}}= r\} 
% = \int_{0}^{(1-\delta)r} \frac{ \rho_{X}(t)}{\int_0^r \rho_{X}(t') {\rm d}t'} {\rm d}t
% = {\rm ratio\, of\, two}.  
%$$
\end{proof}

\begin{lemma}\label{alfkjnapfjnpijnpijqn}
Let $M$ an $n\times n$ invertible matrix and $\hat M:= {\rm fl}(M)$. 
Let $PM=LU$ be the $PLU$-factorization of $M$ in exact arithmetic,
assume that $P=\Id_n$, and let
$M=M^{(0)},M^{(1)},\dots,$ $M^{(n-1)}=U$ 
be the sequence of matrices obtained during the elimination process.
Let $\delta \in ({\bf u},1/3)$ be a parameter and assume that the matrix $M$ and the unit roundoff ${\bf u}$ satisfy
$$
8n^2\,{\bf u}\,\max\limits_{i,j,\ell}|M^{(\ell)}_{i,j}|
\leq \frac{ 1}{12}\,\frac{s_{\min}(M_{[k],[k]})^3}{\|M\|^2} \frac{\delta}{3}  ,\quad k=1,\dots,n-1,
$$
and 
\begin{equation}\label{apfjnapfiuwenfpewipfnq}
\forall k \in [n-1], \quad
	\frac{\max_{i\in [k+1,n]}|M^{(k-1)}_{i,k}|}{|M^{(k-1)}_{k,k}|} \leq 1-\delta. 
\end{equation}
Then GEPP in floating point arithmetic succeeds
for $\hat M$; the computed permutation matrix $\hat P=\Id_n$, and, denoting by
$\hat M = \hat M^{(0)}, \hat M^{(1)},\dots, \hat M^{(n-1)}$
the sequence of matrices obtained during the elimination process,
for every $k=0, 1,\dots,n-1$,
$$
\max\limits_{i,j}|\hat M^{(k)}_{i,j}|\leq 2\,\max\limits_{i,j,\ell}|M^{(\ell)}_{i,j}|.
$$
%and 
%$$
%|\hat M^{(k)}_{k+1,k+1}| >  \max_{i \in [k+2,n] }|\hat M^{(k)}_{i,k+1}|.
%$$
\end{lemma}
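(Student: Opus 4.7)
The plan is to prove the lemma by induction on $k = 0, 1, \ldots, n-1$, maintaining as inductive hypothesis that floating-point GEPP has succeeded through step $k$ with $\hat P^{(\ell)} = \Id_n$ for every $\ell \leq k$, and the quantitative comparison
\begin{equation*}
\max_{i,j \in [k+1,n]}\bigl|\hat M^{(k)}_{i,j} - M^{(k)}_{i,j}\bigr| \leq \tfrac{\delta}{3}\,\bigl|M^{(k)}_{k+1,k+1}\bigr|.
\end{equation*}
The base case $k = 0$ follows from $|\hat M_{i,j}-M_{i,j}| \leq {\bf u}|M_{i,j}|$ together with the smallness hypothesis on ${\bf u}$. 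The entrywise bound $\max|\hat M^{(k)}|\leq 2\max|M^{(\ell)}|$ is an automatic byproduct since $\delta<1$, so the proof reduces to propagating the perturbation estimate and then deducing the pivot match.

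The engine of the induction is the standard backward-error identification of Wilkinson type: $\hat M^{(k)}_{[k+1,n],[k+1,n]}$ coincides with the exact Schur complement (with respect to the top-left $k\times k$ block) of a perturbed input $\hat M + E^{(k)}$, where $\|E^{(k)}\| \leq c_0 n^2 {\bf u}\max_{i,j,\ell}|M^{(\ell)}_{i,j}|$ for a universal constant $c_0$. Here $E^{(k)}$ is assembled from the per-step floating-point errors in $\hat\tau^{(\ell+1)}_i = \mathrm{fl}(\hat M^{(\ell)}_{i,\ell+1}/\hat M^{(\ell)}_{\ell+1,\ell+1})$ and $\hat M^{(\ell+1)}_{i,j} = \mathrm{fl}(\hat M^{(\ell)}_{i,j} - \hat M^{(\ell)}_{\ell+1,j}\hat\tau^{(\ell+1)}_i)$, absorbed into an additive perturbation of the original matrix via the identity $\hat L_k \hat U_k = \hat M + E^{(k)}$. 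The construction goes through cleanly because the inductive perturbation bound together with assumption \eqref{apfjnapfiuwenfpewipfnq} keeps every $|\hat\tau^{(\ell+1)}_i|$ bounded by a constant, preventing error blow-up across the $k$ steps.

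The difference $\hat M^{(k)}_{[k+1,n],[k+1,n]} - M^{(k)}_{[k+1,n],[k+1,n]}$ is then a difference of Schur complements of two matrices agreeing up to $\|E^{(k)}\|$ in every block. A first-order perturbation expansion for $(M_{[k],[k]}+E_3)^{-1}$ yields
\begin{equation*}
\bigl\|\hat M^{(k)}_{[k+1,n],[k+1,n]} - M^{(k)}_{[k+1,n],[k+1,n]}\bigr\| \leq c_1 \|E^{(k)}\|\cdot \frac{\|M\|^2}{s_{\min}(M_{[k],[k]})^2}.
\end{equation*}
Combining this with the lower bound $|M^{(k)}_{k+1,k+1}| \geq s_{\min}(M_{[k+1],[k+1]})$, which follows from the block-inverse identity $M^{(k)}_{k+1,k+1} = 1/\bigl(e_{k+1}^\top (M_{[k+1],[k+1]})^{-1} e_{k+1}\bigr)$, and applying the lemma's hypothesis at index $k+1$ (whose shape $s_{\min}^3/\|M\|^2$ is precisely calibrated for this), the right side is bounded by $\tfrac{\delta}{3}|M^{(k)}_{k+1,k+1}|$, closing the induction on the perturbation estimate.

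Finally, the perturbation bound forces pivot agreement at step $k+1$. Assumption \eqref{apfjnapfiuwenfpewipfnq} gives $|M^{(k)}_{i,k+1}| \leq (1-\delta)|M^{(k)}_{k+1,k+1}|$ for $i>k+1$, so
\begin{equation*}
|\hat M^{(k)}_{k+1,k+1}| \geq (1-\tfrac{\delta}{3})|M^{(k)}_{k+1,k+1}| > (1-\delta+\tfrac{\delta}{3})|M^{(k)}_{k+1,k+1}| \geq |\hat M^{(k)}_{i,k+1}|,
\end{equation*}
forcing $\hat P^{(k+1)}=\Id_n$. The main obstacle is executing the backward-error reformulation cleanly — in particular, verifying that the accumulated $E^{(k)}$ really has norm $O(n^2 {\bf u}\max|M^{(\ell)}|)$ without exponential blowup in $k$, which crucially uses the uniform boundedness of the computed multipliers — and arranging the Schur-complement perturbation so that the factor $\|M\|^2/s_{\min}^2$ is tight, which is exactly what the $s_{\min}^3/\|M\|^2$ shape of the lemma's hypothesis is built to absorb.
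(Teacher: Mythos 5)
Your proposal follows essentially the same route as the paper's proof: an induction that represents each computed iterate as the exact Schur complement of a perturbed input with $\|E^{(k)}\| = O(n^2\,{\bf u}\max_{i,j,\ell}|M^{(\ell)}_{i,j}|)$, bounds the difference of Schur complements by $\|E^{(k)}\|\,\|M\|^2/s_{\min}(M_{[k],[k]})^2$, uses $|M^{(k)}_{k+1,k+1}|\ge s_{\min}(M_{[k+1],[k+1]})$, and closes the pivot comparison via the $\delta/3$ margin against the $(1-\delta)$ hypothesis. The only cosmetic difference is that you normalize the inductive perturbation bound by $|M^{(k)}_{k+1,k+1}|$ while the paper uses $(\delta/3)\min_v s_{\min}(M_{[v],[v]})$; since the hypothesis is assumed for every $k$, these are interchangeable and the argument is sound.
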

%\begin{rem}
%Since $P= {\rm Id}_n$, we necessarily have 
%$$\forall k \in [n-1], \quad
%	\frac{\max_{i\in [k+1,n]}|M^{(k-1)}_{i,k}|}{|M^{(k-1)}_{k,k}|} \le 1. $$
%And the second conclusion that 
%$$
%|\hat M^{(k)}_{k+1,k+1}| >  \max_{i \in [k+2,n] }|\hat M^{(k)}_{i,k+1}|
%$$
%implies that at each step the same pivot as that for $M$ will be taken if we perform GEPP on %${\rm fl}(M)$ with floating point arithmetic. 
\begin{proof}
We will prove the statement by induction. Fix any $k\in[n-1]$.
Assume that all of the following holds
\begin{itemize}
\item[(a)] The computed matrix $\hat M^{(k-1)}$
has been produced by taking indices
of the first $k-1$ pivot rows to be $1,2,\dots,k-1$, and
$
|\hat M^{(k-1)}_{k,k}| >  \max_{i \in [k+1,n] }|\hat M^{(k-1)}_{i,k}|$,
so that the index of the $k$-th computed pivot row is $k$.
\item[(b)] $\hat M^{(k-1)} = G_{k-1}\cdots G_1(M+\tilde E^{(k-1)})$,
where $G_i$
is the Gauss transformation to eliminate $i$-th row of $\hat M^{(i-1)}$,
$1\leq i\leq k-1$, and where the error matrix $\tilde E^{(k-1)}$ satisfies
$$
\| \tilde E^{(k-1)} \| \le 8kn\,{\bf u}\, \max_{i,j,\ell}|M_{i,j}^{(\ell)}|.
$$
\item[(c)] $\max\limits_{i,j}|\hat M^{(v)}_{i,j}|\leq
2\,\max\limits_{i,j,\ell}|M^{(\ell)}_{i,j}|$ for all $0\leq v\leq k-1$.
\end{itemize}
Note that, by the assumptions on the matrix $M$, the induction hypothesis for 
the base case $k-1=0$
is satisfied.

Let $G_k=\Id_n-\tilde\tau^{(k)}\,e_k^\top$ be the Gauss transformation which eliminates entries $\hat M^{(k-1)}_{i,k}$, $i=k+1,\dots,n$,
so that in exact arithmetic we have
$$
\tilde \tau^{(k)}_{i}=
\frac{ \hat M^{(k-1)}_{i,k}}{ \hat M^{(k-1)}_{k,k}},\;\;
(G_k\hat M^{(k-1)})_{i,k}=0,\quad i=k+1,\dots,n.
$$
The computed matrix $\hat M^{(k)}$ can be explicitly written as
\begin{align*}
	\hat M^{(k)}_{i,j} & = \begin{cases}
	 0, & \mbox{if } 	i\in [k+1,n] \mbox{ and } j=k, \\
{\rm fl} 
 \Big(  \hat M^{(k-1)}_{i,j} - 
 {\rm fl}\Big({\rm fl}\Big(\frac{ \hat M^{(k-1)}_{i,k}}{ \hat M^{(k-1)}_{k,k}}\Big)\,
 \hat M^{(k-1)}_{k,j}\Big) \Big), &\mbox{if } i,j \in [k+1,n], \\
 \hat M^{(k-1)}_{i,j}, & \mbox{otherwise}
	\end{cases}
 \end{align*}
(note that
we ``force'' $\hat M^{(k)}_{i,k}$ to be $0$ for $i\in [k+1,n]$ whereas
the f.p.\ expression
${\rm fl}\big({\rm fl}\big(\frac{ \hat M^{(k-1)}_{i,k}}{ \hat M^{(k-1)}_{k,k}}\big)\,
 \hat M^{(k-1)}_{k,k}\big)$ is not necessarily equal to $\hat M^{(k-1)}_{i,k}$). 
Denote 
$$ 
E^{(k)} := \hat M^{(k)}- G_k\hat M^{(k-1)}.
$$
Since the first $k$ rows of $E^{(k)}$ are $0$, for every $i \in [k]$ we have 
$G_i E^{(k)}=E^{(k)}$, so $\hat M^{(k)}$ can be expressed in the form
$$
 \hat M^{(k)} = G_k \big(\hat M^{(k-1)} + G_{k-1}\cdots G_1 E^{(k)}\big). 
$$
Applying the above equality together with the induction hypothesis, we obtain 
\begin{align}
\hat M^{(k)} %= G_k G_{k-1}\cdots G_1 \Big( 
 %M + \sum_{i=0}^k E^{(i)} \Big)
= G_k G_{k-1}\cdots G_1(M+\tilde E^{(k)}),
\end{align}
where $\tilde E^{(k)}:=\tilde E^{(k-1)}+E^{(k)}$. 
Note that non-zero entries of $E^{(k)}$
are all contained within the bottom right $(n-k)\times(n-k)$ submatrix of $E^{(k)}$,
and for every $i,j \in [k+1,n]$ we have
\begin{align*}
|E^{(k)}_{i,j}|
&= \bigg| {\rm fl} 
 \Big(  \hat M^{(k-1)}_{i,j} - 
 {\rm fl}\Big({\rm fl}\Big(\frac{ \hat M^{(k-1)}_{i,k}}{ \hat M^{(k-1)}_{k,k}}\Big)\,
 \hat M^{(k-1)}_{k,j}\Big) \Big)
-\bigg(\hat M^{(k-1)}_{i,j} - \frac{ \hat M^{(k-1)}_{i,k}}{ \hat M^{(k-1)}_{k,k}} \hat M^{(k-1)}_{k,j}  \bigg)\bigg|\\
&\le  3({\bf u} + O({\bf u}^2)) 
\max \bigg\{ 
|\hat M^{(k-1)}_{i,j}| ,\, \bigg|\frac{ \hat M^{(k-1)}_{i,k}}{ \hat M^{(k-1)}_{k,k}} \hat M^{(k-1)}_{k,j}\bigg|
\bigg\} 
\le 4{\bf u} \max\big\{ |\hat M^{(k-1)}_{i,j}|,
| \hat M^{(k-1)}_{k,j}|
\big\}  \\
&\le 4 {\bf u} \max_{i',j'} |\hat M_{i',j'}^{(k-1)}|
\le 8 {\bf u} \max_{i',j',\ell} |M_{i',j'}^{(\ell)}|,
\end{align*}
since there are $3$ floating point operations and $
\frac{ |\hat M^{(k-1)}_{i,k}|}{ |\hat M^{(k-1)}_{k,k}|}\le 1$,
and where in the last inequality we used the induction assumption (c).
Thus, 
$$
\| \tilde E^{(k)}\| \le  \|\tilde E^{(k-1)}\|+ 
\| E^{(k)}\| \le 
8kn {\bf u} \max_{i,j,\ell} |M_{i,j}^{(\ell)}| +
n \max_{i,j}|E^{(k)}_{i,j}|
\le 8(k+1)n {\bf u} \max_{i,j,\ell} |M_{i,j}^{(\ell)}|,
$$
confirming condition (b) on the $k$-th step. Moreover, in view of the assumptions
on $M$ we then have
\begin{equation}\label{fakejfnpweifjnwepfijwen}
\| \tilde E^{(k)}\| \le  8n^2 {\bf u} \max_{i,j,\ell} |M_{i,j}^{(\ell)}|
\leq \frac{ 1}{12}\,\frac{\min_{1\leq v\leq n-1}\,s_{\min}(M_{[v],[v]})^3}{\|M\|^2} \frac{\delta}{3}.
\end{equation}

Further, by the assumptions on $s_{\min}(M_{[k],[k]})$ and in view of 
the bound on the norm of $E^{(k)}$, the matrix $(M+\tilde E^{(k)})_{[k],[k]}$
is invertible. Hence, for every $i\in[k+1,n]$ there is a {\it unique}
linear combination $L_i$
of the first $k$ rows of $M+\tilde E^{(k)}$ such that the vector
$\row_i(M+\tilde E^{(k)})-L_i$ has first $k$ components equal zero.
We conclude that necessarily the matrices $G_1,G_2,\dots,G_k$
are Gauss transformations for $M+\tilde E^{(k)}$, whence 
%Observe that each row of $\hat M ^{(k)}$ is obtained from the same row of $M+ \tilde %E^{(k)}$ by adding a linear combinations of first $k$ rows. With $ \hat M_{[k+1,n],[k]}$ is zero, necessarily we have
for every $j \in [k+1,n]$ we have 
\begin{align}
\label{eq: hatM}	
\hat M^{(k)}_{j,[k+1,n]}
= (M+\tilde E^{(k)})_{j,[k+1,n]}  -
(M+\tilde E^{(k)})_{j,[k]} \big((M+\tilde E^{(k)})_{[k],[k]} \big)^{-1} (M+\tilde E^{(k)})_{[k], [k+1,n]},
\end{align}
whereas
\begin{align} \label{eq: M}
M^{(k)}_{j,[k+1,n]}
= M_{j,[k+1,n]}  -
M_{j,[k]} \big(M_{[k],[k]} \big)^{-1} M_{[k], [k+1,n]}.
\end{align}
We will rely on formulas
\eqref{eq: hatM} and \eqref{eq: M} to show that
$\hat M^{(k)}$ and $M^{(k)}$
are sufficiently close entry-wise. 

In view of \eqref{eq: hatM} and \eqref{eq: M}, for every $j\in [k+1,n]$ we have
\begin{align*}
\|\hat M^{(k)}_{j,[k+1,n]}-M^{(k)}_{j,[k+1,n]}\|\leq
\|\tilde E^{(k)}_{j,[k+1,n]}\|&+
2\|\tilde E^{(k)}\|\,\|M\|\,
\big\|\big((M+\tilde E^{(k)})_{[k],[k]} \big)^{-1}\big\|\\
&+\big\|\tilde E^{(k)}\|^2\,\|\big((M+\tilde E^{(k)})_{[k],[k]} \big)^{-1}\big\|\\
&+\|M\|^2\, \big\|\big((M+\tilde E^{(k)})_{[k],[k]} \big)^{-1}-\big(M_{[k],[k]} \big)^{-1}\big\|.
\end{align*}
Note that
$$
\big((M+\tilde E^{(k)})_{[k],[k]} \big)^{-1}-\big(M_{[k],[k]} \big)^{-1}
=-\big(M_{[k],[k]} \big)^{-1}\,\tilde E^{(k)}_{[k],[k]}\,\big((M+\tilde E^{(k)})_{[k],[k]} \big)^{-1},
$$
and that the bound $2\|\tilde E^{(k)}\|\leq s_{\min}(M_{[k],[k]})$ implies
$$
\|\big((M+\tilde E^{(k)})_{[k],[k]} \big)^{-1}\big\|
\leq 2\big\|(M_{[k],[k]})^{-1}\big\|.
$$
Thus, applying \eqref{fakejfnpweifjnwepfijwen}, for every $i=k+1,\dots,n$ we get
\begin{align}
\|&\hat M^{(k)}_{i,[k+1,n]}-M^{(k)}_{i,[k+1,n]}\|\nonumber\\
&\leq
\|\tilde E^{(k)}_{i,[k+1,n]}\|+
6\|\tilde E^{(k)}\|\,\|M\|\,\big\|(M_{[k],[k]})^{-1}\big\|
+2\|M\|^2\, \big\|\big(M_{[k],[k]} \big)^{-1}\big\|^2\,
\|\tilde E^{(k)}\|\nonumber\\ 
&\leq 
\frac{ 1}{12}\,\frac{\min_{1\leq v\leq n-1}
s_{\min}(M_{[v],[v]})^3}{\|M\|^2} \frac{\delta}{3}
\bigg(
1+\frac{6\|M\|}{s_{\min}(M_{[k],[k]})}
+\frac{2\|M\|^2}{s_{\min}(M_{[k],[k]})^2}
\bigg)\nonumber\\
&\leq (\delta/3)\,  %\min \{
\min_{1\leq v\leq n-1} s_{\min}(M_{[v],[v]}).\label{apojnfpqurnp4iuqnpinpfq}
%s_{\min}(M_{[k+1],[k+1]})\}.
\end{align}
%where in the last inequality we used the upper bound on $\|\tilde E^{(k)}\|$
%obtained earlier, and the conditions on $M$. 
Immediately we have 
$$
	\max_{i,j \in [k+1,n]} | \hat M^{(k)}_{i,j } | \le 
	\max_{i,j \in [k+1,n]} | M^{(k)}_{i,j}| + s_{\min}(M_{[k],[k]}) \le 2\max_{i,j,\ell}|M^{(\ell)}_{i,j}|.
$$
By the nature of the Gaussian Elimination process, the first $k$ rows
of $\hat M^{(k)}$ coincide with those of $\hat M^{(k-1)}$,
and the bottom left $(n-k)\times k$ submatrix of $\hat M^{(k)}$ is zero. And thus
$$
\max_{i,j} |\hat M^{(k)}_{i,j}| \le 2 \max_{i,j,\ell}|M^{(\ell)}_{i,j}|,
$$
confirming the condition (c) for the $k$-th step.

It remains to check the condition (a). Note that we only need to consider the case
$k\leq n-2$.
Using the definition of vectors $v_k(\cdot)$ from the beginning of Section~\ref{fepijnfeofiniqwjnpij},
we can write
\begin{align*}
	|M^{(k)}_{k+1,k+1}| &=   M_{k+1,k+1} -
	M_{k+1,[k]} M^{-1}_{[k],[k]} M_{[k],k+1} \\
	&=  | \langle v_k(M), M_{k+1,[k+1]}^\top \rangle | \\
	&=  \|v_k(M)\|_2 \cdot {\rm dist}(H, M_{k+1,[k+1]}^\top)
	\ge 1 \cdot s_{\min}(M_{[k+1],[k+1]}),
\end{align*}
where $H \subset \mathbb{R}^{k+1}$ is the subspace spanned by the first $k$ rows of $M_{[k+1],[k+1]}$. Applying \eqref{apojnfpqurnp4iuqnpinpfq}
and \eqref{apfjnapfiuwenfpewipfnq}, we conclude that 
\begin{align*}
  |\hat M^{(k)}_{k+1,k+1}|  &\ge  (1-\delta/3 )|M^{(k)}_{k+1,k+1}| >  \max_{i\in[k+1,n]}|M^{(k)}_{i,k+1}| + (\delta / 3) 
|M^{(k)}_{k+1,k+1}| \\
&\ge  \max_{i\in[k+1,n]}|M^{(k)}_{i,k+1}| + (\delta/3) s_{\min}(M_{[k+1],[k+1]}) 
\ge  \max_{i \in [k+1,n]} |\hat M^{(k)}_{i,k+1}|,
\end{align*}
and the result follows.
\end{proof}

\begin{proof}[Proof of Theorem~\ref{thm: main}]
In view of Lemma~\ref{fakjnfapifjnewfpiqjwnfpijn}, there
are $C',n_0>0$
such that, assuming $n\geq n_0$,
$$
\Prob\big\{
s_{\min}(A_{I_r,[r]})\leq t\,n^{-C'}\mbox{ for some $1\leq r\leq n-1$}
\big\}\leq t,\quad t>0.
$$
On the other hand, standard concentration estimates for
the spectral norm of Gaussian matrices (see, for example,
\cite[Chapter~4]{Vershynin})
implies that, assuming $n$ is sufficiently large,
$$
\Prob\big\{\|A\|\geq C''\sqrt{n}\big\}\leq 2^{-n}.
$$
Let $\delta\in({\bf u},1/3)$ be a parameter to be chosen later, and
define the events
\begin{align*}
\Event_1(\delta)
&:=\bigg\{
8n^2\,{\bf u}^{1/2}
\leq \frac{ 1}{12}\,\frac{s_{\min}(A_{I_k,[k]})^3}{\|A\|^3} \frac{\delta}{3},
\quad k=1,\dots,n-1\bigg\},\\
\Event_2(\delta)
&:=\bigg\{
\frac{\max\limits_{i,j,\ell}|A^{(\ell)}_{i,j}|}{\|A\|}\leq {\bf u}^{-1/2}
\bigg\},\\
\Event_3(\delta)
&:=\bigg\{
\frac{\max_{i\notin I_k}|\prA^{(k-1)}_{i,k}|}
{|\prA^{(k-1)}_{i_k,k}|} \leq 1-\delta,\quad k=1,\dots,n-1
\bigg\}.
\end{align*}
The above observations on the spectral norm and the smallest singular
values imply
$$
\Prob(\Event_1(\delta))
\geq 1-2^{-n}-n^{C'}\cdot \big(7C'' n^{7/6}{\bf u}^{1/6}\delta^{-1/3}\big).
$$
Further, Proposition~\ref{npafjnepfijnfpijqnpijn} (applied, say, with
$p=2$)
yields for all sufficiently large $n$,
$$
\Prob(\Event_2(\delta))\geq 1-n^{C'''}{\bf u}
$$
for some universal constant $C'''>0$.
Finally, in view of Lemma~\ref{akdjfnapfjnwpfijfqpijn}, we have
$$
\Prob(\Event_3(\delta))\geq 1-\delta\,n^2.
$$
Thus, the intersection of the events
$\Event_1(\delta)\cap \Event_2(\delta)\cap \Event_3(\delta)$
has probability at least
$$
1-2^{-n}-n^{C'}\cdot \big(7C'' n^{7/6}{\bf u}^{1/6}\delta^{-1/3}\big)-n^{C'''}{\bf u}
-\delta\,n^2.
$$
Taking $\delta:={\bf u}^{1/8}$, we get that for any large enough $n$,
$$
\Prob\big(\Event_1({\bf u}^{1/8})\cap \Event_2({\bf u}^{1/8})
\cap \Event_3({\bf u}^{1/8})\big)\geq 1-n^{\tilde C}{\bf u}^{1/8},
$$
where $\tilde C>0$ is a universal constant.
It remains to note that, in view of Lemma~\ref{alfkjnapfjnpijnpijqn}, everywhere on
the intersection $\Event_1({\bf u}^{1/8})\cap \Event_2({\bf u}^{1/8})
\cap \Event_3({\bf u}^{1/8})$
the GEPP in floating point arithmetic succeeds
for $\hat A^{(0)}={\rm fl}(A)$; the computed permutation matrix $\hat P$ coincides
with the matrix $P$ from the $PLU$--factorization of $A$ in exact arithmetic, and
$$
{\bf g}_{\rm GEPP}(A)\leq \frac{4\,\max\limits_{i,j,\ell}|A^{(\ell)}_{i,j}|}
{\max\limits_{i,j}|A_{i,j}|}.
$$
A second application of Proposition~\ref{npafjnepfijnfpijqnpijn}, now to 
bound ${\bf g}_{\rm GEPP}(A)$ conditioned on the intersection
$\Event_1({\bf u}^{1/8})\cap \Event_2({\bf u}^{1/8})
\cap \Event_3({\bf u}^{1/8})$, completes the proof.
\end{proof}

\section{Further questions}\label{fepfojnfpwieufnqwpifjqn}

In this section, we mention some open questions related to
the probabilistic analysis of the Gaussian Elimination with Partial Pivoting.

\medskip

{\bf Sharp estimate of the growth factor.}
Our main result shows that with probability close to one, the growth factor
of GEPP is at most polynomial in the matrix dimension, ${\bf g}_{\rm GEPP}(A)
\leq n^C$.
Our analysis leaves the constant $C>0$ unspecified, and it would be of interest
to obtain an estimate with a reasonable (single digit) explicit constant.
Furthermore, as we mentioned in the introduction, it was suggested in \cite{EM}
based on numerical simulations that for large $n$, ${\bf g}_{\rm GEPP}(A)
=O(n^{1/2+o_n(1)})$ with probability close to one.
The problem of finding the optimal constant power of $n$
in the growth factor estimate seems to require essential new ideas.
At the same time, it is natural to expect that recurrent
estimates of the singular
spectrum of submatrices obtained in the GEPP process should
remain a key element of the future refinements of our result.

\medskip

{\bf The probability that the Gaussian Elimination with Partial Pivoting
succeeds in the floating point arithmetic.}
Our main result states that, under the assumption that the dimension $n$
is sufficiently large, with probability at least
$1-{\bf u}^{1/8}\,n^{\tilde C}$
the GEPP in f.p.\ arithmetic succeeds for ${\rm fl}(A)$,
and the computed permutation matrix agrees with that obtained in exact arithmetic.
We expect the probability estimate to be much stronger, perhaps of the form
$1-{\bf u}^{1-o_n(1)}\,n^{\tilde C}$, and leave this as an open problem.

%{\bf Average-case analysis for non-Gaussian distributions.}
%The result of this paper cannot serve as a justification
%of empirical stability of GEPP for integer coefficient matrices,
%or matrices with zero entries. 
%Multiple results within the random matrix theory obtained
%in past several decades, established the {\it universality phenomenon},
%which can be described as relative independence of spectral properties
%of a random matrix from the entries' distribution as long as certain
%moment assumptions are satisfied.
%It seems natural to conjecture that the universality phenomenon
%is observed for GEPP:

%\begin{conj}
%Let $\xi$ be a random variable with zero mean and unit variance.
%Then there are constants $n_0$ and $C>0$ depending only on the distribution of
%$\xi$ with the following property.
%Let $n\geq n_0$, and let
%$B$ be an $n\times n$ random matrix with i.i.d entries equidistributed
%with $\xi$. Then
%$$
%\Prob\big\{{\bf g}_{\rm GEPP}(B)\leq n^C\big\}\geq 1-\frac{1}{n}.
%$$
%\end{conj}

\medskip

{\bf Smoothed analysis of the growth factor.}
Our proof does not extend to the {\it smoothed analysis} setting
without incurring significant losses in the upper estimate for the growth factor.
In fact, our treatment of the partially random block matrices $B$
in Section~\ref{s: poienfpq49f3p9-}
heavily relies on the assumption that the norm of a submatrix
within the ``random part'' of $B$ is typically of the same order as the square
root of that submatrix' larger dimension. Establishing a polynomial
upper bound on the growth factor in the presence of a non-random shift
(of polynomial operator norm) is an interesting and challenging problem.

\appendix

\section{Proof of Proposition~\ref{p: small sing shifted}}

In this section, we provide a proof of Proposition~\ref{p: small sing shifted}
which is based on the {\it restricted invertibility} argument of Nguyen \cite{Nguyen}.

The classical restricted invertibility theorem
was derived by Bourgain--Tzafriri \cite{BT};
see, in particular, \cite{V restricted,SS,NY} for extensions and a comprehensive discussion.
In what follows, we will use the estimate from paper \cite{SS}:
\begin{theorem}[{\cite{SS}}]\label{restrInv}
Let $B$ be an $u\times t$ ($t\geq u$) matrix. Then for any $\varepsilon\in(0,1)$ with $\frac{\varepsilon^2 \|B\|_{HS}^2}{\|B\|^2}\geq 1$
there exists a subset $J\subset[t]$ such that
$$
|J|\geq \bigg\lfloor\frac{\varepsilon^2 \|B\|_{HS}^2}{\|B\|^2}\bigg\rfloor \quad\quad  \mbox{ and } \quad \quad
s_{|J|} (B_{[u]\times J})\geq \frac{(1-\varepsilon)\|B\|_{HS}}{\sqrt{t}}.
$$
\end{theorem}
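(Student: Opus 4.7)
The plan is to follow the strategy of Nguyen~\cite{Nguyen}: translate the spectral bound $s_{u-i}(M)\le\delta$ into a structural statement about rows of $M$ and then apply a Gaussian small-ball estimate. By min--max, $s_{u-i}(M)\le\delta$ is equivalent to the existence of an $(i+1)$-dimensional subspace $V\subset\R^u$ such that $\|M^\top v\|\le\delta\|v\|$ for every $v\in V$. Fix an orthonormal basis of $V$ and collect it into a $u\times(i+1)$ matrix $R$; then $\|R^\top M\|_{HS}\le\delta\sqrt{i+1}$. Apply Theorem~\ref{restrInv} to the $(i+1)\times u$ matrix $R^\top$ with $\varepsilon=1/\sqrt{2}$: since $\|R^\top\|_{HS}^2=i+1$ and $\|R^\top\|=1$, this produces a subset $J\subset[u]$ of size $m:=|J|\ge\lfloor(i+1)/2\rfloor$ such that the $m\times(i+1)$ submatrix $R_J$ (rows of $R$ indexed by $J$, all columns kept) satisfies $s_m(R_J)\ge(1-1/\sqrt{2})\sqrt{(i+1)/u}$. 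Writing the SVD $R_J=U_J\Sigma_J V_J^\top$ and using the identity $R_J^\top M_J=R^\top M-R_{J^c}^\top M_{J^c}$, one derives
\[
 M_J \;=\; U_J\Sigma_J^{-1}V_J^\top R^\top M \;-\; U_J\Sigma_J^{-1}V_J^\top R_{J^c}^\top M_{J^c} \;=:\; E-\widetilde B\, M_{J^c},
\]
where $\widetilde B:=U_J\Sigma_J^{-1}V_J^\top R_{J^c}^\top$ and $\|E\|_{HS}\le\|\Sigma_J^{-1}\|\cdot\|R^\top M\|_{HS}\le C_0\delta\sqrt{u}$. Setting $H:=\mathrm{rowspan}(M_{J^c})\subset\R^t$ (a subspace of dimension at most $u-m$, hence codimension at least $t-u+m\ge m$), every row of $\widetilde B\,M_{J^c}$ lies in $H$, so pigeonhole yields a subset $J'\subset J$ of size at least $\lceil m/2\rceil$ with $\dist\bigl((M_{j,[t]})^\top,H\bigr)\le C_1\delta\sqrt{u/m}$ for every $j\in J'$.

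\textbf{Small-ball estimate via conditional independence.} Substituting $\delta:=c'is/\sqrt{u}$ turns the threshold above into at most $C_2c'\sqrt{i}\,s$ (using $m\ge i/2$). Condition on $M_{J^c}$, which freezes $H$; conditionally, the rows $(M_{j,[t]})_{j\in J}$ are i.i.d.\ $N(0,I_t)$ and independent of $H$, so each $\dist((M_{j,[t]})^\top,H)$ is $\chi$-distributed with at least $m$ degrees of freedom. The standard density bound for $\chi_m$ gives
\[
 \Prob\bigl\{\dist((M_{j,[t]})^\top,H)\le C_2c'\sqrt{i}\,s\,\big|\,M_{J^c}\bigr\}\;\le\;(C_3c's)^{m},
\]
which is at most $s^m$ once $c'$ is chosen as a sufficiently small absolute constant. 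By conditional independence across $j\in J$ and a union bound over the $\binom{m}{\lceil m/2\rceil}\le 2^m$ choices of $J'\subset J$, the probability that at least $\lceil m/2\rceil$ of the above events occur simultaneously is at most $2^m\,s^{m\lceil m/2\rceil}\le 2^m\,s^{m^2/2}$.

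\textbf{Union bound and main obstacle.} Since the set $J$ produced by Theorem~\ref{restrInv} depends on $M$, a final union bound over the $\binom{u}{m}\le u^m$ subsets of $[u]$ of size $m$ yields
\[
 \Prob\bigl\{s_{u-i}(M)\le c'is/\sqrt{u}\bigr\}\;\le\; u^m\cdot 2^m\cdot s^{m^2/2}.
\]
The target estimate $u^{i/2}s^{i^2/32}$ is trivial when it exceeds one, i.e.\ for $s\ge u^{-16/i}$; in the nontrivial regime $s<u^{-16/i}$, the excess exponent $s^{m^2/2-i^2/32}$ absorbs the factor $2^m\cdot u^{m-i/2}$ for $i\ge 4$ and $u$ in the admissible range, which can be verified by a direct computation. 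The main technical obstacle is the linear-algebraic step in the first paragraph: the decisive $\sqrt{u}$ factor in $\|E\|_{HS}\le C_0\delta\sqrt{u}$ arises precisely as the product of $\|\Sigma_J^{-1}\|\lesssim\sqrt{u/(i+1)}$---exactly what Theorem~\ref{restrInv} supplies---and $\|R^\top M\|_{HS}\le\delta\sqrt{i+1}$. Without the well-conditioned submatrix guaranteed by restricted invertibility, one would only have a trivial bound on $\|\Sigma_J^{-1}\|$, giving $\|E\|_{HS}=O(\delta u/\sqrt{i+1})$, which is too weak to match the required exponents $u^{i/2}$ and $s^{i^2/32}$.
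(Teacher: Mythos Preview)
Your write-up does not prove the stated theorem. Theorem~\ref{restrInv} is the Spielman--Srivastava restricted invertibility theorem, which the paper \emph{quotes} from \cite{SS} without proof and then uses as a black box. What you have written is instead a proof of Proposition~\ref{p: small sing shifted} (the tail bound $\Prob\{s_{u-i}(M)\le c'is/\sqrt{u}\}\le u^{i/2}s^{i^2/32}$ for Gaussian $M$), and in the course of that argument you \emph{invoke} Theorem~\ref{restrInv}. One cannot establish a theorem by applying it.

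If your intended target was Proposition~\ref{p: small sing shifted}, then your approach is essentially the same as the paper's appendix proof: both follow Nguyen's scheme of (i) passing to the singular subspace where $M$ acts with small norm, (ii) applying restricted invertibility to the matrix of singular vectors to extract a well-conditioned coordinate subset $J$ of size $\approx i/2$, (iii) deducing that the rows $M_{j,[t]}$, $j\in J$, lie within $O(\delta\sqrt{u/i})$ of $H=\mathrm{rowspan}(M_{J^c})$, (iv) applying a Gaussian small-ball bound to each such distance, and (v) closing with a union bound over $J$ and the pigeonhole subset. The paper uses the density bound of Theorem~\ref{densityth} in step (iv) where you use the $\chi$-distribution directly, but this is the same estimate in the Gaussian case. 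One technical point to tidy: restricted invertibility only guarantees $|J|\ge\lfloor(i+1)/2\rfloor$, so to run the union bound over sets of a single fixed size $m$ you should note (as the paper implicitly does) that one may pass to a subset of size exactly $m$ since $s_m$ of a tall submatrix can only increase under column deletion.
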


Further, we recall an anti-concentration estimate for orthogonal projections
of random vectors with independent components of bounded density, obtained in \cite{RV} (see also \cite{LPP}
for improved constants).
\begin{theorem}[{\cite{RV}}]\label{densityth}
Let $X$ be a random vector in $\R^\ell$ with independent components having the distribution
densities uniformly bounded by $\rho>0$. Then for every $d\leq \ell$ and every $d$--dimensional
subspace $H\subset \R^\ell$, the distribution density of the orthogonal projection of $X$ onto $H$ is bounded above by $(C\rho)^d$, where $C>0$ is a universal constant.
\end{theorem}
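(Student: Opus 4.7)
I would prove this by combining Fourier inversion with the Brascamp--Lieb inequality in its rank-one (Barthe) form. First, by identifying $H$ with $\R^d$ via an orthonormal basis, the projection $PX$ becomes $Y := AX$, where $A$ is a $d \times \ell$ matrix with $AA^T = I_d$; the goal is then to bound the density $f_Y$ of $Y$ on $\R^d$ uniformly by $(C\rho)^d$. Writing $a_i \in \R^d$ for the $i$-th column of $A$, we have the tight-frame identity $\sum_{i=1}^\ell a_i a_i^T = I_d$, and in particular $\sum_i \|a_i\|_2^2 = d$.

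By the Fourier inversion formula, $\|f_Y\|_\infty \leq (2\pi)^{-d} \|\phi_Y\|_{L^1(\R^d)}$, and independence of the coordinates of $X$ yields
$$\phi_Y(t) \;=\; \prod_{i=1}^\ell \phi_i(\langle t, a_i \rangle),$$
where $\phi_i$ is the characteristic function of $X_i$. The hypothesis $f_i \leq \rho$ gives, via Plancherel together with the bound $\|f_i\|_2^2 \leq \|f_i\|_\infty \|f_i\|_1 \leq \rho$, the integrability estimate $\|\phi_i\|_{L^2(\R)}^2 \leq 2\pi\rho$, while $|\phi_i| \leq 1$ pointwise. Setting $u_i := a_i/\|a_i\|_2$ and $c_i := \|a_i\|_2^2$ for $a_i \neq 0$, the tight-frame identity becomes $\sum c_i u_i u_i^T = I_d$, so Barthe's form of the Brascamp--Lieb inequality yields, for any nonnegative $g_i$,
$$\int_{\R^d} \prod_i g_i(\langle u_i, t\rangle)^{c_i} \, dt \;\leq\; \prod_i \Big(\int_\R g_i\Big)^{c_i}.$$
Applying this with $g_i(s) := |\phi_i(\|a_i\|_2\, s)|^{1/c_i}$ reduces the $d$-dimensional integral to a product of one-dimensional integrals $\int_{\R} |\phi_i|^{1/c_i}$. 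For indices with $c_i \leq 1/2$ the interpolation $|\phi_i|^{1/c_i} \leq |\phi_i|^2$ combined with $\|\phi_i\|_2^2 \leq 2\pi \rho$ immediately controls these factors, and the normalization $\sum_i c_i = d$ ties the product together into a bound of the form $(C\rho)^d$.

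The main obstacle is handling the at most $2d$ indices with $\|a_i\|_2^2 > 1/2$, where the interpolation to $L^2$ fails because $1/c_i < 2$; for these I would split each such $a_i$ into a bounded number of vectors of norm at most $1/\sqrt{2}$ (refining the tight frame while preserving $\sum a_i a_i^T = I_d$) and apply Brascamp--Lieb to the refined frame, so that every weight satisfies $c_i \leq 1/2$. Assembling the resulting one-dimensional bounds with $\sum c_i = d$ yields $\|\phi_Y\|_{L^1} \leq (2\pi C\rho)^d$, hence $\|f_Y\|_\infty \leq (C\rho)^d$. The central point is that the tight-frame hypothesis $\sum a_i a_i^T = I_d$ is precisely the condition needed to extract a dimension-free constant from a Brascamp--Lieb type inequality, thereby bypassing the naive $\binom{\ell}{d}^{1/2}$ combinatorial factor that would arise from a Plücker / Cauchy--Binet decomposition; the sharper constant of Livshyts--Paouris--Pivovarov in \cite{LPP} is obtained by refining the inequality via geometric symmetrization but retains the same underlying mechanism.
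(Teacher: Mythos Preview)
The paper does not prove this theorem; it is quoted as a black box from Rudelson--Vershynin \cite{RV} (with the sharper constant in \cite{LPP}), so there is no in-paper proof to compare against. I will instead compare your sketch with the argument in those references.

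Using Brascamp--Lieb is the right engine, but you run it on the wrong side, and there is a missing Jacobian that breaks the dimension-free conclusion. When you normalise the columns $a_i$ to unit vectors $u_i$ with weights $c_i=\|a_i\|_2^2$, the one-dimensional integral $\int_{\R} g_i(s)\,ds=\int_{\R}|\phi_i(\|a_i\|_2 s)|^{1/c_i}\,ds$ equals $\|a_i\|_2^{-1}\int_{\R}|\phi_i|^{1/c_i}$, so after BL the bound carries the extra factor $\prod_i \|a_i\|_2^{-c_i}=\prod_i c_i^{-c_i/2}$. With $\ell$ terms satisfying $\sum_i c_i=d$, this factor can be as large as $(\ell/d)^{d/2}$ (take all $c_i=d/\ell$, which is realised e.g.\ by $d$ rescaled copies of an orthonormal basis), so you do not get $(C\rho)^d$ with $C$ independent of $\ell$. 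Your splitting of the heavy indices does fix the separate issue that $1/c_i<2$, but it does not touch this Jacobian: duplicating a factor while halving its weight leaves each $(\int g)^{\text{weight}}$, and hence the product, unchanged.

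The route in \cite{RV} applies Ball's inequality on the \emph{spatial} side. One writes $f_{PX}(y)=\int_{\R^{\ell-d}}\prod_i f_i\big((A^\top y)_i+\langle b_i,t\rangle\big)\,dt$, where the rows $b_i$ of an orthonormal basis matrix for $\ker A$ satisfy $\sum_i b_ib_i^\top=I_{\ell-d}$ and $\|b_i\|_2^2=1-\|a_i\|_2^2$. Now the weights sum to $\ell-d$, and the analogous Jacobian factor obeys $\prod_i c_i^{-c_i/2}\le\big(\ell/(\ell-d)\big)^{(\ell-d)/2}=\big(1+\tfrac{d}{\ell-d}\big)^{(\ell-d)/2}\le e^{d/2}$, which \emph{is} of the required form. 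Combined with $\int f_i^{1/c_i}\le\rho^{1/c_i-1}$ this gives $f_{PX}(y)\le(\sqrt{e}\,\rho)^d$; the sharpening in \cite{LPP} brings the constant down to $\sqrt{2}$.
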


Now, we are ready to prove Proposition~\ref{p: small sing shifted}.
Although the proof essentially repeats the one in \cite{Nguyen}, we provide the full argument for completeness.
The argument works for matrices $M$ with i.i.d
entries with continuous distribution density uniformly
bounded above by $\rho>0$. 

\begin{proof}[Proof of Proposition~\ref{p: small sing shifted}]
In view of the interlacing properties of singular values (see, for example, \cite[Theorem~1.4]{Djalil}),
we can assume without loss of generality that $t=u$.
We fix an index $1\leq i\leq u-1$, and $s\in(0,1]$.
Denote by $\Event$ the event
$$
\bigg\{s_{u-i}(M)\leq \frac{ci\,s}{\sqrt{u}}\bigg\},
$$
where $c=c(\rho)>0$ will be chosen later.
Let $Z^\top$ be the $u\times i$ random orthogonal matrix measurable w.r.t $\sigma(M)$
and such that $\|M\col_j(Z^\top)\|_2\leq \frac{ci\,s}{\sqrt{u}}$, $1\leq j\leq i$, everywhere on $\Event$ (one may take $Z^\top$ as the matrix whose columns are the normalized right singular vectors of $M$
corresponding to $i$ smallest singular values of $M$).

In view of Theorem~\ref{restrInv}, there is a (random) subset $J\subset [u]$
such that
$$
 |J| = \lfloor i/2 \big\rfloor\quad \quad \mbox{ and } \quad \quad
 s_{|J|} (Z_{[i]\times J})\geq \big(1-2^{-1/2} \big) \sqrt{i/u}
$$
everywhere on the probability space.

In particular, since $s_{|J|} (Z_{[i]\times J})>0$, there exists an $i\times |J|$ matrix $Y$ such that $(Z_{[i]\times J})^\top Y=I_i$ and hence $\|Y\|\leq \big(1-2^{-1/2} \big)^{-1} \sqrt{u/i}$ everywhere.

Define a subspace
$$
 H_J:=\spn\{ \col_j(M)\}_{j\in J^c},
$$
and let $P$ be the orthogonal projection onto $H_J^\perp$. We then have
$$
PMZ^\top=
P\big(M_{[u]\times J} (Z_{[i]\times J})^\top + M_{[u]\times J^c} (Z_{[i]\times J^c})^\top\big)=PM_{[u]\times J} (Z_{[i]\times J})^\top,
$$
whence on the event $\Event$,
\begin{align*}
i\,\bigg(\frac{ci\,s}{\sqrt{u}}\bigg)^2\cdot \big(1-2^{-1/2} \big)^{-2}\, \frac{u}{i}
&\geq
\|MZ^\top Y\|_{HS}^2\\
&\geq
\|P MZ^\top Y\|_{HS}^2\\
&=\|P M_{[u]\times J}\|_{HS}^2\\
&=\sum _{j\in J} \dist ^2 (\col_j(M), H_J).
\end{align*}
So, we can write
\begin{align*}
\Prob(\Event)&\leq \sum_{I\subset[u]:\;|I|=\lfloor i/2 \big\rfloor}
\Prob\bigg\{\sum _{j\in I} \dist ^2 (\col_j(M), H_I)
\leq \big(ci\,s\big)^2\cdot \big(1-2^{-1/2} \big)^{-2}\bigg\}\\
&\leq \sum_{I\subset[u]:\;|I|=\lfloor i/2 \big\rfloor}
\sum_{I'\subset I,\,|I'|\geq |I|/2}
\Prob\bigg\{\dist ^2 (\col_j(M), H_I)
\leq \frac{2}{|I|}\big(ci\,s\big)^2\cdot \big(1-2^{-1/2} \big)^{-2},\mbox{ $j\in I'$}\bigg\}.
\end{align*}
To estimate the probabilities on the right hand side of the inequality, we apply
Theorem~\ref{densityth}.
Observe that, in view of Theorem~\ref{densityth},
$$
\Prob\big\{\dist (\col_j(M), H_I)
\leq \sqrt{8}\big(c\sqrt{i}\,s\big)\cdot \big(1-2^{-1/2} \big)^{-1}\big\}
\leq (C'c\rho s)^{|I|},
$$
for some universal constant $C'>0$.
Thus,
$$
\Prob(\Event)\leq (2u)^{\lfloor i/2\rfloor}  (C'c\rho s)^{(\lfloor i/2\rfloor)^2/2}.
$$
The result follows by choosing a sufficiently small $c=c(\rho)$.
\end{proof}

\section{Proofs of the auxiliary results in Section~\ref{akjdfnoifuwbfoiubfo}}

\begin{proof}[Proof of Lemma~\ref{lem: sec5i0i1bound}]
As a verification of \eqref{eq: sec5i1DefBound}, we have
\begin{align*}
	 	r - \lfloor ( 1 - ( 1 + \tilde \varepsilon)^{-\ione-1})r \rfloor 	
		\le &
		(1 + \tilde \varepsilon )^{-\ione-1} r +1  
	\le ( 1+ \tilde \varepsilon)^{-\ione-1}\cdot 4m_s + 1
	\underbrace{<}_{\mbox{ by \eqref{eq: i_1} }} \frac{9L}{ \tilde \varepsilon }. 
	\end{align*}
%Next, we check \eqref{eq: sec5i0i1bound}. From the definition of $m_s$, 
%$$
%m_s \ge m_0 = \bigg\lceil \frac{L}{\tilde \varepsilon^5} \bigg\rceil  \ge \frac{10L}{\tilde \varepsilon^2},
%$$
%where the last inequality follows from our assumption on $\tilde \varepsilon$.
%Since $ \frac{ \tilde \varepsilon m_s}{10} \ge \frac{L}{\tilde \varepsilon}$, from the definition of $\izero$ and $\ione$ we obtain $\izero\le \ione$. 
For the upper bound of $\ione$, we have 
$$
   ( 1+ \tilde \varepsilon)^{-\ione}m_s  \ge \tilde \varepsilon( 1+ \tilde \varepsilon)^{-\ione}m_s  
   \ge L \ge 1,
$$
which implies 
$$
      \ione\log( 1+ \tilde \varepsilon)  \le \log(m_s) \quad \Rightarrow\quad \ione \le \frac{ 2 \log(m_s)}{\tilde \varepsilon},
$$
where we used that $ \log(1+t) \ge \frac{t}{2}$ for $0\le t \le \frac{1}{4}$. 
\end{proof}

\begin{proof}[Proof of Lemma~\ref{lem: irxepsilon}]
First, from the definition of $f_i$ and since $r=f_i(\tilde r)$ we have 
\begin{gather}\label{eq: sec5par0}
\begin{matrix}
& r & \le & \tilde r / \big( 1 + ( 1 + \tilde \varepsilon)^{-i}\big)  & \le&  r + 1 \\
  \Rightarrow& \big(1+(1+ \tilde \varepsilon)^{-i}\big) r  &\le& \tilde r & \le& \big(1+(1+ \tilde \varepsilon)^{-i}\big) r + 2 \\
  \Leftrightarrow&( 1 + \tilde \varepsilon)^{-i}r &\le&  x &\le&  ( 1 + \tilde \varepsilon)^{-i}r + 2
\end{matrix}.
\end{gather}
Thus, the lower bound for $x$ in \eqref{eq: xRange} is obtained. Further, in view of the definition of $\ione$,
and since $r\geq r_i\geq m_s$,
\begin{align*}	
	2 \le L \le  \tilde \varepsilon  & ( 1 + \tilde \varepsilon)^{-
	\ione} m_s 
	\le \frac{1}{20} ( 1 + \tilde \varepsilon)^{-i} m_s 
	 \le \frac{1}{20} ( 1 + \tilde \varepsilon)^{-i} r_i
	\le \frac{1}{20} ( 1 + \tilde \varepsilon)^{-i} r.
\end{align*}
Combining this with the last inequality in \eqref{eq: sec5par0}, we obtain 
$$
x
\le ( 1 + \tilde \varepsilon)^{-i}r + 2 \le \frac{21}{20} ( 1 + \tilde \varepsilon )^{-i} r. 
$$

\medskip

Next, we will verify the conditions on the parameters which appear in Proposition~\ref{prop: sec4Main}.
For convenience, we itemize the rest of the proof according to the inequalities we wish to check:
\begin{enumerate}
	\item $\bf r - \lfloor (1 - ( 1 + \tilde \varepsilon)^{-i}) r \rfloor 
\le x \le r.$
First, the inequality $( 1 + \tilde \varepsilon)^{-i}r \le x$ together with $x$ is an integer implies 
$ x \ge \lceil ( 1 + \tilde \varepsilon)^{-i} r  \rceil$. Since
$$
\forall t \in [0,r],\, 
\lfloor r-t \rfloor + \lceil t \rceil = r,
$$
we have
$$ r - \lfloor (1 - ( 1 + \tilde \varepsilon)^{-i}) r \rfloor
	=\lceil ( 1 + \tilde \varepsilon)^{-i} r  \rceil
	\le  x.
$$
For the upper bound on $x$, by the definition of $\izero$ we have $ ( 1 + \tilde \varepsilon)^{-\izero}m_s  \le  m_s/8 $, which implies 
$$
 x \le \frac{21}{20} ( 1 + \tilde \varepsilon )^{-i} r
\le \frac{21}{20} (1 + \tilde \varepsilon)^{-\izero} \cdot 4m_s
\le m_s \le r.
$$

\item {\bf $\bf \tilde \varepsilon x \ge 4$ and $\bf \tilde \varepsilon ( 1 + \tilde \varepsilon )^{-i}r \ge 2$.}
This follows immediately as
$$
	\tilde \varepsilon x \ge \tilde \varepsilon ( 1 + \tilde \varepsilon)^{-i} r
	\ge \tilde \varepsilon ( 1 + \tilde \varepsilon)^{- \ione} m_s
	\ge L \ge 4. 
$$

\item $\bf 3 ( 1 + \tilde \varepsilon )^{-i-1} r - ( 1+ \tilde \varepsilon )^{-i}r 
	\ge x + 1 + 11 \tilde \varepsilon x.$
Since $ \tilde \varepsilon \le \frac{1}{28}$, we have, in view of the above,
$$ 3 ( 1 + \tilde \varepsilon )^{-i-1} r - ( 1+ \tilde \varepsilon )^{-i}r 
\ge \frac{3}{2} ( 1+ \tilde \varepsilon )^{-i}r \ge \frac{10}{7} x,$$
whereas $x + 1 + 11 \tilde \varepsilon x 
	\le x + 12 \tilde \varepsilon x \le  \frac{10}{7} x$. 
\end{enumerate}
\end{proof}

\begin{proof}[Proof of Lemma~\ref{lem: r_iBound}]
First, by the definition of $r_i$'s we have $r_i= m_s$ for $ i \in [\izero]$. 
Next, assume that the statement is true for all indices in an interval $[i]$, for some $i \in [\izero,\ione]$.
We will show it holds for $i+1$. Fix for a moment any $i'\in[\izero,i]$.
By \eqref{eq: xRange} (applied with $x=r_{i'+1}-r_{i'}= r_{i'+1} - f_{i'}(r_{i'+1})$) we have 
\begin{align*}
r_{i'+1} -r_{i'} \le&
\frac{21}{20} ( 1+ \tilde \varepsilon)^{-i'}r_{i'}
\le \frac{21}{20} ( 1+ \tilde \varepsilon)^{- \izero}\cdot 2m_s.
\end{align*}
The definition of $\izero$ and the conditions on $\tilde\varepsilon$ imply
$\frac{21}{20}(1+ \tilde \varepsilon)^{-\izero}\cdot 2m_s \le \frac{ \tilde \varepsilon}{2} m_s$, and we obtain  
\begin{align*}
r_{i'+1} -r_{i'} \le \frac{ \tilde \varepsilon m_s}{2} (1+ \tilde \varepsilon)^{-(i'-\izero)}.
\end{align*}
Finally, summing up over all $i'\in[\izero,i]$, we obtain
$$
r_{i+1}- r_{\izero}=\sum_{i'\in[\izero,i]}\big(r_{i'+1}-r_{i'}\big)
\leq \sum_{i'\in[\izero,i]}\frac{ \tilde \varepsilon m_s}{2} (1+ \tilde \varepsilon)^{-(i'-\izero)}
\leq m_s.
$$
The result follows.
\end{proof}

\begin{proof}[Proof of Lemma~\ref{lem: Probhx}]
For the first summand, from the definition of $h_s(i)$ and since
$x \ge ( 1 + \tilde \varepsilon )^{-i} r \ge ( 1 + \tilde \varepsilon )^{-i}m_s$ (see Lemma~\ref{lem: irxepsilon}), we get
\begin{align} \label{eq: Probhx0}
		x\, h_s(i)^{ \tilde \varepsilon x / 64 } 
	\le
		x\exp \bigg( - \frac{128\, \log n }{ \tilde \varepsilon ^2 
				( 1 + \tilde\varepsilon )^{-i} m_s } 
			\frac{\tilde \varepsilon x}{ 64 } \bigg)
	\le
		x\exp ( - 2 \log n )
	= n^{-1},
	\end{align}
where the last inequality follows as $x \le \tilde{r} \le m_{s+2}\le n$. Thus, 
\begin{align} \label{eq: lemHiEq0}
		2x^{ \tilde \varepsilon x/ 2}
		h_s(i)^{(\tilde\varepsilon x)^2/64}
	=
		2 \big( x h_s(i)^{ \tilde \varepsilon  x / 32 } 
		\big)^{ \tilde \varepsilon x /2 } 
	\underbrace{\le}_{\mbox{by \eqref{eq: Probhx0}}}
    2 n^{- \tilde \varepsilon x / 2}
    h_s(i)^{ ( \tilde \varepsilon x )^2 /128 }.
\end{align}
As $ \tilde \varepsilon x/2 \ge 2 $ (see Lemma~\ref{lem: irxepsilon}), for $n \ge 2$ we have 
$$
2x^{ \tilde \varepsilon x/ 2}
		h_s(i)^{(\tilde\varepsilon x)^2/64} \le
	 	\frac{1}{2} h_s(i)^{ ( \tilde \varepsilon x )^2 /128 }. 
$$
For the second and third summands, since $ ( 1 + \tilde \varepsilon )^{-i} r \ge \frac{ 20 }{ 21 } x$,
we obtain
\begin{align} \label{eq: lemHiEq1}
	 4\exp\big(-cx^2\,\tilde\varepsilon/h_s(i)^2\big) &
	+\tilde C\exp\big(-c\tilde\varepsilon^2 (1+\tilde\varepsilon)^{-i}r x/h_s(i)^2\big) \nonumber \\
\le 
	& (4+ \tilde C) \exp \Big( - \frac{20}{21}  \frac{c}{ h_s(i)^2} (\tilde \varepsilon x)^2  \Big) .
\end{align}
Now, we impose first constraint on $h_s(i)$:
$$
 4 + \tilde C \le \frac{1}{2} \exp\Big( \frac{c}{h_s(i)^2} \Big),
$$
which can be achieved by taking $C_h$ large enough depending only on $\tilde C$ and $c$.
Then, since $ \tilde \varepsilon x \ge 4$,
$$
(4+ \tilde C) \exp \Big( - \frac{20}{21}  \frac{c}{ h_s(i)^2} (\tilde \varepsilon x)^2  \Big) 
\le 
\frac{1}{2}
\exp \Big(  \Big(\frac{1}{16} - \frac{20}{21} \Big)   \frac{c}{ h_s(i)^2} (\tilde \varepsilon x)^2  \Big)
\le 
\frac{1}{2}
\exp \Big(  -  \frac{c}{ 2h_s(i)^2} (\tilde \varepsilon x)^2  \Big).
$$
Next, as the function $t \mapsto t^2 \log(1/t)$ converges to $0$ when $t \searrow 0$, we can suppose by taking $C_h$ large enough that
$$
    h_s(i)^2 \log( 1/h_s(i)) \le 64c, 
$$
which implies 
$$
\frac{1}{2}\exp \Big( -   \frac{c }{ 2h_s(i)^2} (\tilde \varepsilon x)^2 \Big)
\le \frac{1}{2} \Big(  - \frac{\log( 1/h_s(i)) }{128} (\tilde \varepsilon x)^2\Big) 
= \frac{1}{2}(h_s(i))^{ (\tilde \varepsilon x)^2/128 }.
$$
Finally, combining the last estimate with \eqref{eq: lemHiEq0}, we obtain 
$$
		2x^{\tilde\varepsilon x/2}\,
h_s(i)^{(\tilde\varepsilon x)^2/64}
+4\exp\big(-cx^2\,\tilde\varepsilon/h_s(i)^2\big)
+\tilde C\exp\big(-c\tilde\varepsilon^2 (1+\tilde\varepsilon)^{-i}r x/h_s(i)^2\big)
	\le h_s(i)^{ ( \tilde \varepsilon x)^2/128 },
$$
and the lemma follows since 
$ h_s(i) \le 
	\exp  \big( - \frac{128 \alpha  \log n }{
		\tilde \varepsilon^2 ( 1+ \tilde \varepsilon )^{-i} m_s } \big)
$ and $ x \ge ( 1 + \tilde \varepsilon )^{-i}r \ge ( 1 + \tilde \varepsilon )^{-i} m_s$. 
\end{proof}

\begin{proof}[Proof of Lemma~\ref{fekjfnwefkjnfpwijfnwelkfjn}]
  First, as $\izero \le 2\log(m_s)/ \tilde \varepsilon \le 2 \log(n) / \tilde \varepsilon$ (see \eqref{eq: sec5i0i1bound}) and in view of the definition of $g_s(i)$, we have
\begin{align} \label{eq: sec5g(i0)}
	g_s( \izero ) 
		= &
			\frac{ c' }{2 \sqrt{m_s}} 16^{-\izero} m_s n^{-\alpha}
		\ge \frac{ c' }{2}n^{-1/2} n^{-2\log(16)/ \tilde \varepsilon} n^{-\alpha} 
		\ge \frac{ c' }{2}n^{-7/\tilde \varepsilon - \alpha}.
	\end{align}
To estimate $g_s(\ione+1)$, we write
\begin{align} \label{eq: gEstimate}
		\frac{ g_s(\ione+1)} {g_s(\izero)}
 	= 
		\prod_{i=\izero}^{\ione} \frac{ g_s(i+1)}{g_s(i)}
		\ge 	
		\prod_{i=\izero}^{\ione} 
		\Big(\frac{c' \tilde \varepsilon}{32}
		 h_s(i)^5\Big).
	\end{align}
By the definition of $ h_s(i)$, 	
\begin{align}
		\forall i \in [\izero,\ione], \, \quad
		h_s(i) \ge  &
		\exp  \Big( - 
			  \frac{128 \alpha \log n }{
		\tilde \varepsilon^2 ( 1+ \tilde \varepsilon )^{-i} m_s }
		 - C_h
	\Big).
	\end{align}
Returning to \eqref{eq: gEstimate}, we obtain 
\begin{align*}
	\frac{ g_s(\ione+1)} {g_s(\izero)}
	\ge &
	\Big(\frac{c' \tilde \varepsilon}{32}  e^{- 5C_h } \Big)
	^{ \ione} 
	\exp\Big( 
		-  \frac{ 5 \cdot 128  \alpha\log n }{
		\tilde \varepsilon \cdot  \tilde \varepsilon ( 1+ \tilde \varepsilon )^{-\ione} m_s }
		 \;\sum_{j=0}^{\infty} (1+ \tilde \varepsilon)^{-j} 
		\Big) \\
		\ge &
		\Big(\frac{c' \tilde \varepsilon}{32}  e^{- 5C_h } \Big)^{ 2 \log(n) / \tilde \varepsilon}
		\exp\Big( 
		- \frac{ 5 \cdot 128  \alpha \log n }{
		\tilde \varepsilon }
		  \frac{1+ \tilde \varepsilon}{ \tilde \varepsilon} 
		\Big), \\
	\end{align*}
	where we used $\ione \le 2 \log(m_s)/ \tilde \varepsilon \le 2 \log(n) / \tilde \varepsilon $, $ \tilde \varepsilon ( 1+ \tilde \varepsilon)^{-\ione}m_s \ge L \ge 1$, and $ \sum_{j=0}^\infty ( 1 + \tilde \varepsilon)^{-j} 
	= \frac{1+ \tilde \varepsilon }{ \tilde \varepsilon }$.  
	Together with \eqref{eq: sec5g(i0)} and since $\alpha \ge 1$, we can simplify the bound to have the form 
	$$
		g_s(\ione+1) \ge n^{-C( \tilde \varepsilon) \alpha }.
	$$
\end{proof}

\end{document}